\newcommand{\smallheartsuit}{\scalebox{0.5}{$\heartsuit$}}
\newcommand{\smalldagger}{\scalebox{0.6}{$\dagger$}}
\newcommand{\inv}{\mathrm{inv}}
\newcommand{\Inv}{\mathrm{Inv}}
\newcommand{\disc}{{\mathrm{disc}}}
\DeclareMathOperator{\Gal}{Gal}
\DeclareMathOperator{\Hom}{Hom}
\DeclareMathOperator{\End}{End}
\DeclareMathOperator{\Res}{Res}
\newcommand{\Sh}{\mathrm{Sh}}
\let\Im\relax
\DeclareMathOperator{\im}{im}
\DeclareMathOperator{\Im}{Im}
\DeclareMathOperator{\Lie}{Lie}
\newcommand{\val}{{\mathrm{val}}}
\DeclareMathOperator{\tr}{Tr}
\newcommand{\SO}{\mathrm{SO}}
\newcommand{\isoarrow}{%
	\ifbool{@display}{\overset{\sim}{\longrightarrow}}{\xrightarrow\sim}%
}
\newcommand{\ab}{\mathrm{ab}}
\newcommand{\der}{\mathrm{der}}
\newcommand{\tL}{\widetilde{L}}
\DeclareFontFamily{U}{matha}{\hyphenchar\font45}
\DeclareFontShape{U}{matha}{m}{n}{
	<5> <6> <7> <8> <9> <10> gen * matha
	<10.95> matha10 <12> <14.4> <17.28> <20.74> <24.88> matha12
}{}
\DeclareSymbolFont{matha}{U}{matha}{m}{n}
\DeclareFontFamily{U}{mathx}{\hyphenchar\font45}
\DeclareFontShape{U}{mathx}{m}{n}{
	<5> <6> <7> <8> <9> <10>
	<10.95> <12> <14.4> <17.28> <20.74> <24.88>
	mathx10
}{}
\DeclareSymbolFont{mathx}{U}{mathx}{m}{n}
\DeclareMathSymbol{\obot}         {2}{matha}{"6B}
\newcommand{\bX}{\mathbb{X}}
\newcommand{\bV}{\mathbb{V}}
\newcommand{\Gr}{\mathrm{Gr}}
\newcommand{\Fil}{\mathrm{Fil}}
\newcommand{\bL}{\mathbb{L}}
\newcommand{\R}{\mathbb{R}}
\newcommand{\Q}{\mathbb{Q}}
\newcommand{\Z}{\mathbb{Z}}
\newcommand{\F}{\mathbb{F}}
\newcommand{\spa}{\mathrm{Span}}
\newcommand{\sm}{\ensuremath{\mathrm{sm}}}
\newcommand{\fka}{\ensuremath{\mathfrak{a}}}
\newcommand{\fkf}{\ensuremath{\mathfrak{f}}}
\newcommand{\fkm}{\ensuremath{\mathfrak{m}}}
\newcommand{\fkn}{\ensuremath{\mathfrak{n}}}
\newcommand{\fkp}{\ensuremath{\mathfrak{p}}}
\newcommand{\fks}{\ensuremath{\mathfrak{s}}}
\newcommand{\bbA}{\ensuremath{\mathbb{A}}}
\newcommand{\bbC}{\ensuremath{\mathbb{C}}}
\newcommand{\bbD}{\ensuremath{\mathbb{D}}}
\newcommand{\bbF}{\ensuremath{\mathbb{F}}}
\newcommand{\bbG}{\ensuremath{\mathbb{G}}}
\newcommand{\bbL}{\ensuremath{\mathbb{L}}}
\newcommand{\bbM}{\ensuremath{\mathbb{M}}}
\newcommand{\bbQ}{\ensuremath{\mathbb{Q}}}
\newcommand{\bbR}{\ensuremath{\mathbb{R}}}
\newcommand{\bbS}{\ensuremath{\mathbb{S}}}
\newcommand{\bbV}{\ensuremath{\mathbb{V}}}
\newcommand{\bbW}{\ensuremath{\mathbb{W}}}
\newcommand{\bbX}{\ensuremath{\mathbb{X}}}
\newcommand{\bbZ}{\ensuremath{\mathbb{Z}}}
\newcommand{\brF}{\ensuremath{\breve{F}}}
\newcommand{\brV}{\ensuremath{\breve{V}}}
\newcommand{\Gspin}{\ensuremath{\operatorname{GSpin}}}
\newcommand{\brD}{\ensuremath{\breve{D}}}
\newcommand{\brL}{\ensuremath{\breve{L}}}
\newcommand{\rmF}{\ensuremath{\mathrm{F}}}
\newcommand{\rmG}{\ensuremath{\mathrm{G}}}
\newcommand{\rmI}{\ensuremath{\mathrm{I}}}
\newcommand{\rmK}{\ensuremath{\mathrm{K}}}
\newcommand{\rmM}{\ensuremath{\mathrm{M}}}
\newcommand{\rmX}{\ensuremath{\mathrm{X}}}
\newcommand{\bfx}{\ensuremath{\mathbf{x}}}
\newcommand{\scrD}{\ensuremath{\mathscr{D}}}
\newcommand{\scrF}{\ensuremath{\mathscr{F}}}
\newcommand{\scrG}{\ensuremath{\mathscr{G}}}
\newcommand{\scrS}{\ensuremath{\mathscr{S}}}
\newcommand{\scrV}{\ensuremath{\mathscr{V}}}
\newcommand{\scrX}{\ensuremath{\mathscr{X}}}
\newcommand{\Mloc}{\ensuremath{\mathbb{M}^{\mathrm{loc}}}}
\newcommand{\calA}{\ensuremath{\mathcal{A}}}
\newcommand{\calB}{\ensuremath{\mathcal{B}}}
\newcommand{\calG}{\ensuremath{\mathcal{G}}}
\newcommand{\calI}{\ensuremath{\mathcal{I}}}
\newcommand{\calL}{\ensuremath{\mathcal{L}}}
\newcommand{\calM}{\ensuremath{\mathcal{M}}}
\newcommand{\calN}{\ensuremath{\mathcal{N}}}
\newcommand{\calO}{\ensuremath{\mathcal{O}}}
\newcommand{\calR}{\ensuremath{\mathcal{R}}}
\newcommand{\calT}{\ensuremath{\mathcal{T}}}
\newcommand{\calV}{\ensuremath{\mathcal{V}}}
\newcommand{\calY}{\ensuremath{\mathcal{Y}}}
\newcommand{\calZ}{\ensuremath{\mathcal{Z}}}
\newcommand{\OGr}{\ensuremath{\mathrm{OGr}}}
\newcommand{\tW}{\ensuremath{\widetilde{W}}}
\newcommand{\Adm}{\ensuremath{\operatorname{Adm}}}
\newcommand{\an}{\ensuremath{\mathrm{an}}}
\newcommand{\Span}{\ensuremath{\mathrm{Span}}}
\newcommand{\cV}{\mathcal{V}}
\newcommand{\cZ}{\mathcal{Z}}
\newcommand{\cY}{\mathcal{Y}}
\newcommand{\cG}{\mathcal{G}}
\newcommand{\cN}{\mathcal{N}}
\newcommand{\GL}{\mathrm{GL}}
\newcommand{\Nilp}{\mathrm{Nilp}\, }
\newcommand{\GSpin}{\mathrm{GSpin}}
\newcommand{\Spf}{\mathrm{Spf}\, }
\newcommand{\red}{\mathrm{red}}
\newcommand{\diag}{\mathrm{Diag}}
\newcommand{\Fl}{\mathrm{Fl}}
\newcommand{\brQ}{\breve{\mathbb{Q}}}
\newcommand{\brZ}{\breve{\mathbb{Z}}}
\newcommand{\p}{p}
\newcommand{\K}{K}
\newtheorem{theorem}{Theorem}[section]
\newtheorem{proposition}[theorem]{Proposition}
\newtheorem{lemma}[theorem]{Lemma}
\newtheorem{remark}[theorem]{Remark}
\theoremstyle{definition}
\newtheorem{definition}[theorem]{Definition}
\numberwithin{equation}{subsubsection}
\title{On the basic locus of GSpin Shimura varieties with vertex stabilizer level}
\begin{document}

\author{Qiao He}
\email{heqiaode@icloud.com}
\address{Zhizhen School of Interdisciplinary Mathematical Sciences,
	The Chinese University of Hong Kong,
	Shatin, New Territories, Hong Kong}

\author{Rong Zhou}
\email{rz240@dpmms.cam.ac.uk}
\address{Department of Pure Mathematics and Mathematical Statistics, University of Cambridge, Cambridge, UK, CB3 0WA}

\classification{11G18}
\keywords{Shimura varieties, Rapoport-Zink space, spinor group.}

\begin{abstract}We study the basic locus of Shimura varieties associated to the group of spinor similitudes of a quadratic space over $\bbQ$ with level structure given by the stabilizer of a vertex lattice. We give a description of the underlying reduced scheme of the associated Rapoport--Zink space, generalizing results of Howard--Pappas  and Oki, in the case of self-dual, and almost self-dual level structure.
	
\end{abstract}

    	\maketitle
	\tableofcontents

\section{Introduction}

 This paper is concerned with the study of the basic locus of certain Shimura varieties. Specifically, we consider Shimura varieties  associated to the group of spinor similitudes of a quadratic space over $\bbQ$ whose level structure  at an odd prime $p$ is given by the stabilizer of a vertex lattice. In this setting, good integral models were constructed in \cite{KisinPappas}, \cite{KPZ}, and our main result gives a decomposition of the basic locus in its special fiber into a union of classical Deligne--Lusztig varieties, generalizing work of \cite{HowardPappas} and \cite{Oki} in the special case of self-dual, and almost self-dual, level structure.  

To give some background, the basic locus of Shimura varieties is a generalization of the supersingular locus in the special fiber  of modular curves. Its study has a long history and has led to many important arithmetic applications. For example, the computation of intersection numbers of cycles in the basic locus forms a crucial input to various arithmetic intersection problems such as the Kudla--Rapoport conjecture; we refer to \cite{LZ},\cite{HLSY}, \cite{LZ2} and \cite{LiZhu2018} for applications along these lines. The description of the basic locus is also an essential ingredient for proving `arithmetic level raising' type results, which in turn are important pieces for the proof of cases of  the Beilinson--Bloch--Kato conjecture and Iwasawa main conjecture; see  for example \cite{LT},\cite{ZhouMotivic2023},\cite{LTXZZ}, \cite{LTX}. Moreover, as another striking application, the cycle classes coming from the basic locus often serve as natural candidates to prove  cases of the Tate conjecture; see \cite{TX1,HTX}.

In many of the above applications, a key point is to have an explicit description of the basic locus in terms of more familiar arithmetic objects such as Deligne--Lusztig varieties. Such a description was first discovered by 
 Vollaard \cite{vollaard2010supersingular} and Vollaard--Wedhorn \cite{vollaardwedhorn} in the case of Shimura varieties for the general unitary group $\mathrm{GU}(n-1,1)$ at an inert prime with hyperspecial level. The situation was later clarified by \cite{GHN19}, who gave a group theoretic criterion of when the basic locus is a union of classical Deligne--Lusztig varieties, but only after perfection. 
 For many arithmetic applications, it is crucial to have this description before taking perfection, and moreover to have a more explicit description of the isomorphism with the Deligne--Lusztig variety.
 In the case of Shimura varieties associated to unitary groups such as those considered in \cite{vollaardwedhorn}, \cite{cho2018basic}, this is achieved by leveraging the interpretation of the integral models as a moduli space of abelian varieties with extra structure. See also \cite{RTW,wu2016supersingular,HLY} for ramified unitary cases.

The Shimura varieties associated to spinor similitude groups that we consider in this paper no longer admit such a moduli interpretation, and so this causes difficulties when trying to prove the corresponding result in this setting. Howard--Pappas \cite{HowardPappas} were able to overcome this in the case of hyperspecial level, but a crucial result used in loc. cit. concerning triviality of torsors for reductive group schemes (cf. \cite[Theorem 5.2]{Nisnevich}) breaks down in our setting.
 We are thus required to introduce some new methods along the way, which we hope can be applied in more general non-moduli theoretic settings to study problems of this kind.

\subsection{Main results}
We now discuss our main theorems in more detail. We aim to characterize the irreducible components of the basic locus as the reduced subscheme of certain special cycles  and establish  isomorphisms between them and certain generalized Deligne--Lusztig varieties without passing to perfection.   
As in previous works, we approach this problem by investigating the corresponding Rapoport--Zink space; the analogous results for the Shimura variety are then deduced from this via the Rapoport--Zink uniformization; cf. Proposition \ref{prop: RZ uniformization}.

Let $p$ be an odd prime and let $V$ be a vector space over $\bbQ_p$ equipped with a symmetric bilinear form $(\ ,\ )$. We call a $\Z_p$-lattice $L\subset V$ a vertex lattice of type $t$ if $p L^\vee \subset L \subset L^{\vee}$ with $[L^\vee:L]=t$.\footnote{Our convention for vertex lattices is dual to the one in \cite{HowardPappas}.} Here $$L^{\vee}=\{x\in L\otimes_{\Z_p} \Q_p\mid (x,y) \in \Z_p, \, \forall y \in L\}$$ is the dual lattice. We use $n$ to denote the rank of $L$ and assume $n\ge 3$. From now on, we will fix a vertex lattice $L$ over $\Z_p$ of type $h$.

We let $G=\GSpin(V)$ be the group of spinor similitudes attached to $V$ and $\cG$ the parahoric of $G$ corresponding to $L$.\footnote{Note that $\cG$ is not necessarily a maximal parahoric, cf. \cite[Remark 2.5.2]{PappasZachos}.} Let $\brQ_p$ be the completion of the maximal unramified extension of $\bbQ_p$, $\brZ_p$ its ring of integers and $k$ the residue field of $\brZ_p$. Let $\mu$ be the cocharacter of $G$ defined as in \S \ref{Subsec: GSpin Shimura datum} and $b\in G(\brQ_p)$ a representative of the basic element in the neutral acceptable set $B(G,\mu)$. Then according to \cite{HowardPappas,HK19,PRshtukas}, we have a corresponding Rapoport--Zink space $\cN_L$, which is a formal scheme defined over $\mathrm{Spf}\breve{\Z}_p$.  The choice of $L$ gives rise to a choice of the framing object $\bX$ (see \S \ref{para: RZ space GSp}), and hence the level structure of $\cN_L$.  

\subsubsection{Special cycles}
In the global setting, one can define special cycles on GSpin Shimura varieties which are of great arithmetic interest. As a local analogue of Shimura variety, we can also define special cycles on $\cN_L$. 
Let $\bbV$ be the $\bbQ_p$-vector space given by taking the $\Phi$-invariants $V_{\breve{\Q}_p}^\Phi$ of $V_{\breve{\Q}_p}$, where $\Phi$ is the Frobenius given via twisting by the basic element $b$. Then by considering $\bbV$ as a space of quasi-endomorphisms of the framing object $\bbX$, we may define for any lattice $\bL\subset \bV$,  formal subschemes $\cZ(\bL)$ and $\calY(\bL)$ of $\calN_L$ (see \S\ref{para: Z and Y cycles}); we call these $\calZ$-cycles and $\calY$-cycles respectively.

We have a natural decomposition of $\cN_L$ into a disjoint union $\sqcup_{\ell \in \Z}\cN_L^{(\ell)}$  where $\ell$ is related to the degree of the universal quasi-isogeny. 
We use $\cN_{L,\red}$ (resp. $ \cN_{L,\red}^{(\ell)}$)  to denote the reduced locus of $\cN_L$ (resp. $ \cN_L^{(\ell)}$). The first main result of the current work is the following description of $\cN_{L,\red}^{(\ell)}$ in terms of the reduced locus of special cycles. To describe the results, we let $\cV^{\ge h}$ (resp. $\cV^{\le h}$) denote the set of vertex lattices in $\bV$ of type $t\ge h$ (resp. $t\le h$), and for $\Lambda\in \calV^{\geq h}\cup \calV^{\leq h}$, we write $t_\Lambda$ for the type of $\Lambda.$ 
\begin{theorem}\label{thm: intro 1}\hfill
 \begin{enumerate}
     \item  $\cN_L^{(\ell)}$ is connected
     \item  We have the following bijections
     \begin{align*}
        &\begin{cases}
  \cN_L^{(0)}(k)\sqcup \cN_L^{(1)}(k)   \stackrel{\sim}{\rightarrow} \{\text{special lattices } M \subset V_{\brQ_p}: M\stackrel{h}{\subset}M^\vee\} & \text{ if $h=0$ or $n$},\\
    \cN_L^{(0)}(k) \stackrel{\sim}{\rightarrow} \{\text{special lattices } M \subset V_{\brQ_p}: M\stackrel{h}{\subset}M^\vee\} & \text{ otherwise.}
\end{cases} 
     \end{align*}
 Here $M$ is a special lattice if $[M+\Phi(M):M]\le 1$ when $h\neq 0,n$ and $[M+\Phi(M):M]= 1$ when $h= 0,n$.
\item     We have a decomposition
    \begin{align*}
        \cN_{L,\red}^{(\ell)} = \Big (\cup_{\Lambda \in \cV^{\ge h}} \cZ_\red^{(\ell)}(\Lambda) \Big ) \cup \Big(\cup_{\Lambda \in \cV^{\le h}} \cY_\red^{(\ell)}(\Lambda^\vee) \Big).
    \end{align*}
Moreover, if we use $t_\mathrm{max}$ (resp. $t_\mathrm{min})$ to denote the maximal type (resp. minimal type) of a vertex lattice, then $\cZ_\red^{(\ell)}(\Lambda_{t_\mathrm{max}})$ and $\cY_\red^{(\ell)}(\Lambda_{t_\mathrm{min}}^\vee)$ are irreducible generalized Deligne-Lusztig varieties of dimensions $\frac{t_{\mathrm{max}}+h}{2}-1$ and  $n-\frac{t_{\mathrm{min}}+h}{2}-1$ respectively.
 \end{enumerate}
\end{theorem}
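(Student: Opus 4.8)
The plan is to reduce all three parts to a lattice-theoretic model of $\cN_L(k)$ together with a combinatorial study of special lattices, and then to upgrade the resulting bijections to isomorphisms of $k$-schemes by an argument that avoids the triviality-of-torsors input used in \cite{HowardPappas}. First I would establish part (2). Since $\cN_L$ is a Rapoport--Zink space of Hodge type with parahoric level $\cG$ (built from the integral models of \cite{KisinPappas,KPZ} and realized via $\cG$-shtukas as in \cite{PRshtukas,HK19,HowardPappas}), its $k$-points are governed by the affine Deligne--Lusztig datum for $(\cG,\mu,b)$. Rewriting this datum in terms of the action of $G=\GSpin(V)$ on the quadratic space $V$ identifies $\cN_L(k)$ with a set of lattices $M\subset V_{\brQ_p}$: the $\mu$-admissibility condition becomes $[M+\Phi(M):M]\le 1$, while the position of $L$ relative to $L^\vee$ forces $M\stackrel{h}{\subset}M^\vee$; when $h=0$ or $n$ the parahoric $\cG$ is hyperspecial and $\Adm(\mu)$ consists of a single element, which is why the inequality becomes an equality and why one must range over both $\cN_L^{(0)}$ and $\cN_L^{(1)}$ to exhaust all special lattices, exactly as in \cite{HowardPappas} (and \cite{Oki} when $h=1$). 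The grading by $\ell$ matches the component group of $\cG$ via the spinor similitude.

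Next I would prove the decomposition in part (3), first set-theoretically. For a special lattice $M$, the hypothesis $[M+\Phi(M):M]\le 1$ makes the chain $M^{(r)}=M+\Phi(M)+\dots+\Phi^{r}(M)$ grow slowly, and a short argument with the quadratic form shows it stabilizes to a $\Phi$-stable lattice whose type — and, dually, that of $\bigcap_r\Phi^{r}(M)$ suitably rescaled — is controlled by $h$; one of these constructions produces a vertex lattice $\Lambda\in\cV^{\ge h}$ with $M\in\cZ_\red(\Lambda)(k)$ and the other a $\Lambda\in\cV^{\le h}$ with $M\in\cY_\red(\Lambda^\vee)(k)$, so that every special lattice lies on at least one such cycle. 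Combined with the (easy) inclusions of each $\cZ_\red(\Lambda)$ and $\cY_\red(\Lambda^\vee)$ into $\cN_{L,\red}^{(\ell)}$, and the fact that all of these cycles as well as $\cN_{L,\red}^{(\ell)}$ are reduced, this yields the asserted equality of schemes.

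The heart of the matter is the identification of each cycle with a generalized Deligne--Lusztig variety. Fix $\Lambda\in\cV^{\ge h}$: the special lattices subordinate to $\Lambda$ — i.e. the $k$-points of $\cZ_\red(\Lambda)$ — biject, using the position data coming from $L$, with isotropic subspaces (or flags) in the $\overline{\F}_p$-quadratic space $\Omega_\Lambda=(\Lambda^\vee/\Lambda)\otimes_{\F_p}\overline{\F}_p$ of dimension $t_\Lambda$, compatibly with the natural Frobenius; recognizing the incidence relations as those defining a generalized Deligne--Lusztig variety for $\SO(\Omega_\Lambda)$ and an explicit element of its Weyl group gives irreducibility, and computing the length of that element yields the dimension $\tfrac{t_\Lambda+h}{2}-1$, while the parallel analysis for $\cY_\red(\Lambda^\vee)$ gives $n-\tfrac{t_\Lambda+h}{2}-1$. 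The delicate point — and the one I expect to be the main obstacle — is to promote this bijection on $k$-points to an isomorphism of $k$-schemes \emph{before} perfection: in \cite{HowardPappas} this step rests on \cite[Theorem 5.2]{Nisnevich}, whose reductivity hypothesis on the structure group scheme fails at parahoric level. To circumvent it I would work with an explicit presentation of the universal Dieudonn\'e module (or display) over $\cZ_\red(\Lambda)$ adapted to $\Lambda$, so that the torsor one needs to trivialize is for a smooth affine, possibly non-reductive, group over the smooth projective Deligne--Lusztig base and can be split directly; alternatively, when $\Lambda$ admits a self-dual or almost self-dual refinement one can realize $\cZ_\red(\Lambda)$ inside an auxiliary GSpin Rapoport--Zink space of smaller rank to which \cite{HowardPappas} or \cite{Oki} applies, and bootstrap.

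Finally, for part (1): granting the previous step, each cycle is irreducible, so $\cN_L^{(\ell)}$ is connected as soon as the incidence graph on $\cV^{\ge h}\cup\cV^{\le h}$ — with an edge between two vertex lattices exactly when the associated cycles meet — is connected, a purely combinatorial property of vertex lattices in $\bbV$ that one checks directly from the structure of the building of $\SO(\bbV)$. Part (2) likewise shows the relevant $\cN_L^{(\ell)}$ is nonempty, which completes the proof.
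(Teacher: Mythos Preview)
Your outline for parts (2), (3), and (1) tracks the paper's strategy closely: identify $\cN_L(k)$ with an affine Deligne--Lusztig set, translate $\mu$-admissibility into the special-lattice condition via a relative-position computation (the paper's Proposition~\ref{prop: relative position admissible set} and Proposition~\ref{prop: ADLV and special lattice}), prove the crucial lemma on the stabilizing chain $M\subset M+\Phi(M)\subset\cdots$ (Proposition~\ref{prop: key prop}), and deduce connectedness from the combinatorics of vertex lattices in $\bbV$. One small correction: the distinction between ``$=1$'' and ``$\le 1$'' is not because $\Adm(\mu)_J$ is a singleton when $h=0,n$ (it is not); rather, the length-zero element $\tau_\mu$ fixes the facet of $L$ precisely when $h\neq 0,n$ (Lemma~\ref{lem: dim=0 implies admissible}), so only then does $\Phi(M)=M$ occur. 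The $2$-to-$1$ versus $1$-to-$1$ discrepancy comes from whether the full stabilizer of $L$ in $G(\brQ_p)$ equals $p^{\bbZ}K$ or $p^{\bbZ}K\sqcup \delta p^{\bbZ}K$.

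The genuine divergence is at the step you flag as the main obstacle: promoting the bijection $\cZ_\red^{(0)}(\Lambda)(k)\cong S_\Lambda(k)$ to an isomorphism of $k$-schemes. Neither of your proposed workarounds is what the paper does, and neither is obviously viable. Your first option still lives in the torsor-trivialization paradigm and would require a replacement for Nisnevich's theorem for a non-reductive group scheme, which is exactly the missing ingredient; your bootstrap-from-smaller-rank idea has no evident functorial mechanism linking $\cN_L$ to a hyperspecial or almost-self-dual $\cN_{L'}$ that carries $\cZ_\red(\Lambda)$ to a stratum already handled. The paper instead abandons the torsor approach entirely and argues via \emph{tangent spaces}: using the Kisin--Pappas/KPZ identification of the formal neighbourhood $\widehat{\cN}_{L,y}$ with a completion of the local model $\Mloc_{\cG,\mu}$, one computes $T_y\cZ(\Lambda)_k$ explicitly as $\Hom_k(\overline{\Phi L}_{y,1},\langle\overline{\Phi L}_{y,1},\overline{\Lambda}\rangle^\perp/\overline{\Phi L}_{y,1})$ (Proposition~\ref{prop: tangent space RZ}), and matches it with an independent computation of $T_zS_\Lambda$ (Proposition~\ref{prop: tangent space of SLambda}). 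This only works on the open locus $S_\Lambda^{\smallheartsuit}$ where $\scrV+\Phi(\scrV)$ is non-isotropic, which maps to the smooth locus of the local model; but that locus is dense, so $\Psi_{\cZ,\Lambda}$ is a finite birational morphism onto $S_\Lambda$, and since $S_\Lambda$ is normal (Theorem~\ref{thm: normality of Schuber var} via the local model diagram), Zariski's main theorem finishes. The $\cY$-cycles are then handled by the duality isomorphism $\cN_L\cong\cN_{L^\sharp}$ of Proposition~\ref{prop: duality for Z(Lambda)}, not by a separate argument.
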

When $h=0$ or $n$ ($L$ is self-dual or $p$-modular), this was proved in \cite{HowardPappas}.\footnote{Strictly speaking \cite{HowardPappas} only deals with $h=0$; the case $h=n$ can be reduced to this, for example by using the duality isomorphism Proposition \ref{prop: duality for Z(Lambda)}.} Note that in these cases, we have $\cY(\Lambda)=\emptyset$ (resp. $\cZ(\Lambda)=\emptyset$) when $h=0$ (resp. $h=n$) so that there is only a single family of Deligne--Lusztig varieties appearing.

 Now we give more details.    
 First, for a vertex lattice $\Lambda\subset \bbV$ of type $t_\Lambda$ with quadratic form $q(\,)$, we set
\begin{align*}
    V_{\Lambda^\vee} \coloneqq \Lambda/p \Lambda^\vee \quad \text{ and } \quad V_{\Lambda }\coloneqq \Lambda^\vee/ \Lambda
\end{align*}
with quadratic form induced from $q(\,)$ and $p\cdot   q(\,)$ respectively. Note that $\dim_{\F_p}V_{\Lambda^\vee}=n-t_\Lambda$ and $\dim_{\F_p}V_{\Lambda}=t_\Lambda$.   
Set
	\begin{align*} 
	 \Omega_{\Lambda }&\coloneqq V_{\Lambda } \otimes_{\mathbb{F}_p} k  \stackrel{\sim}{\rightarrow} \Lambda_{\breve{\Z}_p}^\vee / \Lambda_{\breve{\Z}_p},\\
    \Omega_{\Lambda^\vee}&\coloneqq V_{\Lambda^\vee} \otimes_{\mathbb{F}_p} k  \stackrel{\sim}{\rightarrow} \Lambda_{\breve{\Z}_p} /p \Lambda_{\breve{\Z}_p}^{\vee}
  \end{align*} 
	with   Frobenius operator  induced by $\Phi$.   
	 
For a quadratic space $\Omega$ over $k$, we let $\mathrm{OGr}(i,\Omega)$ be the orthogonal Grassmannian that parametrizes $i$-dimensional isotropic subspaces of $\Omega$. Let  $S_{\Lambda}^{[h]}$ be the reduced closed subscheme of $\mathrm{OGr}(\frac{t_\Lambda-h}{2},\Omega_\Lambda)$ whose $k$-points are described as follows
	\begin{align*}
		S_{\Lambda}^{[h]}(k) & = \{  \mathscr{V} \in \mathrm{OGr}(\frac{t_\Lambda-h}{2},\Omega_\Lambda)(k)\mid  \operatorname{dim}_k (\mathscr{V}+\Phi(\mathscr{V}))\le \operatorname{dim}_k (\mathscr{V})+1 \} \\
		& \stackrel{\sim}{\rightarrow} \{\text { special lattices } M \subset V_{\brQ_p}: \Lambda_{\breve{\Z}_p} \subset M \subset   M^\vee \subset \Lambda_{\breve{\Z}_p}^\vee\}.
	\end{align*}

Similarly, let $R_{\Lambda}^{[h]}  $ be the reduced closed subscheme of $\mathrm{OGr}(\frac{h-t_\Lambda}{2},\Omega_{\Lambda^\vee})$  with $k$-points
	\begin{align*}
		R_{\Lambda}^{[h]}(k) & = \{   \mathscr{V}  \in \mathrm{OGr}(\frac{h-t_\Lambda}{2},\Omega_{\Lambda^\vee})(k) \mid  \operatorname{dim}_k (\mathscr{V}+\Phi(\mathscr{V}))\le \operatorname{dim}_k (\mathscr{V})+1  \} \\
		& \stackrel{\sim}{\rightarrow} \{\text { special lattices } M \subset V_{\brQ_p}: p\Lambda_{\breve{\Z}_p}^\vee \subset p M^\vee \subset M \subset \Lambda_{\breve{\Z}_p}\}.
	\end{align*}
We drop the superscript when there is no confusion about the implicit $h$.
\begin{theorem}\label{thm: intro 2}
    For $\Lambda \in \cV^{\ge h}$, we have the following isomorphism between $k$-schemes
    \begin{align*}
         \cZ_{\red}^{(\ell)}(\Lambda)\cong S_\Lambda.
    \end{align*}
Similarly, for $\Lambda \in \cV^{\le h}$, we have
 \begin{align*}
      \cY_{\red}^{(\ell)}(\Lambda^\vee)\cong R_\Lambda.
    \end{align*}
Here $S_\Lambda$ and $R_\Lambda$ are irreducible of dimension $\frac{t_\Lambda+h}{2}-1$ and  $n-\frac{t_\Lambda+h}{2}-1$ respectively. Moreover,  $S_\Lambda$ and $R_\Lambda$ are unions of certain generalized Deligne-Lusztig varieties.
\end{theorem}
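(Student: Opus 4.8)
The plan is to reduce the two isomorphisms to a single assertion about $\cZ$-cycles, and then establish the geometric properties of $S_\Lambda$ and $R_\Lambda$ by a Frobenius stratification of the ambient orthogonal Grassmannians. First I would use the translation operators that permute the pieces $\cN_L^{(\ell)}$ of $\cN_L$ and carry $\cZ$- and $\cY$-cycles to the corresponding cycles, in order to reduce to a single value of $\ell$; and then apply the duality isomorphism of Proposition~\ref{prop: duality for Z(Lambda)}, which interchanges $\cZ$-cycles with $\cY$-cycles and the two families of orthogonal Grassmannians, so that the claim $\cY_\red^{(\ell)}(\Lambda^\vee)\cong R_\Lambda$ for $\Lambda\in\cV^{\le h}$ is reduced to the claim $\cZ_\red^{(\ell)}(\Lambda)\cong S_\Lambda$ for $\Lambda\in\cV^{\ge h}$. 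After this, two things remain: (i) the isomorphism $\cZ_\red^{(\ell)}(\Lambda)\cong S_\Lambda$; and (ii) the statement that $S_\Lambda$ is irreducible of dimension $\tfrac{t_\Lambda+h}{2}-1$ and is a union of generalized Deligne--Lusztig varieties. The cases $h=0,n$ being covered by \cite{HowardPappas}, I would assume $0<h<n$.

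For (i) the starting point is the identification on $k$-points. Combining the description of $\cN_L(k)$ in terms of special lattices (Theorem~\ref{thm: intro 1}(2)) with the definition of the $\cZ$-cycles via special endomorphisms, a $k$-point of $\cZ^{(\ell)}(\Lambda)$ is a special lattice $M$ with $\Lambda_{\brZ_p}\subset M$, hence (taking duals and using that $M$ is special of type $h$) with $\Lambda_{\brZ_p}\subset M\subset M^\vee\subset\Lambda_{\brZ_p}^\vee$; reduction modulo $\Lambda_{\brZ_p}$ inside $\Omega_\Lambda$ carries this bijectively onto $S_\Lambda(k)$. To promote this to an isomorphism of reduced $k$-schemes I would construct a morphism $\cZ_\red^{(\ell)}(\Lambda)\to\mathrm{OGr}(\tfrac{t_\Lambda-h}{2},\Omega_\Lambda)$ from the universal filtered $F$-crystal over the cycle together with its $\GSpin$-structure, extracting a canonical isotropic sub-bundle of the trivial bundle with fibre $\Omega_\Lambda$; one checks that it factors through $S_\Lambda$ (using the Frobenius condition on the crystal) and induces the bijection above, and then that it induces an isomorphism on complete local rings at every $k$-point. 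With the bijectivity on $k$-points already in hand, this yields that the morphism is an isomorphism. The computation of the complete local rings is carried out by Grothendieck--Messing / display deformation theory.

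This local comparison is the step I expect to be the main obstacle. In the hyperspecial situation of \cite{HowardPappas} the deformation functor is linearized by trivializing a torsor under the reductive model $\cG$ of $G$ via \cite[Theorem~5.2]{Nisnevich}; in our setting $\cG$ is the vertex parahoric, which is not reductive, so that input is unavailable --- this is the difficulty flagged in the introduction. My approach would be to work instead with the local model $\Mloc$ attached to $(G,\mu)$ and the vertex parahoric (as in \cite{KisinPappas,KPZ}) and the local model diagram $\cN_L\leftarrow\widetilde{\cN}_L\to\Mloc$: one reads off the \'etale-local structure of $\cN_L$, and of the closed subscheme $\cZ_\red(\Lambda)$ inside it, from an explicit Clifford-algebra presentation of $\Mloc$ and of the relevant closed subscheme of $\Mloc$, and matches it directly with the local structure of the orthogonal Grassmannian. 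Equivalently, the torsor in question, while possibly nontrivial Zariski-locally, becomes trivial after the smooth cover supplied by the local model diagram, which is all that is needed; this is where the argument must go beyond \cite{HowardPappas}.

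For (ii), which concerns only the scheme $S_\Lambda\subset\mathrm{OGr}(\tfrac{t_\Lambda-h}{2},\Omega_\Lambda)$, I would follow the Vollaard--Wedhorn and Howard--Pappas method. Given $\mathscr V\in S_\Lambda(k)$, the ascending chain $\mathscr V\subset\mathscr V+\Phi\mathscr V\subset\mathscr V+\Phi\mathscr V+\Phi^2\mathscr V\subset\cdots$ stabilizes to a Frobenius-stable subspace, which descends to an $\F_p$-rational subspace of $V_\Lambda$ and hence to a vertex lattice $\Lambda_1$ with $\Lambda\subset\Lambda_1\subset\Lambda_1^\vee\subset\Lambda^\vee$; recording $\Lambda_1$ defines a stratification of $S_\Lambda$ into finitely many locally closed pieces, each of which is a generalized Deligne--Lusztig variety for a finite orthogonal group attached to $V_{\Lambda_1}$ and a Coxeter-type element. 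Such varieties are smooth and irreducible, with closure equal to the union of the generalized Deligne--Lusztig varieties for the smaller standard parabolics; hence the unique open stratum is dense, $S_\Lambda$ is irreducible, its dimension equals the length of the associated element --- which one computes to be $\tfrac{t_\Lambda+h}{2}-1$ --- and the stratification exhibits $S_\Lambda$ as a union of generalized Deligne--Lusztig varieties. The same argument applies to $R_\Lambda$ inside $\mathrm{OGr}(\tfrac{h-t_\Lambda}{2},\Omega_{\Lambda^\vee})$, giving irreducibility, dimension $n-\tfrac{t_\Lambda+h}{2}-1$, and the Deligne--Lusztig description; alternatively the $R_\Lambda$ statements follow from those for $S_\Lambda$ by the duality already invoked.
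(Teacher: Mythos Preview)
Your reductions (translation in $\ell$, duality to pass from $\cY$ to $\cZ$), the $k$-point bijection, the construction of the morphism via the universal crystal, and your treatment of (ii) are all in line with the paper. The gap is in the heart of (i): you propose to show that $\Psi_{\calZ,\Lambda}$ induces an isomorphism on complete local rings at \emph{every} $k$-point, by reading off the \'etale-local structure of $\cZ_\red(\Lambda)$ from the local model diagram and matching it with the orthogonal Grassmannian. The paper does not do this, and it is unclear that it can be done. The obstruction is that $\cZ_\red^{(0)}(\Lambda)$ contains points mapping to non-maximal KR strata --- in particular the worst point $\tau_\mu$ --- and there the local model $\Mloc_{\calG,\mu}$ is singular, so the deformation-theoretic description from \cite{KisinPappas,KPZ} does not yield a clean linear model that one could match against $\OGr$.

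The paper's route around this is a two-step argument you are missing one ingredient of. First, restrict to the open dense locus $\cZ^{\smallheartsuit}(\Lambda)=\Psi_{\calZ,\Lambda}^{-1}(S_\Lambda^{\smallheartsuit})$, where $\scrV+\Phi(\scrV)$ is not isotropic; on $k$-points this is exactly the locus landing in the KR strata $Z_K\subset\Adm(\mu)_K^{\max}$, hence in the smooth locus of $\Mloc$. There the tangent space of $\calN_{L,k}$ at $y$ is computed explicitly as $\Hom_k(\overline{\Phi L}_{y,1},\overline{\Phi L}_{y,1}^\perp/\overline{\Phi L}_{y,1})$, and the condition cutting out $\cZ(\Lambda)$ translates, via a Clifford-algebra lemma (an element of $\overline L$ preserves $\ker(\xi)$ iff it is orthogonal to $\xi$), into orthogonality to $\overline\Lambda$; this matches the tangent space of $S_\Lambda^{\smallheartsuit}$ on the nose, so $\Psi^{\smallheartsuit}$ is a closed immersion, hence an isomorphism on this open dense piece. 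Second --- and this is the ingredient absent from your outline --- one proves that $S_\Lambda$ is \emph{normal} (via G\"ortz's local model diagram identifying it \'etale-locally with a Schubert variety, which is normal by \cite{ramanan1985projective}). Since $\Psi_{\calZ,\Lambda}$ is proper, bijective on $k$-points, and birational, it is finite birational onto a normal target, hence an isomorphism. So the fix to your plan is: downgrade the local comparison in (i) to the smooth locus only, add normality of $S_\Lambda$ to your (ii), and conclude by finite-birational-onto-normal.
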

We refer to Theorem \ref{thm: decom pf SLambda} for the details about the identification between $S_\Lambda$ and generalized Deligne--Lusztig varieties. In fact, we only need to prove the first statement concerning $S_\Lambda$ in the above theorem since the second statement will follow from the first as follows.

Let $L^\sharp$ denote the quadratic lattice $(L^\vee)$ with quadratic form $q(\,)_{L^\sharp}=p\cdot q(\,)$. Note that if $L$ is a vertex lattice of type $h$, then $L^\sharp$ is a vertex lattice of type $n-h$. In particular, one can consider the corresponding RZ space $\cN_{L^\sharp}$ and the corresponding special cycles on $\cN_{L^\sharp}$. Let  $\widetilde{\pi}=\sqrt{p}$ and set $E=\bbQ_{p^2}(\widetilde{\pi})$. Then we may identify the corresponding space of quasi-endormorphisms $\bbV^\sharp$ with the subspace  $\widetilde{\pi}\bbV\subset\bbV_E$.
Thus for $\Lambda\subset \bV$ a vertex lattice of type $t$,  $\Lambda^\sharp:=\widetilde{\pi}\Lambda$ is a vertex lattice in $\bbV^\sharp$ of type $n-t$, and we have the associated special cycle $\cZ_{L^\sharp}(\Lambda^\sharp)$ of $\cN_{L^\sharp}$. Here the subscript is used to emphasize that we regard $\cZ_{L^\sharp}(\Lambda^{\sharp})$ as a $\cZ$-cycle of $\cN_{L^\sharp}$.

Motivated by some group theoretic considerations and the unitary case considered in \cite{cho2018basic}, we have the following (see Proposition \ref{prop: duality for Z(Lambda)}).
\begin{proposition}\label{prop: intro} There is a natural isomorphism $$\Upsilon_{L,L^\sharp}:\calN_L\xrightarrow{\sim}\calN_{L^\sharp}$$
such that for any non-degenerate lattice $\Lambda\subset \bV$, $\Upsilon_{L,L^\sharp}$ induces an isomorphism
 \begin{align*}
        \cY_L(\Lambda^\vee)= \cZ_{L^\sharp}(\Lambda^\sharp).
    \end{align*} 
\end{proposition}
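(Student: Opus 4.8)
The plan is to construct the isomorphism $\Upsilon_{L,L^\sharp}$ directly at the level of the moduli functors defining $\calN_L$ and $\calN_{L^\sharp}$, exploiting the fact that the two RZ spaces classify the same $p$-divisible groups with crystalline Tate tensors but with respect to two different ``framing lattices'' $L$ and $L^\sharp = (L^\vee, p\cdot q)$, and that the groups $\GSpin(V)$ and $\GSpin(V^\sharp)$ with their parahoric level structures are canonically identified. Concretely, I would first recall from \S\ref{para: RZ space GSp} that a point of $\calN_L$ over a $\breve{\Z}_p$-algebra $R$ is given by a $p$-divisible group with a quasi-isogeny to the framing object $\bX$ together with the data cutting out the $\GSpin$-structure, where the Hodge filtration is required to be bounded by $\mu$ relative to the lattice chain attached to $L$. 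Since $L^\sharp$ and $L$ differ only by rescaling the quadratic form by $p$ and passing to the dual lattice --- operations which do not change the associated $p$-adic group $G = \GSpin(V)$, only its identification with $\GSpin(V^\sharp)$ via the similitude $\sqrt p$ as in the discussion preceding the Proposition --- the two moduli problems are literally the same, and $\Upsilon_{L,L^\sharp}$ is the resulting tautological identification. The degree-shift/connected-component bookkeeping $\calN_L^{(\ell)} \leftrightarrow \calN_{L^\sharp}^{(\ell')}$ should be tracked here but is routine.

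Next I would verify the compatibility with special cycles. Recall that $\cZ(\bL)$ and $\calY(\bL)$ are defined (\S\ref{para: Z and Y cycles}) by the condition that a prescribed lattice $\bL \subset \bV$ act by actual (integral) quasi-endomorphisms of the universal $p$-divisible group, the difference between the $\cZ$-version and the $\calY$-version being which integral structure one tests against --- precisely the distinction between $L$ and its dual rescaling. Under the identification $\bV^\sharp = \sqrt p\,\bV \subset \bV_E$, a lattice $\Lambda \subset \bV$ of type $t$ maps to $\Lambda^\sharp = \sqrt p\,\Lambda \subset \bV^\sharp$ of type $n-t$, and one checks that ``$\Lambda^\vee$ acts integrally, tested against the $L$-structure'' translates verbatim into ``$\Lambda^\sharp$ acts integrally, tested against the $L^\sharp$-structure.'' This is essentially a linear-algebra identity about the dual-lattice operation interchanging with the $\sqrt p$-rescaling, combined with the fact that the endomorphism actions defining $\cY_L(\Lambda^\vee)$ and $\cZ_{L^\sharp}(\Lambda^\sharp)$ are the same arrows of the same $p$-divisible group. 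Hence $\Upsilon_{L,L^\sharp}$ carries the closed formal subscheme $\cY_L(\Lambda^\vee)$ onto $\cZ_{L^\sharp}(\Lambda^\sharp)$, as desired.

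The main obstacle, I expect, is being careful about what exactly ``$\calN_L$ classifies'' at vertex-stabilizer (as opposed to hyperspecial) level: the integral model is defined via the local model / shtuka formalism of \cite{HK19}, \cite{PRshtukas} rather than a transparent moduli-of-abelian-varieties description, so ``the two functors are literally the same'' must be justified through the lattice-chain data and the bound $\mu$, and one must confirm that rescaling $q$ by $p$ and dualizing $L$ genuinely induces an isomorphism of the associated parahoric group schemes $\cG$ over $\Z_p$ (compatibly with $b$ and $\mu$), not merely of their generic fibres. A clean way to do this is to phrase everything in terms of the $\GSpin$-shtuka and the canonically isomorphic parahoric $\cG$, so that the RZ spaces are identified as $\cG$-shtuka spaces and the special cycles as the loci where an auxiliary lattice enters the integral structure; then the compatibility $\cY_L(\Lambda^\vee) = \cZ_{L^\sharp}(\Lambda^\sharp)$ becomes a formal consequence of the definitions, and one only needs to pin down the type change $t \mapsto n-t$ and the component indexing. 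An alternative, which may be technically lighter, is to check the statement on $k$-points using the lattice-theoretic descriptions of $\cZ_{\red}$ and $\cY_{\red}$ furnished by Theorems \ref{thm: intro 1} and \ref{thm: intro 2} --- where the bijection $S_{\Lambda^\sharp} \leftrightarrow R_\Lambda$ is manifest from the two displayed sets of special lattices --- and then upgrade to an isomorphism of reduced schemes by the same argument used elsewhere in the paper to promote $k$-point bijections to scheme isomorphisms.
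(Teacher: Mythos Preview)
Your overall strategy---identify the parahoric group schemes $\calG_L\cong\calG_{L^\sharp}$ and then argue that the two RZ spaces coincide---is exactly what the paper does. However, you are missing the one concrete observation that makes this work, and without it your claim that ``the two moduli problems are literally the same'' is not justified. Recall that in this paper $\calN_L$ is not defined intrinsically from $(G,b,\mu,\calG)$: it is cut out of the symplectic RZ space $\calN_{C(\tL)}$ via the fiber-product diagram \eqref{eq: diagram}, where $\tL\subset V_E$ is the auxiliary self-dual $\calO_E$-lattice of \S\ref{6.1.2} used for the Hodge embedding. The paper's key point is that under the identification $\widetilde\pi L^\vee=L^\sharp$ inside $V_E$ one has $\tL=\widetilde{L^\sharp}$, hence $C(\tL)=C(\widetilde{L^\sharp})$, the alternating forms $\psi,\psi^\sharp$ can be chosen to agree, and the framing objects satisfy $\bX=\bX^\sharp$. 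Thus both $\calN_L$ and $\calN_{L^\sharp}$ are constructed inside the \emph{same} $\calN_{C(\tL)}$ using the \emph{same} diagram \eqref{eq: diagram}, and $\Upsilon_{L,L^\sharp}$ is the resulting identification. The special-cycle compatibility then drops out because $\widetilde\pi\bL$ and $\bL^{(p)}$ act as the same endomorphisms of $\bX=\bX^\sharp$ (via Lemma \ref{lem: V=B}).

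Both of your proposed workarounds for this gap are problematic. Your shtuka alternative is precisely the approach the introduction warns against: one can likely show $\calN_L$ and $\calN_{L^\sharp}$ represent the same $v$-sheaf, but the paper notes it is then unclear how to deduce the special-cycle compatibility. Your $k$-points alternative is circular---the $\calY$-half of Theorem \ref{thm: intro 2} is proved in the paper \emph{using} Proposition \ref{prop: intro}---and in any case would only yield an isomorphism of reduced subschemes, not of the full formal schemes $\calN_L\cong\calN_{L^\sharp}$.
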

Similarly, one can show there is a natural isomorphism between $R_\Lambda^{[h]}$  and $S_{\Lambda^\sharp}^{[n-h]}$. Combining this with the above proposition, one can deduce the description of the $\calY$-cycles on $\calN_L$ from the description of the $\calZ$-cycles on $\calN_{L^\sharp}.$

We believe Proposition \ref{prop: intro} should be of independent interest and have further applications. For example, it should be useful for formulating the general Kudla-Rapoport conjecture for the RZ space considered here (cf. \cite{cho2022special}). It also manifests a particular symmetry that should play a vital role in many arithmetic geometry problems (cf. \cite{ZhiyuZHang,CHZ}). 

 In our setting, the formulation of this duality isomorphism is complicated by  the lack of a moduli interpretation. It should be possible to show the existence of $\Upsilon_{L,L^\sharp}$ by showing that $\cN_L$ and $\cN_{L^\sharp}$ represent the same $v$-sheaf following the method in \cite{PRLSV}.  However, it seems difficult to prove the compatibility with special cycles in this approach. Instead, we use a more direct approach for which the compatibility with special cycles follows essentially from the construction. A key observation to make this work is that in the construction of $\calN_L$, we need to use the local Hodge embedding given by the composition:
 $$\mathrm{GSpin}(V)\rightarrow \mathrm{\GL}(C(V))\rightarrow \Res_{E/\bbQ_p}\GL(C(V)_E)\rightarrow \GL(W).$$
Here $C(V)$ is the Clifford algebra associated to $V$, $C(V)_E$ its base change to $E$ and $W$ is the $\bbQ_p$-vector space underlying $C(V)_E$, with the last map being induced by the forgetful functor from $E$-vector spaces to $\bbQ_p$-vector spaces. It is interesting to note that this enlarged Hodge embedding also appears in relation to the notion of \emph{very good} Hodge embedding in the sense of \cite[Definition 5.2.5]{KPZ}, which is needed to apply the deformation theoretic results of \cite{KisinPappas} and \cite{KPZ} for our proof of Theorem \ref{thm: intro 2}.

For the reader's convenience, we summarize all the main results in Theorem \ref{thm: main thm}. 

In the following table, we list some previously studied cases and some PEL type Shimura varieties that our results specialize to.   
\begin{align*}
    \begin{array}{c|c}
\hline \text { PEL type special cases and previously studied cases} & \text {Isometry class of  } L \\
\hline \text{Modular curve with level $\Gamma_0(pN)$} \quad (p \nmid N) \text {  } & L=H_2^+\obot \Lambda_1  \\
\text { Shimura curve over a ramified prime \cite[\S 2]{KRshimuracurve} } & L=H_2^-\obot \Lambda_1   \\
\text{Product of Modular curve with level $\Gamma_0(pN)$}\quad (p \nmid N) \text {  } & L=H_2^+\obot \Lambda_2^+\\
\text{Product of  Shimura curve over a ramified prime }  \text {  } & L=H_2^-\obot \Lambda_2^-\\
\text{Hilbert modular surface with Iwahori level \cite{Stamm}} & L=H_2^-\obot \Lambda_2^+\\
\text {Hilbert modular surface over a ramified prime \cite{BachmatGoren}  } &   L=H_3\obot \Lambda_1\\
\text {Siegel threefold with paramodular level \cite{YuParamodular}, \cite[\S 7]{Wang} } & L=H_4^+\obot \Lambda_1 \\
\text {Quaternionic unitary Shimura variety \cite{OkiQuaternionic}, \cite[\S 5]{Wang}] } & L=H_4^-\obot \Lambda_1   \\
\text {GU(2,2) Shimura variety over a ramified prime \cite{OkiGU22}   } & L=H_5\obot \Lambda_1 \\
\text {Hyperspecial GSpin Shimura variety \cite{HowardPappas}} & L=H_n \\
\text {GSpin Shimura variety with almost self-dual level \cite{Oki}} & L=H_{n-1}\obot \Lambda_1 \\
\hline
\end{array}
\end{align*}
Here $H_i$ denotes a self-dual quadratic lattice over $\Z_p$ of rank $i$. When $i$ is even, $H_i^+$   denotes the quadratic lattice that contains a rank $\frac{i}{2}$ totally isotropic sublattice and $H_i^-$ denotes the one which does not contain such sublattice. Similarly,   $\Lambda_i$ denotes a $p$-modular quadratic lattice over $\Z_p$. When $i$ is even, $\Lambda_i^+$ (resp.  $\Lambda_i^-$)  is the one which contains (does not contains) a totally isotropic sublattice of rank $\frac{i}{2}$.    See \S \ref{sec: collection of background} for background about quadratic lattices.

\subsubsection{Proof strategy and organization of the paper}
We now discuss the proof of the main theorem. The starting point is to obtain an explicit description of $\cN_L(k)$. In the unitary case,  one can describe the $k$-points of the RZ space as a set of lattices satisfying certain properties via Dieudonn\'e theory. In our non-moduli setting, we use a more indirect approach by leveraging the relation between RZ-spaces and affine Deligne-Lusztig varieties. More precisely, we identify $\cN_L^{(\ell)}(k)$ with the $k$-points of a  relevant affine Deligne-Lusztig variety parametrized by the corresponding (parahoric) $\mu$-admissible set $\Adm(\mu)_J.$ The description of the $k$-points then boils down to obtaining an explicit description of $\Adm(\mu)_J$ in terms of relative positions of lattices.

The key to showing the decomposition in part $(3)$ of Theorem \ref{thm: intro 1} is Proposition \ref{prop: key prop}, which is an analogue of the ``crucial lemma" in many previously studied cases. More precisely, we show that given a special lattice $M$ of type $h$, then either $\Lambda \subset M$ for a vertex lattice $\Lambda\subset \bV$ of type $\ge h$ or  $\Lambda^\vee \subset M^\vee$ for a vertex lattice $\Lambda\subset \bV$ of type $\le h$. In fact, this notion of special lattice also applies to the lattices used to describe the $k$-points of many other RZ spaces including the unramified/ramified unitary RZ spaces with maximal parahoric level structures. Since our argument is a general argument that works for any special lattices, it can be applied to obtain the crucial lemmas for these cases too (see \cite{HLY} for an application in the ramified unitary case with vertex level).

The geometric properties of $\cZ_{\red}^{(\ell)}(\Lambda)$ and $\cY_{\red}^{(\ell)}(\Lambda^\vee)$ including dimension and irreducibility often can be reduced to the ones of a generalized Deligne-Lusztig variety once we can prove the isomorphism $ \cZ_{\red}^{(\ell)}(\Lambda)\cong S_\Lambda$ (equivalently   $\cY_{\red}^{(\ell)}(\Lambda^\vee)\cong R_\Lambda$) as in Theorem \ref{thm: intro 2}. First we construct a morphism $\Psi_{\cZ,\Lambda}$ (see Proposition \ref{prop: morphism BT strata to DLV}) from $\cZ_{\red}^{(\ell)}(\Lambda)$ to $S_\Lambda$ that induces a bijection on $k$-points. When $L$ is a self-dual lattice, \cite{HowardPappas} proves $\Psi_{\calZ,\Lambda}$ is an isomorphism using  a result of Nisnevich \cite[Theorem 5.2]{Nisnevich} on a special case of a conjecture of Grothendieck. The analogue of this result is not known for parahorics, so instead we use a different method inspired by an alternative proof in the unitary case (see \cite{LTXZZ}). The idea is to  show that, in addition to being a bijection on $k$-points, $\Psi_{Z,\Lambda}$ also induces  isomorphisms on tangent spaces at closed points. In order to carry this out, we need to have an explicit description of the  tangent space to the Shimura variety at closed points in the special fiber. This seems difficult to prove in general since the Hodge embedding has a very large dimension. Nevertheless, it turns out that using the deformation theoretic methods developed in \cite{KisinPappas} and \cite{KPZ} (in particular the notion of very good Hodge embedding mentioned above), we can obtain such a description, at least over the smooth locus of the Shimura variety. Applying the argument for this smooth locus shows that the restriction of $\Psi_{\calZ,\Lambda}$ to an open dense subspace $\calZ^\circ_{\red}(\Lambda$) is an isomorphism. An abstract argument using the normality of $S_\Lambda$  then implies that $\Psi_{\calZ,\Lambda}$ is an isomorphism.

We now explain the organization of the paper. \S2 contains mainly background and recollections on quadratic spaces and vertex lattices. In \S3, we describe the admissible set in our setting in terms of relative positions of lattices, which is then used in \S4 to study the $k$-points of  the relevant affine Deligne--Lusztig variety. In \S5 we introduce the varieties $S_\Lambda$ and $R_\Lambda$ and  relate them to classical Deligne--Lusztig  varieties. In $\S6$ we introduce integral models of GSpin Shimura varieties and the associated Rapoport--Zink space, and recall the deformation theoretic results from \cite{KisinPappas} and \cite{KPZ} needed in the sequel. Finally in \S7 we introduce  special cycles on the Rapoport--Zink space, and prove the relationship with the varieties $S_\Lambda$ and $R_\Lambda$ using the approach outlined above. In the last subsection \S7.6, we summarize the main results and give the application to the basic locus of Shimura varieties.

\emph{Acknowledgements:} The authors would like to thank Xuhua He, Chao Li, Yifeng Liu, Yu Luo, Andreas Mihatsch,   Yousheng Shi, Qingchao Yu, Wei Zhang, and Baiqing Zhu for helpful discussion and comments. Q.H. is supported by the AMS-Simons Travel Grant Program.
R.Z. was supported by the Engineering and Physical Sciences Research Council grant no. EP/Y030648/1.

\section{Collection of background on quadratic lattice and vertex lattices}\label{sec: collection of background}

In this section, we collect some results about quadratic lattices and vertex lattices that will be needed later.

\subsection{Notations on quadratic lattices}
\subsubsection{} Let $p$ be an odd prime and $F$ be a local field  over $\Q_p$.  We use $O_F$ to denote the ring of integers of $F$ and $k_F$ its residue field. Let $\breve F$ denote the completion of the maximal unramified extension $F^{\mathrm{ur}}$ of $F$,  $W=O_{\breve{F}}$ its ring of integers, and $k$ the residue field of $W$. 
  We always fix  a uniformizer $\pi$ of $F$. We let $V$ be a quadratic space over $F$ of dimension $n$ with non-degenerate symmetric form $(\,,\,)$ and quadratic form $q(\,)$.  

We define the determinant of $V$ to be
$$
\operatorname{det}(V):=\operatorname{det}\left(\left(x_i, x_j\right)_{1\le i, j\le n}\right) \in F^{\times} /\left(F^{\times}\right)^2
$$
where $\left\{x_1, \ldots, x_n\right\}$ is a basis of $V$, and the discriminant of $V$   to be
$$
\operatorname{disc}(V):=(-1)^{\binom{n}{2}} \cdot \operatorname{det}(V) \in F^{\times} /\left(F^{\times}\right)^2.
$$

Let $\operatorname{val}(V)=\operatorname{val}(\operatorname{det}(V))\in \Z/2\Z$ and $\chi=(\frac{\cdot}{\pi})_{F}: F^{\times} /\left(F^{\times}\right)^2 \rightarrow\{ \pm 1,0\}$ be the quadratic residue symbol. Thus $\chi(\alpha)=0$  if  $\val(\alpha)$ is odd, and $\chi(\alpha)$ is identified with the image of $\alpha/\pi^{\val(\alpha)}$ under the isomorphism $O_F^\times/ (O_F^\times)^2 \cong \{\pm\}$ if   $\val(\alpha)$ is even. Define
$$
\chi(V):=\chi(\operatorname{disc}(V)) \in\{ \pm 1,0\},\quad 
\chi'(V):=\chi(\pi \operatorname{disc}(V)) \in\{ \pm 1,0\}.
$$
Then $\disc(V)$ is determined by $\chi(V)$ and $\chi'(V)$. Note that only one of $\chi(V)$ and $\chi'(V)$ is nontrivial.

Let $\{v_1,\ldots,v_n\}$ be an orthogonal basis of $V$. We define the Hasse invariant of $V$ to be
\begin{align*}
    \epsilon(V)\coloneqq \prod_{1\le i< j\le n} (q(v_i),q(v_j))_{F}\in \{\pm 1\}.
\end{align*}
Here $(\, , \,)_{F}$ is the Hilbert symbol.
It is well-known that quadratic spaces over $F$ are classified by its dimension, discriminant and Hasse invariant.

For a quadratic lattice $L\subset V$ of rank $n$, we   define $\chi(L)=\chi(V),\chi'(L)=\chi'(V)$ and $\epsilon(L)=\epsilon(V)$.

\subsection{An explicit classification of quadratic spaces over $F$}
In this subsection, we discuss all the possible isomorphism classes of vertex lattices. 
	\subsubsection{}

 Let $V$ be a quadratic space over $F$.
 Consider the following decomposition of $V$:
  \begin{equation*}
      V=V_0\obot V_{\an}=\spa_{F}\{e_1,f_1,\ldots,e_r,f_r\}\obot V_{\an},
  \end{equation*}
where $(e_i,e_j)=(f_i,f_j)=0$, $(e_i,f_j)=\delta_{ij}$, $V_{\an}$ is anisotropic, and $\obot$ denotes the  orthogonal sum. Note that $\dim_{F}(V_{\an})\le 4$ since any quadratic space over $F$ of dimension at least $5$ contains an isotropic vector. 

Let $L\subset V$ be a lattice. We let $L^\vee=\{x\in L\otimes_{O_F} F\mid (x,y) \in O_F, \, \forall y \in L\}$ denote the dual lattice. We say that  $L$ is \emph{a vertex lattice} if we have $\pi L \subset L \subset L^{\vee}$.  If $L$ is a vertex lattice, its type $t=t(L)$ is defined to be the dimension of the $k_F$-vector space $L^\vee/L.$ Note that our convention of vertex lattice is different with the one used in \cite{HowardPappas}. The dual lattice of a vertex lattice in  \cite{HowardPappas} is our vertex lattice.

We will describe all the possible $V_{\an}$ and the vertex lattices in $V_{\an}$ following \cite[\S 5.1]{HowardPappas}.

\subsubsection{}
If $\operatorname{dim}\left(V_{\an}\right)=1$, then it is isomorphic to $F$ with the quadratic form $q(x)=d x^2$ for some $d \in F^{\times} /\left(F^{\times}\right)^2$. So there are four cases. Then $V$ contains a self-dual lattice if and only if $\val(d)$ is even. If we write $d=u\pi^{\val(d)}$, then $\chi(V)=\chi(u)$. The vertex lattice in $V_{\an}$ is $\spa_{O_F}\{x\}$ for $\val(q(x))=0$ (resp. $1$) if $\val(d)$ is even (resp. odd).

If $\operatorname{dim}\left(V_{\an}\right)=2$, then we can identify $V_{\an}$ with a quadratic extension $E/F$  whose quadratic form is given by $q(x)=d\cdot \mathrm{Nm}(x)$ for some $d \in E^{\times} / \mathrm{Nm}\left(E^{\times}\right)$. Without loss of generality, we can choose $d$ such that $\val(d)=0$ or $1$ when $E$ is unramified and $d\in (O_F^\times)^2$ or $d\in O_F^\times \setminus (O_F^\times)^2$ when $E$ is ramified. If $E$ is an unramified extension, then the vertex lattice in $V_{\an}$  has moment matrix $\diag(u_1 d, u_2 d)$ such that $\chi(-u_1u_2)=-1$. If $E$ is a ramified extension, then the vertex lattice in $V_{\an}$  has moment matrix $\diag(d , -d \pi )$.

If $\operatorname{dim}\left(V_{\an}\right)=3$, there are exactly four anisotropic quadratic spaces over $F$ of dimension 3. Let $B$ denote the quaternion division algebra over $F$  and  $\bar{x}$  denote the main involution of $B$. Then the subspace of  elements $B^0=\{x \in B: x+\bar{x}=0\}$ is a $3$-dimensional quadratic space with quadratic form given by the reduced norm on $B^0$. More explicitly,
\begin{align}\label{eq: B^0}
    \left(B^0, \mathrm{Nrd}\right) \stackrel{\sim}{\rightarrow}\left(F^3,-u x_1^2-\pi x_2^2+u \pi x_3^2\right)
\end{align}
for any nonsquare $u \in O_F^{\times}$. Now the four possible $V_{\an}$ of dimension $3$ are the spaces $\left(B_0, d\cdot \mathrm{Nrd}\right)$ with $d \in F^{\times} /\left(F^{\times}\right)^2$. Again, we assume $\val(d)=0$ or $1$ without loss of generality. Then according to \eqref{eq: B^0}, when $\val(d)$ is even, a vertex lattice has moment matrix given by $\diag(-ud,-d\pi,ud\pi )$. When $\val(d)$ is odd, a vertex lattice   has moment matrix given by $\diag(-ud,-\pi^{-1}d,u\pi^{-1}d)$.
In particular, $V_{\an}$ can never contain self-dual lattices when $\dim(V_{\an})=3$.

If  $\operatorname{dim}\left(V_{\an}\right)=4$,   then $V\stackrel{\sim}{\to} B $ with quadratic form given by the reduced norm.  In this case, a vertex lattice has $\diag(u_1,u_2,u_3\pi,u_4\pi)$ as moment matrix where $u_i \in O_F^\times$ and $\chi(-u_1u_2)=\chi(-u_3u_4)=-1$.
  	
\subsubsection{}

Combining the above discussion, we summarize all the possible  vertex lattices of maximal type and minimal type up to isomorphism in the following proposition.  We set 
\begin{align*} 
    H_{n}^{\pm}\coloneqq \text{a self-dual lattice of rank $n$ with $\chi(H_{n}^\pm)=\pm 1$}.
\end{align*} 
We also use the same notation to denote the corresponding moment matrix. We set 
\begin{align*} 
    \Lambda_n^{\pm}\coloneqq \text{a $\pi$-modular lattice with moment matrix $\pi\cdot H_{n}^\pm$}.
\end{align*} 

\begin{proposition}\label{prop: class of V}
    Let $V$ be a quadratic space over $F$ of rank $n$.  Let $\Lambda_{\max}\subset V$  and  $\Lambda_{\min}\subset V$ be the  vertex lattices of maximal and minimal type. Then $\Lambda_{\max}\subset V$  and  $\Lambda_{\min}\subset V$ are of the following forms.
\begin{enumerate}
    \item If  $\operatorname{dim}\left(V_{\an}\right)=0$, then 
     \begin{align*}
         \Lambda_{\max}=\Lambda_{n}^+  \quad \text{ and } \quad \Lambda_{\min}=H_{n}^+.    
    \end{align*}
    \item If $\operatorname{dim}\left(V_{\an}\right)=1$, then
    \begin{align*}
       \begin{cases}
            \Lambda_{\max}= \Lambda_{n-1}^+\obot H_{1}^{\chi} \quad &\text{and}  \quad \Lambda_{\min}=H_{n}^\chi,\\
           \Lambda_{\max}=\Lambda_{n}^\chi \quad &\text{and}  \quad    \Lambda_{\min}=H_{n-1}^+\obot \Lambda_1^\chi.
        \end{cases} 
    \end{align*}
    \item If $\operatorname{dim}\left(V_{\an}\right)=2$, then 
      \begin{align*}
        \begin{cases}
          \Lambda_{\max}=\Lambda_{n-2}^+\obot H_2^-    \quad &\text{and}  \quad \text{   $\Lambda_{\min}=H_n^{-}$ },\\
           \Lambda_{\max}= \Lambda_{n}^-  \quad &\text{and}  \quad \text{ $\Lambda_{\min}=H_{n-2}^{+}\obot \Lambda_2^-$},\\
            \Lambda_{\max}= \Lambda_{n-1}^{\chi_1} \obot H_1^{\chi_2}  \quad &\text{and}  \quad  \text{ $\Lambda_{\min}=H_{n-1}^{\chi_2}\obot \Lambda_1^{\chi_1}$}.
        \end{cases} 
    \end{align*}
    \item If $\operatorname{dim}\left(V_{\an}\right)=3$, then 
    \begin{align*}
         \begin{cases}
           \Lambda_{\max}= \Lambda_{n-1}^-\obot H_1^{-\chi}    \quad &\text{ and } \quad \text{  $\Lambda_{\min}=H_{n-2}^{-\chi }\obot \Lambda_2^{-}$ },\\
            \Lambda_{\max}= \Lambda_{n-2}^{-\chi}\obot H_2^{-}    \quad &\text{ and } \quad \text{ $\Lambda_{\min}=H_{n-1}^{- }\obot \Lambda_1^{-\chi}$}.
        \end{cases} 
    \end{align*}
     \item If $\operatorname{dim}\left(V_{\an}\right)=4$, then 
     \begin{align*}
         \Lambda_{\max}=\Lambda_{n-2}^- \obot H_2^- \quad \text{ and } \quad \Lambda_{\min}=H_{n-2}^-\obot \Lambda_2^-. 
    \end{align*}
\end{enumerate}
Here, if $H_{n-t}^{\chi_0}\obot \Lambda_t^{\chi_1}\subset V$, then 
\begin{align}\notag
    \chi(V)&=\begin{cases}
       \chi_0\chi_1  & \text{ if $t$ is even},\\ 
      0   & \text{ if $t$ is odd},
    \end{cases}\\ \notag
    \chi'(V)&= \chi(-1)^{n-1}\cdot\begin{cases}
      0 & \text{ if $t$ is even},\\
     \chi_0\chi_1   & \text{ if $t$ is odd},
    \end{cases}\\ \label{eq: Hasse}
    \epsilon(V)&=   \begin{cases}
\chi_1& \text{ if $t$ is even},\\
  \chi((-1)^{\frac{(n-1)(n-2)}{2}})  \chi_0 & \text{ if $t$ is odd}.
    \end{cases}
\end{align}

\qed
\end{proposition}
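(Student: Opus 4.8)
The plan is to prove Proposition \ref{prop: class of V} by working through the same case division by $\dim(V_{\an}) \in \{0,1,2,3,4\}$ that was set up in the preceding subsubsections, extracting in each case the vertex lattice of maximal type and minimal type. The basic principle is that a vertex lattice in $V = V_0 \obot V_{\an}$ (with $V_0 = \spa_F\{e_i,f_i\}$ the split part) is built by orthogonally summing a vertex lattice in the split part $V_0$ with the essentially unique vertex lattice in $V_{\an}$ recorded above. For the split part, the two extreme choices are: the $\pi$-modular lattice $\Lambda_{2r}^+$ (which has type $2r$, maximal) and the self-dual lattice $H_{2r}^+$ (type $0$, minimal); any vertex lattice in $V_0$ interpolates between these. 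So $\Lambda_{\max}$ is obtained by taking $\Lambda_{2r}^+$ in the split part and the (fixed) vertex lattice in $V_{\an}$, while $\Lambda_{\min}$ is obtained by taking $H_{2r}^+$ in the split part together with $V_{\an}$'s vertex lattice; one then re-expresses the resulting orthogonal sums in the normal forms $H_m^{\chi_0}\obot\Lambda_t^{\chi_1}$ using standard identities for orthogonal sums of self-dual and $\pi$-modular lattices.

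First I would handle $\dim(V_{\an}) = 0$: here $V = V_0$, and the statement is just the remark that the maximal-type vertex lattice is $\pi$-modular $\Lambda_n^+$ and the minimal one is self-dual $H_n^+$, together with the computation that $\Lambda_n^+$ really is $\pi$-modular with the stated sign invariant. For $\dim(V_{\an}) = 1$, using the explicit description $V_{\an} = (F, dx^2)$ with $\val(d) \in \{0,1\}$: if $\val(d)$ is even, the vertex lattice in $V_{\an}$ is self-dual with sign $\chi$, so $\Lambda_{\max} = \Lambda_{n-1}^+ \obot H_1^\chi$ and $\Lambda_{\min} = H_{n-1}^+ \obot H_1^\chi = H_n^\chi$; if $\val(d)$ is odd, the vertex lattice in $V_{\an}$ is $\pi$-modular $\Lambda_1^\chi$, giving $\Lambda_{\max} = \Lambda_{n-1}^+\obot \Lambda_1^\chi = \Lambda_n^\chi$ and $\Lambda_{\min} = H_{n-1}^+ \obot \Lambda_1^\chi$. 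The cases $\dim(V_{\an}) = 2, 3, 4$ proceed identically, reading off from the moment matrices computed in the preceding paragraphs (e.g.\ $\diag(d, -d\pi)$ for ramified quadratic $V_{\an}$, $\diag(-ud, -d\pi, ud\pi)$ for $B^0$, etc.) whether $V_{\an}$'s vertex lattice is self-dual, $\pi$-modular, or mixed, and what sign it carries; then sum with $\Lambda_{2r}^+$ or $H_{2r}^+$ accordingly. Finally, the invariant formulas \eqref{eq: Hasse} follow from the multiplicativity of $\chi$ (resp.\ $\chi'$, resp.\ $\epsilon$) under orthogonal sums — one just computes the discriminant and Hasse invariant of $H_{n-t}^{\chi_0}\obot\Lambda_t^{\chi_1}$ from its moment matrix, which is block-diagonal with an $(n-t)\times(n-t)$ unimodular block and a $t\times t$ block that is $\pi$ times a unimodular matrix, and tracks the parity of $t$.

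The main obstacle I expect is bookkeeping of the discriminant and Hasse invariant signs: distinguishing when $\chi(V)$ versus $\chi'(V)$ is the nontrivial one (governed by the parity of $t$), and getting the power of $\chi(-1)$ right in the Hasse invariant, since orthogonal sums of $\pi$-scaled lattices introduce factors of $(\pi,\pi)_F = \chi(-1)$ and $(\pi, u)_F$ for units $u$. This is where the binomial-coefficient exponents in \eqref{eq: Hasse} come from, and verifying them requires a careful induction on the number of hyperbolic planes, or equivalently a direct Hilbert-symbol computation using $\epsilon(\Lambda\obot\Lambda') = \epsilon(\Lambda)\epsilon(\Lambda')(\det\Lambda,\det\Lambda')_F$. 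The geometric part — that these are genuinely the maximal and minimal type vertex lattices — is comparatively routine, since a vertex lattice $L$ with $\pi L \subset L \subset L^\vee$ decomposes (after passing to $L^\vee/\pi L^\vee$, a quadratic space over $k_F$ of rank $n$) according to the radical of the induced form, and maximizing (resp.\ minimizing) the type amounts to maximizing (resp.\ minimizing) the dimension of this radical, which is constrained precisely by $\dim V_{\an}$; this is already implicit in the classification recalled from \cite[\S 5.1]{HowardPappas}.
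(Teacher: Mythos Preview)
Your proposal is correct and follows essentially the same approach as the paper. The paper in fact gives no formal proof at all: the proposition is introduced with ``Combining the above discussion, we summarize all the possible vertex lattices of maximal type and minimal type up to isomorphism in the following proposition'' and is marked with a \qed, so it is treated as a direct summary of the case-by-case analysis of $V_{\an}$ carried out in the preceding subsubsections, which is precisely what you outline.
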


\subsection{An explicit classification of quadratic spaces over $\breve{F}$}\label{sec: quadratic space over breveF}
\subsubsection{}Note that $O_{\brF}^\times=(O_{\brF}^\times)^2$. In particular, if $V=E$ with $q(x)=\mathrm{Nm}(x)$ where $E/F$ is an unramified quadratic extension, then $\brV=V\otimes_{F}\brF$ becomes isotropic.  Combining this with the discussion in the previous section, we have the following classification of quadratic spaces over $\brF$.

 Let     $\brV=\brV_0\obot\brV_{\an}$ be a quadratic space over $\brF$, where $$\brV_0=\langle e_1,\dotsc,e_n,f_1,\dotsc,f_n\rangle$$ with $(e_i,e_j)=(f_i,f_j)=0$, $(e_i,f_j)=\delta_{ij}$, and $\brV_{\an}$ is anisotropic. Then  $\brV_{\an}$ is of the following form.
\begin{enumerate}
	\item[(1)] $\brV_{\an}=0$.
	\item[(2a)] $\brV_{\an}=\brF u$, with $(u,u)=1$.
	\item[(2b)] $\brV_{\an}=\brF u$, with $(u,u)=\pi$.
	\item[(3)] $\brV_{\an}=\brF u\oplus \brF v$, with $(u,u)=\pi$, $(v,v)=1$.
 \end{enumerate}

	\section{Relative positions of vertex lattices}

 \subsection{The $\mu$-admissible set}
\subsubsection{}
 Let $G$ be a reductive group over $F$. We set  $\Gamma=\Gal(\bar{F} /F)$ and let $\sigma\in \Gamma$ denote a lift of the Frobenius of $k$ over $k_F$.   Let $S$ be a maximal $\brF$-split torus of $G$ and $T$ its centralizer (cf. \cite[1.10]{Tits} for the existence of $S$). By Steinberg's theorem, $G$ is quasi-split over $\brF$ and $T$ is a maximal torus of $G$. The relative Weyl group over $\brF$ and the Iwahori Weyl group  of $G$ are defined as 
 $$\tW_0=N(\brF)/T(\brF),\ \ \ \tW=N(\brF)/\calT_0(\calO_{\brF})$$
where $N$ is the normalizer of $T$ and $\calT_0$ is the connected N\'eron model for $T$. We have an exact sequence 
\begin{equation}\label{eqn: exact sequence IWG}\xymatrix{0\ar[r] &X_*(T)_I \ar[r]& \tW\ar[r]&\tW_0\ar[r] &0}\end{equation}
Here, $I = \mathrm{Gal}(\bar{F}/F^{\mathrm{ur}})$ denotes the inertia subgroup of the absolute Galois group of $F$, and $X_*(T)_I$ represents the group of coinvariants of the cocharacter lattice $X_*(T)$ under the action of $I$. For an element $\lambda\in X_*(T)_I$, we write $t^\lambda$ for the corresponding element in $\tW$; such elements are called translation elements.

Let $\calB(G,F)$ (resp. $\calB(G,\brF)$) be the (extended) Bruhat-Tits building for $G$ over $F$ (resp. $\brF$), and fix $\fka\subset \calB(G,F)$ a $\sigma$-stable alcove; this determines an Iwahori group scheme $\calI$ defined over $F$. Fix also a special vertex $\fks\in \calB(G,\brF)$ lying in the closure of $\fka$, which determines a splitting of the exact sequence \eqref{eqn: exact sequence IWG} and gives an identification 
 $$ A:=X_*(T)_I\otimes_{\bbZ}\bbR\cong \calA(G,S,\brF),$$ where $\calA(G,S,\brF)\subset \calB(G,\brF)$ is the apartment corresponding to $S$. This identification determines a chamber $C^+\subset X_*(T)_I\otimes_{\bbZ}\bbR$, namely the one  containing $\fka$. We write $B$ for the corresponding Borel subgroup defined over $\brF$.

Let $\bbS$ denote the set of simple reflections about the walls of $\fka$. We let $\tW_a$ denote the affine Weyl group; it is the subgroup of $\tW$ generated by the reflections in $\bbS$. Then we have an exact sequence 
\begin{equation}\label{eqn: exact sequence AWG}\xymatrix{0\ar[r] &X_*(T_{\mathrm{sc}})_I \ar[r]& \tW_a\ar[r]&\tW_0\ar[r] &0}
\end{equation}
where $T_{\mathrm{sc}}$ is the preimage of $T$ in the simply connected cover of the derived group of $G$. The group $(\tW_a,\bbS)$ has the structure of a Coxeter group, and hence we have a notion of length and Bruhat order. The Iwahori Weyl group and affine Weyl group are related by the following exact sequence

\[\xymatrix{0\ar[r]& \tW_a \ar[r] &\tW\ar[r] &\pi_1(G)_I\ar[r] &0.}\]
The choice of $\fka$ induces a splitting of this exact sequence and  $\pi_1(G)_I$ can be identified with the subgroup $\Omega\subset \tW$ consisting of elements which map $\fka$ to itself. The length function $\ell$ and Bruhat order extend to $\tW$ via the splitting.

By \cite{HR}, there is a reduced root system $\Sigma$ such that $$\tW_a\cong Q^\vee(\Sigma)\rtimes \tW(\Sigma),$$ where $Q^\vee(\Sigma) $ and $\tW(\Sigma)$  denote the coroot lattice and Weyl group of $\Sigma$ respectively. The root system $\Sigma$ is known as the \textit{\'echelonnage} root system.  

\subsubsection{} Let $X_*(T)_I^+$ denote the set of dominant elements with respect to $B$. For $\mu\in X_*(T)^+_I$, we have the  $\mu$-admissible set  which is defined as follows
$$
\Adm(\mu):=\left\{w \in \widetilde{W} \mid w \leq t^{\mu^{\prime}} \text { for some } \mu^{\prime} \in \tW_0 \mu\right\}.
$$
This set has a minimal element $\tau_\mu$, which is  the unique length zero element of $\Adm(\mu).$

Let $J\subset \bbS$ be a subset. We write $\tW_J\subset \tW$ for the subgroup generated by the reflections in $J$. If $\tW_J$ is finite, $J$ determines a facet $\fkf_J$ lying in the closure of $\fka$ and hence corresponds to a parahoric group scheme $\calG^\circ_J$. The corresponding parahoric $\mu$-admissible set  
$$\Adm(\mu)_J\subset \tW_J\backslash \tW/\tW_J$$ 
is defined to be the image of $\Adm(\mu)$ under the map  $\tW\rightarrow \tW_J\backslash \tW/\tW_J$. We also set $$\Adm(\mu)^J:=\tW_J\Adm(\mu)\tW_J\subset \tW.$$  
\subsection{Relative position of vertex lattices}
\subsubsection{}\label{subsec: VL cases}
 Let $V$ be a quadratic space over $F$. We now specialize to the case $G=\SO(V).$
  We let $\brV=V\otimes_{F}\brF$. Then we can write $\brV=\brV_0\obot\brV_{\an}$, where $$\brV_0=\langle e_1,\dotsc,e_n,f_1,\dotsc,f_n\rangle$$ with $(e_i,e_j)=(f_i,f_j)=0$, $(e_i,f_j)=\delta_{ij}$, and $\brV_{\an}$ is  of the following form:
\begin{enumerate}
	\item[(1)] $\brV_{\an}=0$.
	\item[(2a)] $\brV_{\an}=\brF u$, with $(u,u)=1$.
	\item[(2b)] $\brV_{\an}=\brF u$, with $(u,u)=\pi$.
	\item[(3)] $\brV_{\an}=\brF u\oplus \brF v$, with $(u,u)=\pi$, $(v,v)=1$.
	
\end{enumerate}
The local Dynkin diagram is of type $D_n$ in case (1), type $B_n$ in case (2a) and (2b), and of type $C$-$B_n$ in case (3); here we use the notation of Tits' table \cite[\S4]{Tits}.

Let $L_{\an}\subset \brV_{\an}$ denote the $\calO_{\brF}$-submodule $$L_{\an}=\begin{cases}
0 &\text{ in Case (1)},\\
\calO_{\brF}u& \text{ in Case (2)},\\
\calO_{\brF}u+\calO_{\brF}v& \text{ in Case (3)}.\end{cases}$$ For $s=0,\dotsc,n$, we let $L_0^{(s)}\subset\brV_0$ be the $\calO_{\brF}$-module given by $$L_0^{(s)}=\Span_{\calO_{\brF}}(\pi e_1,\dotsc,\pi e_s,e_{s+1},\dotsc,e_n,f_1,\dotsc,f_n)$$ and set $$L^{(s)}=L_{0}^{(s)}\obot L_{\an}\subset \brV.$$
Then $L^{(s)}$ is a vertex lattice of type $$t(s):=\begin{cases} 2s & \text{ in Case (1) and (2a)},\\
2s+1 &\text{ in Case (2b) and (3)}.
\end{cases}$$

We also set $$L'^{(0)}=\Span_{\calO_{\brF}}(\pi e_1,e_2,\dotsc, e_n,\pi^{-1}f_1,\dotsc,f_n)\obot L_{\an}$$
	$$L'^{(n)}=\Span_{\calO_{\brF}}(\pi e_1,\dotsc,\pi e_{n-1}, e_n,f_1,\dotsc,f_{n-1},\pi f_n)\obot L_{\an}$$which are vertex lattices of type $t(0)$ and $t(n)$ respectively.
\subsubsection{}\label{subsec: apartment basis}

We  take $S$ to be the maximal $\brF$-split torus of $\SO(\brV)$ given  by $\bbG_m^n\subset\SO(\brV)$  such that for $t=(t_1,\dotsc,t_n)\in \bbG_m^n$, we have $$t:\brV \mapsto \brV, \text{ is given by  } e_i\mapsto t_ie_i,\ f_i\mapsto t_i^{-1}f_i.$$ Let $\nu:\bbG_m\rightarrow \bbG_m^n$ be the cocharacter $t\mapsto (t,1,\dotsc,1)$, and let $\mu:\bbG_m\rightarrow \SO(\brV)$ denote the composition of $\nu$ and the above inclusion. Then $\mu$ is a minuscule cocharacter of $\SO(\brV)$. We then have the associated $\mu$-admissible set $\Adm(\mu)$ and its parahoric analogues. Our goal is to give a more explicit description of the set $\Adm(\mu)_J$, where $J$ corresponds to a vertex lattice in $\brV$. For this, we use the description of the building of $\SO(\brV)$ in \cite[\S20.3]{GarrettBook}; we translate the necessary results  to our setting.  We first describe the base alcove, and facets corresponding to vertex lattices.

In each case, we have $X_*(T)_I\otimes_{\bbZ}\bbR\cong X_*(S)\otimes_{\bbZ}\bbR$ which we identify with $\calA(G,S,\brF)$ by sending $0$ to the vertex $\bfx^{(0)}$ corresponding to the stabilizer of $L^{(0)}$.  We let $\epsilon_1,\dotsc,\epsilon_n$ denote the standard basis for $\bbZ^n$.

\textit{Case (1):} We have $S=T$ and $X_*(T)_I=X_*(S)$ which we identify with $\bbZ^n$ so that the roots are given by $\pm\epsilon_i\pm\epsilon_j$, $1\leq i< j\leq n$. We take the $n$ roots 
$$ \epsilon_i-\epsilon_{i+1}, 1\leq i\leq n-1, \text{ and }  \epsilon_{n-1}+\epsilon_n$$as simple roots.

The base alcove in $\bbR^n$ is given by $$\fka=\{(a_1,\dotsc,a_n)\in \bbR^n\mid a_1> a_2> \dotsc >\pm a_n, a_1+a_2< 1\}$$
Let $$\bfx^{(0)} =(0,\dotsc,0),\ \  \bfx'^{(0)}=(1,0\dotsc,0),$$
$$\bfx^{(s)}=\sum_{i=1}^s\frac{\epsilon_i}{2}, \text{ for } 1 \leq s \leq n-1,$$
$$\bfx^{(n)}=(\frac{1}{2},\dotsc,\frac{1}{2}),  \ \bfx'^{(n)}=(\frac{1}{2},\dotsc,\frac{1}{2},-\frac{1}{2})$$
which lie in the closure of $\fka$, with $\bfx^{(0)}$ a special vertex.
The points $\bfx^{(0)},\bfx'^{(0)}, \bfx^{(n)},\bfx'^{(n)}$ and $\bfx^{(s)}$ for $2\leq s\leq n-2$ are $0$-dimensional facets, while $\bfx^{(1)}$ and $\bfx^{(n-1)}$  lie on 1-dimensional facets given by the line segments between $\bfx^{(0)},\bfx'^{(0)}$ and $\bfx^{(n)},\bfx'^{(n)}$ respectively.
The parahoric corresponding to $\bfx^{(s)}, 0\leq s\leq  n$ is the connected stabilizer of $L^{(s)}$; similarly $\bfx'^{(0)}$ corresponds to $L'^{(0)}$ and $\bfx'^{(n)}$ corresponds to $L'^{(n)}$.

\textit{Case (2a) and (2b)}: 
 We have $S=T$ and $X_*(T)_I=X_*(S)$ which we identify  with $\bbZ^n$  so that the roots are given by $\pm\epsilon_i\pm\epsilon_j, 1\leq i<j\leq n$, $\pm\epsilon_i, 1\leq i \leq n$. We take the $n$ roots $$ \epsilon_i-\epsilon_{i+1}, 1\leq i\leq n-1, \text{ and }  \epsilon_n$$as simple roots.

We first assume we are in case (2a). The base alcove in $\bbR^n$ is given by $$\fka=\{(a_1,\dotsc,a_n)\in \bbR^n \mid a_1>a_2>\dotsc> a_n> 0, a_1+a_2< 1\}.$$
Let 
$$\bfx^{(0)}=(0,\dotsc,0),\ \  \bfx'^{(0)}=(1,0\dotsc,0),$$
$$\bfx^{(s)}=\sum_{i=1}^s\frac{\epsilon_i}{2}, \text{ for } 1 \leq s \leq n,$$ which lie in the closure of $\fka$, with $\bfx^{(0)}$ a special vertex.
The points $\bfx^{(0)},\bfx'^{(0)}$, $\bfx^{(s)},2\leq s\leq n$ are $0$-dimensional facets, while $\bfx^{(1)}$ lies on the 1-dimensional facet given by the line segment between $\bfx^{(0)}$ and $\bfx'^{(0)}$. The parahoric corresponding to $\bfx^{(s)}, 0\leq s\leq  n$ is the connected stabilizer of $L^{(s)}$, and similarly $\bfx'^{(0)}$ corresponds to $L'^{(0)}$.

Now assume we are in case (2b); in this case the point $\bfx^{(0)}={(0,\dotsc,0)}$ is \emph{not} a special vertex. The base alcove in $\bbR^n$ is given by $$\fka=\{(a_1,\dotsc,a_n)\in \bbR^n \mid \frac{1}{2}>a_1>a_2>\dotsc> a_n, a_{n-1}+a_n> 0\}.$$
Let 
$$\bfx^{(s)}=\sum_{i=1}^s\frac{\epsilon_i}{2}, \text{ for } 1 \leq s \leq n-1,$$ 
$$\bfx^{(n)}=(\frac{1}{2},\dotsc,\frac{1}{2},\frac{1}{2}),\ \  \bfx'^{(n)}=(\frac{1}{2},\dotsc,\frac{1}{2},-\frac{1}{2}),$$ which lie in the closure of $\fka$, with $\bfx^{(n)}$  a special vertex. The point $\bfx^{(n)}, \bfx'^{(n)}$, $\bfx^{(s)}, 1\leq s \leq n-2$  are $0$-dimensional facets while $\bfx^{(n-1)}$ lies on the $1$-dimensional facet given by the line segment between $\bfx^{(n)}$ and $\bfx'^{(n)}.$

\textit{Case (3):}
In this case we have  $T=S\times( \mathrm{Res}_{\brF'/\brF}\bbG_m)^{\mathrm{Nm}=1}$, where $\brF'/\brF$ is a degree 2 ramified extension, and hence $X_*(T)_I=X_*(S)\oplus \bbZ/2\bbZ$. The \'echelonnage root system is of type $C_n$ (cf. \cite[Example 1.16]{Tits}), so we fix an identification $X_*(S)\cong \bbZ^n$ so that the roots are given by $\pm\epsilon_i\pm \epsilon_j, 1 \leq i< j\leq n$, $\pm2\epsilon_i, i=1,\dotsc,n$. We take the $n$ roots $$ \epsilon_i-\epsilon_{i+1}, 1\leq i\leq n-1, \text{ and }  2\epsilon_n$$as simple roots.

 The base alcove in $\bbR^n$ is given by $$\fka=\{(a_1,\dotsc,a_n)\mid\frac{1}{2}>a_1>\dotsc>a_n>0\}.$$
Let $$\bfx^{(s)}=\sum_{i=1}^s\frac{\epsilon_i}{2}, \text{ for } 0 \leq s \leq n,$$ which lie in the closure of $\fka$; these are all $0$-dimensional facets. The parahoric corresponding to $\bfx^{(s)}$ is the connected stabilizer of $L^{(s)}$.
\begin{remark}\label{rem: automorphism case switch}\begin{enumerate}\item
From the description above, we see that the parahoric corresponding to the lattices $L^{(1)}, L^{(n-1)}$ in Case (1),  $L^{(1)}$ in Case (2a), and $L^{(n-2)}$ in Case (2b) are not maximal parahorics.
\item In case (2), the automorphism $$\theta(a_1,\dotsc,a_n)\mapsto (\frac{1}{2},\dotsc,\frac{1}{2})-(a_n,\dotsc,a_1)$$ of $\bbZ^n$ maps the alcove in case (2a) to case (2b), and is equivariant for the actions of the respective Iwahori Weyl groups. This  automorphism is induced by the isomorphism of group schemes $\SO(V)\cong \SO(V^{(\pi)})$, where $V^{(\pi)}$ is the quadratic space whose underlying vector space is $V$, but with quadratic form given by $\pi(\ ,\ ).$
\end{enumerate}
\end{remark}
\subsubsection{}
The relative Weyl group $W_0$ has the following description $$W_0\cong \begin{cases}((\bbZ/2\bbZ)^n\rtimes S_n)^\circ &\text{ in Case (1)},\\
(\bbZ/2\bbZ)^n\rtimes S_n &\text{ in Case (2) and (3)}.
\end{cases}$$
Here, $((\bbZ/2\bbZ)^n\rtimes S_n)^\circ $ denotes the kernel of the homomorphism $$(\bbZ/2\bbZ)^n\rtimes S_n\rightarrow \bbZ/2\bbZ$$ $$((z_1,\dotsc,z_n),\sigma)\mapsto \sum_{i=1}^n z_i.$$

In case (1), (2a) and (3), the groups $(\bbZ/2\bbZ)^n\cong \{\pm1\}^n$ and $S_n$ act on $\bbZ^n$ via factorwise multiplication and permutation of coordinates. In case (2b), they act via the same linear transformations, but centered at $\bfx^{(n)}$. Thus we  see in each case that the $\tW$-orbit of $\bfx^{(s)}$ in $\calA(G,S,\brF)\cong \bbR^n$ is given by \begin{equation}\label{eqn: xs orbit}\tW \bfx^{(s)}=\{(a_1,\dotsc, a_n)\in \bbZ[1/2]^n \mid \text{\ exactly $s$ of the $a_i$ are in $\frac{1}{2}+\bbZ$}\}.\end{equation}
For $y=(a_1,\dotsc,a_n)\in \tW \bfx^{(s)},$ define $$L_0(y)=\Span_{\calO_F}(\pi^{\lceil a_1\rceil} e_1,\dotsc,\pi^{\lceil a_n\rceil} e_n, \pi^{\lceil -a_1\rceil}f_1,\dotsc, \pi^{\lceil -a_n\rceil}f_n)\subset \brV_0$$
and set $L(y)=L_0(y)\obot L_{\an}$. Then in each case, the parahoric associated to $y$ is the connected stabilizer of $L(y)$, and the association $y\mapsto L(y)$ induces a $\tW$-equivariant bijection between the  $\tW$-orbit of $\bfx^{(s)}$ and the $\tW$-orbit of $L^{(s)}$. Here $\tW$ acts on vertex lattices of the form $L(y)$ via $w:L(y)\mapsto \dot{w}L(y)$, where $\dot{w}\in N(\brF)$ is a lift of $w$, noting that $\dot{w}L(y)$ does not depend on the choice of lift $\dot{w}$.

\subsubsection{} Under the identification $$ X_*(T)_I\cong \begin{cases} \bbZ^n & \text{ in Case (1) and (2)},\\
    \bbZ^n\times \bbZ/2\bbZ &\text{ in Case (3).}
\end{cases}$$ the element $\mu$ corresponds to  the element $(1,0,\dotsc,0)$. In particular, its image in $X_*(T)_I\otimes_{\bbZ}\bbR\cong\bbR^n$ is $\epsilon_1.$ 
\begin{lemma}\label{lem: dim=1 implies permissible}Let $s\in \{0,\dotsc,n\}$ and let $y\in \tW \bfx^{(s)}$ with $\dim_{k}(L(y)+L^{(s)})/L^{(s)}= 1$. \begin{enumerate} \item We have $$y=\bfx^{(s)}\pm \epsilon_i, i=1,\dotsc,n$$ or $$y=\bfx^{(s)}\pm\frac{1}{2}\epsilon_j\pm \frac{1}{2}\epsilon _i, 1\leq j\leq s, s+1\leq i\leq n.$$
	
\item Let $J^{(s)}\subset \bbS$ be the subset of simple reflections corresponding to the facet containing $\bfx^{(s)}$. Then there exists $w\in \Adm(\mu)^{J^{(s)}}$ such that
	$wx^{(s)}=y$.

	\end{enumerate}
\end{lemma}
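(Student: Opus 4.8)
The plan is to treat the two parts separately, but both reduce to explicit combinatorics in the apartment $\calA(G,S,\brF)\cong\bbR^n$ using the lattice-theoretic dictionary $y\mapsto L(y)$ established in \S\ref{subsec: apartment basis}. For part (1), I would first translate the condition $\dim_k(L(y)+L^{(s)})/L^{(s)}=1$ into a statement about the coordinates of $y$. Writing $y=(a_1,\dotsc,a_n)\in\tW\bfx^{(s)}$, so that exactly $s$ of the $a_i$ lie in $\frac12+\bbZ$ by \eqref{eqn: xs orbit}, one has $L(y)=\Span(\pi^{\lceil a_i\rceil}e_i,\pi^{\lceil-a_i\rceil}f_i)\obot L_{\an}$, while $L^{(s)}$ corresponds to $\bfx^{(s)}=\sum_{i\le s}\epsilon_i/2$, i.e.\ has exponents $\lceil 1/2\rceil=1$ or $0$ on the $e_i$ and $\lceil -1/2\rceil=0$ on the $f_i$ for $i\le s$, and exponents $0$ throughout for $i>s$ (with the obvious shift by $\bfx^{(n)}$ in Case (2b)). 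The quotient $(L(y)+L^{(s)})/L^{(s)}$ decomposes as a direct sum over the coordinate lines $\brF e_i$ and $\brF f_i$, plus the anisotropic part which contributes nothing since $L_{\an}$ is common to both. On each coordinate line the contribution to the length is $\max(0,\, e^{(s)}_\bullet-\lceil a_\bullet\rceil)$ for the relevant exponents, so requiring the total to be exactly $1$ forces $y$ to differ from $\bfx^{(s)}$ in a single "unit" of displacement. Carefully enumerating which displacements cost exactly one: either moving one coordinate by a full integer step (giving $y=\bfx^{(s)}\pm\epsilon_i$), or simultaneously halving one coordinate out of $\frac12+\bbZ$ and another into it — which, to stay in $\tW\bfx^{(s)}$ and cost only one, must pair an index $j\le s$ with an index $i>s$, giving $y=\bfx^{(s)}\pm\frac12\epsilon_j\pm\frac12\epsilon_i$ with $1\le j\le s<i\le n$. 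This is a finite case-check; the only subtlety is bookkeeping the ceiling functions and the $\pm a_n$ ambiguity in Case (1) and the reflected alcove in Case (2b).

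For part (2), I would argue as follows. By \eqref{eqn: xs orbit} and the description of $\tW$-orbits, the vertices $y$ in part (1) all lie in $\tW\bfx^{(s)}$, and it suffices to exhibit one $w\in\Adm(\mu)^{J^{(s)}}=\tW_{J^{(s)}}\Adm(\mu)\tW_{J^{(s)}}$ with $w\bfx^{(s)}=y$. Since $\mu$ corresponds to $\epsilon_1\in\bbR^n$ and is minuscule, $\Adm(\mu)$ contains all translations $t^{\mu'}$ with $\mu'\in\tW_0\mu=\{\pm\epsilon_i\}$ (in Cases (2),(3)) or $\{\pm\epsilon_i\}$ (in Case (1), where $W_0$ is the even subgroup but still acts transitively on $\{\pm\epsilon_i\}$ since $n\ge 3$). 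Thus for $y=\bfx^{(s)}\pm\epsilon_i$ we may simply take $w=t^{\pm\epsilon_i}$, which already lies in $\Adm(\mu)$. For the "half-half" vertices $y=\bfx^{(s)}\pm\frac12\epsilon_j\pm\frac12\epsilon_i$ with $j\le s<i$, I would instead use the freedom to multiply on the left and right by $\tW_{J^{(s)}}$: the reflection $s_{\epsilon_j-\epsilon_i}\in W_0$ (which lies in $\tW_{J^{(s)}}$ when $j\le s<i$, since it fixes the facet of $\bfx^{(s)}$ — one checks $s_{\epsilon_j-\epsilon_i}$ fixes $\sum_{k\le s}\epsilon_k/2$ precisely when $j,i$ are on opposite sides of $s$... wait, $s_{\epsilon_j-\epsilon_i}$ swaps coordinates $j$ and $i$, so it does \emph{not} fix $\bfx^{(s)}$; rather the relevant elements of $\tW_{J^{(s)}}$ are the affine reflections fixing $\bfx^{(s)}$). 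Concretely: starting from $t^{\epsilon_j}\in\Adm(\mu)$, which sends $\bfx^{(s)}\mapsto\bfx^{(s)}+\epsilon_j$, I would post-compose with an element of $\tW_{J^{(s)}}$ — the stabilizer of the facet $\fkf_{J^{(s)}}$ — that drags $\bfx^{(s)}+\epsilon_j$ to $\bfx^{(s)}+\frac12\epsilon_j-\frac12\epsilon_i$. Such an element exists because both points lie in $\tW_{J^{(s)}}\cdot(\bfx^{(s)}+\epsilon_j)$: indeed $\tW_{J^{(s)}}$ acts on the set of lattices $L(y')$ neighbouring $L^{(s)}$ and, by the orthogonal-group combinatorics (the reflection in the root $\epsilon_j$ composed with translation), the orbit of $\bfx^{(s)}+\epsilon_j$ under $\tW_{J^{(s)}}$ contains exactly the vertices obtained by the allowed half-integer moves against an index $i>s$. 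Packaging this, $w=u_1 t^{\epsilon_j}u_2$ for suitable $u_1,u_2\in\tW_{J^{(s)}}$ does the job and lies in $\Adm(\mu)^{J^{(s)}}$ by definition.

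The main obstacle I anticipate is part (2) in the "half-half" case: showing that the element of $\tW_{J^{(s)}}$ needed to adjust $t^{\pm\epsilon_j}\bfx^{(s)}$ to the target $y$ actually exists, i.e.\ that $\tW_{J^{(s)}}$ acts transitively on the appropriate set of neighbouring vertices. This is really a statement about the local structure of the building of $\SO(\brV)$ at the facet $\fkf_{J^{(s)}}$ — equivalently about the Weyl group of the reductive quotient of $\calG^\circ_{J^{(s)}}$, which in the relevant cases is an orthogonal group over $k$ acting on $V_{L^{(s)}}$-type spaces — and I would extract it from the Garrett reference \cite[\S20.3]{GarrettBook} already cited, or prove it directly by exhibiting explicit affine reflections in $\tW_a$ that fix $\bfx^{(s)}$ and realize the transposition-with-sign-change on coordinates. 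The translation-element cases $y=\bfx^{(s)}\pm\epsilon_i$ are, by contrast, immediate. I would also need to double-check the minuscule orbit $\tW_0\mu$ in Case (1) is all of $\{\pm\epsilon_i\}_{i=1}^n$ and not a proper subset; since $n\ge 3$ the even subgroup $((\bbZ/2)^n\rtimes S_n)^\circ$ still contains, e.g., the product of the sign change on coordinate $1$ with a sign change on coordinate $2$ followed by the transposition $(1\,2)$, which sends $\epsilon_1$ to $-\epsilon_2$, etc., so transitivity holds.
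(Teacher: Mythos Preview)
Your approach to part (1) is the same as the paper's: translate the dimension-one condition into the requirement that exactly one of the exponents $\lceil a_i\rceil - \bfx^{(s)}_i$, $\lceil -a_i\rceil$ equals $1$ while the rest are $\leq 0$, then enumerate the possibilities subject to the constraint $y\in\tW\bfx^{(s)}$. Modulo filling in the case analysis, this is correct.

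For part (2), your treatment of the translation case $y=\bfx^{(s)}\pm\epsilon_i$ is correct and matches the paper. But your strategy for the half-half case has a genuine gap. You propose to write $w=u_1 t^{\pm\epsilon_j}u_2$ with $u_1,u_2\in\tW_{J^{(s)}}$ and claim that some $u_1\in\tW_{J^{(s)}}$ sends $\bfx^{(s)}+\epsilon_j$ to $\bfx^{(s)}\pm\frac12\epsilon_j\pm\frac12\epsilon_i$. This cannot work: $\tW_{J^{(s)}}$ is generated by reflections in walls through $\bfx^{(s)}$, and in every case these walls are of the form $\epsilon_k\pm\epsilon_\ell=c$ with $k,\ell$ \emph{both} $\leq s$ or \emph{both} $>s$ (and possibly $2\epsilon_k=c$ in type $C$). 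Hence $\tW_{J^{(s)}}$ acts block-diagonally on the coordinates $\{1,\dots,s\}\sqcup\{s+1,\dots,n\}$ (after centering at $\bfx^{(s)}$), and the orbit of $\bfx^{(s)}+\epsilon_j$ with $j\leq s$ consists entirely of points $\bfx^{(s)}\pm\epsilon_k$ with $k\leq s$; the $i$-th coordinate ($i>s$) is fixed at $0$ and can never become $\pm\frac12$. Your suspicion ``wait, $s_{\epsilon_j-\epsilon_i}$ does not fix $\bfx^{(s)}$'' was exactly right, and the subsequent claim about the orbit is false.

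The paper sidesteps this by a different mechanism: after using $\tW_{J^{(s)}}$ to reduce to $j=s$, $i=s+1$, it exhibits an explicit $w=u't^\lambda$ with $u'\in\tW_0$ a reflection that \emph{swaps} (possibly with sign) the $s$-th and $(s{+}1)$-st coordinates, and $\lambda\in\{\pm\epsilon_s,\pm\epsilon_{s+1}\}$, chosen so that $w\bfx^{(s)}=y$. The point is that $u'\notin\tW_{J^{(s)}}$; rather, one checks $w\leq t^\lambda$ in the Bruhat order directly, by observing that $\fka$ and $t^\lambda\fka$ lie on opposite sides of the hyperplane fixed by $u'$. Since $t^\lambda\in\Adm(\mu)$, this gives $w\in\Adm(\mu)\subset\Adm(\mu)^{J^{(s)}}$. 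So the missing idea is: do not try to stay inside the $\tW_{J^{(s)}}$-double coset of a translation, but instead produce an element of $\Adm(\mu)$ itself via a Bruhat-order inequality.
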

\begin{proof}Using the automorphism $\theta$ from Remark \ref{rem: automorphism case switch} (2), case (2b) can be reduced to case (2a). So we may assume we are in case (1), (2a) or (3), and hence that $\bfx^{(0)}$ is a special point.

(i) 	Let $y=(a_1,\dotsc,a_n)$. Then using the description of $L(y)$ above, we see that exactly one of the integers $$\lceil a_1\rceil-1,\dotsc, \lceil a_s\rceil-1, \lceil a_{s+1}\rceil , \dotsc, \lceil a_n\rceil, \lceil -a_1\rceil,\dotsc, \lceil -a_n\rceil $$ is equal to $1$, and the others are all $\leq 0$.

Now let use write $\bfx_k^{(s)}$ for the $k^{\mathrm{th}}$-coordinate in $\bfx^{(s)}$.	 Suppose  first that $\lceil a_i\rceil-1 =1$ for some $i\in \{1\dotsc,s\}$, then $a_i=\frac{3}{2}$ or $a_i=2$. If $a_i=2$, then since $y\in \tW \bfx^{(s)}$, there exists $j\in \{s+1,\dotsc,n\}$ with $a_j\in \frac{1}{2}+\bbZ$. But then either $\lceil a_j\rceil>0$ or $\lceil -a_j\rceil>0$, which is a contradiction, and hence $a_i=\frac{3}{2}$. Suppose there exists $k\neq i$ with $a_k\neq \bfx^{(s)}_k$. If $k\leq s$, then we must have $a_k=0$ or $1$, otherwise $\lceil a_k\rceil>0$ or $\lceil -a_k\rceil>0$. Then as before, there exists $j\in \{s+1,\dotsc,n\}$ with $a_j\in \frac{1}{2}+\bbZ$. Thus upon replacing $k$ by $j$, we may assume $k>s+1$. In this case, we have $\lceil a_k\rceil>0$ or $\lceil -a_k\rceil <0$, which is a contradiction. Thus $y=\bfx^{(s)}+\epsilon_i$.
	 
	 Now suppose $\lceil a_i\rceil=1$ for some $i\in \{s+1,\dotsc,n\}$. Then $a_i=\frac{1}{2}$ or $a_i=1$. If $a_i=\frac{1}{2}$, then since $y\in \tW \bfx^{(s)}$ there exists $j\in \{1,\dotsc,s\}$, such that $a_j\in\bbZ$, and since $\lceil a_j\rceil-1,\lceil -a_j\rceil \leq 0$, we must have $a_j=0$ or $1$. The same argument as the previous paragraph shows that $a_k=\bfx_k^{(s)}$ for $k\neq i,j$, and hence $y=\bfx^{(s)}\pm\frac{1}{2}\epsilon_j+\frac{1}{2}\epsilon_i$. If $a_i=1$, then similarly, we have $a_k=\bfx^{(s)}_k$ for $k\neq i$, and hence $y=\bfx^{(s)}+\epsilon_i$.
	 
	 The cases when $\lceil -a_i\rceil=1$ for $i\in \{1,\dotsc,s\}$ and $i\in \{s+1,\dotsc,n\}$, can be handled analogously to the previous two cases: When $\lceil -a_i\rceil=1$ for $i\in \{1,\dotsc,s\}$, we have $y=\bfx^{(s)}-\epsilon_i$,  and when $\lceil -a_i\rceil=1$ for $i\in \{s+1,\dotsc,n\}$, we have $y=\bfx^{(s)}-\epsilon_i$ or $y=\bfx^{(s)}\pm\frac{1}{2}\epsilon_j-\frac{1}{2}\epsilon_i$.
	 
	 (ii) 
     It suffices to consider the two cases from (i).  Let $y=\bfx^{(s)}\pm\epsilon_i$. Then there is $u\in \tW_0$ with $u(\epsilon_1)=\pm \epsilon_i$. Set $\lambda=u(\mu)$. Then $t^\lambda \bfx^{(s)}=y$ and we have $t^\lambda\in \Adm(\mu)$ so the result follows in this case.
	 
	 Now let $y=\bfx^{(s)}\pm\frac{1}{2}\epsilon_j\pm \frac{1}{2}\epsilon _i, 1\leq j\leq s, s+1\leq i\leq n$; note that this case occurs only if $1\leq s\leq n-1$. Let $u\in \tW_{J^{(s)}}$ be the element which switches the $j^{\mathrm{th}}$ and $s^{\mathrm{th}}$ coordinates, and switches the $i^{\mathrm{th}}$ and $s+1^{\mathrm{st}}$ coordinates. Then upon replacing $y$ by $u(y)$, we may assume $y=\bfx^{(s)}\pm\frac{1}{2}\epsilon_s\pm \frac{1}{2}\epsilon _{s+1}.$
     
     Let $w^+\in S_n\subset \tW_0$ denote the reflection switching $\epsilon_i$ and $\epsilon_j$, and let $w^-\in \tW_0$ be the composition of $w^+$ with multiplication by $-1$ in the $s^{\mathrm{th}}$ and $s+1^{\mathrm{st}}$ factors; thus $w^-:\epsilon_s\mapsto -\epsilon_{s+1}, \epsilon_{s+1}\mapsto -\epsilon_s$. Let $\lambda_s,\lambda_{s+1}\in X_*(T)_I$ be the  $\tW_0$ conjugates of $\mu$ whose images in $\bbR^n$ are $\epsilon_s,\epsilon_{s+1}$ respectively.
	 
	 We define a pair $(u',\lambda)\in \tW_0\times X_*(T)_I$ as follows:
	 $$(u',\lambda)=\begin{cases} (w^+_{s,s+1},{\lambda_{s+1}})& \text{ if } y=\bfx^{(s)}+\frac{1}{2}\epsilon_s+\frac{1}{2}\epsilon _{s+1},\\
(w^-_{s,s+1},{-\lambda_{s+1}}) &\text{ if } y=\bfx^{(s)}+\frac{1}{2}\epsilon_s-\frac{1}{2}\epsilon _{s+1},\\
(w^-_{s,s+1},{-\lambda_{s}})&\text{ if }  y=\bfx^{(s)}-\frac{1}{2}\epsilon_s+\frac{1}{2}\epsilon _{s+1},\\
(w^+_{s,s+1},{-\lambda_{s}})	&\text{ if }  y=\bfx^{(s)}-\frac{1}{2}\epsilon_s-\frac{1}{2}\epsilon _{s+1}.
	 	 \end{cases}$$
and we set $w=u't^\lambda\in \tW$. Then we have $w(\bfx^{(s)})=y$. Moreover, we have $w\leq t^\lambda$ in the Bruhat order since $\fka$ and $t^\lambda\fka$ lie on different sides of the hyperplane corresponding to the reflection $u'$. Since $\lambda\in \tW_0(\mu)$, it follows that $t^\lambda$, and hence $w$, lie in $\Adm(\mu).$
\end{proof}

\begin{lemma}\label{lem: dim=0 implies admissible}  Let $s\in \{0,\dotsc,n\}$ and assume $t(s)\neq 0,\dim V$. Then the element $\tau_\mu\in\Adm(\mu)$ satisfies $\tau_\mu \bfx^{(s)}=\bfx^{(s)}$.
	\end{lemma}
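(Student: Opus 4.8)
The plan is to use that $\tau_\mu$, being the unique length-zero element of $\Adm(\mu)$, lies in the subgroup $\Omega\subset\tW$ of elements taking $\fka$ to itself; hence $\tau_\mu$ preserves the closure $\overline{\fka}$, so $\tau_\mu\bfx^{(s)}\in\overline{\fka}$. On the other hand $\tau_\mu\in\tW$, so $\tau_\mu\bfx^{(s)}$ also lies in the $\tW$-orbit of $\bfx^{(s)}$, which by \eqref{eqn: xs orbit} consists of those $(a_1,\dots,a_n)\in\bbZ[1/2]^n$ having exactly $s$ coordinates in $\tfrac12+\bbZ$. It therefore suffices to show that, when $t(s)\neq 0,\dim V$, the only point of $\overline{\fka}$ lying in $\tW\bfx^{(s)}$ is $\bfx^{(s)}$ itself; since $\tau_\mu\bfx^{(s)}$ is such a point, the lemma follows.

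To prove this last statement I would run through the Tits cases (1), (2a), (2b), (3) using the explicit coordinates of the facets of $\fka$ recorded in \S\ref{subsec: apartment basis}. Since $\tau_\mu$ permutes the facets of $\overline{\fka}$, the point $\tau_\mu\bfx^{(s)}$ lies in the relative interior of some facet; and inspecting the vertex lists, the only facets of $\overline{\fka}$ whose relative interior meets $\bigcup_{s}\tW\bfx^{(s)}$ are the vertices $\bfx^{(s')}$ (contributing a point with exactly $s'$ coordinates in $\tfrac12+\bbZ$) together with the one-dimensional ``fork'' facets joining $\bfx^{(0)},\bfx'^{(0)}$, resp.\ $\bfx^{(n)},\bfx'^{(n)}$, when these occur --- each contributing only its midpoint, namely $\bfx^{(1)}$ with one, resp.\ $\bfx^{(n-1)}$ with $n-1$, half-integer coordinates. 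As the orbits $\tW\bfx^{(s)}$ for distinct $s$ are disjoint (they are separated by the number of half-integer coordinates), it follows that $\overline{\fka}\cap\tW\bfx^{(s)}$ is the single point $\bfx^{(s)}$ \emph{unless} $s$ is a value at which the two ``end'' families $\{\bfx^{(0)},\bfx'^{(0)}\}$ (type $0$) or $\{\bfx^{(n)},\bfx'^{(n)}\}$ (type $\dim V$) appear --- which is precisely what the hypothesis $t(s)\neq 0,\dim V$ rules out. Hence $\tau_\mu\bfx^{(s)}=\bfx^{(s)}$. (In cases (2a), (2b), (3) at most one of the two end families is present, for parity reasons, which only shortens the check.)

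I do not expect a genuine obstacle here: the entire content is that length-zero elements stabilize $\fka$ while the $\tW$-orbits of the $\bfx^{(s)}$ are completely pinned down by \eqref{eqn: xs orbit}. The only thing requiring care is the case-by-case bookkeeping --- which apartments carry the extra vertices $\bfx'^{(0)},\bfx'^{(n)}$, and which $\bfx^{(s)}$ sit on a one-dimensional facet rather than being a vertex --- together with a check of the few small-rank degenerations (for instance $n=2$ in Case (1), where the two fork facets degenerate and $\bfx^{(1)}=\bfx^{(n-1)}$ becomes the barycenter of the alcove), in each of which the same orbit count still gives the claim.
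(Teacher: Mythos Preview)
Your approach is correct but differs from the paper's. The paper computes $\tau_\mu$ explicitly as an affine transformation of $\bbR^n$ in each case (reducing Case~(2b) to~(2a) via the automorphism $\theta$ of Remark~\ref{rem: automorphism case switch}): for instance in Case~(1) one finds $\tau_\mu:(a_1,\dots,a_n)\mapsto(1-a_1,a_2,\dots,a_{n-1},-a_n)$, and then checks by substitution that $\bfx^{(s)}$ is fixed for $1\leq s\leq n-1$. Your argument avoids computing $\tau_\mu$ at all, using only that length-zero elements lie in $\Omega$ and hence stabilize $\overline{\fka}$, reducing the lemma to the claim $\overline{\fka}\cap\tW\bfx^{(s)}=\{\bfx^{(s)}\}$; the case check then amounts to listing the points of $\overline{\fka}\cap\tfrac12\bbZ^n$, which one reads off from the closure of the alcove inequalities in \S\ref{subsec: apartment basis}. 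The paper's route has the side benefit of recording $\tau_\mu$ explicitly; yours is a bit more conceptual and makes the role of the hypothesis $t(s)\neq 0,\dim V$ transparent --- it is precisely where the extra vertices $\bfx'^{(0)},\bfx'^{(n)}$ give a second orbit representative in $\overline{\fka}$. One small comment: your detour through ``which facets have relative interior meeting the orbit'' is more elaborate than needed; it is cleaner simply to enumerate $\overline{\fka}\cap\tfrac12\bbZ^n$ directly from the inequalities (in each case the list is exactly the points $\bfx^{(s')}$ together with $\bfx'^{(0)}$ and/or $\bfx'^{(n)}$ when present), and then count half-integer coordinates.
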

\begin{proof}
Again, using Remark \ref{rem: automorphism case switch}, we may assume we are in case (1), (2a) or (3). We compute that $\tau_\mu:\bbZ^{n}\rightarrow \bbZ^n$ has the following form:
$$w:(a_1,a_2,\dotsc,a_{n-1},a_n)\mapsto \begin{cases} (1-a_1,a_2,\dotsc,a_{n-1},-a_n) &\text{ in case (1)},\\ (1-a_1,a_2,\dotsc,a_{n-1},a_n) & \text{ in case (2a)},\\
(a_1,a_2,\dotsc,a_{n-1},a_n) &\text{ in case (3)}.
\end{cases}$$  It is then clear that $\tau_
\mu$ fixes $\bfx^{(s)}$ if $t(s)\neq 0,\dim V$.
\end{proof}
\begin{remark} The assumption $t(s)\neq 0,\dim V$  is equivalent to the condition that $\bfx^{(s)}$ is not a hyperspecial vertex for $G_{\brQ_p}.$ 

\end{remark}

\subsubsection{}We now prove the main result of this subsection, which gives an explicit description of $\Adm(\mu)_{J^{(s)}}$ in terms of relative positions of lattices in $\brV$. 
\begin{lemma}\label{lem: transitive action on v lattices}
	Let $s\in \{0,1\dotsc,n-1\}$ and $L\subset \brV$ be a vertex lattice of type $t(s)$. Then there exists $g\in G(\brF)$ such that $gL^{(s)}=L$.
\end{lemma}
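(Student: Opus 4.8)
The plan is to show that $G(\brF) = \SO(\brV)(\brF)$ acts transitively on the set of vertex lattices of a fixed type $t(s)$ in $\brV$. Since we have already set up an explicit basis-theoretic model of the apartment $\calA(G,S,\brF) \cong \bbR^n$ and the vertices $\bfx^{(s)}$ with their associated lattices $L^{(s)}$, the strategy is to reduce to the case of lattices in the same apartment, and then invoke transitivity of $\tW$ on the $\tW$-orbit of $\bfx^{(s)}$ established in \S\ref{subsec: apartment basis} (the discussion culminating in \eqref{eqn: xs orbit} and the bijection $y \mapsto L(y)$).

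\begin{proof}
First I would recall that the connected stabilizer of $L^{(s)}$ is a parahoric group scheme whose associated facet $\fkf_{J^{(s)}}$ contains $\bfx^{(s)}$, and similarly any vertex lattice $L$ of type $t(s)$ in $\brV$ has an associated connected stabilizer which is a parahoric group scheme corresponding to some facet in $\calB(G,\brF)$. Since $G(\brF)$ acts transitively on the set of apartments of $\calB(G,\brF)$, and more precisely on pairs consisting of an apartment together with a facet of a given type, I may replace $L$ by $gL$ for suitable $g \in G(\brF)$ to assume that the facet corresponding to the connected stabilizer of $L$ lies in the apartment $\calA(G,S,\brF)$. Here one must be slightly careful: a priori the building-theoretic statement gives transitivity on facets of a given "type" in the sense of the local Dynkin diagram, and one needs that a vertex lattice of type $t(s)$ corresponds (up to the $G(\brF)$-action) to the vertex $\bfx^{(s)}$ and not some other vertex of the diagram. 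This is exactly the content of the case-by-case analysis in \S\ref{subsec: VL cases} and \S\ref{subsec: apartment basis}, where each $L^{(s)}$ is matched with a specific vertex; moreover for the ranges $s \in \{0,\ldots,n-1\}$ with $t(s) \neq 0, \dim V$ the relevant vertex $\bfx^{(s)}$ is uniquely pinned down, and the boundary cases are handled via the auxiliary lattices $L'^{(0)}, L'^{(n)}$ and Remark \ref{rem: automorphism case switch}.

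Once $L$ is a vertex lattice of type $t(s)$ whose associated facet lies in $\calA(G,S,\brF)$, I claim $L$ is of the form $L(y)$ for some $y$ in the $\tW$-orbit of $\bfx^{(s)}$. Indeed, the vertices of $\calA(G,S,\brF)$ corresponding to vertex lattices of $\brV$ are precisely the points of $\bbZ[1/2]^n$ (together with the appropriate parity/compatibility condition with $\brV_{\an}$), and the function $y \mapsto L(y)$ from \S\ref{subsec: apartment basis} realizes the bijection between lattice points with exactly $s$ coordinates in $\tfrac12 + \bbZ$ and vertex lattices of type $t(s)$ lying in the standard "coordinate" family; the general vertex lattice with facet in the apartment differs from one of these by an element of $\tW_0$ (acting by signed permutations), which again is induced by an element of $N(\brF) \subset G(\brF)$. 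Therefore, after a further translation by an element of $N(\brF)$, I reduce to the statement that $\tW$ acts transitively on its orbit of $\bfx^{(s)}$, which is immediate, and lifting $w \in \tW$ with $w\bfx^{(s)}$ equal to the point for $L$ to $\dot w \in N(\brF)$ gives the required $g$.

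The main obstacle I anticipate is the bookkeeping in the first step: matching an abstract vertex lattice of type $t(s)$ with the concrete vertex $\bfx^{(s)}$ in the apartment, rather than with a $G(\brF)$-conjugate that happens to be a different (but diagram-theoretically similar) vertex. This requires knowing that all vertex lattices of a given type form a single $G(\brF)$-orbit, which in turn rests on the explicit identification of the local Dynkin diagram and its vertices with lattice-theoretic data carried out in \S\ref{subsec: VL cases}; once one trusts that dictionary, the rest is the essentially formal reduction to transitivity of $\tW$ on an orbit. One should also double-check the role of $\brV_{\an}$: since $L_{\an}$ is a fixed summand in all the $L^{(s)}$ and any vertex lattice of $\brV$ decomposes compatibly (as $\brV_{\an}$ has dimension $\le 2$ over $\brF$ and carries an essentially unique vertex lattice), this does not introduce additional orbits.
\end{proof}
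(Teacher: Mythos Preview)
Your building-theoretic route can be made to work, but as written it has a genuine gap, and it is considerably more involved than the paper's argument. After translating so that the facet of $L$ lies in the standard apartment, you assert that $L$ must be one of the lattices $L(y)$; however, what you have actually arranged is only that the \emph{connected stabilizer} of $L$ corresponds to a facet in $\calA(G,S,\brF)$. Passing from ``same facet'' to ``$L = L(y)$'' is not formal: a given parahoric stabilizes an entire self-dual lattice chain, and to single out the vertex lattice of type $t(s)$ among them you need the statement that vertex lattices whose facet lies in the apartment are precisely those of the form $L(y)$. That is true, but it amounts to the additive-norm (or lattice-chain) description of $\calB(\SO(\brV),\brF)$, which you would have to invoke from the Garrett reference rather than from \S\ref{subsec: apartment basis}; the discussion there only verifies the dictionary for the explicit family $L^{(s)}$ and its $\tW$-orbit, not its exhaustiveness among all vertex lattices with facet in the apartment. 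Your closing paragraph correctly identifies this as the main obstacle but does not actually resolve it.

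The paper bypasses buildings entirely with a two-line lattice argument. Over $\calO_{\brF}$ (algebraically closed residue field, $p$ odd) any vertex lattice splits orthogonally as $L_1 \obot L_2$ with $L_1$ unimodular and $L_2^\vee = \pi^{-1}L_2$, and the classification of quadratic lattices over such rings shows each summand is determined up to isometry by its rank alone. Hence $L_1 \cong L_1^{(s)}$ and $L_2 \cong L_2^{(s)}$, giving an isometry $L \cong L^{(s)}$ and thus $g \in O(\brV)$ with $gL^{(s)} = L$; composing with a reflection in a unit-norm vector of $L^{(s)}$ if necessary lands $g$ in $\SO(\brV) = G$. This is both shorter and avoids the stabilizer-versus-lattice ambiguity altogether.
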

\begin{proof}
By the classification of quadratic lattices over local fields,
 every vertex lattice $L$ can be written as $L_1\obot L_2$ where $L_1$ is unimodular and $L_2^{\vee}=\pi^{-1}L_2$. Similarly, we can write $L^{(s)}$ as $L_1^{(s)}\obot L_2^{(s)}$. Since $L_1\cong L_1^{(s)}$ and $L_2\cong L_2^{(s)}$, we can find a $g$ as desired.
\end{proof}

\begin{proposition}\label{prop: relative position admissible set}Let $L,L'$ be vertex lattices of type $t=t(s)$ with $t(s)\neq 0,\dim V$. The following two conditions are equivalent:
	\begin{enumerate}\item $\dim_{k}(L+L')/L\leq1$.
		\item There exists $g\in G(\brF), w\in \Adm(\mu)_{J^{(s)}}$ such that $$gL^{(s)}=L,\ \ g\dot{w} L^{(s)}=L'$$
  where $\dot{w}\in N(\brF)$ is a lift of $w$.
		\end{enumerate}
	\end{proposition}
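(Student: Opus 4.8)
The plan is to prove the equivalence of the two conditions by reducing everything to the combinatorial description of $\tW$-orbits of vertex lattices established in the preceding subsections, together with the two lemmas on $\Adm(\mu)$.

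\medskip

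\textbf{Setup and reduction.} First I would use Lemma \ref{lem: transitive action on v lattices}: since $L$ is a vertex lattice of type $t(s)$ with $t(s)\neq 0,\dim V$, there exists $g\in G(\brF)$ with $gL^{(s)}=L$. Applying $g^{-1}$ to both $L$ and $L'$, the problem becomes: for $L'' = g^{-1}L'$ a vertex lattice of type $t(s)$, we have $\dim_k(L^{(s)}+L'')/L^{(s)}\leq 1$ if and only if $L'' = \dot w L^{(s)}$ for some $w\in \Adm(\mu)_{J_s}$ (lifted to $\dot w\in N(\brF)$). So the statement is really about the single distinguished lattice $L^{(s)}$ and its $\tW$-orbit, and the relative position condition is preserved under the $G(\brF)$-action (indeed under $N(\brF)$, hence is well-defined on $\tW$-translates). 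I would also invoke Remark \ref{rem: automorphism case switch}(2) to reduce Case (2b) to Case (2a), so that $\bfx^{(0)}$ (equivalently $\bfx^{(s)}$) sits in an apartment with a special vertex available, matching the hypotheses of Lemmas \ref{lem: dim=1 implies permissible} and \ref{lem: dim=0 implies admissible}.

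\medskip

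\textbf{Direction (2) $\Rightarrow$ (1).} Given $w\in \Adm(\mu)_{J_s}$, lift it to $\tilde w\in \Adm(\mu)^{J_s}=\tW_{J_s}\Adm(\mu)\tW_{J_s}$, i.e. write $\tilde w = u_1 w_0 u_2$ with $u_1,u_2\in \tW_{J_s}$ and $w_0\in\Adm(\mu)$; since $\tW_{J_s}$ fixes $\bfx^{(s)}$ we have $\tilde w\bfx^{(s)} = u_1 w_0 \bfx^{(s)}$, and the relative position $\dim_k(L^{(s)}+L(y))/L^{(s)}$ for $y$ in the $\tW$-orbit of $\bfx^{(s)}$ depends only on the $\tW_{J_s}$-orbit of $y$ (acting on the left), so it suffices to bound $\dim_k(L^{(s)}+L(w_0\bfx^{(s)}))/L^{(s)}$ for $w_0\in\Adm(\mu)$. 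Now since $\mu$ is minuscule, every $w_0\in\Adm(\mu)$ satisfies $w_0\leq t^{\mu'}$ for some $\mu'\in\tW_0\mu$; combined with the explicit $t^{\lambda}\bfx^{(s)}$ computation (the $\tW_0$-conjugates of $\mu$ have image $\pm\epsilon_i$ in $\bbR^n$) and the form \eqref{eqn: xs orbit} of the orbit, one checks directly that $L(w_0\bfx^{(s)})$ differs from $L^{(s)}$ in at most one "coordinate slot", giving $\dim_k\leq 1$. Concretely I would argue that $w_0\bfx^{(s)}$ lies in the convex hull of $\tW_{J_s}$-translates of the points $t^{\mu'}\bfx^{(s)}$, each of which is of the form $\bfx^{(s)}\pm\epsilon_i$ or $\bfx^{(s)}\pm\frac12\epsilon_j\pm\frac12\epsilon_i$, and use that these all have the single-coordinate-change property, which is stable under the relevant operations.

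\medskip

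\textbf{Direction (1) $\Rightarrow$ (2).} Conversely, suppose $\dim_k(L^{(s)}+L')/L^{(s)}\leq 1$ (after the reduction, writing $L'$ for $L''$). If the dimension is $0$ then $L' = L^{(s)}$, and we may take $w=\tau_\mu$: by Lemma \ref{lem: dim=0 implies admissible}, $\tau_\mu\bfx^{(s)}=\bfx^{(s)}$, and $\tau_\mu\in\Adm(\mu)$ so its image in $\tW_{J_s}\backslash\tW/\tW_{J_s}$ lies in $\Adm(\mu)_{J_s}$; a lift $\dot\tau_\mu\in N(\brF)$ then satisfies $\dot\tau_\mu L^{(s)} = L^{(s)}$ up to the connected stabilizer, which is exactly the content needed (one checks $\dot w L(y)$ is independent of the lift). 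If the dimension is exactly $1$, write $L' = L(y)$ for the appropriate $y\in\tW\bfx^{(s)}$ — here I need that any vertex lattice of type $t(s)$ in the $\tW$-orbit structure is of the form $L(y)$, which follows from the $\tW$-equivariant bijection between $\tW\bfx^{(s)}$ and $\tW L^{(s)}$ plus transitivity. Then $\dim_k(L(y)+L^{(s)})/L^{(s)}=1$ puts us exactly in the hypothesis of Lemma \ref{lem: dim=1 implies permissible}: part (1) pins down $y=\bfx^{(s)}\pm\epsilon_i$ or $\bfx^{(s)}\pm\frac12\epsilon_j\pm\frac12\epsilon_i$, and part (2) produces $w\in\Adm(\mu)^{J_s}$ with $w\bfx^{(s)}=y$; its image in $\tW_{J_s}\backslash\tW/\tW_{J_s}$ lies in $\Adm(\mu)_{J_s}$ and, lifting to $\dot w\in N(\brF)$, the $\tW$-equivariance of $y\mapsto L(y)$ gives $\dot w L^{(s)} = L(y) = L'$. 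Reinstating $g$ finishes the proof.

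\medskip

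\textbf{Main obstacle.} The subtle point I expect to be the real work is the direction (2) $\Rightarrow$ (1): one must check that \emph{every} element of $\Adm(\mu)_{J_s}$ — not just the translations $t^{\mu'}$ — moves $\bfx^{(s)}$ to a point whose associated lattice is at relative position $\leq 1$ from $L^{(s)}$. Because $\mu$ is minuscule, $\Adm(\mu)$ can be larger than $\tW_0\mu\cup\{\tau_\mu\}$, so I cannot just quote Lemma \ref{lem: dim=1 implies permissible}(1) in reverse; I need a genuine argument, presumably by noting that $\bfx^{(s)}$ is a vertex of the facet $\fkf_{J_s}$ and that for $w\leq t^{\mu'}$ the point $w\bfx^{(s)}$ lies in a suitable "minuscule" neighborhood of $\bfx^{(s)}$ — quantitatively, that the difference $w\bfx^{(s)}-\bfx^{(s)}$ has the shape forced in Lemma \ref{lem: dim=1 implies permissible}(1). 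This is the place where the minusculeness of $\mu$ and the explicit root data of Cases (1), (2a), (3) must be used carefully, and where I would spend the bulk of the verification.
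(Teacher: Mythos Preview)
Your direction (1) $\Rightarrow$ (2) is essentially the paper's argument; the only thing you gloss over is why, after reducing to $L^{(s)}$, the second lattice can be written as $\dot w'L^{(s)}$ for some $w'\in\tW$. This is not ``transitivity plus the $\tW$-equivariant bijection'' (that only tells you about lattices already in the $N(\brF)$-orbit), but rather the Bruhat decomposition $G(\brF)=K\cdot N(\brF)\cdot K$ for the parahoric $K=\calG_s^\circ(\calO_{\brF})$. Once you invoke that, Lemmas \ref{lem: dim=1 implies permissible} and \ref{lem: dim=0 implies admissible} finish the job exactly as you describe.

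The real gap is in (2) $\Rightarrow$ (1), and you are right to flag it. Your convex-hull heuristic does not work: the condition $\dim_k(L^{(s)}+L(y))/L^{(s)}\le 1$ is not convex in $y$, and $w_0\bfx^{(s)}$ for $w_0\le t^{\mu'}$ has no a priori convex-geometric relation to $t^{\mu'}\bfx^{(s)}$. The paper avoids any element-by-element analysis of $\Adm(\mu)$ by a closure argument in the Witt vector partial affine flag variety $\Fl_{\calG_s^\circ}$. The point is that the condition $\dim_k(gL^{(s)}+g'L^{(s)})/gL^{(s)}\le 1$ on pairs $(g,g')\in\Fl_{\calG_s^\circ}\times\Fl_{\calG_s^\circ}$ is Zariski closed: it is the preimage of the quasi-minuscule Schubert variety in $\Gr(L^{(s)})$ for $\GL(L^{(s)})$ under the natural map induced by $\calG_s^\circ\hookrightarrow\GL(L^{(s)})$. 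Since the $L^+\calG_s^\circ$-orbits on $\Fl_{\calG_s^\circ}\times\Fl_{\calG_s^\circ}$ are indexed by $\tW_{J_s}\backslash\tW/\tW_{J_s}$ with closure relations given by the Bruhat order, and since the translations $t^{w_0(\mu)}$ are the maximal elements of $\Adm(\mu)_{J_s}$, it suffices to check the condition for these translations --- which is the easy computation you already did. This geometric closedness is the missing idea; a direct case check on all of $\Adm(\mu)$ would be painful and is not how the paper proceeds.
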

\begin{proof}(i) $\Rightarrow$ (ii): For simplicity, we write $\calG_s^\circ$ for the parahoric $\calG_{J^{(s)}}^\circ$. By Lemma \ref{lem: transitive action on v lattices}, there exists $g,g'\in G(\brF)$ such that $gL^{(s)}=L$ and $g'L^{(s)}=L'$. By the Bruhat decomposition, upon right multiplying $g$ and $g'$ by $\calG_s^\circ(\calO_{\brF})$, we may assume $g'=g\dot{w}'$ for some $w'\in \tW$. 
	
	If  $\dim (w' L^{(s)}+L^{(s)})/
	L^{(s)}\leq 1$, then by Lemmas \ref{lem: dim=1 implies permissible} and \ref{lem: dim=0 implies admissible}, there exists $w\in \Adm(\mu)^{J^{(s)}}$ such that $wL^{(s)}=w'L^{(s)}$ and the result follows since $\Adm(\mu)^{J^{(s)}}$ is the preimage of $\Adm(\mu)_{J^{(s)}}$ in $\tW$.

	(ii) $\Rightarrow$ (i): It suffices to show for $w\in \Adm(\mu)_{J^{(s)}}$, $\dim(L^{(s)}+wL^{(s)})/L^{(s)}\leq 1$. We first consider $w=t^{w_0(\mu)}$ for $w_0\in \tW_0$. Then $w_0(\mu)$ is of the form $\pm \epsilon_i$, and hence $L'$ is of the form $$L'=\langle e_1,\dotsc,e_{i-1}, \pi^{\pm 1}e_i,e_{i+1},\dotsc,e_n,f_1,\dotsc,f_{i-1},\pi^{\mp1}f_i,f_{i+1},\dotsc,f_n\rangle\obot L_{\an}$$ which satisfies $\dim_{k}(L+L')/L\leq1$.
	
	Let $\Fl_{\calG^\circ_s}$ denote the Witt vector affine partial flag variety over $k$ associated to the parahoric $\calG^\circ_s$ (see \cite{BhattScholze}, \cite{Zhu}).  For $g\in \Fl_{\calG^\circ_s}(k)=G(\brF)/\calG^\circ_s(\calO_{\brF})$, we associate to it the vertex lattice  $gL^{(s)}$ of type $t(s)$ (this association is not a bijection in case (3) as the parahoric is not equal to the stabilizer group in this case).
		The orbits of the positive loop group $L^+\calG^\circ_{s}$ on $\Fl_{\calG^\circ_s}\times \Fl_{\calG^\circ_s}$ are indexed by $\tW_{J^{(s)}}\backslash \tW/\tW_{J^{(s)}}$, and the closure relations are given by the Bruhat order.
	
	Now for $(g,g')\in \Fl_{\calG_s^\circ}\times \Fl_{\calG_s^\circ}(k)$, the condition that $$\dim (gL^{(s)}+g'L^{(s)})/gL^{(s)}\leq 1$$ defines a closed subscheme of $\Fl_{\calG_s^\circ}\times \Fl_{\calG_s^\circ}$. Indeed, it is the pre-image of the quasi-minuscule Schubert variety in the Witt vector affine Grassmannian $\Gr(L^{(s)})$ for $\GL(L^{(s)})$ under the natural map $\Fl_{\calG^\circ_s}\rightarrow \Gr(L^{(s)})$ induced by the locally closed immersion $\calG_s^\circ\rightarrow \GL(L^{(s)})$. Since the elements $t_{w_0(\mu)}, w_0\in \tW_0$ are the maximal elements of $\Adm(\mu)_{J^{(s)}}$, it  follows that for any $w\in \Adm(\mu)_{J^{(s)}}$, we have $\dim(L^{(s)}+wL^{(s)})/L^{(s)}\leq 1$.

\end{proof}

\section{Affine Deligne-Lusztig varieties and special lattices}

\subsection{Clifford algebra and GSpin local Shimura datum}\label{Subsec: GSpin Shimura datum} \subsubsection{}We now specialize to the case  $F=\bbQ_p$, so let $V$ be a quadratic space over $\bbQ_p$ of dimension $n$.
For a $\Q_p$-algebra $R$, the tensor product $V_R=V \otimes_{\Q_p} R$ is a   quadratic space over $R$, and let $C(V_R )=C(V) \otimes_{\Q_p} R$ denote the Clifford algebra of $V_R$. Then $C(V_R)$ is a semisimple algebra over $R$ of rank $2^{\dim_{\bbQ_p} V}$, and is equipped with a $\bbZ/2\bbZ$-grading $C(V_R)=C^+(V_R)\oplus C^-(V_R)$. We define $G=\operatorname{GSpin}(V)$ to be the group scheme over $\Q_p$ with $R$-points  
$$
G(R)=\{g \in C^{+}(V_R)^{\times}: g V_R g^{-1}=V_R,\,  g^* g \in R^{\times}\}.
$$
We use $g \bullet v=g v g^{-1}$ to denote the conjugation action of $G$ on $V$. Then there is a short exact sequence of group schemes
$$
1 \rightarrow \mathbb{G}_m \rightarrow G \stackrel{g \mapsto g \bullet}{\longrightarrow} \mathrm{SO}(V) \rightarrow 1
$$
over $\bbQ_p$. When $V$ is clear in the context, we simply denote   $C(V_R)$ as $C_R$.

\subsubsection{}\label{sssec: cocharcter mu}	Let $\{x_1, \ldots, x_n \}$ be a  basis of $V_{\breve{\Q}_p}$  with moment matrix
	$$
	 \left(\begin{array}{cccccc}
		0 & 1 & & & & \\
		1 & 0 & & & & \\
		& & * & & & \\
		& & &  \ddots & & \\
		& & & & *
	\end{array}\right). 
	$$
Given this choice of basis, we define a cocharacter $\mu: \mathbb{G}_m \rightarrow G$ by
	\begin{align}\label{eq: def of mu}
	    \mu(t)=t^{-1} x_1 x_2+x_2 x_1,
	\end{align}
	where we regard the right hand side of the above as an element in  $C(V_{\breve{\Q}_p})$. 

   We remark that the reflex field of $\mu$ is always $\Q_p$. Indeed, when $V$ has a split subspace of rank $2$, then we can choose $\{x_1,x_2\}$ to be the basis of this subspace and $\mu$ is even defined over $\Q_p$. Now we assume $V$ does not have a split subspace of rank $2$. Then $V$ contains a subspace generated by $\{v_1,v_2\}$ with moment matrix $\begin{pmatrix}
       1&0\\
       0&\nu
   \end{pmatrix}$ or $\begin{pmatrix}
       p&0\\
       0&p\nu
   \end{pmatrix}$ where $\chi(-\nu)=-1$. Assume  $\{v_1,v_2\}$ has moment matrix $\begin{pmatrix}
       1&0\\
       0&\nu
   \end{pmatrix}$. Let $\alpha\in \Q_{p^2}^\times$ such that $\alpha^2=-\nu$. Then we may choose $x_1=\alpha v_1+v_2$ and $x_2=\frac{-1}{2\nu} (\alpha v_1-v_2)$ so that the moment matrix of $\{x_1,x_2\}$ is $\begin{pmatrix}
       0&1\\
       1&0
   \end{pmatrix}.$ Therefore, in terms of $v_1,v_2$,  we have 
   \begin{align*}
       \mu(t)=t^{-1}(\alpha v_1+v_2)(\frac{-1}{2\nu} (\alpha v_1-v_2))+\frac{-1}{2\nu} (\alpha v_1-v_2)(\alpha v_1+v_2).
   \end{align*}
Let $\tau$ denote the nontrivial element in $\Gal(\Q_{p^2}/\Q_p)$ such that $\tau(\alpha)=-\alpha$. 

Although $\mu$ is only defined over $\Q_{p^2}$, we claim that
\begin{align*}
    \tau\cdot \mu(t)= t^{-1}(-\alpha v_1+v_2)(\frac{-1}{2\nu} (-\alpha v_1-v_2))+\frac{-1}{2\nu} (-\alpha v_1-v_2)(-\alpha v_1+v_2)
\end{align*}
lies in the $G(\bar{\Q}_p)$-conjugacy class of $\mu$. Indeed, note that the moment matrix of $\{-\alpha v_1+v_2, -\alpha v_1-v_2\}$ is still $\begin{pmatrix}
       0&1\\
       1&0
   \end{pmatrix}.$ Therefore we can find  $h\in\mathrm{O}(V)(\bar{\Q}_p)$ so that $\rho(\mu)=h\circ \rho(\tau\cdot \mu) \circ h^{-1}$ by Witt's theorem, where $\rho$ denotes the morphism from $G$ to $\mathrm{SO}(V)$. If $\det(h)=-1,$ we can consider $h'\in \mathrm{O}(V)(\bar{\Q}_p)$ such that $h'(x_3)=-x_3$  and fixes $x_i$ for $i\neq 3$. Then $h'h\in \mathrm{SO}(V)(\bar{\Q}_p)$ and $\rho(\mu)=(h'h)\circ \rho(\tau \cdot \mu) \circ (h'h)^{-1}$. Therefore we can assume $h$ lies in $\mathrm{SO}(V)(\bar{\Q}_p)$ without loss of generality. Now let $g\in G(\bar{\Q}_p)$ be a lifting of $h$. Since $\ker(\rho)=\mathbb{G}_m$ is central and $\rho(g\circ (\tau\cdot \mu)\circ g^{-1})=\rho(\mu)$, we have $g\circ (\tau\cdot \mu(t))\circ g^{-1}=\mu(t) \circ t^m$ for some $m\in \Z$.     Since
   $$(g\circ (\tau\cdot \mu(t))\circ g^{-1})(g\circ (\tau\cdot \mu(t))\circ g^{-1})^*=t^{-1}$$ 
   is equal to 
   $$(\mu(t) \circ t^m)(\mu(t) \circ t^m)^*=t^{2m-1},$$
   we have $m=0$, and our claim is proved.
   
   The case $\{v_1,v_2\}$ has moment matrix   $\begin{pmatrix}
       p&0\\
       0&p\nu
   \end{pmatrix}$ is the same, so we omit the details.

\subsubsection{}Recall that $\brQ_p$ denotes the completion of the maximal unramified extension of $\bbQ_p$, and we let $\sigma\in \mathrm{Aut}(\brQ_p/\bbQ_p)$ be the Frobenius element. For an element $b\in G(\breve{\Q}_p)$, we consider the following isocrystals:
\begin{align*}
    (V_{\breve{\Q}_p}, \Phi=b\circ \sigma) \quad \text{ and } \quad (C_{\breve{\Q}_p}, F=b\circ \sigma).
\end{align*}
Here $\Phi$ and $F$ are related by $\Phi x=F\circ x \circ F^{-1}$.
According to \cite{Kottwitz85},  we can associate to every $b\in G(\breve{\Q}_p)$, its Newton cocharacter 
$$
\nu_b: \mathbb{\bbD}  \rightarrow G 
$$ 
which is characterized by the following condition. For any representation $\rho: G  \rightarrow \mathrm{GL}(N)$ on a $\Q_p$-vector space $N$, we require that the decomposition
$$
N_{\brQ_p}=\bigoplus_{s / t \in \mathbb{Q}} N^{s / t}
$$
induced by $\rho\circ \nu_b$ is identified with the  slope decomposition of the isocrystal $(N_{\brQ_p},\rho(b)\circ \sigma)$. 
We call an element of $b\in G(\breve{\Q}_p)$ \emph{basic} if $\nu_b$ factors through the center of $G$.

Let $B(G)$ denote the set of $\sigma$-conjugacy classes of $G(\breve{\Q}_p)$. Given $\mu$,  Kottwitz introduced a subset   $B(G,\mu)\subset B(G)$ in \cite{kottwitz_1997}:
\begin{align*}
    B(G, \mu)=\left\{[b] \in B(G) ; \kappa([b])=\kappa(\mu), \bar{\nu}_b \leqslant \mu^{\diamond}\right\},
\end{align*}
where $\kappa([b])\in \pi_1(G)_{\Gamma}$ is the Kottwitz invariant of $b$, $\kappa(\mu)$ is the image of $\mu$ in $\pi_1(G)_{\Gamma}$, $\bar{\nu}(b)$ is the  dominant Newton cocharacter of $[b]$ and $\mu^{\diamond}$ is the Galois average of $\mu$. We refer the reader to \cite{kottwitz_1997} and \cite{He2016b} for details. From now on, $b$ will always denote the unique basic element of $B(G,\mu).$

Later on, we will need to consider  
    $\bV\coloneqq V_{\breve{\Q}_p}^{\Phi},$ which will be regarded as the space of special endomorphisms of the framing object of the GSpin RZ space in later sections (see \S \ref{sec: special cycles}). 
The following description of $\bV$ is a generalization of \cite[Proposition 4.2.5]{HowardPappas}. We also determine the vertex lattices of maximal and minimal types in $\bV$. The description of a general vertex lattice follows from  this easily. 

To simplify the notation,  we use $U_n^{+}$ (resp. $U_n^-$) to denote the split (resp. non-split) non-degenerate quadratic space over $\mathbb{F}_p$ of dimension $n$.
\begin{proposition}\label{prop: V^Phi}
Assume $V$ is a quadratic space over $\Q_p$ with Frobenius $\Phi$ as in \S \ref{sec: collection of background}. 
		Then the $\Q_p$-quadratic space
		$$
		\bV\coloneqq V_{\brQ_p}^{\Phi}=\{x \in V_{\brQ_p}: \Phi x=x\}
		$$
		has the same dimension and discriminant as $V$, and has Hasse invariant $\epsilon(\bV)=-\epsilon(V)$.
	
 More precisely, if we let $\Lambda_{\max}\subset \bV$ (resp. $\Lambda_{\min}$) denote the vertex lattice in $\bV$ of maximal (resp. minimal) type, then we have the following.
 \begin{enumerate}
    \item If  $\operatorname{dim}\left(V_{\an}\right)=0$, then   
     \begin{align*}
         \Lambda_{\max}=\Lambda_{n-2}^-\obot H_2^-  \quad \text{ and } \quad \Lambda_{\min}=H_{n-2}^-\obot \Lambda_2^-. 
    \end{align*}
 
    \item If $\operatorname{dim}\left(V_{\an}\right)=1$, then
    \begin{align*}
       \begin{cases}
            \Lambda_{\max}= \Lambda_{n-1}^-\obot H_1^{-\chi}, \quad   \quad \Lambda_{\min}=H_{n-2}^{-\chi}\obot \Lambda_2^{-} & \text{ if $V=(H_n^\chi)_{\Q_p}$},\\
           \Lambda_{\max}=\Lambda_{n-2}^{-\chi}\obot \Lambda_2^{-}, \quad    \quad    \Lambda_{\min}=H_{n-1}^-\obot \Lambda_1^{-\chi} &\text{ if $V=(H_{n-1}^+\obot \Lambda_1^\chi)_{\Q_p}$}.
        \end{cases} 
    \end{align*}
   
    \item If $\operatorname{dim}\left(V_{\an}\right)=2$, then 
      \begin{align*}
        \begin{cases}
          \Lambda_{\max}=\Lambda_{n}^-,  \quad  \quad    \quad\quad\,\, \, \, \, \, \, \,  \,  \text{ $\Lambda_{\min}=H_{n-2}^{+}\obot \Lambda_2^-$  } &\text{if $V=(H_n^{-})_{\Q_p}$},\\
           \Lambda_{\max}=\Lambda_{n-2}^+\obot H_2^- ,       \quad    \quad   \text{ $\Lambda_{\min}=H_n^{-}$ } &\text{if $V=(H_{n-2}^{+}\obot \Lambda_2^-)_{\Q_p}$},\\
            \Lambda_{\max}= \Lambda_{n-1}^{\chi_2} \obot H_1^{\chi_1}  \quad   \quad \  \text{ $\Lambda_{\min}=H_{n-1}^{\chi_1}\obot \Lambda_1^{\chi_2}$} &\text{if $V=(H_{n-1}^{\chi_2}\obot \Lambda_1^{\chi_1})_{\Q_p}$}.
        \end{cases} 
    \end{align*}
 
    \item If $\operatorname{dim}\left(V_{\an}\right)=3$, then 
    \begin{align*}
         \begin{cases}
           \Lambda_{\max}=\Lambda_{n-1}^+\obot H_1^{\chi},    \quad  \quad \text{  $\Lambda_{\min}=H_{n}^{\epsilon }$ } & \text{if $V=(H_{n-2}^{-\chi }\obot \Lambda_2^{-})_{\Q_p}$ },\\
            \Lambda_{\max}= \Lambda_{n-1}^+\obot \Lambda_1^{\chi},    \quad   \quad \text{ $\Lambda_{\min}=H_{n-1}^{+}\obot \Lambda_1^{\chi}$} & \text{if $V=(H_{n-1}^{- }\obot \Lambda_1^{-\chi})_{\Q_p}$}.
        \end{cases} 
    \end{align*}
     
     \item If $\operatorname{dim}\left(V_{\an}\right)=4$, then 
     \begin{align*}
         \Lambda_{\max}=\Lambda_{n}^+  \quad \text{ and } \quad \Lambda_{\min}=H_{n}^+. 
    \end{align*}
\end{enumerate}
\end{proposition}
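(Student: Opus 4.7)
The plan is to reduce the proposition to determining the three isomorphism invariants $(\dim \bV, \disc(\bV), \epsilon(\bV))$ of the $\bbQ_p$-quadratic space $\bV$, and then to read off the anisotropic part $\bV_{\an}$ together with the maximal and minimal vertex lattices $\Lambda_{\max},\Lambda_{\min}$ by matching against Proposition \ref{prop: class of V} and the formulas \eqref{eq: Hasse}.

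First I would establish $\dim_{\bbQ_p}\bV = n$. Since $[b]$ is basic, its Newton cocharacter $\nu_b$ factors through the center of $G = \GSpin(V)$, which acts trivially on $V$ under the quotient $G \twoheadrightarrow \SO(V)$. Consequently the isocrystal $(V_{\brQ_p}, \Phi = b\circ\sigma)$ is isoclinic of slope $0$, so by the slope decomposition $\dim_{\bbQ_p}\bV = \dim_{\brQ_p} V_{\brQ_p} = n$ and the natural map $\bV \otimes_{\bbQ_p} \brQ_p \to V_{\brQ_p}$ is an isomorphism of $\brQ_p$-vector spaces. Moreover $b$ acts on $V_{\brQ_p}$ through $\SO(V)(\brQ_p)$ via $g\bullet v = gvg^{-1}$, so $\Phi$ preserves the bilinear form and $\bV$ inherits a nondegenerate $\bbQ_p$-quadratic form.

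The heart of the argument is the pair of identities $\disc(\bV) = \disc(V)$ and $\epsilon(\bV) = -\epsilon(V)$. In the self-dual case this is precisely \cite[Proposition 4.2.5]{HowardPappas}, and I would follow their strategy in our more general setup. Using the basis of $V_{\brQ_p}$ fixed in \S\ref{sssec: cocharcter mu}, one writes down an explicit basic representative $b \in G(\brQ_p)$ realising the cocharacter $\mu$ up to $\sigma$-conjugacy, modified on the anisotropic summand as dictated by the case list in \S\ref{sec: quadratic space over breveF}. With such an explicit $b$ one exhibits an orthogonal basis of $\bV \subset V_{\brQ_p}$ and evaluates the Hilbert symbols to read off $(\disc(\bV),\epsilon(\bV))$. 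The discriminant is seen to be unchanged since $b$ acts by orthogonal transformations with determinant a square in $\brQ_p^\times$, and the sign $-1$ in the Hasse invariant comes out of a Hilbert symbol of the form $(p,\cdot)_p$, reflecting the fact that the inner form $J_b$ of $G$ defined by the basic class $[b]\in B(G,\mu)$ is nontrivial for our minuscule $\mu$. The main obstacle is carrying out this computation uniformly for all five isomorphism classes of $\brV_{\an}$ from \S\ref{sec: quadratic space over breveF}; the non-split and ramified cases, in particular case (2b), case (3) and the ramified subcases of (2a) and (4), require some care in choosing the correct representative $b$, but the framework of \cite{HowardPappas} carries over once one uses the explicit lattice descriptions of \S\ref{subsec: VL cases}.

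Once $(\dim\bV, \disc(\bV), \epsilon(\bV)) = (n,\disc(V),-\epsilon(V))$ is in hand, each of the five subcases is a mechanical check. For each possibility for $V_{\an}$ from Proposition \ref{prop: class of V} one reads off $(\disc(V),\epsilon(V))$, flips the Hasse invariant, and uses the formulas \eqref{eq: Hasse} to identify the unique triple $(t,\chi_0,\chi_1)$ with $\bV \cong H_{n-t}^{\chi_0}\obot\Lambda_t^{\chi_1}$ realising these invariants—this determines $\bV_{\an}$ since quadratic spaces over $\bbQ_p$ are classified by $(\dim,\disc,\epsilon)$. Finally, $\Lambda_{\max}$ and $\Lambda_{\min}$ in $\bV$ are read off by applying Proposition \ref{prop: class of V} to $\bV$, giving the five case-by-case formulas in the statement. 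The discriminant-preserving, Hasse-flipping nature of the descent explains the observed pattern whereby $\dim V_{\an}$ and $\dim \bV_{\an}$ interchange via the partial symmetry $d \leftrightarrow 4-d$, with the precise choices of $\chi_0,\chi_1$ pinned down by the sign computation.
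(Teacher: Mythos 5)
Your overall reduction is the same as the paper's: show $\dim\bV=n$, $\disc(\bV)=\disc(V)$, $\epsilon(\bV)=-\epsilon(V)$, then read off the subcases by matching against Proposition~\ref{prop: class of V} and \eqref{eq: Hasse}. You also correctly identify \cite[Proposition 4.2.5]{HowardPappas} as handling the self-dual case. So the outline is right.

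Where you fall short is the key computational step, which you leave as a case-by-case analysis over the five shapes of $\brV_{\an}$ with the remark that ``the non-split and ramified cases \ldots require some care in choosing the correct representative $b$.'' The paper avoids this entirely with a single uniform choice. If $V$ does not contain a self-dual lattice, then since $n\ge 3$ there is an orthogonal basis $\{x_1,\dots,x_n\}$ of $V$ with $q(x_1)=u_1p$ and $q(x_2)=u_2$ for units $u_1,u_2\in\Z_p^\times$. Taking $b=x_2x_1\in C^+(V)$, one has $b^2=-u_1u_2p$, which is a scalar of odd valuation; conjugation by $b$ acts as $-1$ on $x_1,x_2$ and $+1$ on $x_3,\dots,x_n$, the Newton cocharacter on $V_{\brQ_p}$ is trivial, and $\kappa(b)=\kappa(\mu)$, so $b$ represents the basic class of $B(G,\mu)$. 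Choosing $\eta\in\Z_{p^2}^\times$ with $\sigma(\eta)=-\eta$, $\eta^2=\delta$ a non-square unit, the vectors $v_1=\eta x_1$, $v_2=\eta x_2$, $v_i=x_i$ $(i\ge 3)$ give an orthogonal basis of $\bV$ with $q(v_1)=\delta\,q(x_1)$, $q(v_2)=\delta\,q(x_2)$. This simultaneously flips $\chi_0$ and $\chi_1$ (so leaves $\disc$ fixed) and, via \eqref{eq: Hasse}, flips $\epsilon$. This is not a computation that needs to be ``modified on the anisotropic summand'' or threaded through cases (2b), (3), etc.—it is a single argument, valid whenever $V$ is not self-dual. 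You should make this the centerpiece; without it, your plan is to redo the HP-style calculation five times with ad hoc choices of $b$, which is doable but substantially harder and is not what the paper does, and your appeal to ``the explicit lattice descriptions of \S\ref{subsec: VL cases}'' does not by itself tell you which $b$ to take.

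One further minor caveat: your heuristic that the sign $-1$ ``comes out of a Hilbert symbol of the form $(p,\cdot)_p$'' is not quite how the paper organizes it. In the paper, the flip is bookkept through \eqref{eq: Hasse} once one knows $\chi_0\mapsto-\chi_0$, $\chi_1\mapsto-\chi_1$; there is no explicit Hilbert-symbol calculation.
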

 \begin{proof}
       If $V$ contains a self-dual lattice, then this is implied by \cite[Proposition 4.2.5]{HowardPappas}. 
      
      Now we assume $V$ does not contain a self-dual lattice. In particular, since $n\ge 3$, we may assume $V$ has a basis $\{x_1,\ldots,x_n\}$ with diagonal moment matrix such that $q(x_1)=u_1 p$ and $q(x_2)=u_2$ where $u_1, u_2\in \Z_p^\times$. Then we can choose $b=x_2x_1$ so that $b^2=-u_2u_1p\in p\Z_p^\times$.  Then one can check that for $\Phi=b\circ \sigma$, we have 
      \begin{align*}
          \Phi \bullet x_1=-x_1, \quad \Phi\bullet x_2=-x_2, \quad \Phi\bullet x_i=x_i \quad  \text{ for $3\le i\le n$.}
      \end{align*}
Choose $\eta \in \Z_{p^2}^\times\setminus \Z_p^\times$ such that $\sigma(\eta)=-\eta$ and $\eta^2=\delta$ where $\delta\in \Z_p^\times\setminus (\Z_p^{\times})^2$.  Set
\begin{align*}
    v_1=\eta x_1, \quad v_2=\eta x_2, \quad v_i =x_i \quad  \text{ for $3\le i\le n$.}
\end{align*}
Then one can easily check that $\Phi\bullet v_i=v_i$ for any $i\in\{1,\ldots,n\}$. Hence $\{v_1,\ldots,v_n\}$ is a set of basis vectors of $\bV$.  Moreover, we have 
\begin{align*}
    q(v_1)=\delta q(x_1) \quad q(v_2)=\delta q(x_2).
\end{align*}
Therefore we have $\chi_0(V)=-\chi_0(\bV)$ and $\chi_1(V)=-\chi_1(\bV)$. In particular, $V$ and $\bbV$ have the same discriminant.  Moreover, one can check that $\epsilon(V)=-\epsilon(\bV)$ by \eqref{eq: Hasse}.

Then applying Proposition \ref{prop: class of V},  the proposition follows.
 \end{proof}

\subsection{Affine Deligne--Lusztig varieties}
\subsubsection{} 
Let $G=\GSpin(V)$ and $\mu$ be as above.  We now fix a vertex lattice $L\subset V$ of type $h$. This determines a parahoric group scheme $\calG^\circ$ of $G.$ We fix $S$ a maximal $\brQ_p$-split torus of $G$ which is in good relative position with respect to $\calG^\circ$, so that $L$ corresponds to a facet $\fkf$ in the apartment $A(G,S,\brQ_p)$ corresponding to $S$, and we fix an alcove $\fka\subset A(G,S,\brQ_p)$ whose closure contains $\fkf$. Then $L$ corresponds to a subset $J\subset\bbS$ of the set of simple reflections in $\fka$.   We then have the associated admissible  set $\Adm(\mu)_J.$

Recall we have fixed an element $b\in G(\brQ_p)$ whose  image $[b]\in B(G)$ is the unique basic element of $B(G,\mu)$. We let $\K=\calG^\circ(\Z_p)$, and we set   
$$
X(\mu^\sigma,b)_{\K}(k):=\left\{g \cdot \K \in G(\breve{\Q}_p) / \K \mid g^{-1} b \sigma(g) \in \bigcup_{w \in \operatorname{Adm}(\mu)_J} \K \sigma(\dot{w}) \K\right\},
$$ 
where $k$ is the residue field of $\brZ_p$. Then $X(\mu^\sigma,b)_\K(k)$ arises as the $k$-points of a closed subscheme $X(\mu^\sigma,b)_\K$ of the Witt vector affine Grassmannian constructed in \cite{Zhu}, \cite{BhattScholze}, and $X(\mu^\sigma,b)_\K$ is locally of perfectly finite type. By \cite[Theorem A]{He2016b}, we have $X(\mu^\sigma,b)_\K(k)\neq \emptyset$.

\begin{definition}
   A special lattice $M\subset V_{\brQ_p}$ of type $h$ is a $\brZ_p$-lattice such that 
   \begin{align*}
        p M^\vee \subset M \stackrel{h}{\subset}M^\vee \quad 
   \end{align*}
and 
   \begin{align*}
	\begin{cases}
    [M+\Phi(M):M]=1 & \text{ if $h= 0,n$,}\\
 [M+\Phi(M):M]\le 1 & \text{ if $h\neq 0,n$.}
	\end{cases}		
		\end{align*}
        Here the notation $M \stackrel{h}\subset M^\vee$ means that $M^\vee/M$ is an $h$-dimensional vector space over the residue field $\bbF_p$.
\end{definition}

\subsubsection{}

For any $g\in X(\mu^\sigma,b)_\K(k)$, we define the following vertex lattices

$$
L_g\coloneqq \sigma^{-1}(b^{-1}g)\bullet L \quad \text { and } \quad L_g'\coloneqq g\bullet L.  
$$   
We have
\begin{align*}
    \Phi(L_g)=(bb^{-1}g)\bullet L   =L_g'.
\end{align*}
Also, note that $L_g$ and $L_g'$ determine each other. Indeed, if $L_g = L_{g'} $ for a different $g'$, then one can directly check $L_g'=L_{g'}'$ and vice versa.
 
\begin{proposition}\label{prop: ADLV and special lattice}
 The map   $g \mapsto L_g$ induces a bijection between $p ^{\mathbb{Z}} \backslash X(\mu^\sigma,b)_\K(k) $ and
 \begin{align*}
     \begin{cases}
     \text {\{special lattices of type $h$ in } V_{\brQ_p}\} & \text{ if $h= 0,n$},\\
         \text {two copies of \{special lattices of type $h$ in } V_{\brQ_p}\} & \text{ if $h\neq 0,n$.}
     \end{cases}
 \end{align*}
\end{proposition}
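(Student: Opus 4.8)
The plan is to deduce this from the combinatorial description of $\Adm(\mu)_J$ obtained in \S3 (Proposition~\ref{prop: relative position admissible set}), together with a careful treatment of the central torus $\bbG_m\subset G=\GSpin(V)$, which is the kernel of $G\to\SO(V)$. When $h=0$ or $h=n$ the vertex $\bfx^{(s)}$ attached to $L$ is hyperspecial and the statement is \cite[\S4.2]{HowardPappas} (or an immediate variant), so I will assume $h\neq 0,n$ from now on, in which case Proposition~\ref{prop: relative position admissible set} applies directly. Note first that $L_g$ and $L_g'=\Phi_*(L_g)=g\bullet L$ determine one another, so the content is really a statement about the single lattice $L_g$.

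\textit{Step 1 (the image lies in the set of special lattices).} Fix $g\in X(\mu^\sigma,b)_\K(k)$. Since $\SO(V)(\brQ_p)$, hence $G(\brQ_p)$ via the $\bullet$-action, acts transitively on vertex lattices of type $h$ in $V_{\brQ_p}$ (Lemma~\ref{lem: transitive action on v lattices}), both $L_g$ and $L_g'$ are vertex lattices of type $h$. With $a:=\sigma^{-1}(b^{-1}g)$ one has $L_g=a\bullet L$ and $a^{-1}g=\sigma^{-1}(g^{-1}b\sigma(g))$; since $g^{-1}b\sigma(g)\in\K\sigma(\dot w)\K$ for some $w\in\Adm(\mu)_J$ and $\K$ is $\sigma$-stable, we get $a^{-1}g\in\K\dot w\K$, and therefore
\[
\dim_k(L_g+L_g')/L_g=\dim_k\bigl(L+\dot w\bullet L\bigr)/L\le 1
\]
by the implication $(2)\Rightarrow(1)$ of Proposition~\ref{prop: relative position admissible set}. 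Hence $L_g$ is a special lattice of type $h$. (For $h=0,n$, Lemma~\ref{lem: dim=0 implies admissible} forces this dimension to equal $1$, matching the definition of special lattice; compare \cite{HowardPappas}.)

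\textit{Step 2 ($p^{\mathbb{Z}}$-invariance and surjectivity).} The element $p\in\bbG_m(\brQ_p)$ is central, acts trivially in the $\bullet$-action, and $g\mapsto pg$ fixes $g^{-1}b\sigma(g)$; so the map descends to $p^{\mathbb{Z}}\backslash X(\mu^\sigma,b)_\K(k)$. Conversely, given a special lattice $M$ of type $h$, apply $(1)\Rightarrow(2)$ of Proposition~\ref{prop: relative position admissible set} to $(M,\Phi_*M)$ to obtain $\bar g\in\SO(V)(\brQ_p)$ and $w\in\Adm(\mu)_J$ with $\bar g\bullet L=M$ and $\bar g\dot w\bullet L=\Phi_*M$; lift $\bar g$ to $\tilde g\in G(\brQ_p)$ (possible since $H^1(\brQ_p,\bbG_m)=0$) and set $g:=b\sigma(\tilde g)$. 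Then $L_g=\tilde g\bullet L=M$, and $g^{-1}b\sigma(g)=\sigma\bigl(\tilde g^{-1}b\sigma(\tilde g)\bigr)$, which by the computation in Step 1 lies in $\bigcup_{w'}\K\sigma(\dot w')\K$ up to a central factor in $p^{\mathbb{Z}}$. That factor is trivial by the Kottwitz invariant, since $\kappa(g^{-1}b\sigma(g))=\kappa(b)=\kappa(\mu)$ while $\kappa(p)\in\pi_1(G)_\Gamma$ is non-torsion (the similitude factor $g^*g$ sends $p$ to $p^2$). Thus $g\in X(\mu^\sigma,b)_\K(k)$ maps to $M$.

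\textit{Step 3 (fibres and the factor of two).} If $L_{g_1}=L_{g_2}=M$, then $g_2^{-1}g_1$ lies in the preimage under $G(\brQ_p)\twoheadrightarrow\SO(V)(\brQ_p)$ of $\mathrm{Stab}_{\SO(V)(\brQ_p)}(L)$; this preimage contains $\bbG_m(\brQ_p)$ and $\K$, and modulo $p^{\mathbb{Z}}$ and $\K$ it reduces to the component group of $\mathrm{Stab}_{\SO(V)(\brQ_p)}(L)$ over its connected parahoric. When $\bfx^{(s)}$ is hyperspecial ($h=0,n$) this component group is trivial, so the map is injective mod $p^{\mathbb{Z}}$, hence bijective. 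When $h\neq 0,n$ the component group is $\bbZ/2\bbZ$: writing $L=L_1\obot L_2$ with $L_1$ unimodular and $L_2$ $\pi$-modular, the reflections $r_v$ ($v\in L_1$, $q(v)$ a unit) and $r_{v'}$ ($v'\in L_2$, $q(v')$ of valuation $1$) preserve $L$, and $r_vr_{v'}\in\SO(V)(\brQ_p)$ represents the nontrivial class. Tracking $\ell(g):=\val(g^*g)$, which is well defined on $G(\brQ_p)/\K$ and satisfies $\ell(pg)=\ell(g)+2$, shows the two points of a fibre modulo $p^{\mathbb{Z}}$ are separated by $\ell\bmod 2$, and re-running Step 2 with $\bar g$ replaced by $\bar g\,r_vr_{v'}$ (which still satisfies the hypotheses of Proposition~\ref{prop: relative position admissible set}) realizes both. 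This yields the bijection with two copies of the set of special lattices.

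\textit{Expected main obstacle.} The delicate part is Step 3: one must keep track of the kernel $\bbG_m$ of $G\to\SO(V)$ throughout — so that the $\SO$-theoretic Proposition~\ref{prop: relative position admissible set} can legitimately be applied to the $\GSpin$-affine Deligne--Lusztig variety — identify the component group of $\mathrm{Stab}_{\SO(V)(\brQ_p)}(L)$ (equivalently, the image of the similitude factor on it), and, crucially, verify that \emph{both} residue classes of $\ell$ are actually realized by points of $X(\mu^\sigma,b)_\K(k)$, not merely that the stabilizer is large enough. The remaining steps are essentially a transcription of the admissibility combinatorics of \S3.
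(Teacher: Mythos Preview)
Your proof is correct and follows essentially the same three-step approach as the paper: apply Proposition~\ref{prop: relative position admissible set} (together with the bijection $\Adm(\mu)_J\cong\Adm(\mu')_{J'}$ induced by $G\to\SO(V)$) for well-definedness and surjectivity, invoke the Kottwitz invariant to eliminate the ambiguity coming from the stabilizer of $L$, and then compute the fibers via $\mathrm{Stab}_{G(\brQ_p)}(L)=p^{\bbZ}K\sqcup\delta p^{\bbZ}K$. Your Step~3 is more explicit than the paper's in constructing $\delta$ and in verifying that \emph{both} cosets actually occur in $X(\mu^\sigma,b)_K(k)$, a point the paper leaves implicit; conversely, in Step~2 your phrase ``up to a central factor in $p^{\bbZ}$'' should also acknowledge the $\delta$-coset, but the Kottwitz argument you give (since $\kappa(\delta)$ is odd in $\pi_1(G)_I=\bbZ$) handles it as well.
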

\begin{proof}
The case $h=0,n$ is \cite[Proposition 6.2.2]{HowardPappas}. So we assume $h\neq 0,n$ from now on.

   First, we check $L_g$ is indeed a special lattice. We have 
   \begin{align*}
       [L_g+\Phi(L_g):L_g]=[L_g+L_g':L_g]&=[g\bullet L+\sigma^{-1}(b^{-1}g)\bullet L:\sigma^{-1}(b^{-1}g)\bullet L]\\
       ~&=[(\sigma^{-1}(g^{-1}b)g)\bullet L+L:L]
   \end{align*}
   Note that the map $G\rightarrow \SO(V)$ induces a bijection  $$\Adm(\mu)_J\cong \Adm(\mu')_{J'},$$ where $\mu'$ is the cocharacter of $\SO(V)$ induced by $\mu$, and $\Adm(\mu')_{J'}$ is the corresponding admissible set of $\SO(V)$.
   Thus since $g^{-1}b\sigma(g) \in K\sigma(\dot{w})K$, $w\in \Adm(\mu)_J$, we have $[L_g+\Phi(L_g):L_g]\le 1$ by Proposition \ref{prop: relative position admissible set}. 
   
  Now we show the map $g\mapsto L_g$ is surjective.  Let $L'$ be any special lattice. Then by Proposition \ref{prop: relative position admissible set}, there exists $g\in G(\brQ_p)$ and $w\in \Adm(\mu)_{J}$ such that $$(\Phi(L'),L')=(g\bullet L,g\dot{w}^{-1}\bullet L).$$ It follows that $g^{-1}b\sigma(g)\sigma(\dot{w}^{-1})$ fixes $L$, and hence lies in $p^\bbZ K.$ Since $\kappa([b])=\kappa(\dot{w})=\kappa(\mu)\in \pi_1(G)_\Gamma$, it follows that $g^{-1}b\sigma(g)\sigma(\dot{w}^{-1})\in K$, and hence $g\in X(\mu^\sigma,b)_K(k).$ 

The stabilizer of $L$ in $G(\breve{\Q}_p)$ is $p^\Z \K \sqcup \delta p^\Z \K$, where the induced actions of $\delta\in G(\brF)$ on $L^\vee/L$ and $L/p L^\vee$ have determinant $-1$ (see \cite[\S 2.5]{PappasZachos}).  Hence the map $g\to L_g$ is a two-to-one map.
\end{proof}

\subsection{The crucial proposition}
\subsubsection{} 
 Let $M\subset V_{\brQ_p}$ denote a special lattice of type $h$.
Let 
\begin{align*}
    S_i(M)\coloneqq M+\Phi(M)+\cdots + \Phi^i(M).
\end{align*}
	From now on, we  always assume $c$ (resp. $d$) is the smallest non-negative integer such that $S_c(M)$ (resp. $S_d(M^\vee)$) is $\Phi$-invariant ($c$ and $d$ are finite by a standard argument). It is not hard to show that
	\begin{align}\label{eq: chain}
		M\stackrel{1}{\subset}S_1(M) \stackrel{1}{\subset} S_2(M) \cdots \stackrel{1}{\subset} S_c(M)=S_{c+1}(M).
	\end{align}
Also, $M$ is $\Phi$-invariant if and only if $M^\vee$ is $\Phi$-invariant 
 by the following lemma.
	\begin{lemma}\label{lem: same ind}
		$[M+\Phi (M):M]=[M^\vee +\Phi(M^\vee) : M^\vee]$.
	\end{lemma}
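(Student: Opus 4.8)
The plan is to deduce the identity from the observation that $\Phi$ is a $\sigma$-semilinear isometry of $V_{\brQ_p}$ which preserves covolumes of lattices, combined with elementary duality arithmetic for lattice indices. The first point to record is that, since $b\in G(\brQ_p)$ acts on $V_{\brQ_p}$ through $\SO(V)$ and the Frobenius $\sigma$ fixes the $\Q_p$-valued form on $V$, one has $(\Phi x,\Phi y)=\sigma((x,y))$ for all $x,y\in V_{\brQ_p}$. Hence for every $\brZ_p$-lattice $N\subset V_{\brQ_p}$ one gets $\Phi(N)^\vee=\Phi(N^\vee)$: writing a general element of $\Phi(N)^\vee$ as $\Phi w$, the condition $(\Phi w,\Phi n)\in\brZ_p$ for all $n\in N$ is equivalent to $\sigma((w,n))\in\brZ_p$, i.e.\ $(w,n)\in\brZ_p$, i.e.\ $w\in N^\vee$.

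Next I would use the generalized index $\chi(N_1,N_2):=\length_{\brZ_p}(N_1/N_1\cap N_2)-\length_{\brZ_p}(N_2/N_1\cap N_2)\in\Z$, which is antisymmetric, satisfies the cocycle relation $\chi(N_1,N_3)=\chi(N_1,N_2)+\chi(N_2,N_3)$, is unchanged under any semilinear bijection of $V_{\brQ_p}$ (it only reads off cardinalities of finite quotients), and satisfies $\chi(N_1^\vee,N_2^\vee)=-\chi(N_1,N_2)$. The key normalization is $\chi(M,\Phi M)=0$: choosing a $\sigma$-stable reference lattice $\Lambda_0$ (for instance the $\brZ_p$-span of a $\Q_p$-basis of $V$), the cocycle relation and the semilinear-invariance applied to $\Phi$ give
\[
\chi(M,\Phi M)=\chi(M,\Lambda_0)+\chi(\Lambda_0,\Phi M)=\chi(M,\Lambda_0)+\chi(\Phi\Lambda_0,\Phi M)=\chi(M,\Lambda_0)+\chi(\Lambda_0,M)=0,
\]
where the middle step uses that $\Phi\Lambda_0=(b\bullet)\Lambda_0$ has the same covolume as $\Lambda_0$, since $b\bullet\in\SO(V)$ has determinant $1$ and $\sigma\Lambda_0=\Lambda_0$. (Equivalently, $\disc(\Phi M)=\sigma(\disc M)$ has the same valuation as $\disc M$.)

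With these in hand the lemma becomes a short chain:
\[
[M+\Phi(M):M]=[\Phi(M):M\cap\Phi(M)]=[M:M\cap\Phi(M)]=[(M\cap\Phi(M))^\vee:M^\vee]=[M^\vee+\Phi(M^\vee):M^\vee],
\]
using in order the second isomorphism theorem; the normalization $\chi(M,\Phi M)=0$; the duality $[B:A]=[A^\vee:B^\vee]$ for $A=M\cap\Phi(M)\subseteq M=B$; and finally $(M\cap\Phi(M))^\vee=M^\vee+\Phi(M)^\vee=M^\vee+\Phi(M^\vee)$ from the first paragraph. This is exactly the asserted equality, and in particular it shows that both sides lie in $\{0,1\}$ when $M$ is special, consistent with the defining condition.

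The only real content is the normalization $\chi(M,\Phi M)=0$, i.e.\ that $\Phi$ preserves covolumes of lattices; this is the step where it matters that $b$ lies in $\GSpin(V)$, so that the conjugation action $b\bullet$ has determinant $1$ on $V_{\brQ_p}$ rather than merely lying in $\mathrm{GO}(V)$, together with the fact that the geometric Frobenius is defined over a rational lattice. Everything else is formal bookkeeping with dual lattices and lengths.
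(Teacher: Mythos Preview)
Your proof is correct and follows essentially the same chain of equalities as the paper, namely
\[
[M+\Phi(M):M]=[\Phi(M):M\cap\Phi(M)]=[M:M\cap\Phi(M)]=[M^\vee+\Phi(M^\vee):M^\vee].
\]
The one genuine difference lies in how the middle equality (equivalently, your normalization $\chi(M,\Phi M)=0$) is established. The paper exploits the $\Phi$-stable superlattice $S_c(M)\supset M,\Phi(M)$, whose existence has already been recorded: from $[S_c(M):M]=[S_c(M):\Phi(M)]$ one deduces $[M:M\cap\Phi(M)]=[\Phi(M):M\cap\Phi(M)]$ by multiplicativity of indices. You instead pass through a $\sigma$-stable reference lattice $\Lambda_0$ and use that $b\bullet\in\SO(V)(\brQ_p)$ has determinant $1$, hence preserves covolumes. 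Your route is slightly more general in that it does not use the special-lattice hypothesis or the finiteness of $c$; the paper's route is marginally shorter given that $S_c(M)$ is already on the table. Either way the remaining steps (second isomorphism theorem, the duality $[B:A]=[A^\vee:B^\vee]$, and $(M\cap\Phi M)^\vee=M^\vee+\Phi(M^\vee)$) are identical.
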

	\begin{proof}
		First of all, note that $[M^\vee +\Phi(M^\vee) : M^\vee]=[M: M\cap \Phi(M)]$. Since $S_c(M)$ is $\Phi$-invariant, we have $[S_c(M):M]=[S_c(M):\Phi(M)]$. Hence, we have $[M:M\cap \Phi(M)]=[\Phi(M):M\cap \Phi(M)]$.
		Since  $[\Phi(M)+M:M]=[\Phi(M):\Phi(M)\cap  M]$,  the proof is finished.
	\end{proof}
 
We also remark  that the equality $[S_c(M):M]=[S_c(M):\Phi(M)]$ combined with \ref{eq: chain} implies that  $[M+\Phi(M):M]=[M+\Phi(M):\Phi(M)]$, which further implies that 
\begin{align*}
    \Phi(M)\stackrel{1}{\subset} S_1(M) \stackrel{1}{\subset} S_2(M)\cdots \stackrel{1}{\subset} S_c(M).
\end{align*}

	\begin{lemma}\label{lem: inf inter}
		We  have
        \begin{enumerate}
            \item 
		 
			$S_c(M)\subset   \cap_{i\in \Z_{\ge 0}}\Phi^i(p^{-1} M)$ and $     S_d(M^\vee)\subset   \cap_{i\in \Z_{\ge 0}}\Phi^i(p^{-1} M^\vee). $
		 
        \item If $\Phi(M)\subset M^\vee$ and $c\le d$, then  
        \begin{align*}
            S_c(M)\subset   \cap_{i\in \Z_{\ge 0}}\Phi^i(M^\vee ).
        \end{align*}
         \item If $\Phi(p M^\vee)\subset M$ and $d\le c$, then  
        \begin{align*}
            S_d(p M^\vee)\subset   \cap_{i\in \Z_{\ge 0}}\Phi^i(M ).
        \end{align*}
         \end{enumerate}
	\end{lemma}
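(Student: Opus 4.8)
The plan is to exploit the $\Phi$-invariant lattices $S_c(M)=\sum_{i\ge0}\Phi^i(M)$ and $S_d(M^\vee)=\sum_{i\ge0}\Phi^i(M^\vee)$ together with their duals. The structural inputs are: $\Phi$ is a $\sigma$-semilinear isometry of $(V_{\brQ_p},q)$ (it is induced from $b$ via $G\to\SO(V)$), so $\Phi(N^\vee)=\Phi(N)^\vee$ and $\Phi(pN)=p\Phi(N)$ for every lattice $N$, whence $S_c(M)^\vee=\bigcap_{i\ge0}\Phi^i(M^\vee)$ and $S_d(M^\vee)^\vee=\bigcap_{i\ge0}\Phi^i(M)$; and, as $[b]$ is basic, Proposition~\ref{prop: V^Phi} gives $\dim_{\Q_p}\bV=n$, so $V_{\brQ_p}=\bV\otimes_{\Q_p}\brQ_p$ and the $\Phi$-invariant $\brZ_p$-lattices are precisely the $\Lambda\otimes_{\Z_p}\brZ_p$ with $\Lambda\subset\bV$---this is what forces the sums and intersections above to stabilize. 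I will also use the chain \eqref{eq: chain} (and its analogue with $\Phi(M)$ in place of $M$), Lemma~\ref{lem: same ind}, and the elementary fact that a length-one inclusion $N\stackrel{1}{\subset}N'$ gives $pN'\subset N$.

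For (1): since $S_c(M)$ is $\Phi$-invariant, the inclusion $S_c(M)\subset\Phi^i(p^{-1}M)$ for all $i\ge0$ is equivalent (apply $\Phi^{-i}$) to $S_c(M)\subset p^{-1}M$, i.e.\ to $pS_c(M)\subset M$; dualizing and using $pM^\vee\subset M$ and $S_c(M)^\vee=\bigcap_i\Phi^i(M^\vee)$, this is equivalent to $pM^\vee\subset\Phi^i(M^\vee)$ for all $i\ge0$, hence to $\Phi^i(M)\subset p^{-1}M$ for all $i\ge0$. I would prove these last inclusions by induction on $i$: for $i=0,1$ they follow at once from $[M+\Phi(M):M]\le1$ (which yields $pS_1(M)\subset M\cap\Phi(M)$); for the inductive step one runs along \eqref{eq: chain} via $S_{i+1}(M)=S_i(M)+\Phi^i(S_1(M))$ and controls, using the non-degeneracy of the quadratic space $M^\vee/M$, the ``direction'' in which each successive rank-one extension grows, so that it stays inside $p^{-1}M$. \textbf{This last combinatorial step is the main obstacle}; it runs parallel to the lattice bookkeeping in \cite[\S5]{HowardPappas}. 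The second assertion of (1), for $M^\vee$, then follows by the dual argument---exchanging the roles of $M$ and $M^\vee$ and of the form $q$ and $p\cdot q$---the hypothesis $[M^\vee+\Phi(M^\vee):M^\vee]\le1$ being provided by Lemma~\ref{lem: same ind}.

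For (2): the hypothesis $\Phi(M)\subset M^\vee$ says exactly that $q$ is $\brZ_p$-valued on $M+\Phi(M)$; combined with the relation $(\Phi^a x,\Phi^b y)=\sigma^a((x,\Phi^{b-a}y))$ and the integrality of $q$ on each $\Phi^k(M)$, the conclusion $S_c(M)\subset\bigcap_{i\ge0}\Phi^i(M^\vee)$---which, by $\Phi$-invariance of $S_c(M)$, amounts to $S_c(M)\subset M^\vee$---reduces to showing $\Phi^k(M)\subset M^\vee$ for $1\le k\le c$. I would prove these by induction on $k$ (the case $k=1$ being the hypothesis): here the point of $c\le d$ is that the chain \eqref{eq: chain} for $M$ has length $c$ while the corresponding chain for $M^\vee$ has length $d$, which is what prevents the translates $\Phi^k(M)$ (which lie in $\Phi(M^\vee)$ by the inductive step) from escaping $M^\vee$ for $k\le c$. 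Finally, (3) is (2) applied to the special lattice $M^\vee$ with the form rescaled by $p$: its dual is $p^{-1}M$, its versions of $c$ and $d$ are the original $d$ and $c$, the hypothesis $\Phi(pM^\vee)\subset M$ rewrites as $\Phi(M^\vee)\subset p^{-1}M$ (the hypothesis of (2) for this lattice), the requirement $c\le d$ of (2) becomes the given $d\le c$, and the conclusion of (2), namely $S_d(M^\vee)\subset p^{-1}\bigcap_i\Phi^i(M)$, is precisely $S_d(pM^\vee)\subset\bigcap_i\Phi^i(M)$.
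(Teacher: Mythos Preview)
Your proposal has genuine gaps at the key steps, which you yourself flag in part (1).

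For (1), your plan is to show $\Phi^i(M)\subset p^{-1}M$ by forward induction on $i$. From $\Phi^{k}(M)\subset p^{-1}M$ you only get $\Phi^{k+1}(M)\subset p^{-1}\Phi(M)\subset p^{-2}M$, so the induction does not close. The appeal to ``non-degeneracy of $M^\vee/M$'' and ``direction of growth'' does not supply the missing inclusion, and the paper makes no such argument. The paper instead runs a \emph{descending} argument that exploits the $\Phi$-invariance of $S_c(M)$ globally rather than bounding the individual $\Phi^i(M)$. From $\Phi(M)\subset p^{-1}M$ one has $S_c(M)\subset p^{-1}S_{c-1}(M)$; by $\Phi$-invariance also $S_c(M)\subset p^{-1}\Phi(S_{c-1}(M))$, and the crucial identity
\[
S_{c-1}(M)\cap \Phi(S_{c-1}(M))=\Phi(S_{c-2}(M))
\]
(which holds because $[S_{c-1}(M):\Phi(S_{c-2}(M))]=[\Phi(S_{c-1}(M)):\Phi(S_{c-2}(M))]=1$ while $\Phi(S_{c-1}(M))\neq S_{c-1}(M)$) gives $S_c(M)\subset p^{-1}S_{c-2}(M)$. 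Iterating yields $S_c(M)\subset p^{-1}M$.

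For (2), your proposed induction (``$\Phi^{k-1}(M)\subset M^\vee$ implies $\Phi^k(M)\subset M^\vee$'') has the same defect: from the inductive hypothesis you only get $\Phi^k(M)\subset \Phi(M^\vee)$, and you offer no mechanism forcing this back into $M^\vee$. The paper's route is again via descent: from $\Phi(M)\subset M^\vee$ one gets $S_c(M)\subset S_{c-1}(M^\vee)$, and then one intersects $\Phi$-translates as above. Here the hypothesis $c\le d$ enters precisely to guarantee that $S_{j}(M^\vee)$ is \emph{not} $\Phi$-invariant for $j\le c-1$, so that the intersection identity $S_{j}(M^\vee)\cap\Phi(S_{j}(M^\vee))=\Phi(S_{j-1}(M^\vee))$ holds at every stage of the descent. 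This is the correct role of $c\le d$, not the vaguer ``prevents escaping'' formulation you give.

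Your reduction of (3) to (2) via the lattice $M^\vee$ with the rescaled form $p\cdot q$ is correct and is a clean way to see the symmetry; the paper simply remarks that (2) and (3) are parallel to (1).
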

	\begin{proof}
	For (i),	we only prove the statement for $S_c(M)$ since the proof for $S_d(M^\vee)$ is literally  the same.

		First, note that
		\begin{align*}
			\Phi(M)\subset p^{-1} M.
		\end{align*}
		Hence
		\begin{align*}
			S_c(M)\subset p^{-1} S_{c-1}(M).
		\end{align*}
		Since $S_c(M)$ is $\Phi$-invariant, we have 
		\begin{align*}
			S_c(M)\subset \cap_{i\in \Z_{\ge 0}}p^{-1}\Phi^i(S_{c-1}(M)).  
		\end{align*}
		Note that
		\begin{align*}
			S_c(M)\subset p^{-1}  \Phi(S_{c-1}(M))\cap p^{-1} S_{c-1}(M)=p^{-1}\Phi(S_{c-2}(M)).
		\end{align*}
      Here, for the last equality,  we use the fact that  $$\Phi(S_{c-1}(M))\neq S_{c-1}(M),$$ $$\Phi(S_{c-2}(M))\subset \Phi(S_{c-1}(M))\cap S_{c-1}(M)$$ and $$[S_{c-1}(M):S_{c-2}(M)]=[\Phi(S_{c-1}(M)):\Phi(S_{c-2}(M))]=1.$$

  Since $S_c(M)$ is $\Phi$-invariant,
		\begin{align*}
			S_c(M)\subset p^{-1}S_{c-2}(M).
		\end{align*}
		Then by an induction argument, we have $S_c(M)\subset p^{-1} M$. Since $S_c(M)$ is $\Phi$-invariant, we have
		\begin{align*}
			S_c(M)\subset  p^{-1} \cap_{i\in \Z_{\ge 0}}\Phi^i(M)\subset  p^{-1} \cap_{0\le i\le f}\Phi^i(M)
		\end{align*}
		for any $f\in \Z_{\ge 0}$.

  (ii) and (iii) are essentially the same as (i) once we observe that the assumption $c\le d$ (resp. $d\le c$) implies that $\Phi(S_{c-1}(M^\vee))\cap S_{c-1}(M^\vee)=\Phi(S_{c-2}(M^\vee))$ (resp. $\Phi(S_{d-1}(M))\cap S_{c-1}(M)=\Phi(S_{c-2}(M))$).
	\end{proof}

The following is a first classification of special lattices.	
	 \begin{proposition}\label{prop: KR stratification}
	 Assume $M$ is a special lattice.    We have either $\Phi(M)\subset M^\vee$ or $\Phi(pM^\vee)\subset M$.
	 \end{proposition}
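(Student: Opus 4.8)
The plan is to dispose of the $\Phi$-stable case at once and otherwise argue by contradiction, using only the relative-position constraints in the definition of a special lattice together with a one-dimensionality count; in particular the quadratic form on individual vectors will play no role, and the argument does not use the value of $h$ (only that $pM^\vee\subset M\subset M^\vee$ and $[M+\Phi(M):M]\le 1$).

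First I would note that, by the definition of a special lattice, $[M+\Phi(M):M]\in\{0,1\}$. If this index is $0$, then $\Phi(M)\subset M\subset M^\vee$ and the first alternative holds. So assume $[M+\Phi(M):M]=1$. Using the isomorphism $(M+\Phi(M))/M\cong \Phi(M)/(M\cap\Phi(M))$ we also get $[\Phi(M):L^-]=1$, where $L^-:=M\cap\Phi(M)$; thus $\Phi(M)/L^-$ is a one-dimensional $\mathbb{F}_p$-vector space.

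Now suppose, for contradiction, that both inclusions fail, and choose $v\in\Phi(M)\setminus M^\vee$ and $u\in\Phi(pM^\vee)\setminus M$. Since $pM^\vee\subset M$, applying $\Phi$ gives $\Phi(pM^\vee)\subset\Phi(M)$, so $u\in\Phi(M)$; and since $\Phi(M^\vee)\subset p^{-1}M^\vee$ (by the same argument that gives $\Phi(M)\subset p^{-1}M$ in the proof of Lemma~\ref{lem: inf inter}), we have $\Phi(pM^\vee)=p\,\Phi(M^\vee)\subset M^\vee$, so in fact $u\in M^\vee$. Because $L^-\subset M\subset M^\vee$, neither $v$ (not in $M^\vee$) nor $u$ (not in $M$) lies in $L^-$; since $\Phi(M)/L^-$ has order $p$, there is a unit $\alpha\in\brZ_p^\times$ with $\ell:=v-\alpha u\in L^-\subset M$. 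Then for every $m\in M$,
\[
(v,m)\;=\;\alpha\,(u,m)+(\ell,m),
\]
and both terms on the right lie in $\brZ_p$: the first because $u\in M^\vee$ and $\alpha\in\brZ_p^\times$, the second because $\ell,m\in M\subset M^\vee$. Hence $(v,m)\in\brZ_p$ for all $m\in M$, i.e.\ $v\in M^\vee$, contradicting the choice of $v$. Therefore $\Phi(M)\subset M^\vee$ or $\Phi(pM^\vee)\subset M$.

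The only external inputs are the elementary index identity $(A+B)/A\cong B/(A\cap B)$, the inclusions $\Phi(M)\subset p^{-1}M$ and $\Phi(M^\vee)\subset p^{-1}M^\vee$ already present in the proof of Lemma~\ref{lem: inf inter}, and the definition of the dual lattice, so I do not anticipate a genuine obstacle. The one delicate point is arranging that the chosen witness $u$ lies simultaneously in $\Phi(M)$ and in $M^\vee$, which is precisely what lets the one-dimensional quotient $\Phi(M)/L^-$ collapse $u$ and $v$ into a single coset and thereby force $v\in M^\vee$. (If one prefers to mirror the structure of Lemma~\ref{lem: inf inter}(2),(3), the same dichotomy can instead be organized according to whether $c\le d$ or $d\le c$, but the direct argument above appears to be the shortest route.)
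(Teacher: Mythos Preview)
Your argument is correct. One cosmetic slip: $\Phi(M)/L^-$ is a one-dimensional $k$-vector space (with $k=\bar{\mathbb{F}}_p$), not a group of order $p$ or an $\mathbb{F}_p$-line; but you only use that any two nonzero classes are proportional by a unit of $\brZ_p$, which still holds. The justification for $\Phi(M^\vee)\subset p^{-1}M^\vee$ really rests on Lemma~\ref{lem: same ind} (to get $[M^\vee+\Phi(M^\vee):M^\vee]\le 1$) together with the observation from the proof of Lemma~\ref{lem: inf inter}; citing both would be cleaner.

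The paper argues along the same contrapositive line but works with lattice identities rather than elements: from $\Phi(M)\not\subset M^\vee$ it obtains $\Phi(M)+M^\vee=M^\vee+\Phi(M^\vee)$, and from $\Phi(pM^\vee)\not\subset M$ it obtains $\Phi(M^\vee)+p^{-1}M=p^{-1}M+p^{-1}\Phi(M)$; chaining these with $\Phi(M)\subset p^{-1}M$ collapses everything to $p^{-1}M$, forcing $\Phi(M)\subset M\subset M^\vee$. Your element-level version makes the mechanism more visible---the single witness $u$ lands in $\Phi(M)\cap M^\vee$ but outside $M$, so it generates $\Phi(M)/L^-$ and drags every other class, in particular $v$, into $M^\vee$---while the paper's version is a terser lattice calculus. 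Both ultimately hinge on the same index-one constraint, so neither approach buys extra generality; yours is arguably easier to read, the paper's is shorter.
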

     \begin{proof}
         If  $\Phi(M)\not\subset M^\vee$, then $\Phi(M)+M^\vee=M^\vee+\Phi(M^\vee)$. Moreover, if $\Phi(pM^\vee)\not\subset M$, then $p^{-1}M+p^{-1}\Phi(M)=\Phi(M^\vee)+p^{-1}M=\Phi(M)+p^{-1}M=p^{-1}M$, which is a contradiction. 
     \end{proof}
 \begin{remark}\label{rem: KR}
     One can check that the conditions $\Phi(M)\subset M^\vee$ and $\Phi(pM^\vee)\subset M$ actually correspond to the conditions for cutting out KR-strata. See \S \ref{sec: ortho local model} and Lemma \ref{lem: KR strata} for more details. See also Remark \ref{rem: rem KR 2}.
 \end{remark}

The main result of this subsection is the following crucial proposition.	
	\begin{proposition}\label{prop: key prop}
	Assume $M$ is a special lattice. Then one of the following will happen: 
		\begin{enumerate}
			\item $p (S_c(M))^\vee \subset p M^\vee \subset M\subset   S_c(M) \subset (S_c(M))^\vee \subset M^\vee.$ In particular, $S_c(M)$ is a vertex lattice of type $t\le h$.
			\item $p M^\vee \subset p S_d(M^\vee)\subset  S_d(M^\vee)^\vee  \subset M\subset   M^\vee\subset S_d(M^\vee)$. In particular, $S_d(M^\vee)^\vee$ is a vertex lattice of type $t\ge h$.
		\end{enumerate}
More precisely,  we have
\begin{enumerate}[label=(\alph*)]
    \item   If $\Phi(p M^\vee)\not\subset   M$, then $M$ satisfies $(1)$.
    \item  If $\Phi(M)\not\subset M^\vee$, then $M$ satisfies $(2)$.
    \item If $\Phi(p M^\vee) \subset M$ and $\Phi(M) \subset M^\vee$, then $M$ satisfies $(1)$   if $c\le d$.

    \item If $\Phi(p M^\vee) \subset M$ and $\Phi(M) \subset M^\vee$, then $M$ satisfies $(2)$   if $d\le c$.
        
\end{enumerate}
	\end{proposition}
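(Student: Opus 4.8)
The plan is to establish the four refined assertions (a)--(d); the dichotomy (1)/(2) is then automatic, since by Proposition~\ref{prop: KR stratification} at least one of $\Phi(M)\subseteq M^\vee$, $\Phi(pM^\vee)\subseteq M$ holds, so every special lattice falls into one of the four cases (with (c) and (d) overlapping exactly when $c=d$). First I would isolate the inclusions that hold unconditionally: in chain~(1), $pM^\vee\subseteq M$ is the special lattice hypothesis, $M\subseteq S_c(M)$ is \eqref{eq: chain}, and dualizing the latter gives $(S_c(M))^\vee\subseteq M^\vee$ and $p(S_c(M))^\vee\subseteq pM^\vee$, so the only remaining inclusion is $S_c(M)\subseteq(S_c(M))^\vee$; symmetrically, in chain~(2) only $pS_d(M^\vee)\subseteq(S_d(M^\vee))^\vee$ remains. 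Here I would use that $S_c(M)$ is by construction the smallest $\Phi$-invariant lattice containing $M$, so that $(S_c(M))^\vee$ is the largest $\Phi$-invariant lattice contained in $M^\vee$, and likewise $(S_d(M^\vee))^\vee$ is the largest $\Phi$-invariant lattice contained in $M$. Since $S_c(M)$ is itself $\Phi$-invariant, $S_c(M)\subseteq(S_c(M))^\vee$ follows as soon as $M\subseteq(S_c(M))^\vee$, i.e.\ as soon as $S_c(M)\subseteq M^\vee$; dually, since $pS_d(M^\vee)=S_d(pM^\vee)$ is $\Phi$-invariant, the remaining inclusion in~(2) follows as soon as $S_d(pM^\vee)\subseteq M$. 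Thus the whole proposition reduces to proving $S_c(M)\subseteq M^\vee$ in cases (a),(c), and $S_d(pM^\vee)\subseteq M$ in cases (b),(d).

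Cases (c) and (d) are then immediate from Lemma~\ref{lem: inf inter}: under the hypotheses of (c), Lemma~\ref{lem: inf inter}(2) gives $S_c(M)\subseteq\bigcap_{i\geq 0}\Phi^i(M^\vee)\subseteq M^\vee$, and under those of (d), Lemma~\ref{lem: inf inter}(3) gives $S_d(pM^\vee)\subseteq\bigcap_{i\geq 0}\Phi^i(M)\subseteq M$.

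For case (a), Proposition~\ref{prop: KR stratification} already yields $\Phi(M)\subseteq M^\vee$ (because $\Phi(pM^\vee)\not\subseteq M$), and I would prove $S_i(M)\subseteq M^\vee$ for all $i$ by induction, the base case $S_0(M)=M\subseteq M^\vee$ being trivial. The engine of the induction is the identity $\Phi(M^\vee)\cap p^{-1}M=\Phi(M^\vee)\cap M^\vee$: one containment holds because $M^\vee\subseteq p^{-1}M$; for the reverse, $[\Phi(M^\vee):\Phi(M^\vee)\cap M^\vee]=[M^\vee+\Phi(M^\vee):M^\vee]\leq 1$ by Lemma~\ref{lem: same ind}, so $\Phi(M^\vee)\cap p^{-1}M$ has index $1$ or $p$ in $\Phi(M^\vee)$, and index $1$ would force $\Phi(M^\vee)\subseteq p^{-1}M$, i.e.\ $\Phi(pM^\vee)\subseteq M$, contradicting the hypothesis of (a). Granting $S_i(M)\subseteq M^\vee$, one has $\Phi^{i+1}(M)\subseteq\Phi(S_i(M))\subseteq\Phi(M^\vee)$, while $\Phi^{i+1}(M)\subseteq S_c(M)\subseteq p^{-1}M$ by Lemma~\ref{lem: inf inter}(1); hence $\Phi^{i+1}(M)\subseteq\Phi(M^\vee)\cap p^{-1}M=\Phi(M^\vee)\cap M^\vee\subseteq M^\vee$, and $S_{i+1}(M)=S_i(M)+\Phi^{i+1}(M)\subseteq M^\vee$. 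Case (b) is the mirror image: $\Phi(M)\not\subseteq M^\vee$ forces $\Phi(pM^\vee)\subseteq M$, the identity $\Phi(M)\cap M^\vee=\Phi(M)\cap M$ holds by the same index count, and induction then gives $\Phi^i(pM^\vee)\subseteq M$ for all $i$, so $S_d(pM^\vee)\subseteq M$.

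The hard part will be cases (a) and (b): there Proposition~\ref{prop: KR stratification} supplies only a single-step containment, and the real task is to bootstrap it to the $\Phi$-saturated statement $S_c(M)\subseteq M^\vee$ (resp.\ $S_d(pM^\vee)\subseteq M$); the leverage is precisely the index computation above, in which the ``KR-stratum'' inequality $\Phi(pM^\vee)\not\subseteq M$ (resp.\ $\Phi(M)\not\subseteq M^\vee$) enters essentially. By contrast, in the complementary cases (c)/(d) that inequality fails, and it is instead the trivial trichotomy $c\leq d$ or $d\leq c$ that unlocks Lemma~\ref{lem: inf inter}(2)/(3). The remaining points are routine bookkeeping: the finiteness and step-size-one structure of the chain \eqref{eq: chain} (already in place), and the degenerate situations $h\in\{0,n\}$, where several of the cases are vacuous and the statement is covered by---and is recovered here, e.g.\ via the duality of Proposition~\ref{prop: intro}, from---the case treated in \cite{HowardPappas}.
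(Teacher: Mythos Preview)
Your proof is correct. For cases (c) and (d) your argument coincides with the paper's: both reduce to $S_c(M)\subset M^\vee$ (resp.\ $S_d(pM^\vee)\subset M$) and then quote Lemma~\ref{lem: inf inter}(2)/(3). For cases (a) and (b), however, you take a genuinely different route. The paper exploits the hypothesis $\Phi(pM^\vee)\not\subset M$ to obtain the structural decomposition $S_c(M)=M+\Phi(pM^\vee)+\cdots+\Phi^c(pM^\vee)$, then computes $(S_c(M))^\vee$ explicitly as $M^\vee\cap\bigcap_{i=1}^{c}\Phi^i(p^{-1}M)$ and checks the inclusion term by term via Lemma~\ref{lem: inf inter}(1). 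You instead first invoke Proposition~\ref{prop: KR stratification} to get $\Phi(M)\subset M^\vee$, then prove $S_i(M)\subset M^\vee$ by induction on $i$, the engine being the identity $\Phi(M^\vee)\cap p^{-1}M=\Phi(M^\vee)\cap M^\vee$ (forced by the index count and the failure of $\Phi(pM^\vee)\subset M$) together with $\Phi^{i+1}(M)\subset S_c(M)\subset p^{-1}M$ from Lemma~\ref{lem: inf inter}(1). Your argument is a bit more elementary and sidesteps the explicit dual computation; the paper's argument has the advantage of producing the decomposition \eqref{eq: 33} along the way. One small remark: the phrase ``index $1$ or $p$'' is imprecise over $\breve{\Z}_p$ (the residue field is $k$, not $\F_p$); what you mean and use is that the colength is $0$ or $1$, which is what Lemma~\ref{lem: same ind} supplies.
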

	\begin{proof}
$(a)$ Assume $\Phi(  M^\vee)\not\subset p^{-1} M$. Since $M\subset   S_c(M)$, it suffices to show  $  S_c(M)\subset S_c(M)^\vee$.

		Since $ M \stackrel{\le 1}{\subset}  M + \Phi(M)$, $\Phi(p M^\vee)\not\subset  M $ and $\Phi(p M^\vee)\subset  \Phi(M)$, we have  $M+\Phi(M)= M+\Phi(p M^\vee)$. In fact, an inductive argument shows that 
		\begin{align}\label{eq: 33}
	 S_c(M)=M+\Phi(p M^\vee)+\cdots +\Phi^c(p M^\vee).
		\end{align}
Equivalently,
		\begin{align}\label{eq: 66}
			S_c(M)^\vee=M^\vee\cap  (\cap_{1\le i\le c} \Phi^i(p^{-1} M)) .
		\end{align}
According to Lemma \ref{lem: inf inter}, we have
		\begin{align}\label{eq: 11}
        \begin{split}
			S_c(p M^\vee)&\subset     \cap_{0\le i\le c}\Phi^i(M^\vee) \subset   M^\vee\cap  (\cap_{1\le i\le c}\Phi^i(p^{-1}M)),\\
           M&\subset   M^\vee\cap (  \cap_{1\le i\le c}\Phi^i(p^{-1}M)).
        \end{split}
		\end{align}
Therefore,
\begin{align*}
    S_c(M)\stackrel{\eqref{eq: 33}}{\subset} M+S_c(p M^\vee) \stackrel{\eqref{eq: 11}}{\subset}M^\vee\cap (  \cap_{1\le i\le c}\Phi^i(p^{-1}M))\stackrel{\eqref{eq: 66}}{=}S_c(M)^\vee.
\end{align*}

$(b)$ Now we assume  $\Phi(M)\not\subset M^\vee$. By construction, we have $S_d(M^\vee)^\vee\subset M\subset   M^\vee\subset S_d(M^\vee)$. Therefore, we only need to show $p S_d(M^\vee)\subset S_d(M^\vee)^\vee$.

		Since $M^\vee \stackrel{\le 1}{\subset} M^\vee +\Phi(M^\vee)$ and $\Phi(M)\not\subset M^\vee$, we have  $M^\vee +\Phi(M^\vee)=M^\vee +\Phi(M)$. In fact, an inductive argument shows that 
		\begin{align}\label{eq: 3}
			S_d(M^\vee)=M^\vee+\Phi(M)+\cdots +\Phi^d(M).
		\end{align}
		Equivalently,
		\begin{align}\label{eq: 6}
			S_d(M^\vee)^\vee=M\cap (\cap_{1\le i\le d}\Phi^i(M^\vee)).
		\end{align}
According to Lemma \ref{lem: inf inter}, we have
		\begin{align}\label{eq: 1}
        \begin{split}
			S_d(p M)&\subset     \cap_{0\le i\le d}\Phi^i(M) \subset   M \cap  (\cap_{1\le i\le d}\Phi^i( M^\vee)),\\
           p M^\vee &\subset   M \cap (  \cap_{1\le i\le d}\Phi^i( M^\vee)).
        \end{split}
		\end{align}
Therefore,
\begin{align*}
   p  S_d(M^\vee)\stackrel{\eqref{eq: 3}}{\subset} p M^\vee+  S_d(p  M) \stackrel{\eqref{eq: 1}}{\subset}M \cap (  \cap_{1\le i\le d}\Phi^i( M^\vee))\stackrel{\eqref{eq: 6}}{=}S_d(M^\vee)^\vee.
\end{align*}

$(c)$ Assuming  $\Phi(M) \subset M^\vee$  and $c\le d$, we will show that $M$ satisfies $(1)$ (a stronger statement than $(c)$).  Note that $M+\Phi(M)\subset M^\vee$. Inductively, we see that  $S_c(M)\subset S_{c-1}(M^\vee)$. Since $S_c(M)$ is $\Phi$-invariant,  we have $S_c(M) \subset \cap_{0\le i\le c} \Phi^i(M^\vee) \subset (S_c(M))^\vee $ by   Lemma \ref{lem: inf inter}. Hence, we have
		\begin{align*}
			M\subset S_c(M) \subset (S_c(M))^\vee \subset M^\vee.
		\end{align*}
		Since $p M^\vee \subset M \subset M^\vee$, we have 
		\begin{align*}
		p (S_c(M))^\vee \subset p M^\vee \subset M\subset   S_c(M) \subset (S_c(M))^\vee \subset M^\vee.
		\end{align*}

 $(d)$  Assuming  $\Phi(p M^\vee) \subset M$  and $d\le c$, we will show that $M$ satisfies $(2)$.   
 Note that $p M^\vee+\Phi(p M^\vee)\subset M$. Inductively, we see that  $S_d(p M^\vee)\subset S_{d-1}(M)$. Since $S_d(p M^\vee)$ is $\Phi$-invariant,  we have $S_d(p M^\vee) \subset \cap_{0\le i\le d} \Phi^i(M) \subset p^{-1}(S_d(p M^\vee))^\vee $ by   Lemma \ref{lem: inf inter}. Hence, we have
		\begin{align*}
			p M^\vee \subset p S_d(M^\vee) \subset S_d(M^\vee)^\vee \subset M.
		\end{align*}
		Since $p M^\vee \subset M \subset M^\vee$, we have 
		\begin{align*}
		p M^\vee \subset p S_d(M^\vee)\subset  S_d(M^\vee)^\vee  \subset M\subset   M^\vee\subset S_d(M^\vee).
		\end{align*}\end{proof}

\section{Deligne--Lusztig varieties}

We let $V$ be a quadratic space over $\bbQ_p$ and let $G=\GSpin(V)$, $b$, $\mu$ be as in the last section. In particular, we have an associated quadratic space $\bbV$ given by Proposition \ref{prop: V^Phi}. We fix $L\subset V$ a vertex lattice of type $h$. Let $\cV^{\ge h}(\bV)$ (resp. $\cV^{\le h}(\bV)$) denote the set of vertex lattices in $\bV$ of type $t\ge h$ (resp. $t\le h$). If the quadratic space is clear in the context, we simply denote it as $\cV^{\ge h} $ (resp. $\cV^{\le h} $).  In this section, we attach to a vertex lattice $\Lambda \in \cV(\bbV)^{\ge h}$ (resp.  $\Lambda \in \cV(\bbV)^{\le h}$) a $k$-scheme   $S_{\Lambda}$  (resp. $R_{\Lambda}$ ) parameterizing certain special lattices.
Then we relate $S_{\Lambda}$ and  $R_\Lambda$ with certain Deligne-Lusztig varieties and study their geometry.

\subsection{The variety $S_{\Lambda}$ and  $R_\Lambda$}

\subsubsection{}
 First, we can associate to a vertex lattice certain quadratic spaces defined over $\F_p$ as follows. 
Let $\Lambda\subset \bbV$ be a vertex lattice of type $t_\Lambda$ and rank $n$ over $O_F$ with quadratic form $(\, ,\, )$. We set
\begin{align*}
   V_{\Lambda}\coloneqq \Lambda^\vee/ \Lambda \quad  \text{ and } \quad V_{\Lambda^\vee} \coloneqq \Lambda/p \Lambda^\vee  
\end{align*}
with quadratic form induced from $p(\, , \, )$ and $ (\, , \, )$ respectively. Note that  $\dim_{\F_p}V_{\Lambda}=t_\Lambda$  and $\dim_{\F_p}V_{\Lambda^\vee}=n-t_\Lambda$.   
   
   Set
	\begin{align*} 
	\Omega_{\Lambda}&\coloneqq V_{\Lambda} \otimes_{\mathbb{F}_p} k  \stackrel{\sim}{\rightarrow} \Lambda_{\breve{\Z}_p}^\vee / \Lambda_{\breve{\Z}_p},\\
    \Omega_{\Lambda^\vee}&\coloneqq V_{\Lambda^\vee} \otimes_{\mathbb{F}_p} k  \stackrel{\sim}{\rightarrow} \Lambda_{\breve{\Z}_p} /p  \Lambda_{\breve{\Z}_p}^{\vee}
  \end{align*} 
	with its Frobenius operator  $\mathrm{Id}\otimes \sigma=\Phi$.   
	
\subsubsection{}	For a quadratic space $\Omega$  over $k$,  we let $\mathrm{OGr}(i,\Omega)$ be the orthogonal Grassmannian that parametrizes $i$-dimensional isotropic subspaces of $\Omega$.
 For $\Lambda\in \cV^{\ge h}$, let  $S_{\Lambda}$ be the reduced closed subscheme of $\mathrm{OGr}(\frac{t_\Lambda-h}{2},\Omega_\Lambda)$ whose $k$-points are described as follows 
	\begin{align*}
		S_{\Lambda}(k) & = \{  \mathscr{V} \in \mathrm{OGr}(\frac{t_\Lambda-h}{2},\Omega_\Lambda)(k)\mid  \operatorname{dim}_k (\mathscr{V}+\Phi(\mathscr{V}))\le \operatorname{dim}_k (\mathscr{V})+1 \} \\
		& \stackrel{\sim}{\rightarrow} \{\text { special lattices } M \subset V_{\breve{\Q}_p}: \Lambda_{\brZ_p} \subset M \subset   M^\vee \subset \Lambda_{\breve{\Z}_p}^\vee\}.
	\end{align*}

  Similarly, for $\Lambda \in\cV^{\le h}$, let $R_{\Lambda}  $ be the reduced closed subscheme of $\mathrm{OGr}(\frac{h-t_\Lambda}{2},\Omega_{\Lambda^\vee})$  with $k$-points
	\begin{align*}
		R_{\Lambda}(k) & = \{   \mathscr{V}  \in \mathrm{OGr}(\frac{h-t_\Lambda}{2},\Omega_{\Lambda^\vee})(k) \mid  \operatorname{dim}_k (\mathscr{V}+\Phi(\mathscr{V}))\le \operatorname{dim}_k (\mathscr{V})+1  \} \\
		& \stackrel{\sim}{\rightarrow} \{\text { special lattices } M \subset V_{\breve{\Q}_p}: p\Lambda_{\breve{\Z}_p}^\vee \subset p M^\vee \subset M \subset \Lambda_{\breve{\Z}_p}\}.
	\end{align*}
Note that when $h=0$, $S_\Lambda$ has two connected components (see \cite[Proposition 5.3.2]{HowardPappas}). In this case, we write  $S_\Lambda$ as $S_\Lambda=S_\Lambda^+\cup S_\Lambda^-$.  Note that the definition of $S_\Lambda$ and $R_\Lambda$ also depend on the fixed $h$. We will denote it as  $S_\Lambda^{[h]}$ and $R_\Lambda^{[h]}$ if we want to emphasize the $h$ here. 

Moreover, for certain $\bV$ and $h$, $\cV^{\ge h}$ (resp.  $\cV^{\le h}$) might  only contain vertex lattices of type $h$ so that $S_\Lambda$ (resp. $R_\Lambda$) is simply a point. For example, when $h=2$ and $\epsilon(\bV)=-1$, then  $\cV^{\le h}$ is the set of vertex lattices of type $2$ since there is no self-dual lattice in $\bbV$ by  $\epsilon(\bV)=-1$.

For a quadratic lattice $\Lambda$, we use $\Lambda^{(p)}$ to denote the same lattice with quadratic form $q(\,)=p\cdot q(\,)_\Lambda$. We use $\Lambda^\sharp$ to denote $(\Lambda^\vee)^{(p)}$.
\begin{lemma}\label{lem: duality DL var}
    Assume    $\Lambda $ is a vertex lattice of type $ t_\Lambda\le  h$ in $\bV^\sharp$ so that $\Lambda^\sharp \subset \bV$ is a vertex lattice of type $t_{\Lambda^\sharp}=n-t_\Lambda\ge n- h$.  Then we have 
    \begin{align*}
       R_{\Lambda}^{[h]} \cong S_{\Lambda^\sharp}^{[n-h]}.
    \end{align*}
\end{lemma}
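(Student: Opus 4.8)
The plan is to unwind the definitions of $R_\Lambda^{[h]}$ and $S_{\Lambda^\sharp}^{[n-h]}$ and produce an explicit isomorphism of orthogonal Grassmannians carrying one subscheme onto the other. First I would set up notation: write $\Lambda\subset\bbV^\sharp$ for the vertex lattice of type $t_\Lambda\le h$, so $\Lambda^\sharp=(\Lambda^\vee)^{(p)}\subset\bbV$ is a vertex lattice of type $n-t_\Lambda\ge n-h$, and recall from \S\ref{sec: quadratic space over breveF} and the discussion before Proposition \ref{prop: intro} that $\bbV^\sharp$ is identified with $\widetilde\pi\bbV\subset\bbV_E$, under which $\Lambda^\sharp$ corresponds to $\widetilde\pi\Lambda$; in particular the dual and scaling operations interchange as $(\Lambda^\sharp)^\vee = \widetilde\pi^{-1}\Lambda^\vee = p^{-1}\widetilde\pi\,\Lambda^{\vee\vee}$... more carefully, $(\Lambda^\vee)^{(p)}$ has dual lattice $p^{-1}\Lambda$, so $(\Lambda^\sharp)^\vee = p^{-1}\Lambda$ with the scaled form. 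The key combinatorial point is then that the two ambient quadratic spaces over $\mathbb F_p$ coincide:
\begin{align*}
V_{(\Lambda^\sharp)^\vee} = \Lambda^\sharp/p(\Lambda^\sharp)^\vee = \widetilde\pi\Lambda\big/\widetilde\pi\Lambda^\vee \;\cong\; \Lambda/\Lambda^\vee \cdot(\text{twist}),
\end{align*}
which I would match up with $V_\Lambda = \Lambda^\vee/\Lambda$ for the lattice $\Lambda\subset\bbV^\sharp$ — being careful about which of the two quadratic forms (the one induced by $q$ versus by $p\cdot q$) is in play, since passing from $\bbV$ to $\bbV^\sharp$ multiplies the form by $p$ and this is exactly what interchanges the roles of $V_\Lambda$ and $V_{\Lambda^\vee}$.

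Concretely, I expect to check that $\Omega_{\Lambda}$ (built from $\Lambda\subset\bbV^\sharp$, as it appears in the definition of $R_\Lambda^{[h]}$, i.e.\ $V_{\Lambda^\vee}\otimes_{\mathbb F_p}k \xrightarrow{\sim}\Lambda^\vee_{\breve\Z_p}/\Lambda_{\breve\Z_p}$... no — in the $R$ case the relevant space is $\Omega_{\Lambda^\vee}=V_\Lambda\otimes k\xrightarrow{\sim}\Lambda_{\breve\Z_p}/p\Lambda^\vee_{\breve\Z_p}$) is canonically isometric, compatibly with Frobenius $\Phi$, to the space $\Omega_{\Lambda^\sharp}$ appearing in the definition of $S_{\Lambda^\sharp}^{[n-h]}$, namely $V_{(\Lambda^\sharp)^\vee}\otimes k\xrightarrow{\sim}(\Lambda^\sharp)^\vee_{\breve\Z_p}/\Lambda^\sharp_{\breve\Z_p}$ — wait, I must recheck: $S_{\Lambda^\sharp}^{[n-h]}$ is a closed subscheme of $\mathrm{OGr}(\frac{t_{\Lambda^\sharp}-(n-h)}{2},\Omega_{\Lambda^\sharp})$ with $\Omega_{\Lambda^\sharp}=V_{(\Lambda^\sharp)^\vee}\otimes k$. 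The dimensions of the Grassmannians match: $\frac{t_{\Lambda^\sharp}-(n-h)}{2}=\frac{(n-t_\Lambda)-(n-h)}{2}=\frac{h-t_\Lambda}{2}$, which is exactly the index for $R_\Lambda^{[h]}\subset\mathrm{OGr}(\frac{h-t_\Lambda}{2},\Omega_{\Lambda^\vee})$. So the two orthogonal Grassmannians are literally the same once the quadratic spaces are identified, and the defining condition $\dim_k(\mathscr V+\Phi(\mathscr V))\le\dim_k\mathscr V+1$ is manifestly preserved since the Frobenius operators correspond. Therefore the reduced closed subschemes coincide, giving the isomorphism.

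The cleanest way to organize the proof is via the dictionary with special lattices already provided in the excerpt: $R_\Lambda^{[h]}(k)$ is in bijection with special lattices $M\subset V_{\breve\Q_p}$ (for the GSpin datum attached to $\bbV$ with level $h$... here $V_{\breve\Q_p}$ should be $\bbV_{\breve\Q_p}$) satisfying $p\Lambda^\vee_{\breve\Z_p}\subset pM^\vee\subset M\subset\Lambda_{\breve\Z_p}$, while $S_{\Lambda^\sharp}^{[n-h]}(k)$ is in bijection with special lattices $M'\subset\bbV^\sharp_{\breve\Q_p}$ (level $n-h$) satisfying $(\Lambda^\sharp)_{\breve\Z_p}\subset M'\subset (M')^\vee\subset(\Lambda^\sharp)^\vee_{\breve\Z_p}$. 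Under $M'=\widetilde\pi M$ and the identification $\bbV^\sharp_{\breve\Q_p}=\widetilde\pi\bbV_{\breve\Q_p}$ (so $q_{\bbV^\sharp}=p\cdot q_{\bbV}$ and hence $(M')^\vee = p^{-1}\widetilde\pi M^\vee$), the chain $(\Lambda^\sharp)\subset M'\subset(M')^\vee\subset(\Lambda^\sharp)^\vee$ becomes $\widetilde\pi\Lambda\subset\widetilde\pi M\subset p^{-1}\widetilde\pi M^\vee\subset\widetilde\pi\cdot p^{-1}\Lambda$, i.e.\ $\Lambda\subset M\subset p^{-1}M^\vee\subset p^{-1}\Lambda$, i.e.\ $p\Lambda^\vee\subset pM^\vee$... hmm, I need $p\Lambda^\vee_{\breve\Z_p}\subset pM^\vee\subset M\subset\Lambda_{\breve\Z_p}$ — here with $\Lambda\subset\bbV^\sharp$ its dual inside $\bbV^\sharp$ is $\widetilde\pi^{-1}(\widetilde\pi\Lambda)^{\vee_{\bbV}}\cdot(\cdots)$, so there is a genuine check that the two defining lattice chains match up, and this is where I must be most careful. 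The verification that the special-lattice condition (the index bound on $M+\Phi_*(M)$, and the self-duality-up-to-$p$) transports correctly is routine given Lemma \ref{lem: same ind} and the compatibility of $\Phi$ with $\widetilde\pi$ (note $\Phi\widetilde\pi = -\widetilde\pi\Phi$ on $\bbV_E$, but this sign does not affect the lattice conditions). The main obstacle will be bookkeeping the various dualities and $p$-scalings — in particular tracking that $\widetilde\pi^2=p$ and that taking $(-)^\vee$ in $\bbV^\sharp$ versus in $\bbV$ differ by a factor of $p$ — to confirm the two subschemes of the (identified) orthogonal Grassmannian are defined by identical conditions. Once the $k$-point identification is in hand, the isomorphism of reduced schemes follows since both are reduced closed subschemes of the same ambient $\mathrm{OGr}$, cut out by the same locus.
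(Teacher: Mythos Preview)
Your core approach is exactly the paper's: identify the ambient quadratic spaces $\Omega_{\Lambda^\vee}$ and $\Omega_{\Lambda^\sharp}$ compatibly with $\Phi$, note the Grassmannian indices agree via $\frac{t_{\Lambda^\sharp}-(n-h)}{2}=\frac{h-t_\Lambda}{2}$, and observe that the defining condition $\dim_k(\scrV+\Phi(\scrV))\le\dim_k\scrV+1$ transfers verbatim. The paper carries this out in one line, via the chain of isometries $\Lambda/p\Lambda^\vee \cong (p^{-1}\Lambda)/\Lambda^\vee \cong (p^{-1}\Lambda)^{(p)}/(\Lambda^\vee)^{(p)} = (\Lambda^\sharp)^\vee/\Lambda^\sharp$ (multiply by $p^{-1}$, then rescale the form by $p$); your second route through the special-lattice dictionary and $\widetilde\pi$ is correct in spirit but unnecessary, since the first half of your proposal already constitutes the complete proof.
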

\begin{proof}
 Note that we have the following natural identification of quadratic spaces that preserves the quadratic forms: $$\Lambda/p\Lambda^\vee \cong (p^{-1}\Lambda )/\Lambda^\vee\cong (p^{-1}\Lambda )^{(p)}/(\Lambda^\vee)^{(p)}=  (\Lambda^\sharp)^\vee/\Lambda^\sharp.$$
 This identification induces an identification of \begin{align*} \mathrm{OGr}(\frac{h-t_\Lambda}{2},\Omega_{\Lambda^\vee})   \quad  \text{and} \quad  \mathrm{OGr}(\frac{t_{\Lambda^\sharp}-(n-h)}{2},\Omega_{\Lambda^\sharp}) ,
 \end{align*} which restricts to an identification between $R_\Lambda^{[h]}$ and $S_{\Lambda^\sharp}^{n-h}$.
\end{proof}
Because of the above lemma, we will focus on $S_\Lambda$ from now on. First, we consider a stratification of $S_\Lambda$. 
Assume $\Lambda\subset \Lambda'$ and $t(\Lambda')\ge h$, then we may regard   $S_{\Lambda'}$ as a closed subscheme of $S_{\Lambda}$ as follows. Let $\pi: \Lambda_{\brZ_p}^\vee/\Lambda_{\brZ_p}\to \Lambda_{\brZ_p}^\vee/\Lambda_{\brZ_p}'$ be the natural quotient map. Then for $\scrV'\in S_{\Lambda'}$, one can check that $\pi^{-1}(\scrV')\in S_{\Lambda}$. Now let 
\begin{align*}
    S_\Lambda^\circ \coloneqq S_\Lambda \setminus \cup_{\Lambda \subsetneq\Lambda'}S_{\Lambda'}.
\end{align*}
By construction, we have 
\begin{align*}
    S_\Lambda =   \cup_{\Lambda\subset\Lambda'}S_{\Lambda'}^\circ.
\end{align*}

\subsubsection{}
We need to consider the following natural decomposition of $S_{\Lambda}$ later:
\begin{align}\label{eq: decom of SLambda}
    S_{\Lambda}=S_{\Lambda}^{\smallheartsuit}\sqcup S_\Lambda^{\smalldagger}\sqcup S_{\Lambda}^{\Phi}
\end{align}
where   $S_\Lambda^{\smallheartsuit}$ is the  open subvariety of $S_\Lambda$ whose $k$-points are 
\begin{align*}
		S_{\Lambda}^{\smallheartsuit}(k) & = \{\text {isotropic subspace } \mathscr{V}\in S_{\Lambda}(k)\mid \Phi(\scrV)\not\subset \scrV^{\perp} \} \\
		 & = \{\text {isotropic subspace } \mathscr{V}\in S_{\Lambda}\mid \scrV+\Phi(\scrV) \text{ is not totally isotropic}\},
	\end{align*}
  $S_\Lambda^{\smalldagger}$ is the subvariety of $S_\Lambda$ such that  
\begin{align*}
    S_\Lambda^{\smalldagger}(k)=\{\scrV \in S_\Lambda(k) \mid \scrV+\Phi(\scrV) \text{ is totally isotropic and $\scrV\neq \Phi(\scrV)$}\},
\end{align*}
and 
$S_\Lambda^\Phi$ is the subvariety of $S_\Lambda$ such that  
\begin{align*}
    S_\Lambda^\Phi(k)=\{\scrV \in S_{\Lambda}(k) \mid \Phi(\scrV)=\scrV \}.
\end{align*}

Let $F_{\frac{t-h}{2}-i}^{\smalldagger}\coloneqq \cap_{\ell=0}^i\Phi^\ell(\scrV)$ and $F_{\frac{t-h}{2}+i}^{\smalldagger}\coloneqq \sum_{\ell=0}^i\Phi^\ell(\scrV)$ for $i\ge 0$. Let $S_\Lambda^0$ be the subvariety of $S_\Lambda^{\smalldagger}$ such that  
\begin{align*}
    S_\Lambda^0(k)=\{\scrV \in S_{\Lambda}(k) \mid F_i^{\smalldagger} \text{ is totally isotropic for all $i$} \}.
\end{align*}

Assume $\Lambda'$ is another vertex lattice that contains $\Lambda$ and set $\Omega_{\Lambda'/\Lambda}:=\Lambda'/\Lambda\otimes_{\bbF_p}k$.  
Let $S_{\Lambda',\Lambda}$ denote the subvariety of $S_{\Lambda}$ such that 
\begin{align*}
    S_{\Lambda',\Lambda}(k)=\{\scrV \in S_{\Lambda}\mid \scrV \subset \Omega_{\Lambda'/\Lambda}\}.
\end{align*}
The subvariety of the form $S_{\Lambda',\Lambda}$ will naturally show up later.
Note that if $\scrV \in S_{\Lambda',\Lambda}$, then $\Phi^i(\scrV)\subset \Omega_{\Lambda'/\Lambda}$ for any $i\ge 0$. In fact, since $\Omega_{\Lambda'/\Lambda}$ is already totally isotropic, any subspace of it is automatically totally isotropic. So we can simply define $S_{\Lambda',\Lambda}$ as a subvariety of the usual Grassmannian $\Gr(\frac{t_{\Lambda}-h}{2},\Omega_{\Lambda'/\Lambda})$ such that 
\begin{align*}
    S_{\Lambda',\Lambda}(k)=\{\scrV \in \Gr(\frac{t_{\Lambda}-h}{2},\Omega_{\Lambda'/\Lambda})(k) \mid  \operatorname{dim}_k (\mathscr{V}+\Phi(\mathscr{V}))\le \operatorname{dim}_k (\mathscr{V})+1 \}.
\end{align*}
Since for each $\scrV\in S_\Lambda^0(k)$, $F_\bullet^{\smalldagger}(\scrV)$ will eventually stabilize to the base change of some rational subspace of $\Omega_{\Lambda}$, we have the following decomposition:
\begin{align*}
    S_\Lambda^0=\cup_{\substack{\Lambda\subset \Lambda',\\ t(\Lambda')\le h}} S_{\Lambda',\Lambda}.
\end{align*}

\subsubsection{} We will need the following description of tangent spaces of $S_\Lambda^{\smallheartsuit}$ in \S \ref{sec: bruhat tits stratification}. 
\begin{proposition}\label{prop: tangent space of SLambda}
   Assume  $z=[\scrV]\in S_\Lambda^{\smallheartsuit}(k)$, and let $T_z{S_{\Lambda}}$ denote the tangent space of $S_\Lambda$ at $z$. Then we have 
   \begin{align*}
       T_{z}S_{\Lambda}\cong \Hom( \scrV+\Phi(\scrV)/\Phi(\scrV),\Omega_{\Lambda}/(\scrV+\Phi(\scrV))).
   \end{align*}
\end{proposition}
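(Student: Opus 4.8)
The plan is to compare $S_\Lambda$ near $z$ with an explicit, possibly non-reduced, determinantal scheme whose tangent space is easy to compute, and then to match dimensions. Write $m=\tfrac{t_\Lambda-h}{2}=\dim\scrV$ and $N=t_\Lambda=\dim_k\Omega_\Lambda$, and set $X=\mathrm{OGr}(m,\Omega_\Lambda)$. Let $\mathcal{S}\subset\Omega_\Lambda\otimes_k\mathcal{O}_X$ be the tautological isotropic subbundle, and let $\mathcal{S}^{(p)}$ be the pullback of $\mathcal{S}$ along the absolute Frobenius of $X$, viewed again inside $\Omega_\Lambda\otimes_k\mathcal{O}_X$ via the natural $\F_p$-structure on $\Omega_\Lambda$, so that on $k$-points $\mathcal{S}^{(p)}$ has fibre $\Phi(\scrV)$ at $[\scrV]$. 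Let $\widetilde{S}_\Lambda\subseteq X$ be the closed subscheme cut out by the condition that $\mathcal{S}\oplus\mathcal{S}^{(p)}\to\Omega_\Lambda\otimes\mathcal{O}_X$ has rank $\le m+1$ (vanishing of the $(m+2)$-minors). Then $\widetilde{S}_\Lambda(k)=S_\Lambda(k)$, so $S_\Lambda=(\widetilde{S}_\Lambda)_{\mathrm{red}}$ and there is a closed immersion $S_\Lambda\hookrightarrow\widetilde{S}_\Lambda$, giving $T_zS_\Lambda\subseteq T_z\widetilde{S}_\Lambda$. I will compute $T_z\widetilde{S}_\Lambda$ and show it has dimension $\tfrac{t_\Lambda+h}{2}-1=m+h-1$; since $S_\Lambda$ is irreducible of this dimension (by its identification with Deligne--Lusztig varieties, Theorem~\ref{thm: intro 2}), we have $\dim T_zS_\Lambda\ge\dim_zS_\Lambda=m+h-1$, which forces $T_zS_\Lambda=T_z\widetilde{S}_\Lambda$.

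To compute $T_z\widetilde{S}_\Lambda$, put $Q=\scrV+\Phi\scrV$ and $W=\scrV\cap\Phi\scrV$. Because $z\in S_\Lambda^{\smallheartsuit}$, the space $Q$ is not totally isotropic, so $\Phi\scrV\ne\scrV$, $\dim Q=m+1$ and $\dim W=m-1$; thus the evaluation map $\psi_z\colon\scrV\oplus\Phi\scrV\to\Omega_\Lambda$ has rank exactly $m+1$, with kernel $W$ (embedded diagonally) and cokernel $\Omega_\Lambda/Q$. The standard linearization of a determinantal condition at a point of maximal rank gives
\[
T_z\widetilde{S}_\Lambda=\bigl\{\varphi\in T_zX \ :\ \bigl[\,W\hookrightarrow\scrV\oplus\Phi\scrV\xrightarrow{d\psi_z(\varphi)}\Omega_\Lambda\twoheadrightarrow\Omega_\Lambda/Q\,\bigr]=0\bigr\}.
\]
The key simplification is that the absolute Frobenius has vanishing differential, so $\mathcal{S}^{(p)}$ does not deform to first order: for a square-zero deformation $\widetilde{\scrV}$ of $\scrV$ one has $\Phi(\widetilde{\scrV})=\Phi(\scrV)\otimes_k k[\epsilon]$ on the nose. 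Hence, under the usual identification of $T_zX$ with the form-compatible homomorphisms $\varphi\colon\scrV\to\Omega_\Lambda/\scrV$ (those with $\langle\varphi(v),v\rangle=0$ for all $v$, where $\langle\,,\,\rangle\colon\Omega_\Lambda/\scrV\times\scrV\to k$ is the pairing induced by the form), the derivative $d\psi_z(\varphi)$ is $(u,w)\mapsto\widetilde{\varphi}(u)$, and the displayed condition becomes simply $\varphi(W)\subseteq Q/\scrV$. Therefore every $\varphi\in T_z\widetilde{S}_\Lambda$ descends to a homomorphism $\scrV/W\to\Omega_\Lambda/Q$, i.e.\ an element of $\Hom\bigl((\scrV+\Phi\scrV)/\Phi\scrV,\Omega_\Lambda/(\scrV+\Phi\scrV)\bigr)$, and this yields a natural linear map $\psi\colon T_z\widetilde{S}_\Lambda\to\Hom\bigl((\scrV+\Phi\scrV)/\Phi\scrV,\Omega_\Lambda/(\scrV+\Phi\scrV)\bigr)$, whose target has dimension $N-m-1=m+h-1$.

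Next I claim $\psi$ is an isomorphism. For injectivity, an element of the kernel is a form-compatible $\varphi$ whose image lies in the line $Q/\scrV\subseteq\Omega_\Lambda/\scrV$; writing $\varphi(v)=\ell(v)\bar{q}$ for a linear form $\ell$ on $\scrV$ and a generator $\bar{q}$ of $Q/\scrV$, compatibility with the quadratic form becomes $\ell(v)\,\beta(v)=0$ for all $v\in\scrV$, where $\beta(v)=\langle\bar{q},v\rangle$. Here $z\in S_\Lambda^{\smallheartsuit}$ is used: representing $\bar{q}$ by some $\tilde{q}\in\Phi\scrV$, if $\beta=0$ then $\tilde{q}\perp\scrV$, hence $\Phi\scrV\subseteq Q\subseteq\scrV^\perp$, making $\scrV+\Phi\scrV$ totally isotropic --- a contradiction. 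So $\beta\ne0$, and a product of two linear forms vanishing identically forces $\ell=0$, so $\varphi=0$. Surjectivity is a similar computation: one notes $\beta|_W=0$ (as $W\subseteq\Phi\scrV$ and $\Phi\scrV$ is isotropic) while $\beta\ne0$, so $\beta(e)\ne0$ for $e\in\scrV\setminus W$, and then for a given target homomorphism $\tau$ the conditions $\langle\varphi(e),e\rangle=0$ and form-compatibility along $W$ determine $\varphi$ uniquely. Thus $T_z\widetilde{S}_\Lambda\cong\Hom\bigl((\scrV+\Phi\scrV)/\Phi\scrV,\Omega_\Lambda/(\scrV+\Phi\scrV)\bigr)$, of dimension $m+h-1$; combining with the first paragraph, $T_zS_\Lambda=T_z\widetilde{S}_\Lambda$ and the proposition follows.

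The step I expect to be most delicate is the determinantal tangent-space bookkeeping together with the comparison between $\widetilde{S}_\Lambda$ and $S_\Lambda$: one computes directly only the tangent space of the naive scheme $\widetilde{S}_\Lambda$, and it is through the external input that $S_\Lambda$ is irreducible of the expected dimension that one knows $T_zS_\Lambda$ is not strictly smaller. One could instead try to prove $\widetilde{S}_\Lambda$ smooth at $z$ directly, but checking that the defining minors are infinitesimally independent there --- in this non-generic family --- seems more painful than invoking $\dim S_\Lambda$.
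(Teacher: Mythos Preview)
Your proof is correct and follows essentially the same approach as the paper: both exploit that Frobenius has zero differential (so $\Phi(\widetilde{\scrV})=\Phi(\scrV)\otimes_k k[\epsilon]$), use the $S_\Lambda^{\smallheartsuit}$ hypothesis to get $(\Phi(v),v)\neq 0$ (your $\beta\neq 0$) for the injectivity step, and invoke $\dim S_\Lambda=\tfrac{t_\Lambda+h}{2}-1$ from the Deligne--Lusztig description to close the argument. The only organisational difference is that you make the determinantal scheme $\widetilde{S}_\Lambda$ explicit and prove $T_z\widetilde{S}_\Lambda\cong\Hom(\cdots)$ bijectively before comparing with $T_zS_\Lambda$, whereas the paper constructs the map $T_zS_\Lambda\to\Hom(\cdots)$ directly, proves only injectivity, and uses the dimension count for surjectivity; one minor caution is that your reference for the dimension should be to the Deligne--Lusztig identification of $S_\Lambda$ itself (Theorem~\ref{thm: decom pf SLambda}) rather than Theorem~\ref{thm: intro 2}, since the latter also asserts $\calZ_{\red}^{(\ell)}(\Lambda)\cong S_\Lambda$, whose proof uses the present proposition.
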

\begin{proof}
Let $\widetilde{\scrV}\in T_zS_\Lambda$, which we consider as an element of $S_{\Lambda}^{\smallheartsuit}(k[\epsilon])$ with $\widetilde{\scrV}\otimes_{k[\epsilon]} k = \scrV$; here $k[\epsilon]$ is the ring of dual numbers. Then $\widetilde{\scrV}+\Phi(\widetilde{\scrV})$ is finite free of rank $\frac{t_\Lambda-h}{2}+1$ over $k[\epsilon]$ and lifts $\scrV+\Phi(\scrV)$, hence gives us an element of the tangent space of the usual Grassmannian $\mathrm{Gr}(\frac{t_\Lambda-h}{2}+1,\Omega_\Lambda)$ at the point $\scrV+\Phi(\scrV).$ It follows that we obtain a map of vector spaces \begin{equation}\label{eqn: tangent map 1} T_z S_\Lambda\rightarrow\Hom_k(\scrV+\Phi(\scrV), \Omega_\Lambda/(\scrV+\Phi(\scrV))).\end{equation}
Note that since $\epsilon^p=0$, we have $\Phi(\widetilde{\scrV})=\Phi(\scrV)\otimes_kk[\epsilon].$ Thus \eqref{eqn: tangent map 1} factors through a map \begin{equation}\label{eqn: tangent space map 2}T_zS_\Lambda\rightarrow \Hom_k( \scrV+\Phi(\scrV)/\Phi(\scrV),\Omega_{\Lambda}/(\scrV+\Phi(\scrV))).
\end{equation}

We claim that this map is injective. Let $\widetilde{\scrV}$ be an element of $T_zS_\Lambda$, which we identify with a homomorphism $\lambda:\scrV\rightarrow \Omega_\Lambda/\scrV$ by considering $\widetilde{\scrV}$ as a $k[\epsilon]$-point of $\Gr(\frac{t_\Lambda-h}{2},\Omega_\Lambda).$ Then $\lambda$ factors through $\scrV/\scrV\cap\Phi(\scrV)$, and its image $\gamma$ under \eqref{eqn: tangent space map 2} is given by composing with the natural isomorphism $\scrV/\scrV\cap\Phi(\scrV)\cong \scrV+\Phi(\scrV)/\Phi(\scrV)$, and the projection $\Omega_\Lambda/\scrV\rightarrow\Omega_\Lambda/(\scrV+\Phi(\scrV)).$ We will show that $\lambda$ is determined by $\gamma.$

Let $v\in \scrV$ be a generator of the 1-dimensional vector space $\scrV/\scrV\cap\Phi(\scrV)\cong \scrV+\Phi(\scrV)/\Phi(\scrV)$ and fix a lifting $\tilde{\gamma}:\scrV+\Phi(\scrV)/\Phi(\scrV)\rightarrow \Omega_\Lambda/\scrV$ of $\gamma$. Then $\scrV+\Phi(\scrV)$ is generated by $\scrV$ and $\Phi(v)$, and for any $u\in \scrV$, we have $\lambda(u)=c_u\Phi(v)+\tilde{\gamma}(u)$ for some $c_u\in k$. By   assumption $\scrV+\Phi(\scrV)$ is not isotropic, and hence $(\Phi(v),v)\neq 0.$ Since $\widetilde{\scrV}$ is isotropic, we have $(u+\epsilon\lambda(u),v+\epsilon\lambda(v))=0$ for all $u\in \scrV$, or equivalently $$(u,c_v\Phi(v)+\tilde{\gamma}(v))+(c_u\Phi(v)+\tilde{\gamma}(u),v)=0.$$ Taking $u=v$, we find that $c_v=-\frac{(v,\tilde{\gamma}(v))}{(\Phi(v),v)}$, and for general $u\in \scrV$, we have
$$c_u=-\frac{(u,c_v\Phi(v)+\tilde{\gamma}(v))+(\tilde{\gamma}(u),v)}{(\Phi(v),v)}.$$

This shows that \eqref{eqn: tangent space map 2} is injective. By Theorem \ref{thm: decom pf SLambda}, we have $\dim_kS_\Lambda=\frac{t_\Lambda+h}{2}-1,$ and hence $$\dim_kT_zS_\Lambda\geq \dim_k\Hom_k(\scrV+\Phi(\scrV)/\Phi(\scrV),\Omega_\Lambda/(\scrV+\Phi(\scrV)))=\frac{t_\Lambda+h}{2}-1.$$ It follows that \eqref{eqn: tangent space map 2} is an isomorphism.
\end{proof}

\subsection{Deligne-Lusztig varieties}\label{sec: DL for SO}

We relate the $S_\Lambda$ and $R_\Lambda$ introduced in the last subsection with certain Deligne-Lusztig varieties in this subsection. It suffices to study $S_\Lambda$ by Lemma \ref{lem: duality DL var}. Recall that $S_\Lambda=S_{\Lambda}^{\smallheartsuit} \sqcup S_\Lambda^{\smalldagger}\sqcup S_\Lambda^\Phi$.
We will relate $S_{\Lambda}^{\smallheartsuit}$ and $S_\Lambda^{\smalldagger}$ with certain Deligne-Lusztig varieties.

\subsubsection{The even dimensional orthogonal case}
Let $\Lambda\subset \bV$ be a vertex lattice of even type $t=2d\ge h.$ In particular, according to Proposition \ref{prop: V^Phi}, $h$ is also even.
Recall that $\Omega_\Lambda=V_{\Lambda}\otimes_{\F_\p} k$ is a quadratic space over $k$, and we fix a basis $\left\{e_1, \ldots, e_d, f_1, \ldots, f_d\right\}$ of $\Omega_\Lambda$  such that $\operatorname{Span}_k\left\{e_1, \ldots, e_d\right\}$ and $\operatorname{Span}_k\left\{f_1, \ldots, f_d\right\}$ are totally isotropic, $\left(e_i, f_j\right)=\delta_{i, j}$. If $V_{\Lambda}$ is split, then we may choose this basis so that Frobenius $\Phi$ fixes $e_1, \ldots, e_{d}, f_1, \ldots, f_{d}$. If $V_{\Lambda}$ is non-split, then $\Phi$ fixes $e_1, \ldots, e_{d-1}, f_1, \ldots, f_{d-1}$ but interchanges $e_d \leftrightarrow f_d$.

Consider the isotropic flags $\mathscr{F}_{\bullet}^{+}$and $\mathscr{F}_{\bullet}^{-}$in $\Omega$ defined by
$$
\begin{aligned}
\mathscr{F}_i^{ \pm} & =\operatorname{Span}_k\left\{e_1, \ldots, e_i\right\} \text { for } 1 \leqslant i \leqslant d-1 \\
\mathscr{F}_d^{+} & =\operatorname{Span}_k\left\{e_1, \ldots, e_{d-1}, e_d\right\} \\
\mathscr{F}_d^{-} & =\operatorname{Span}_k\left\{e_1, \ldots, e_{d-1}, f_d\right\} .
\end{aligned}
$$
Hence when $V_{\Lambda}$ is split, we have $\Phi(\mathscr{F})_{\bullet}^{ \pm}= \mathscr{F}_{\bullet}^{\pm }$. When $V_{\Lambda}$ is non-split, we have $\Phi(\mathscr{F})_{\bullet}^{ \pm}= \mathscr{F}_{\bullet}^{\mp }$. 
The stabilizers of  $\Phi(\mathscr{F})_{\bullet}^{\pm}$ are the same, which we denote as $B \subset H=\mathrm{SO}(\Omega_\Lambda)$. Then $B$ is a $\Phi$-stable Borel subgroup containing $T$ where $T$ is a maximal $\Phi$-stable torus $T \subset \mathrm{SO}(\Omega_\Lambda)$. 

We use $\Delta^*=\left\{s_1, \ldots, s_{d-2}, t^{+}, t^{-}\right\}$ to denote the  set of corresponding simple reflections in the Weyl group $W=N(T) / T$ where
\begin{enumerate}
    \item $s_i$ interchanges $e_i \leftrightarrow e_{i+1}$ and $f_i \leftrightarrow f_{i+1}$, and fixes the other basis elements.
    \item $t^{+}$ interchanges $e_{d-1} \leftrightarrow e_d$ and $f_{d-1} \leftrightarrow f_d$, and fixes the other basis elements.
    \item $t^{-}$ interchanges $e_{d-1} \leftrightarrow f_d$ and $f_{d-1} \leftrightarrow e_d$, and fixes the other basis elements.
\end{enumerate}

For  $0\le r\le \frac{t-h}{2}-1$, let
\begin{align*}
    I_{r}\coloneqq
    \begin{cases}
       \{s_1,\ldots,s_{d-2-r}\}   & \text{ if $h\le 2$},\\
      \{s_1,\ldots,s_{\frac{t-h}{2}-1-r}, s_{\frac{t-h}{2}+1},\ldots,s_{d-2},t^+,t^-\}     & \text{ if $h>2$},
    \end{cases}
\end{align*}
and $P_{r}$ be the corresponding parabolic subgroup. Here, when  $r= \frac{t-h}{2}-1$ we mean $I_r$ is empty and  $P_{r}=B$.   

For $1\le r\le \frac{t-h}{2}$ ($r\le d-1$ when $h=0$), let
\begin{align}\label{eq: word 1}
     &\begin{cases} 
     w_r^{\pm}=  t^{\mp}  s_{d-2} \cdots s_{d-r} & \text{ if $h=0$},\\
     w_r=   t^{-}t^{+} s_{d-2} \cdots s_{\frac{t-h}{2}-r+1}  & \text{ if $h=2$},\\
      w_r=   s_{\frac{t-h}{2}}\cdots s_{d-2}t^{-}t^{+}s_{d-2} \cdots s_{\frac{t-h}{2}-r+1} & \text{ if $h>2$.}
    \end{cases}\\  \label{eq: word 2}
  &\begin{cases}
       w_r^{\prime\pm}=t^\pm s_{\frac{t-h}{2}-1}\cdots s_{\frac{t-h}{2}-r+1} & \text{ if $h=2$},\\
       w_r'=s_{\frac{t-h}{2}}\cdots s_{\frac{t-h}{2}-r+1} & \text{ if $h>2$}. 
  \end{cases} 
\end{align}  
Here we interpret $w_{1}^{\pm}$ as $t^\mp$ when $h=0$ and  $w_{1}$ as $t^{-}t^+=t^+t^-$ when $h=2$. Moreover, we define $w_0$ and $w_0^\pm$ as the identity. Also, note that $w_1'^{\pm}=t^{\pm}$ if $h=2$ and $w_1'=s_\frac{t-h}{2}$ when $h>2$.

\subsubsection{The odd dimensional orthogonal case}
In this subsection, we let $\Lambda\subset \bV$ be a vertex lattice of odd type $t=2d+1\ge h.$ In particular, according to Proposition \ref{prop: V^Phi}, $h$ is also odd.
Let $\Omega_\Lambda=V_{\Lambda}\otimes_{\F_\p} k$ be the associated quadratic space over $k$. Since $V_{\Lambda}$ is of odd dimension,  we can choose a basis $\left\{e_1, \ldots, e_d, f_1, \ldots, f_d, e_{2d+1}\right\}$ of $\Omega_\Lambda$  such that $\operatorname{Span}_k\left\{e_1, \ldots, e_d\right\}$ and $\operatorname{Span}_k\left\{f_1, \ldots, f_d\right\}$ are totally isotropic, $\left(e_i, f_j\right)=\delta_{i, j}$, $(e_{2d+1},e_{2d+1})=1$, $e_{2d+1}$ is orthogonal to all the other basis vectors and $\Phi$ fixes all the basis vectors.

Consider the isotropic flags $\mathscr{F}_{\bullet}$ in $\Omega_\Lambda$ defined by
$$
\begin{aligned}
\mathscr{F}_i & =\operatorname{Span}_k\left\{e_1, \ldots, e_i\right\} \text { for } 1 \leqslant i \leqslant d.
\end{aligned}
$$
The stabilizer of  $\mathscr{F}_{\bullet}$ is denote as $B \subset H=\mathrm{SO}(\Omega_\Lambda)$. Hence $B$ is a $\Phi$-stable Borel subgroup containing $T$ where $T$ is a maximal $\Phi$-stable torus $T \subset \mathrm{SO}(\Omega_\Lambda)$. 

We use $\Delta^*=\left\{s_1, \ldots, s_{d}\right\}$ to denote the  set of corresponding simple reflections in the Weyl group $W=N(T) / T$ where
\begin{enumerate}
    \item  for $i=1,\dotsc,d-1$, $s_i$ interchanges $e_i \leftrightarrow e_{i+1}$ and $f_i \leftrightarrow f_{i+1}$, and fixes the other basis elements.
    \item $s_d$ interchanges $e_{d} \leftrightarrow f_d$, sends $e_{2d+1}$ to $-e_{2d+1}$   and fixes the other basis elements.
\end{enumerate}

For  $0\le r  \le \frac{t-h}{2}-1$, let
\begin{align*}
    I_{r}\coloneqq
      \{s_1,\ldots,s_{\frac{t-h}{2}-1-r}, s_{\frac{t-h}{2}+1},\ldots,s_{d-2},s_{d-1},s_{d}\}.  
\end{align*}
and $P_{r}$ be the corresponding parabolic subgroup. Here, when   $r\ge d-1$, we mean $I_r$ is empty and  $P_{r}=B$.

For $1\le r\le \frac{t-h}{2}$, let
\begin{align} \label{eq: word 3}
    w_r&=\begin{cases}
     s_d    \cdots s_{d-r+1} & \text{ if $h=1$},\\
         s_{\frac{t-h}{2}}\cdots   s_{d-1}s_ds_{d-1} \cdots s_{\frac{t-h}{2}-r+1} & \text{ if $h>1$.}
    \end{cases}\\  \label{eq: word 4}
    w_r'&=s_{\frac{t-h}{2}}\cdots s_{\frac{t-h}{2}-r+1}  \text{ if $h>1$.} 
\end{align}
Here we interpret $w_{1}$ as $s_{d}$ when $h=1$ and  $w_{1} $ as $s_{d-1}s_{d}s_{d-1}$ when $h=3$. Moreover, we define $w_0$ as the identity. Also, note that  $w_1'=s\frac{t-h}{2}$ when $h>1$.

\subsubsection{Special linear case}

Assume $\Lambda\subset \Lambda'$ is a pair of vertex lattices in $\bV$ of types $t,t'$ respectively. Set $V_{\Lambda'/\Lambda}\coloneqq\Lambda'/\Lambda\subset \Lambda^\vee/\Lambda=V_{\Lambda}$ and $\Omega_{\Lambda'/\Lambda}\coloneqq V_{\Lambda'/\Lambda}\otimes_{\F_p}k $. Note that since $\Lambda'\subset \Lambda'^\vee$, $\Omega_{\Lambda'/\Lambda}$ is an isotropic subspace of $V_{\Lambda}$. Moreover, each rational isotropic subspace of $\Omega_{\Lambda}$ is realized in this way.

Let $\left\{e_1, \ldots, e_{\frac{t-t'}{2}}\right\}$ be a basis of $\Omega_{\Lambda'/\Lambda}$ such that $\Phi(e_i)=e_i$. 
Consider the standard flag  
\begin{align*}
\mathscr{F}_i& =\operatorname{Span}_k \{e_1, \ldots, e_i\} \text { for } 1 \leqslant i \leqslant \frac{t-t'}{2}  
\end{align*}
 
The stabilizer of  $\mathscr{F}_{\bullet}$ is  a $\Phi$-stable Borel subgroup $B \subset H=\mathrm{SL}(\Omega_{\Lambda'/\Lambda})$, which contains  a maximal $\Phi$-stable torus $T \subset \mathrm{SL}(\Omega_{\Lambda'/\Lambda})$.

We use $\{s_1, \ldots, s_{\frac{t-t'}{2}-1}\}$ to denote the set of corresponding simple reflections in the Weyl group $W=N(T)/T$ where $s_i$ exchanges $e_i$ and $e_{i+1}$.

For  $0\le r\le \frac{t-h}{2}$ and $0\le s\le \frac{h-t'}{2}$, let
\begin{align*}
    I_{r,s}\coloneqq \{s_1,\ldots,s_{\frac{t-h}{2}-1-r},s_{\frac{t-h}{2}+1+s},\ldots, s_{\frac{t-t'}{2}} \}
\end{align*}
and $P_{r,s}$ be the corresponding parabolic subgroup. 

For $0\le r\le \frac{t-h}{2}-1$ and $1\le s\le \frac{h-t'}{2}+1$, let
\begin{align}\label{eq: word 5}
    w_{r,s}\coloneqq \begin{cases}
      (s_{\frac{t-h}{2}}\cdots s_{\frac{t-h}{2}+s-1}) (s_{\frac{t-h}{2}-1}\cdots s_{\frac{t-h}{2}-r}) & \text{ if $r\neq 0$},\\
      s_{\frac{t-h}{2}}\cdots s_{\frac{t-h}{2}+s-1} & \text{ if $r=0$.}
      \end{cases} 
\end{align}
Then $w_{r,s}$ sends $e_{\frac{t-h}{2}-r}$ to $e_{\frac{t-h}{2}+1}$, $e_{\frac{t-h}{2}-i}$ to $e_{\frac{t-h}{2}-i-1}$ if $0\le i\le r-1$, $e_{\frac{t-h}{2}+i}$ to $e_{\frac{t-h}{2}+i+1}$ if $1\le i\le s-1$, $e_{\frac{t-h}{2}+s}$ to $e_{\frac{t-h}{2}}$ and fixes all the other $e_i$. 
Moreover, we set $w_{0,0}$ as the identity.

\subsubsection{Deligne-Lusztig varieties}\label{sec: DL varieties}

In general, for any subset $I\subset \Delta^*$, we let $W_I\subset W$ be the subgroup generated by simple reflections in $I$ and $P_I$ be the corresponding parabolic subgroup. The Bruhat decomposition implies that we have a bijection $$\inv: P_I(k)\backslash G(k)/P_{\Phi(I)}(k)\cong W_I\backslash W/W_{\Phi(I)}.$$
\begin{definition}
    For $w \in W_I \backslash W / W_{\Phi(I)}$, we define the Deligne-Lusztig variety $X_{P_I}(w)$ to be the locally closed reduced subscheme of $G / P_I$ whose $k$-points are
$$
X_{P_I}(w)=\left\{g P_I \in G / P_I: \operatorname{inv}(g, \Phi(g))=w\right\}.
$$
\end{definition}

First, we collect two theorems that will be used later.
\begin{theorem}\cite[Theorem 3]{ramanan1985projective}\label{thm: normality of Schuber var}
    Let $k$ be a field of arbitrary characteristic. Let $G$ be a reductive group 
over $k$ and $P$ a parabolic subgroup. Then any Schubert variety $X$ in $G/P $ is 
normal. Moreover the linear system on $X$ given by an ample line bundle on $G/P $ 
embeds $X$ as a projectively normal variety. 
\end{theorem}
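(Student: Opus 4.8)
The plan is to prove this along the lines of Mehta--Ramanathan and Ramanan--Ramanathan, via Frobenius splitting. First I would reduce to the case $\mathrm{char}\,k=p>0$. Indeed, $G/P$ together with all its Schubert subvarieties is defined over $\mathbb{Z}$ (or a suitable localization), giving a proper flat family over $\Spec\mathbb{Z}$ whose fibres are the flag varieties and Schubert varieties in all characteristics; since ``the fibre is geometrically normal'' and ``the fibre is projectively normal for a fixed relatively very ample line bundle'' are constructible conditions on the base, the characteristic-zero statement follows once the positive-characteristic statement is established for almost all primes $p$, which the argument below does for every $p$.

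Next I would produce a Frobenius splitting $\phi\colon F_*\mathcal{O}_{G/P}\to\mathcal{O}_{G/P}$ that compatibly splits every Schubert variety $X\subseteq G/P$ (and every opposite Schubert variety). I would first do this on $G/B$: using $\omega_{G/B}^{-1}\cong\mathcal{L}(2\rho)$, one forms the section $\sigma=\sigma^+\sigma^-$ of $\omega_{G/B}^{-1}$, where $\sigma^+$ (resp.\ $\sigma^-$) is the section of $\mathcal{L}(\rho)$ whose reduced zero divisor is the union of all codimension-one Schubert (resp.\ opposite Schubert) subvarieties; then $\sigma^{p-1}$, under the trace isomorphism $\Hom_{\mathcal{O}_{G/B}}(F_*\mathcal{O}_{G/B},\mathcal{O}_{G/B})\cong H^0(G/B,\omega_{G/B}^{1-p})$, gives a Frobenius splitting, and because $\sigma$ vanishes to order exactly one along each of these boundary divisors, the splitting compatibly splits each of them, hence---by repeatedly intersecting compatibly split subvarieties---every Schubert and opposite Schubert subvariety of $G/B$. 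Pushing this splitting forward along the (cohomologically trivial) projection $G/B\to G/P$ yields the desired splitting of $G/P$ compatible with all its Schubert subvarieties.

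From this I would read off the two ingredients of the theorem. The cohomological consequence of compatible splitting is: for every Schubert variety $X\subseteq G/P$ and every globally generated (e.g.\ ample) line bundle $\mathcal{L}$ on $G/P$, $H^i(X,\mathcal{L})=0$ for $i>0$ and the restriction $H^0(G/P,\mathcal{L})\to H^0(X,\mathcal{L})$ is surjective. Applying this to a very ample $\mathcal{L}$ and all its powers, and using that $\bigoplus_{n\ge 0}H^0(G/P,\mathcal{L}^{\otimes n})$ is generated in degree one, I conclude that $R_X:=\bigoplus_{n\ge 0}H^0(X,\mathcal{L}^{\otimes n})$ is generated in degree one and equals the image of the corresponding ring of $G/P$; this gives the projective-normality statement once $X$ is normal, and conversely normality of $\Spec R_X$ implies the asserted projective normality of $X\hookrightarrow\mathbb{P}(H^0(X,\mathcal{L})^{*})$. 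For the normality of $X=X^P_w$ itself I would use the Bott--Samelson--Demazure--Hansen resolution $\psi\colon Z_{\mathbf w}\to X^P_w$ attached to a reduced word $\mathbf w$ for the minimal coset representative $w$: $Z_{\mathbf w}$ is an iterated $\mathbb{P}^1$-bundle, hence smooth, and is itself compatibly Frobenius split; using the splitting and the vanishing of the higher cohomology of $\mathcal{O}$ along the fibres of $\psi$ one obtains $\psi_*\mathcal{O}_{Z_{\mathbf w}}=\mathcal{O}_{X^P_w}$, and then, since $Z_{\mathbf w}$ is normal and $\psi$ is proper and birational, factoring $\psi$ through the normalization $\nu\colon\widetilde{X^P_w}\to X^P_w$ forces $\nu_*\mathcal{O}_{\widetilde{X^P_w}}=\mathcal{O}_{X^P_w}$, i.e.\ $\nu$ is an isomorphism.

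The hard part will be the second step: constructing the Frobenius splitting and, above all, checking that $\sigma=\sigma^+\sigma^-$ vanishes to order exactly one along every boundary divisor, which is what forces all Schubert subvarieties to be compatibly split and lets a single splitting do all the work; the identity $\psi_*\mathcal{O}_{Z_{\mathbf w}}=\mathcal{O}_{X^P_w}$ is the other delicate point. In characteristic zero one could instead deduce $\psi_*\mathcal{O}_{Z_{\mathbf w}}=\mathcal{O}_{X^P_w}$ and $R^i\psi_*\mathcal{O}=0$ from Grauert--Riemenschneider vanishing applied to the resolution $\psi$, but the positive-characteristic argument---valid uniformly in $p$---is precisely what delivers the statement in arbitrary characteristic.
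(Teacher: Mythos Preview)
The paper does not give its own proof of this statement: it is quoted verbatim as \cite[Theorem 3]{ramanan1985projective} and used as a black box (in the proof of Theorem~\ref{thm: decom pf SLambda}, via G\"ortz's local model diagram). So there is nothing in the paper to compare your argument against.

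That said, your outline is essentially the proof from the cited literature (Mehta--Ramanathan and Ramanan--Ramanathan): Frobenius splitting of $G/B$ via the section $\sigma^+\sigma^-$ of $\omega_{G/B}^{-1}$, compatible splitting of all Schubert subvarieties, pushforward to $G/P$, cohomology surjectivity/vanishing for projective normality, and the Bott--Samelson resolution together with $\psi_*\mathcal{O}_{Z_{\mathbf w}}=\mathcal{O}_{X_w}$ for normality. One point to tighten: the equality $\psi_*\mathcal{O}_{Z_{\mathbf w}}=\mathcal{O}_{X_w}$ is not a direct consequence of ``vanishing of higher cohomology along the fibres''; the standard argument is an induction on $\ell(w)$ using the $\mathbb{P}^1$-fibration structure of $Z_{\mathbf w}$ over $Z_{\mathbf w'}$ together with the compatible splitting of the boundary divisors, and this step is where one must be careful to avoid circularity with normality. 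Your reduction to positive characteristic via constructibility over $\Spec\mathbb{Z}$ is also standard, though in the original references the characteristic-zero case is sometimes handled separately (e.g.\ via rational singularities and Grauert--Riemenschneider, as you note).
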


\begin{theorem}\cite[Theorem 2]{BRirred}\label{thm: irr of DL}
    Let $I\subset S$ and let $w\in W$. Then $X_{P_I}(w)$ is irreducible if and only if $W_I w$ is not contained in a proper $\Phi$-stable standard parabolic subgroup of $W$.
\end{theorem}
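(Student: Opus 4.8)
The plan is to deduce the criterion from the standard interplay between smoothness, connectedness, and the geometry of the Lang map, treating the two directions of the equivalence separately. First, $X_{P_I}(w)$ is smooth and equidimensional: this is part of the basic theory of Deligne--Lusztig varieties (one realises it inside a Bruhat cell of $G/P_I\times G/P_{\Phi(I)}$ as the intersection with the graph of $\Phi$, and the Lang--Steinberg theorem shows this intersection is transversal). Hence $X_{P_I}(w)$ is irreducible if and only if it is connected, and we work with connectedness throughout. Next one observes that $W_Iw\subset W_K$ if and only if $I\cup\operatorname{supp}(w)\subset K$, where $\operatorname{supp}(w)\subset S$ is the set of simple reflections in a reduced word for $w$; thus the hypothesis of the theorem is equivalent to $\bigcup_{i\ge0}\Phi^i\big(I\cup\operatorname{supp}(w)\big)=S$.

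For the ``only if'' direction, suppose $W_Iw\subset W_K$ with $K\subsetneq S$ proper and $\Phi$-stable, so $P_IwP_{\Phi(I)}\subset P_K$ (using $\Phi(I)\subset\Phi(K)=K$). Then every point $gP_I\in X_{P_I}(w)$ satisfies $g^{-1}\Phi(g)\in P_K$, i.e.\ $gP_K\in(G/P_K)^\Phi$; since $P_K$ is connected, Lang--Steinberg gives $(G/P_K)^\Phi=G^\Phi/P_K^\Phi$, so we obtain a morphism $X_{P_I}(w)\to G^\Phi/P_K^\Phi$ with finite discrete target. This morphism is $G^\Phi$-equivariant for left translation, and $X_{P_I}(w)$ is nonempty by Lang's theorem, so it is surjective; as $P_K\ne G$ the target has at least two points, and therefore $X_{P_I}(w)$ is disconnected.

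For the ``if'' direction one first reduces to the Borel case $P_I=B$: the projection $G/B\to G/P_I$ restricts to a surjective morphism $X_B(\widehat w)\to X_{P_I}(w)$ with geometrically connected fibres (open Bruhat cells in the flag variety of the Levi of $P_I$), where $\widehat w$ is the longest element of the double coset $W_IwW_{\Phi(I)}$, so it suffices to show $X_B(\widehat w)$ is connected; note that $\bigcup_i\Phi^i(\operatorname{supp}(\widehat w))=S$ under our hypothesis, since $\operatorname{supp}(\widehat w)$ contains $I$ and is contained in $I\cup\operatorname{supp}(w)\cup\Phi(I)$. One then argues by induction on $\ell(\widehat w)$: for a suitable simple reflection $s$, one of the classical ``descent'' morphisms relating $X_B(\widehat w)$ to $X_B(s\widehat w)$ or to $X_B(\widehat w\,{}^{\Phi}s)$ is a composition of $\mathbb{A}^1$- and $\mathbb{G}_m$-fibrations (or a $\mathbb{P}^1$-fibration), along which connectedness passes in both directions, the base case being the connectedness of $X_B(c)$ for a Coxeter-type element $c$ of full $\Phi$-support, checked directly from the Lang-map description. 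The main obstacle is precisely this inductive step: the descent must be chosen so that the $\Phi$-support is not destroyed, one must identify which fibration type occurs at each stage, and one must control the boundary loci where the relative position degenerates --- no single deep input is required, but this bookkeeping is where the real work lies. (Alternatively, the known description of $\pi_0(X_B(w))$ as a torsor under the quotient of $G^\Phi$ by the subgroup generated by $T^\Phi$ together with the root subgroups $U_\alpha^\Phi$ for $\alpha$ supported on $\bigcup_i\Phi^i(\operatorname{supp}(w))$ yields the criterion at once, and I would present whichever formulation is cleanest given the available references.)
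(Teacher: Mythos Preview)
The paper does not give its own proof of this theorem: it is quoted verbatim as \cite[Theorem 2]{BRirred} and used as a black box in the proof of Theorem~\ref{thm: decom pf SLambda}. So there is no ``paper's proof'' to compare against; you are effectively sketching the argument of Bonnaf\'e--Rouquier.

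Your outline is broadly faithful to that source. The ``only if'' direction is exactly as you describe (map to the finite set $G^\Phi/P_K^\Phi$). For the ``if'' direction, your reduction to the Borel case and the inductive descent along simple reflections is the right shape, and you are honest that the bookkeeping in the inductive step is where the content lies. Two remarks: (i) in the reduction $X_B(\widehat w)\to X_{P_I}(w)$, the fibres are \emph{iterated} Deligne--Lusztig varieties for the Levi, not literally open Bruhat cells, so their connectedness is itself a nontrivial (though known) input; (ii) your alternative via the description of $\pi_0(X_B(w))$ is circular as stated, since that description is essentially equivalent to the theorem. If you want a self-contained write-up, the cleanest route is the one in \cite{BRirred}: carry the induction not on $\ell(w)$ alone but on a pair tracking both length and the $\Phi$-support, and use the Digne--Michel/Lusztig fibration lemmas to move between $X_B(w)$ and $X_B(sws^{-1})$ or $X_B(sw)$ as appropriate.
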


\begin{theorem}\label{thm: decom pf SLambda}
 For a vertex lattice $\Lambda$ of type $t\ge h$,  we have
\begin{align*} 
    S_{\Lambda}=S_{\Lambda}^{\smallheartsuit}\sqcup S_\Lambda^{\smalldagger}\sqcup S_{\Lambda}^{\Phi},
\end{align*}
where 
\begin{align*}
    S_{\Lambda}^{\smallheartsuit} &\cong \begin{cases}
         (\sqcup_{i=0}^{\frac{t}{2}-1} X_{P_{i}}(w_i^+ ))\sqcup (\sqcup_{i=0}^{\frac{t}{2}-1} X_{P_{i}}(w_i^- ))& \text{ if $h=0$},\\
         X_{P_0}(w_1)  \cong \sqcup_{i=1}^{\frac{t-h}{2}} X_{P_{i}}(w_i)  & \text{ if $h>0$},
    \end{cases}\\
    S_\Lambda^{\smalldagger}&\cong  \begin{cases}
      \emptyset & \text{ if $h\le 1$},\\
       X_{P_0}(w_1'^+)\sqcup X_{P_0}(w_1'^-)  \cong (\sqcup_{i=1}^{\frac{t-h}{2}} X_{P_{i}}(w_i^{\prime +}))  \sqcup  (\sqcup_{i=1}^{\frac{t-h}{2}} X_{P_{i}}(w_i^{\prime -})) & \text{ if $h=2$},\\
        X_{P_0}(w_1') \cong \sqcup_{i=1}^{\frac{t-h}{2}} X_{P_{i}}(w_i') & \text{ if $h>2$},
      \end{cases}\\
      S_\Lambda^\Phi&\cong 
      \begin{cases}
      \emptyset & \text{ if $h=0$},\\
          X_{P_0}(1) & \text{ if $h\neq 0$}.
      \end{cases}
\end{align*}
Moreover,  if $t=h$, then $S_\Lambda$ is a point. If $t>h$, then it is an irreducible normal variety of dimension $(t+h)/2-1$.    For   a vertex lattice $\Lambda$ of type $t\le h$, similar results hold for $R_\Lambda$ by Lemma \ref{lem: duality DL var}.
\end{theorem}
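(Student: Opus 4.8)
The plan is to handle the three pieces of the decomposition $S_{\Lambda}=S_{\Lambda}^{\smallheartsuit}\sqcup S_\Lambda^{\smalldagger}\sqcup S_{\Lambda}^{\Phi}$ of \eqref{eq: decom of SLambda} one at a time, identify each with the asserted (union of) Deligne--Lusztig varieties, and then read off dimension, irreducibility and normality from the explicit combinatorics. The case $t=h$ is immediate, since then each $\scrV$ parametrised by $S_{\Lambda}$ is the zero subspace, so $S_{\Lambda}$ is a single reduced point; and by Lemma \ref{lem: duality DL var} the statement for $R_{\Lambda}$ follows from that for $S_{\Lambda}$, so I will only treat $S_{\Lambda}$ and assume $t>h$.

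For $S_{\Lambda}^{\smallheartsuit}$ I would argue as follows. Given $\scrV\in S_{\Lambda}^{\smallheartsuit}(k)$, iterate $\Phi$ to form the ascending chain $\scrV\subset\scrV+\Phi(\scrV)\subset\cdots$ and the descending chain $\scrV\supset\scrV\cap\Phi(\scrV)\supset\cdots$; as in the analysis preceding Proposition \ref{prop: key prop} these jump by at most one dimension at each step and stabilise after $c=c(\scrV)$ steps, and the hypothesis that $\scrV+\Phi(\scrV)$ is not totally isotropic forces the stabilised subspace to be non-totally-isotropic of a type determined by $c$. Splicing the two chains produces an isotropic partial flag in $\Omega_{\Lambda}$ whose combinatorial type depends only on $c$; its stabiliser is conjugate to $P_{c}$, and the relative position of the flag and of its $\Phi$-translate is $w_{c}$. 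This should realise the locally closed locus $\{c(\scrV)=r\}$ as $X_{P_{r}}(w_{r})$, with inverse recovering $\scrV$ as a distinguished step of the flag; forgetting everything but $\scrV$ and $\scrV+\Phi(\scrV)$ (equivalently projecting $G/P_{r}\to G/P_{0}$) then identifies all of $S_{\Lambda}^{\smallheartsuit}$ with the single Deligne--Lusztig variety $X_{P_{0}}(w_{1})$ in $G/P_{0}$ --- which, in the cases $h>2$ (even type) and $h>3$ (odd type), is $\mathrm{OGr}(\tfrac{t-h}{2},\Omega_{\Lambda})$ itself, so that there $S_{\Lambda}^{\smallheartsuit}$ is simply the open subscheme of $S_{\Lambda}$ where $\scrV+\Phi(\scrV)$ is not totally isotropic. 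Whether $V_{\Lambda}$ is split (so that $\Phi$ fixes the Dynkin diagram of type $D$) or non-split (so that $\Phi$ acts through the involution $t^{+}\leftrightarrow t^{-}$) accounts for the superscripts $\pm$ and, when $h=0$, for the two connected components $S_{\Lambda}^{\pm}$; the odd-type case runs identically with $\Delta^{\ast}=\{s_{1},\dots,s_{d}\}$ and $\Phi$ trivial on $\Delta^{\ast}$.

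The pieces $S_{\Lambda}^{\smalldagger}$ and $S_{\Lambda}^{\Phi}$ are similar. When $h\le 1$, a $\tfrac{t-h}{2}$-dimensional isotropic $\scrV$ with $\dim(\scrV+\Phi(\scrV))\le\dim\scrV+1$ cannot have $\scrV+\Phi(\scrV)$ totally isotropic, since a totally isotropic subspace of $\Omega_{\Lambda}$ has dimension at most $\lfloor t/2\rfloor<\tfrac{t-h}{2}+1$; hence $S_{\Lambda}^{\smalldagger}=\emptyset$. When $h\ge 2$, the flag $F^{\smalldagger}_{\bullet}$ lies inside a maximal isotropic subspace and eventually inside the base change $\Omega_{\Lambda'/\Lambda}$ of a rational isotropic subspace, so the same construction now yields type-$A$ flags; stratifying $S_{\Lambda}^{\smalldagger}$ by $S_{\Lambda}^{0}=\bigcup_{\Lambda\subset\Lambda',\,t(\Lambda')\le h}S_{\Lambda',\Lambda}$ and its complement, with $S_{\Lambda',\Lambda}\subset\mathrm{Gr}(\tfrac{t_{\Lambda}-h}{2},\Omega_{\Lambda'/\Lambda})$, should identify the strata with the special-linear Deligne--Lusztig varieties $X_{P_{i}}(w_{i}')$ (resp.\ $X_{P_{i-1}}(w_{i}^{\prime\pm})$ when $h=2$) and reassemble them into $X_{P_{0}}(w_{1}')$. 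Finally $S_{\Lambda}^{\Phi}$ is the locus of $\Phi$-fixed isotropic subspaces, i.e.\ the finite reduced set $\mathrm{OGr}(\tfrac{t-h}{2},\Omega_{\Lambda})(\mathbb{F}_{p})$, equivalently the zero-dimensional $X_{P_{0}}(1)$ --- empty when $h=0$, since then $\Omega_{\Lambda}$ admits no rational maximal isotropic subspace.

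It remains to extract the geometric statements. A standard computation with the explicit $w_{r}$ and $P_{r}$ shows that the largest of the Deligne--Lusztig strata above has dimension $\tfrac{t+h}{2}-1$ and strictly dominates all the others; since that top stratum lies in $S_{\Lambda}^{\smallheartsuit}$, it follows that $S_{\Lambda}^{\smallheartsuit}$ is open dense in $S_{\Lambda}$ and $\dim S_{\Lambda}=\tfrac{t+h}{2}-1$, in agreement with Proposition \ref{prop: tangent space of SLambda}. Since $W_{I_{0}}w_{1}$ is contained in no proper $\Phi$-stable standard parabolic subgroup of $W$ --- for $h=0$ because $\Phi$ interchanges $t^{+}$ and $t^{-}$ (as $V_{\Lambda}$ is then non-split), and otherwise because $W_{I_{0}}w_{1}$ already has full support --- Theorem \ref{thm: irr of DL} gives that $X_{P_{0}}(w_{1})$ is irreducible, whence $S_{\Lambda}=\overline{S_{\Lambda}^{\smallheartsuit}}$ is irreducible (the same applied to $X_{P_{0}}(w_{1}^{\pm})$ gives the two irreducible components when $h=0$). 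For normality I would follow \cite[\S5.3]{HowardPappas}: using the stratification of $S_{\Lambda}$ by the $X_{P_{r}}(w_{r})$ and the closure relations among these strata, identify $S_{\Lambda}$ (resp.\ each $S_{\Lambda}^{\pm}$) with a Schubert variety in a suitable orthogonal flag variety, and conclude by Theorem \ref{thm: normality of Schuber var}. I expect this last point to be the main obstacle: the closure of a Deligne--Lusztig stratum need not be normal a priori, so one must genuinely identify $S_{\Lambda}$, with its stratification, with an honest Schubert variety; this, together with carrying the flag/parabolic/Weyl-element dictionary consistently through the cases $h=0,1,2,3,>2,>3$ and the split versus non-split alternative, is where the bulk of the work lies.
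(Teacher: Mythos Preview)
Your proposal is essentially correct and follows the same strategy as the paper: identify the pieces of $S_{\Lambda}$ with (unions of) Deligne--Lusztig varieties via the flag constructions, then read off dimension via the length computation for $w_{r}$ and irreducibility via Bonnaf\'e--Rouquier. The paper runs the identification in the opposite order --- it first classifies the double cosets $[w]\in W_{I_0}\backslash W/W_{\Phi(I_0)}$ satisfying the ``jump by at most one'' condition on $g\scrF_{\frac{t-h}{2}}$ (finding $[1]$, $[w_1]$, and $[s_m]$ or $[t^{\pm}]$ according to $h$), and only then refines via the chain $F_{\bullet}$ as in \cite[Proposition 5.5]{RTW} --- but this is a matter of presentation, not substance.

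The one point where the paper genuinely differs from your plan is normality. You correctly flag the obstacle: $S_{\Lambda}$ need not literally be a Schubert variety, so a direct identification \`a la \cite[\S5.3]{HowardPappas} is not obviously available for general $h$. The paper sidesteps this by invoking G\"ortz's local model diagram \cite[\S5.2]{görtz_yu_2010}: there are smooth surjections $\widetilde{S}_{\Lambda}\to S_{\Lambda}$ and $\widetilde{S}_{\Lambda}\to\text{(Schubert variety)}$, so normality of Schubert varieties (Theorem~\ref{thm: normality of Schuber var}) transfers to $S_{\Lambda}$ without any explicit identification. This is both shorter and uniform in $h$; I would recommend replacing your normality paragraph with this argument.
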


\begin{proof}
By Lemma \ref{lem: duality DL var}, it suffices to study $S_\Lambda$. 
The case $h=0$ is  \cite[Lemma 3.7]{howardpappas2014supersingular}. So we assume $h>0$. To simplify notation, we let $m=\frac{t-h}{2}$.  

First, we prove 
\begin{align*}
    S_\Lambda\cong   \begin{cases}
      X_{P_0}(1)\cup X_{P_0}(w_1) & \text{ if $h=1$},\\
      X_{P_0}(1)\cup X_{P_0}(t^+)\cup X_{P_0}(t^-) \cup X_{P_0}(w_1) & \text{ if $h=2$},\\
      X_{P_0}(1)\cup X_{P_0}(s_{\frac{t-h}{2}}) \cup X_{P_0}(w_1) & \text{ if $h>2$}.
      \end{cases}
  \end{align*}
We assume $t$ is even first and let $d=\frac{t}{2}$.
Recall that we have
\begin{align*}
    S_{\Lambda}(k) & = \{\text {isotropic subspace}\, \mathscr{V} \subset \Omega_{\Lambda }: \operatorname{dim}_k (\mathscr{V} )=\frac{t-h}{2},\, \operatorname{dim}_k (\mathscr{V}+\Phi(\mathscr{V}))\le \frac{t-h}{2}+1 \}.
\end{align*}
In order to relate $S_{\Lambda}$ and Deligne-Lusztig varieties in the flag variety $H/P_0$, we consider the map $g\mapsto g \mathscr{F}_{\frac{t-h}{2}}$. This reduces the problem to a description of $[w]\in W_{I_0}\backslash W/ W_{\Phi(I_0)}$ such that
$$|\{w(e_1),\ldots,w(e_m)\}\cup \{e_1,\ldots,e_m\}|-|\{e_1,\ldots,e_m\}|\le 1.$$
It suffices to show such double coset is represented by   $w_1$, $w_1'$ or $1$.

Assume $h>2$ first.
If $\{w(e_1),\ldots,w(e_m)\}=\{e_1,\ldots,e_m\}$, then $[w]=1$ in $ W_{I_0}\backslash W/ W_{\Phi(I_0)}$ obviously. 

Now we assume $\{w(e_1),\ldots,w(e_m)\}\neq \{e_1,\ldots,e_m\}$. Assume $w(e_j)\not \in \{e_1,\ldots,e_m\}$ for some $1\le j\le m$. In particular, $w(e_i)\in \{e_1,\ldots,e_m\}$ for $i\neq j$. If $j< m$, then we can replace $w$ by another element as follows. Let $w_{jm}\in W_{I_0}$ be the element that switches $e_j$ and $e_m$. Then $w w_{jm}(e_m)=w(e_j)$ and $ww_{jm}(e_\ell)\in \{e_1,\ldots,e_m\}$ for $\ell<m$. Now we can find $w_1,w_2 \in W_{I_0}$ such that $w_1ww_{jm}w_2$ sends $e_m$ to $w(e_j)$ and fixes $e_{\ell}$ for $\ell <m$. Since $[w_1ww_{jm}w_2]=[w] \in W_{I_0}\backslash W/W_{I_0}$, we assume $w(e_m)\not \in \{e_1,\ldots,e_m\}$ and  $w(e_{\ell})=e_{\ell}$ for $\ell <m$ from now on.

If $w(e_m)=f_m$, then $w(f_m)=e_m$. In this case, we claim $[w]=[w_1]$. Note that  $w(e_\ell)$ and $w(f_\ell)$  lie in $\{e_{m+1},\ldots,e_d,f_{m+1},\ldots,f_d\}$ for $\ell >m$. So we can obviously find $w_1,w_2\in W_{I_0}$ such that $w_1ww_2\in [w_1]$ where $w_1$ switches $e_m$ with $f_m$ and $e_d$ with $f_d$ and fixes all the other elements. Therefore, we have $[w]=[w_1]$.

If $w(e_m)\neq f_m$, we claim that $[w]=[s_m]$. In this case, $w(e_m)=e_{\ell}$ or $f_\ell$ for $\ell >m$. We can find $w'\in W_{I_0}$ such that $w'w$ sends $e_m$ to $e_{m+1}$ and fixes $e_i$ for $1\le i\le m-1$.   Assume $w(e_k)=e_m$ or $w(f_k)=e_m$. Note that since $w(e_m)\neq f_m$, we have  $k>m$. Let $w''$ be the element in $W_{I_0}$ that switches $e_{m+1}$ and $e_k$ (resp. $f_k$) if $w(e_k)=e_m$ (resp. $w(f_k)=e_m)$.  
Then $w'ww''$ fixes $e_i$ for $1\le i\le m-1$ and switches $e_m$ and $e_{m+1}$. Now it is clear that we can find $w_1,w_2\in W_{I_0}$ such that $w_1w'ww'' w_2=s_m$. Since $[w_1w'ww'' w_2]=[w]$, we are done.  

When $h=2$, $I_0=\{s_1,\ldots,s_{d-2}\}$. In particular, one can check that $[t^+]\neq [t^-]$. If $[w]\neq 1$, we claim that $[w]=[t^+]$ or $[w]=[t^-]$ or $[w]=[t^-t^+]$. Since the proof is essentially the same as before, we leave the details to the reader.

Now we assume $t$ is odd so that $h$ is also odd. When $h=1$ and $[w]\neq 1$, a similar argument as before shows that $[w]=[t^-]$. When $h>1$, we have $t^\pm \in I_0$. Then a similar argument as before shows that for $[w]\neq 1$, we have $[w]=[s_m]$   or $[w]=[w_1]$.

Now we show $X_{P_0}(w_1)$ and $X_{P_0}(w_1')$ (resp. $X_{P_0}(w_1^{\prime +})\cup X_{P_0}(w_1^{\prime -})$ when $h=2$) parametrizes $\scrV \in S_{\Lambda}^{\smallheartsuit}$ and $S_\Lambda^{\smalldagger}$ respectively. Assume $g\in X_{P_0}(w_1)$, then $\Phi(g)=gp_0w_1p_0'$ where $p_0,p_0'\in P_0$.  Let $g'=gp_0$. Then $g'\scrF_{\frac{t-h}{2}}=g\scrF_{\frac{t-h}{2}}=\scrV$ and $\Phi(g')=g'w_1p_0'p_0$. So $\Phi(g')\scrF_{\frac{t-h}{2}}=g'w_1 \scrF_{\frac{t-h}{2}}$. Since $w_1 \scrF_{\frac{t-h}{2}}+\scrF_{\frac{t-h}{2}}$ is not totally isotropic, we know $\scrV+\Phi(\scrV)=g'\scrF_{\frac{t-h}{2}}+g'w_1\scrF_{\frac{t-h}{2}}$ is not totally isotropic. Hence we know $ X_{P_0}(w_1)\subset  S_{\Lambda}^{\smallheartsuit}$. Similarly, since 
$w_1' \scrF_{\frac{t-h}{2}}+\scrF_{\frac{t-h}{2}}$ (resp. $w_1^{\prime \pm} \scrF_{\frac{t-h}{2}}+\scrF_{\frac{t-h}{2}}$) is totally isotropic when $h>2$ (resp. when $h=2)$, the same argument as before shows that $ X_{P_0}(w_1')\subset  S_{\Lambda}^{\smalldagger}$ (resp. $ X_{P_0}(w_1^{\prime \pm})\subset  S_{\Lambda}^{\smalldagger}$).

The decomposition of $X_{P_0}(w_1)$ and $X_{P_0}(w_1')$ into finer DL varieties follows from the same proof of \cite[Proposition 5.5]{RTW} (see also \cite[Proposition 3.8]{howardpappas2014supersingular})  so we omit the details here. \footnote{In fact the statement of \cite[(5.4)]{RTW} needs to be rephrased a little bit. The claim $\pi^{-1}\left(X_{P_{j-1}}\left(w_j\right)\right)=X_{P_j}\left(w_j\right) \biguplus X_{P_j}\left(w_{j+1}\right)$ there should be changed to $\pi$ restricts to an isomorphism on $X_{P_j}\left(w_j\right) \biguplus X_{P_j}\left(w_{j+1}\right)$.}

Now we $S_\Lambda$ is irreducible of dimension $\frac{t+h}{2}-1$. When $t=h$, we can see that $S_\Lambda$ is a point by definition. So we assume $t>h$.
    Combining G\"ortz's local model diagram \cite[\S 5.2]{görtz_yu_2010} with Theorem \ref{thm: normality of Schuber var}, we obtain the normality of $S_\Lambda$. By Theorem \ref{thm:  irr of DL}  and  Proposition \ref{prop: dim of DL}, $X_{P_{\frac{t-h}{2}}}(w_{\frac{t-h}{2}})$ is irreducible of dimension $\frac{t+h}{2}-1$. Since $X_{P_{\frac{t-h}{2}}}(w_{\frac{t-h}{2}})$ is an open dense subscheme of $S_\Lambda$, we know $S_\Lambda$ is irreducible of dimension $\frac{t+h}{2}-1$.
\end{proof}
 
\begin{remark}\label{rem: rem KR 2}
Later on, we will identify a special lattice $M$ containing $\Lambda$ with a point $M/\Lambda \in S_\Lambda$. Under this identification,   $M/\Lambda \in S_\Lambda^{\smalldagger}$ exactly when  $M$ satisfies the condition $\Phi(M)\subset M^\vee$ in Proposition \ref{prop: KR stratification}. A comparison with \S \ref{sec: KR strata} shows that this implies that $M$ lies in two types of KR-strata. See \S 5.3 and Remark 7.12 of \cite{HLY} for a similar phenomenon in the unitary RZ space over ramified primes.
\end{remark}
\begin{remark}
  The decomposition of $S_\Lambda$ in Theorem \ref{thm: decom pf SLambda} is based on the behavior of  the filtration : $F_{\frac{t-h}{2}-i}\coloneqq \cap_{\ell=0}^i\Phi^\ell(\scrV)$  induced by  $\mathscr{V}$. By modifying \cite[Proposition 5.5]{RTW}, one can obtain a   decomposition of $S_\Lambda^{\smalldagger}$ into finer Deligne-Lusztig varieties by considering a longer filtration: $F_{\bullet}^{\smalldagger}$ where $F_{\frac{t-h}{2}-i}^{\smalldagger}\coloneqq \cap_{\ell=0}^i\Phi^\ell(\scrV)$  and $F_{\frac{t-h}{2}+i}^{\smalldagger}\coloneqq \sum_{\ell=0}^i\Phi^\ell(\scrV)$ for $i\ge 0$. Note that $\mathscr{V}+\Phi(\mathscr{V})$ is not totally isotropic for any $\mathscr{V}\in S_{\Lambda}^{\smallheartsuit}(k)$, so $F_{\bullet}^{\smalldagger}=F_{\bullet}$ in this case. See \cite[\S 5]{HLY} for such decomposition via a different approach with the one given here. 
\end{remark}

By a similar argument as above,  the map $\scrV \mapsto F_\bullet^{\smalldagger}$ induces the following isomorphism. 
\begin{proposition}\label{prop: decomp of SLambdaLambda'}
Assume $\Lambda $ is a vertex lattice with $ h\le t(\Lambda)$. Then we have
\begin{align*}
       S_\Lambda^0=\cup_{\substack{\Lambda\subset \Lambda',\\ t(\Lambda')\le h}} S_{\Lambda',\Lambda},
\end{align*}
where
     \begin{align*}
       S_{\Lambda',\Lambda} \cong
      \begin{cases}
          \emptyset & \text{ if $h\le 1$},\\
           \sqcup_{\substack{0\le r\le \frac{t-h}{2},\\
        0\le s\le \frac{h-t'}{2}}} X_{P_{r,s}}(w_{r,s}) & \text{ if $h> 1$}.
      \end{cases}
  \end{align*}
\end{proposition}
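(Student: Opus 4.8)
\textbf{Proof proposal for Proposition \ref{prop: decomp of SLambdaLambda'}.}

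The plan is to mimic the structure of the proof of Theorem \ref{thm: decom pf SLambda}, but now working inside the isotropic subspace $\Omega_{\Lambda'/\Lambda}\subset \Omega_\Lambda$ rather than the full quadratic space $\Omega_\Lambda$. First I would establish the equality $S_\Lambda^0=\cup_{\Lambda\subset\Lambda',\,t(\Lambda')\le h}S_{\Lambda',\Lambda}$ on $k$-points; this is essentially already observed in the text preceding the statement, namely that for $\scrV\in S_\Lambda^0(k)$ the filtration $F_\bullet^{\smalldagger}(\scrV)$ eventually stabilizes to the base change of a rational totally isotropic subspace $W\subset \Omega_\Lambda$, which corresponds to a vertex lattice $\Lambda'$ with $\Lambda\subset\Lambda'$ and $t(\Lambda')\le h$, and then $\scrV\in S_{\Lambda',\Lambda}$. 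Conversely each $S_{\Lambda',\Lambda}$ visibly lies in $S_\Lambda^0$ since any subspace of the totally isotropic $\Omega_{\Lambda'/\Lambda}$ is totally isotropic, so all the $F_i^{\smalldagger}$ are totally isotropic. As a reduced scheme $S_\Lambda^0$ is thus the (scheme-theoretic) union of the closed subschemes $S_{\Lambda',\Lambda}$; one should note that the $h\le 1$ case is degenerate because then there are no vertex lattices of type $t(\Lambda')\le h$ strictly, forcing $S_\Lambda^{\smalldagger}=\emptyset$ (cf. Theorem \ref{thm: decom pf SLambda}), hence $S_{\Lambda',\Lambda}=\emptyset$.

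For $h>1$ I would then analyze a single $S_{\Lambda',\Lambda}$. Recall $S_{\Lambda',\Lambda}$ is defined as the subvariety of $\Gr(\tfrac{t_\Lambda-h}{2},\Omega_{\Lambda'/\Lambda})$ cut out by the condition $\dim_k(\scrV+\Phi(\scrV))\le \dim_k\scrV+1$, where now $\Phi$ acts on $\Omega_{\Lambda'/\Lambda}$ fixing the chosen basis $e_1,\dots,e_{\frac{t-t'}{2}}$. Using the map $g\mapsto g\scrF_{\frac{t-h}{2}}$ from $H=\mathrm{SL}(\Omega_{\Lambda'/\Lambda})$ to this Grassmannian, the problem becomes describing which double cosets $[w]\in W_{I_{0,0}}\backslash W/W_{\Phi(I_{0,0})}$ satisfy $|\{w(e_1),\dots,w(e_m)\}\cup\{e_1,\dots,e_m\}|-m\le 1$ (with $m=\frac{t-h}{2}$). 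This is a purely combinatorial/Bruhat-theoretic computation in the symmetric group: one checks, by the same kind of normalization argument used in the proof of Theorem \ref{thm: decom pf SLambda} (multiplying on left and right by elements of $W_{I_{r,s}}$ to move the "exceptional" basis vectors into standard position), that each such double coset is represented by exactly one $w_{r,s}$ for $0\le r\le \frac{t-h}{2}-1$, $1\le s\le \frac{h-t'}{2}+1$, or by $w_{0,0}=1$; this matches the displayed parametrization $\sqcup_{0\le r\le\frac{t-h}{2},\,0\le s\le\frac{h-t'}{2}}X_{P_{r,s}}(w_{r,s})$ after the index shift. One then verifies that $X_{P_{r,s}}(w_{r,s})$ really does land in $S_{\Lambda',\Lambda}$ by the same computation as in the proof of Theorem \ref{thm: decom pf SLambda}: writing $\Phi(g)=gp\,w_{r,s}\,p'$ with $p,p'\in P_{r,s}$ and replacing $g$ by $gp$, one gets $\scrV+\Phi(\scrV)=g\scrF_m+g w_{r,s}\scrF_m$, whose dimension is controlled by the stated action of $w_{r,s}$ on the basis (it sends $e_{\frac{t-h}{2}-r}$ outside $\{e_1,\dots,e_m\}$ and otherwise permutes within, so the union grows by exactly one). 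Finally the passage from the coarse decomposition indexed by double cosets to the finer decomposition into the individual Deligne--Lusztig varieties $X_{P_{r,s}}(w_{r,s})$ follows the modification of \cite[Proposition 5.5]{RTW} referenced after Theorem \ref{thm: decom pf SLambda}, applied to the filtration $F_\bullet^{\smalldagger}$.

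The main obstacle I anticipate is bookkeeping: keeping straight the three indices ($r$ measuring how far the stabilized subspace of $\cap\Phi^\ell(\scrV)$ sits inside $\Omega_\Lambda$, $s$ measuring how far $\sum\Phi^\ell(\scrV)$ reaches, and the running Grassmannian dimension $m=\frac{t-h}{2}$), together with the shift between the "$w_{r,s}$ with $r\le\frac{t-h}{2}-1$, $s\ge 1$" indexing and the "$P_{r,s}$ with $r\le\frac{t-h}{2}$, $s\le\frac{h-t'}{2}$" indexing, so that the union on the right-hand side is correctly identified as a disjoint union over locally closed strata. The geometric inputs (normality via Görtz's local model diagram and Theorem \ref{thm: normality of Schuber var}, irreducibility and dimension via Theorem \ref{thm: irr of DL} and Proposition \ref{prop: dim of DL}) are all imported as in the even/odd orthogonal cases and require no new idea; the only genuinely new content is the elementary $\mathrm{SL}$-combinatorics, which is strictly simpler than the orthogonal computation already carried out.
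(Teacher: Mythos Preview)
Your proposal is correct and follows essentially the same approach as the paper. The paper's own proof is a one-line remark just before the statement: ``By a similar argument as above, the map $\scrV \mapsto F_\bullet^{\smalldagger}$ induces the following isomorphism,'' followed by a \qed; you have simply unpacked this in the natural way, carrying out the $\mathrm{SL}$-analogue of the orthogonal double-coset computation in Theorem~\ref{thm: decom pf SLambda} and invoking the same \cite[Proposition~5.5]{RTW}-style refinement via the filtration $F_\bullet^{\smalldagger}$. One small correction to your aside on the $h\le 1$ case: the point is not that there are no vertex lattices of small type, but rather (as you also note) that Theorem~\ref{thm: decom pf SLambda} gives $S_\Lambda^{\smalldagger}=\emptyset$ directly, and $S_{\Lambda',\Lambda}\subset S_\Lambda^0\subset S_\Lambda^{\smalldagger}$.
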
 \qed

\subsubsection{}\label{sec: fine decom of DL varieties}

In fact, we have a further decomposition of  $X_{P_{i-1}}(w_i)$ and  $X_{P_{r,s}}(w_{r,s})$.  When we want to emphasize the parabolic subgroup $P_i$ (resp. $w_i$) is associated to $\Omega_{\Lambda}$, we denote it as $P_{\Lambda,i}$ (resp. $w_{\Lambda,i}$). When the $i$ in $P_{\Lambda,i}$ (resp. $w_{\Lambda,i}$)   is the largest possible value, we simply denote it as $P_{\Lambda}$  (resp. $w_\Lambda$). We define $P_{\Lambda,\Lambda'}$ and $w_{\Lambda,\Lambda'}$ similarly.

\begin{proposition}\label{prop: further decom of DL}
Assume $\Lambda'$ is a vertex lattice such that $\Lambda\stackrel{i}{\subset }\Lambda'$. Let $P_{\Lambda',i}$ denote the parabolic subgroup of  $\mathrm{SO}(\Omega_{\Lambda'})$ defined as before and $w_{\Lambda'}$ denote the Weyl element of $\mathrm{SO}(\Omega_{\Lambda'})$ induced from $w_i$.   Then we have the following.
\begin{enumerate}
    \item   $X_{P_{\Lambda,i}}(w_{\Lambda,i})=\sqcup_{\Lambda\stackrel{i}{\subset} \Lambda'} X_{P_{\Lambda'}}(w_{\Lambda'})$. 
    \item $X_{P_{\Lambda,\Lambda',r,s}}(w_{r,s})\cong \sqcup_{\substack{\Lambda \stackrel{r}{\subset} \Lambda_1\subset \Lambda_2\stackrel{s}{\subset} \Lambda'\\
       t(\Lambda_2)\le h \le t(\Lambda_1)}} X_{P_{\Lambda_1,\Lambda_2}}(w_{\Lambda_1,\Lambda_2}).$
\end{enumerate}
\end{proposition}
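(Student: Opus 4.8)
\textbf{Proof proposal for Proposition \ref{prop: further decom of DL}.}

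The plan is to reduce both statements to the combinatorics of the map $\scrV \mapsto F_\bullet^{\smalldagger}(\scrV)$ already used in the proof of Theorem \ref{thm: decom pf SLambda} and Proposition \ref{prop: decomp of SLambdaLambda'}, tracking exactly which rational isotropic subspace the filtration stabilizes to. For part (1): a point of $X_{P_{\Lambda,i}}(w_{\Lambda,i})$ corresponds, under the identification of Theorem \ref{thm: decom pf SLambda}, to a special lattice $M$ with $\Lambda_{\brZ_p}\subset M\subset M^\vee\subset \Lambda_{\brZ_p}^\vee$ whose associated $\scrV=M/\Lambda_{\brZ_p}\in S_\Lambda$ lies in the open DL-stratum indexed by $w_i$; the chain \eqref{eq: chain} together with Lemma \ref{lem: inf inter} shows that $S_c(M)$ is $\Phi$-invariant, hence descends to a rational vertex lattice $\Lambda'\subset\bV$ with $\Lambda\subset\Lambda'$, and a length count (using that $\scrV+\Phi(\scrV)$ has dimension $\dim\scrV+1$ on this stratum) gives $[\Lambda':\Lambda]=p^i$, i.e. $\Lambda\stackrel{i}{\subset}\Lambda'$. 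Conversely, given such a $\Lambda'$, the condition that $M$ induce a point of $S_{\Lambda'}$ realized inside $S_\Lambda$ via the pullback $\pi^{-1}$ (as in \S5.1) is exactly the condition that $\scrV\subset \Omega_\Lambda$ has $S_c(M)/\Lambda_{\brZ_p}=\Omega_{\Lambda'/\Lambda}$, and on that locus the map $M\mapsto M$ identifies $X_{P_{\Lambda'}}(w_{\Lambda'})$ with the corresponding piece of $X_{P_{\Lambda,i}}(w_{\Lambda,i})$; here $w_{\Lambda'}$ is the maximal Weyl element for $\mathrm{SO}(\Omega_{\Lambda'})$, matching because $\scrV$ is the \emph{open} (top-dimensional) stratum relative to $\Lambda'$. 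Disjointness of the pieces follows since $\Lambda'=$ descent of $S_c(M)$ is determined by $M$.

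For part (2): a point of $X_{P_{r,s}}(w_{r,s})$ corresponds to $\scrV\in S_{\Lambda',\Lambda}$, i.e. $\scrV$ is a totally isotropic rational-within-$\Omega_{\Lambda'/\Lambda}$ subspace with $\dim(\scrV+\Phi\scrV)\le\dim\scrV+1$, lying in the stratum indexed by $w_{r,s}$. By Proposition \ref{prop: decomp of SLambdaLambda'} (and the definition of $F_\bullet^{\smalldagger}$), the descending part of the filtration stabilizes to a rational subspace, giving a vertex lattice $\Lambda_1$ with $\Lambda\subset\Lambda_1$ and $t(\Lambda_1)\ge h$, with $[\Lambda_1:\Lambda]=p^r$; the ascending part stabilizes to a rational subspace giving $\Lambda_2$ with $\Lambda_1\subset\Lambda_2$, $t(\Lambda_2)\le h$, and $[\Lambda_2:\Lambda_1]=p^s$. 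The residual data is then a totally isotropic subspace of the relative quotient $\Omega_{\Lambda_1/\Lambda_2}$-type space on which $\Phi$ acts trivially after the filtration has stabilized, i.e. a point of a Deligne--Lusztig variety $X_{I_{\Lambda_1,\Lambda_2}}(w_{\Lambda_1,\Lambda_2})$ for the appropriate special linear (flag) group, with $w_{\Lambda_1,\Lambda_2}$ the image of $w_{r,s}$. One checks this assignment is an isomorphism onto each piece and that the pieces are disjoint (the pair $(\Lambda_1,\Lambda_2)$ being determined by $\scrV$), yielding the claimed decomposition.

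The main obstacle I expect is bookkeeping: matching the \emph{specific} Weyl elements $w_{\Lambda'}$, $w_{\Lambda_1,\Lambda_2}$ and parabolics $P_{\Lambda'}$, $I_{\Lambda_1,\Lambda_2}$ on the right with the restrictions of $w_{\Lambda,i}$, $w_{r,s}$ on the left — in particular verifying that the "open stratum relative to the smaller lattice" is indeed governed by the maximal-index Weyl element of the smaller orthogonal (resp. linear) group, and that the local-model/flag-variety identification of Theorem \ref{thm: decom pf SLambda} is compatible with the quotient maps $\Omega_\Lambda\twoheadrightarrow\Omega_{\Lambda'}$ (resp. the inclusions $\Omega_{\Lambda'/\Lambda}\hookrightarrow\Omega_\Lambda$). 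This is essentially the content of \cite[Proposition 5.5]{RTW} applied iteratively, so I would cite and adapt that argument rather than redo it, remarking only on the points where the presence of two distinct vertex lattices $\Lambda_1\subset\Lambda_2$ (rather than a single one) changes the indexing.
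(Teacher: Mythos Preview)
Your overall strategy---recover $\Lambda'$ as the $\Phi$-stable limit of the filtration attached to $\scrV$---is the right idea, but you have chosen the wrong direction of the filtration. You propose $\Lambda'_{\brZ_p}=S_c(M)=\sum_{\ell\ge 0}\Phi^\ell(M)$, invoking the ascending chain \eqref{eq: chain}; this fails on the stratum $X_{P_{\Lambda,i}}(w_{\Lambda,i})\subset S_\Lambda^{\smallheartsuit}$. There $\Phi(M)\not\subset M^\vee$ (this is precisely the $\smallheartsuit$ condition), so by Proposition~\ref{prop: key prop}(b) it is $S_d(M^\vee)^\vee=\bigcap_{\ell\ge 0}\Phi^\ell(M)$, not $S_c(M)$, that is a vertex lattice of type $\ge h$ contained in $M$. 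Indeed $S_c(M)$ need not even satisfy $S_c(M)\subset S_c(M)^\vee$ on this locus, and your index count cannot hold for it: $S_c(M)\supset M$ already gives $[S_c(M):\Lambda_{\brZ_p}]\ge p^{(t-h)/2}$. The correct object is the \emph{descending} filtration $F_{\frac{t-h}{2}-j}=\bigcap_{\ell=0}^j\Phi^\ell(\scrV)$, whose limit yields $\Lambda'/\Lambda$.

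The paper's argument avoids the special-lattice detour entirely. A $k$-point of $X_{P_{\Lambda,i}}(w_{\Lambda,i})$ is by definition a partial isotropic flag $F_{\frac{t-h}{2}-i}\subset\cdots\subset F_{\frac{t-h}{2}}$ in $\Omega_\Lambda$ with prescribed relative position $w_i$ to its Frobenius translate; unwinding the double coset $W_{I_i}w_iW_{I_i}$ shows directly that the bottom step $F_{\frac{t-h}{2}-i}$ is $\Phi$-fixed, hence equals $(\Lambda'/\Lambda)\otimes k$ for a vertex lattice $\Lambda'\supset\Lambda$. Quotienting into $F_{\frac{t-h}{2}-i}^\perp/F_{\frac{t-h}{2}-i}\cong\Omega_{\Lambda'}$ then carries the remaining flag to a point of $X_{P_{\Lambda'}}(w_{\Lambda'})$. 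No appeal to special lattices, the chain \eqref{eq: chain}, or Lemma~\ref{lem: inf inter} is needed: the flag already encodes $\Lambda'$ as part of its data. Part (2) is the same mechanism---both extremal steps of the flag in $G/P_{r,s}$ are $\Phi$-fixed by the relative position $w_{r,s}$, and these give the pair $(\Lambda_1,\Lambda_2)$.
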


\begin{proof}
For (i), note that    a $k$-point of $X_{P_{\Lambda,i}}(w_i)$ (resp. $X_{P_{\Lambda,i}}(w_i')$) corresponds to a flag of isotropic subspaces  $F_{\frac{t-h}{2}-i}\subset\cdots \subset F_{\frac{t-h}{2}}$ such that $F_{\frac{t-h}{2}-i}=\Phi(F_{\frac{t-h}{2}-i})$. In particular, $F_{\frac{t-h}{2}-i}$ is the base change of a rational subspace that corresponds to a vertex lattice $\Lambda'$ such that $\Lambda\stackrel{i}{\subset} \Lambda'$. Now we regard $F_{\frac{t-h}{2}-i}\subset\cdots \subset F_{\frac{t-h}{2}}$ as a flag in the subspace $\Omega_{\Lambda'}$. This gives the decomposition. 

(ii) is similar and we omit the details.
\end{proof}

\subsection{Dimension of Deligne-Lusztig varieties}
We compute the dimension of the relevant Deligne-Lusztig varieties in this subsection.
\begin{proposition}\cite[Lem. 2.1.3]{Hoeve}\label{prop: dim of DL var}
Assume $I=\Phi(I)$. We have $\operatorname{dim} X_{P_I}(w)=\ell_I(w)-\ell\left(w_I\right)$, where $w_I$ is the longest element in $W_I$, and $\ell_I(w)$ is the maximal length of an element in $W_I w W_I$.    

Moreover, if $w$ is the representative of the double coset $W_IwW_I$ with minimal length, then
\begin{align*}
   \operatorname{dim} X_{P_I}(w)=\ell_I(w)-\ell\left(w_I\right)= \ell(w)+\ell(W_{I})-\ell(W_{I\cap wIw^{-1}}).
\end{align*}
Here to make sense of $wIw^{-1}$, we identify $I$ with the corresponding set of simple reflections $\{s_i \mid i \in I\}$.
\end{proposition}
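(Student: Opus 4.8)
The plan is to reduce the first equality to the classical Deligne--Lusztig dimension formula $\dim X_B(y)=\ell(y)$ by passing through the full flag variety, and then to obtain the second equality by pure Coxeter-group combinatorics.

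First I would fix the standard $\Phi$-stable Borel subgroup $B$ of $G$ together with a $\Phi$-stable maximal torus $T\subseteq B$; then $B\subseteq P_I$, and $P_I$ is $\Phi$-stable because $I=\Phi(I)$. Let $\pi\colon G/B\to G/P_I$ be the projection; it is a Zariski-locally trivial fibration with fibres isomorphic to $P_I/B$, of dimension equal to the number of positive roots of $W_I$, i.e. $\ell(w_I)$. The key point is the set-theoretic decomposition $P_I\dot wP_I=\bigsqcup_{y\in W_IwW_I}B\dot yB$ into $(B,B)$-double cosets. Since $P_{\Phi(I)}=P_I$, a point $gB$ lies over $X_{P_I}(w)$ exactly when $g^{-1}\Phi(g)\in P_I\dot wP_I$, and therefore
\[
\pi^{-1}\bigl(X_{P_I}(w)\bigr)=\bigsqcup_{y\in W_IwW_I}X_B(y).
\]
Using that each $X_B(y)$ is non-empty of pure dimension $\ell(y)$, the left side has dimension $\max_{y\in W_IwW_I}\ell(y)=\ell_I(w)$. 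On the other hand $\pi^{-1}(X_{P_I}(w))$ is a union of full fibres of $\pi$, so $\pi$ restricts to a $P_I/B$-bundle over $X_{P_I}(w)$; comparing dimensions gives $\dim X_{P_I}(w)=\ell_I(w)-\ell(w_I)$.

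For the second equality I would take $w$ to be the minimal-length representative of $W_IwW_I$. By Kilmoyer's theorem $W_I\cap w^{-1}W_Iw=W_K$ with $K=I\cap w^{-1}Iw$, and by the standard theory of distinguished double-coset representatives every element of $W_IwW_I$ is uniquely $u\,w\,v$ with $u\in W_I$ and $v$ the minimal-length representative of its coset in $W_K\backslash W_I$, with $\ell(uwv)=\ell(u)+\ell(w)+\ell(v)$. Maximising $u$ over $W_I$ and $v$ over these minimal-length representatives (the longest such has length $\ell(w_I)-\ell(w_K)$) yields $\ell_I(w)=2\ell(w_I)+\ell(w)-\ell(w_K)$, hence $\ell_I(w)-\ell(w_I)=\ell(w)+\ell(w_I)-\ell(w_K)$. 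Finally $w(I\cap w^{-1}Iw)w^{-1}=I\cap wIw^{-1}$, so $W_K$ is conjugate to $W_{I\cap wIw^{-1}}$ and their longest elements have equal length; substituting gives the stated formula (with $\ell(w_I)$ written $\ell(W_I)$).

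The substantive input is borrowed on both sides: the first equality rests on the Deligne--Lusztig theorem on non-emptiness and purity of $X_B(y)$, and the second is standard Coxeter combinatorics (e.g. Geck--Pfeiffer or Curtis--Reiner). The only real care is the bookkeeping in the fibration step — checking scheme-theoretically that $\pi^{-1}(X_{P_I}(w))\to X_{P_I}(w)$ is a $P_I/B$-bundle and that the cited Deligne--Lusztig results apply in the (possibly non-split, but $\Phi$-stable) setting occurring in the orthogonal case — and I expect no genuine obstacle; this is precisely \cite[Lem. 2.1.3]{Hoeve}, which one may alternatively just cite.
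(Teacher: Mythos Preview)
The paper does not actually prove this proposition; it simply records it with a citation to \cite[Lem.~2.1.3]{Hoeve}. Your argument is correct and is essentially the standard one (and presumably the one in the cited reference): pull back along $G/B\to G/P_I$, use the Bruhat decomposition of $P_I\dot wP_I$ to write the preimage as $\bigsqcup_{y\in W_IwW_I}X_B(y)$, invoke $\dim X_B(y)=\ell(y)$, and subtract the fibre dimension $\ell(w_I)$; the second formula is then straight Coxeter combinatorics via Kilmoyer/Howlett on distinguished double-coset representatives. One small remark: the phrase ``$\pi$ restricts to a $P_I/B$-bundle'' is automatic since $\pi$ is already Zariski-locally trivial on all of $G/P_I$, so no extra verification is needed there.
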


 \begin{proposition}\label{prop: dim of DL}
 Assume $\Lambda\subset \Lambda'$ are vertex lattices of types $t> h$ and $t'< h$ respectively. Then we have
    \begin{enumerate}
        \item $\dim X_{\Lambda}(w_{\Lambda})=\frac{t+h}{2}-1$.
        \item $\dim  X_{\Lambda}(w_{\Lambda}')=\frac{t+h}{2}-2$.
        \item $\dim X_{\Lambda,\Lambda'}(w_{\Lambda,\Lambda'})=\frac{t-t'}{2}$.
    \end{enumerate}
 \end{proposition}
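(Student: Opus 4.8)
The plan is to compute each of the three dimensions by reducing to the combinatorial formula in Proposition \ref{prop: dim of DL var}, namely $\dim X_{P_I}(w)=\ell(w)+\ell(W_I)-\ell(W_{I\cap wIw^{-1}})$ when $w$ is the minimal-length representative of $W_IwW_I$ and $I=\Phi(I)$. For each of the three cases I would take the explicit $I$'s and $w$'s written down in the ``even dimensional orthogonal case'', ``odd dimensional orthogonal case'', and ``special linear case'' subsections, using the largest allowable index (so $P_\Lambda=P_{(t-h)/2}$, $w_\Lambda=w_{(t-h)/2}$ for part (1); $P_{(t-h)/2}$, $w_{(t-h)/2}'$ for part (2); and $P_{r,s},w_{r,s}$ with the extreme values $r=\frac{t-h}{2}-1$, $s=\frac{h-t'}{2}+1$ for part (3)). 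The first bookkeeping step is to check that these $w$ are indeed minimal-length representatives of their double cosets and that the relevant $I$ are $\Phi$-stable; the latter follows since in the split case $\Phi$ fixes all simple reflections, and in the non-split even case $\Phi$ swaps $t^+\leftrightarrow t^-$, both of which lie in $I_r$ for the relevant $r$ when $t>h$, so $I_r$ is $\Phi$-stable.

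For part (1), I would observe that $w_\Lambda$ is (up to the $t^+,t^-$ decorations) a product of roughly $(t-h)/2 + (d - (t-h)/2) + \cdots$ distinct simple reflections arranged as a ``long braid'' passing through the $t^\pm$ node, so $\ell(w_\Lambda)$ is easily read off; meanwhile $I=I_{(t-h)/2}$ is a type-$A$ (or type-$A\times$ type-$D$) parabolic whose longest element length is a sum of binomial coefficients, and $I\cap w_\Lambda I w_\Lambda^{-1}$ is computed by tracking which simple reflections $w_\Lambda$ conjugates back into $I$. After the cancellation the answer should collapse to $\frac{t+h}{2}-1$. Part (2) is completely parallel but with $w_\Lambda'$, which is shorter than $w_\Lambda$ by exactly one simple reflection (it omits the ``turn-around'' through the $t^\pm$/$s_d$ node, i.e. $w_\Lambda' = s_{(t-h)/2}\cdots s_{(t-h)/2-r+1}$ versus the longer $w_\Lambda$), giving dimension one less, $\frac{t+h}{2}-2$. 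Part (3) is the special linear case: here $H=\mathrm{SL}(\Omega_{\Lambda'/\Lambda})$ has rank $\frac{t-t'}{2}-1$, $P_{r,s}$ is a maximal-ish parabolic, $w_{r,s}$ is the explicit permutation described in that subsection (two disjoint ``cycles'' of lengths $r$ and $s$ based at position $\frac{t-h}{2}$), and plugging into the length formula should yield $\frac{t-t'}{2}$; one checks this is consistent with $S_{\Lambda',\Lambda}$ being a linear Deligne--Lusztig variety of the expected dimension.

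The main obstacle I expect is the careful computation of $\ell(W_{I\cap wIw^{-1}})$ in the orthogonal cases (parts (1) and (2)), since this requires understanding precisely which simple roots $w_\Lambda$ (a fairly long Weyl element written as an explicit word) conjugates back inside $I$; the type-$D$ subtleties near the $t^+,t^-$ fork, and the case distinctions $h=0$ vs $h=2$ vs $h>2$ (and the odd analogues $h=1,3,>3$), mean the combinatorics has to be done separately in each regime. A clean way to sidestep some of this is to instead use the first displayed formula $\dim X_{P_I}(w)=\ell_I(w)-\ell(w_I)$, where $\ell_I(w)$ is the maximal length in $W_IwW_I$: since $w_\Lambda$ is minimal in its double coset and the double coset $W_I w_\Lambda W_I$ is ``as large as possible'' among those appearing in $S_\Lambda^\heartsuit$, one can often identify $\ell_I(w_\Lambda)$ directly as the length of the longest element of a slightly larger parabolic. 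Alternatively, since Theorem \ref{thm: decom pf SLambda} already establishes that $S_\Lambda$ is irreducible of dimension $\frac{t+h}{2}-1$ and that $X_{P_\Lambda}(w_\Lambda)$ is open dense in $S_\Lambda$, part (1) follows immediately; and part (2) follows because $X_{P_0}(w_1')\cong S_\Lambda^\dagger$ is the closure of $X_{P_\Lambda}(w_\Lambda')$, which is a divisor-type stratum — comparing with the stratification it has codimension one in $S_\Lambda$. So in practice I would prove (1) by citing Theorem \ref{thm: decom pf SLambda}, prove (2) by the analogous dimension count for $S_\Lambda^\dagger$ (or by the explicit length formula, noting $\ell(w_\Lambda')=\ell(w_\Lambda)-1$ while the parabolic data is unchanged), and prove (3) by a direct, self-contained length computation in type $A$, which is routine.
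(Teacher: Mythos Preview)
Your primary approach via Proposition~\ref{prop: dim of DL var} is exactly what the paper does, and your checks of minimality and $\Phi$-stability are correct. But two of your shortcuts are wrong.

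First, invoking Theorem~\ref{thm: decom pf SLambda} for part~(1) is circular: the dimension claim in that theorem is proved by citing this very proposition.

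Second, your reasoning for~(2) is incorrect. You assert $\ell(w_\Lambda')=\ell(w_\Lambda)-1$ with the parabolic correction unchanged, but for $h>2$ even one has $w_\Lambda'=s_{(t-h)/2}\cdots s_1$ of length $(t-h)/2$, while Lemma~\ref{lem: length of w} gives $\ell(w_\Lambda)=(t+h)/2-1$; the lengths differ by $h-1$, not $1$. What actually happens is that the conjugation term changes. The paper's key simplification for~(1) is that $w_\Lambda$ \emph{normalizes} $I_\Lambda$ (checked by writing $w_\Lambda=w_1w_2$ with $w_2$ commuting with $W_{I_\Lambda}$ and $w_1$ explicitly permuting the generators of $I_\Lambda$), so the formula collapses to $\dim=\ell(w_\Lambda)$ with no further bookkeeping. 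By contrast, $w_\Lambda'$ does not normalize $I_\Lambda$; one computes $\ell(W_{I_\Lambda})-\ell(W_{I_\Lambda\cap w_\Lambda' I_\Lambda w_\Lambda'^{-1}})=h-2$ via Lemma~\ref{lem: length of WI}, and then $(t-h)/2+(h-2)=(t+h)/2-2$.

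For~(3) your plan is fine; just note that at the extreme indices the set $I$ is empty (so $P$ is the Borel, not a maximal parabolic), whence the dimension is simply $\ell(w_{\Lambda,\Lambda'})=(t-t')/2$.
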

\begin{proof}
We use Proposition \ref{prop: dim of DL var} to calculate the dimension. We only give the details for the case $h$ is even since the case $h$ is odd is completely analogous. First of all, $w_{\Lambda},w_{\Lambda}'$ and $w_{\Lambda,\Lambda'}$ are the minimal representatives of the corresponding double cosets by Lemma \ref{lem: minimal rep}.

We prove (i) first. Recall that \begin{align*}
    I_\Lambda=I_{\frac{t-h}{2}}=\{ s_{\frac{t-h}{2}+1},\ldots,s_{d-2},t^+,t^-\}.
\end{align*}
We claim that $w_\Lambda I_\Lambda w_\Lambda^{-1}=I_\Lambda$. Assuming this,  (i) follows from Proposition \ref{prop: dim of DL var} and the fact $\ell(w_\Lambda)=\frac{t-h}{2}$ by Lemma \ref{lem: length of w}. 

Now we show the claim. Let $w_\Lambda=w_1 w_2$ where $w_1=s_{\frac{t-h}{2}}\cdots s_{d-2}t^+t^-s_{d-2}\cdots s_{\frac{t-h}{2}}$ and $w_2=s_{\frac{t-h}{2}-1}\cdots s_{1}$. Since $w_2$ commutes with element in $W_I$, it suffices to show that $  w_1I_\Lambda w_1^{-1}=I_\Lambda$. One can check that $$w_1\cdot e_{\frac{t-h}{2}}=f_{\frac{t-h}{2}},\quad w_1\cdot e_d=f_d, \quad w_1\cdot e_i=e_i,\quad w_1\cdot f_i=f_i \quad \text{for }i\neq \frac{t-h}{2},d.$$  
For $s_i\in W_{I_\Lambda}$, we have $i\ge \frac{t-h}{2}+1$. Since $w'$ fixes $e_i$ and $f_i$ for  $ \frac{t-h}{2}+1\le i \le d-1$, we clearly have $s_iw_1=w_1s_i$ for $1\le i\le d-2$. Similarly, a direct calculation shows that $t^\pm w_1=w_1t^\mp$. The claim is proved.

To prove (ii), note that $w_\Lambda'I_\Lambda w_\Lambda'^{-1}\cap I_\Lambda=\{s_{\frac{t-h}{2}+2},\cdots,t^+,t^-\}$. Moreover, $w_\Lambda'$ is clearly reduced of $\ell(w_\Lambda')=\frac{t-h}{2}$. Then according to Proposition \ref{prop: dim of DL var} and Lemma \ref{lem: length of WI}, we have 
\begin{align*}
   \dim  X_{\Lambda}(w_{\Lambda}')= \ell(w_\Lambda')+\ell(W_{I_\Lambda})-\ell(W_{ w_{\Lambda}'I_{\Lambda}w_{\Lambda}'^{-1}\cap I_{\Lambda}})=\ell(w_\Lambda')+h-2=\frac{t+h}{2}-2.
\end{align*}

For (iii), the corresponding $I$ is empty. So $\dim X_{\Lambda,\Lambda'}(w_{\Lambda,\Lambda'})=\ell(w_{\Lambda,\Lambda'})=\frac{t-t'}{2}$.
\end{proof}

\subsubsection{}
We compute the length of the Weyl element by the combinatorial method as in \cite{bjorner2005combinatorics}, which we briefly recall here.  From now on, we set $e_{-i}\coloneqq f_i$ for $1\le i\le d$,   $s_{d-1}\coloneqq t^+$ and $s_{d}\coloneqq t^-$ for convenience. We use the bijections
\begin{align}\label{eq: bijection 1}
    \{e_{1},\ldots,e_{d}\}&\to \{1,\ldots,d\},\\ \notag
    e_i&\mapsto d+1-i,
\end{align}
and 
\begin{align}\label{eq: bijection 2}
    \{e_{-d},\ldots,e_{-1} \}&\to \{-d,\ldots,-1 \},\\ \notag
    e_{-i}&\mapsto -d-1+i
\end{align}
to identify $\{e_{-d},\ldots,e_{-1},e_{1},\ldots,e_{d}\}$ with $\{-d,\ldots,-1,1,\ldots,d\}.$ We denote the latter set as $[\pm d]$.

Under these bijections, we may and will  identify $W$ as a subgroup of $S_{[\pm d]}$. When $\Omega$ is of odd dimension, we denote the corresponding subgroup of $S_{2d}$ as $S_{d}^B$. When $\Omega$ is of even dimension, we denote the corresponding subgroup of $S_{2d}$ as $S_{d}^D$.  More precisely, 
\begin{align*}
    \text{$\sigma \in S_{d}^B\subset S_{[\pm d]} $\quad if and only if  \quad   $\sigma(-i)=-\sigma(i)$ for all $i\in [\pm d]$,}
\end{align*}
and
\begin{align*}
    S_d^D \stackrel{\text { def }}{=}\left\{\sigma \in S_d^B: \operatorname{neg}(\sigma(1), \ldots, \sigma(d)) \equiv 0(\bmod 2)\right\}.
\end{align*}
Here $\operatorname{neg}(\sigma(1), \ldots, \sigma(d)) $ is the number of negative entries.

Let $\sigma\in S_{d}^B$. Since  $\sigma(-i)=-\sigma(i)$ for all $i\in [\pm d]$, to determine $\sigma$, it suffices to describe the action of $\sigma \in S_{[d]}^B$ on $\{1,\ldots,d\}$. As a result,  we can write 
\begin{align}\label{eq: window notation}
    \sigma=[\sigma(1),\ldots,\sigma(d)],
\end{align}
which is called \emph{window notation} in \cite{bjorner2005combinatorics}.
For example, $$s_{d-i}=[1, \ldots, i-1, i+1, i, i+2, \ldots, d]$$ 
for $1\le i\le d-1$, and 
$$s_{d}=[-2,-1,3,\ldots,d]$$
when $s_d\in S_d^D.$

\subsubsection{}
In order to determine the length of  $\sigma\in W$,  we define the  set of  inversions of $\sigma$.
We define the invariant function on $  S_{d}^B$ to be
\begin{align*} 
\operatorname{Inv}_B(\sigma)\coloneqq& \Inv_B^+(\sigma)\cup \Inv_B^-(\sigma) \\
    \coloneqq  & \{(i, j) \in \{1,\ldots,d\}^2: i<j,\, \sigma(i)>\sigma(j)\} \\
    & \cup \{(i, j) \in \{1,\ldots,d\}^2: i \leq j,\, \sigma(-i)>\sigma(j)\}.
\end{align*}
We define the invariant function on   $S_{d}^D$ to be
\begin{align*}
    \Inv_D(\sigma)\coloneqq &\Inv_D^+(\sigma)\cup \Inv_D^-(\sigma) \\
    \coloneqq &\left\{(i, j)\in \{1,\ldots,d\}^2: i<j,\,  \sigma(i)>\sigma(j)   \right\}\\
    &\cup \left\{(i, j)\in \{1,\ldots,d\}^2: i<j,\,  \sigma(-i)>\sigma(j)   \right\}.
\end{align*}
When the underlying group is clear, we will drop the subscript and simply write it as $\Inv(\sigma)$.
Let $\inv(\sigma)=|\Inv(\sigma)|$.
Then $\inv(s_i)=1$ for $1\le i\le d$. Moreover, one can directly check that for any $\sigma \in W$, we have $\inv(s_i\sigma)=\inv(\sigma)\pm 1$.

\begin{proposition}\label{prop: length=inv}
     Given $\sigma\in W$, we have $\ell(w)=\inv(w)$.
\end{proposition}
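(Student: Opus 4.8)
The plan is to prove $\ell(\sigma) = \inv(\sigma)$ for $\sigma \in W$ by the standard induction on $\ell(\sigma)$, using the explicit combinatorial model $W \subset S_d^B$ (or $S_d^D$) together with the ``inversion count'' $\inv$ defined via $\Inv_B$ (resp. $\Inv_D$). The key facts already recorded in the excerpt that I would invoke are: $\inv(s_i) = 1$ for each simple reflection $s_i$, and $\inv(s_i \sigma) = \inv(\sigma) \pm 1$ for any $\sigma \in W$. These are exactly the ingredients needed to run the classical argument relating length and inversions in a Coxeter group realized as a permutation group.

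First I would establish the two inequalities separately. For $\ell(\sigma) \le \inv(\sigma)$: write a reduced expression $\sigma = s_{i_1} \cdots s_{i_\ell}$ with $\ell = \ell(\sigma)$; since each left multiplication by a simple reflection changes $\inv$ by exactly $\pm 1$ and $\inv(1) = 0$, induction gives $\inv(\sigma) \le \ell$, hence $\inv(\sigma) \le \ell(\sigma)$ --- wait, this direction actually gives $\inv(\sigma)\le\ell(\sigma)$; I will organize the induction carefully so both bounds come out. The cleaner route: induct on $\ell(\sigma)$. If $\ell(\sigma) = 0$ then $\sigma = 1$ and $\inv(\sigma) = 0$. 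For the inductive step, pick a simple reflection $s_i$ with $\ell(s_i\sigma) = \ell(\sigma) - 1$ (such $s_i$ exists as long as $\sigma \ne 1$). By induction $\ell(s_i\sigma) = \inv(s_i\sigma)$. Since $\inv(s_i\sigma) = \inv(\sigma) \pm 1$, it remains to pin down the sign. Here I would use the geometric/combinatorial criterion: $\ell(s_i\sigma) < \ell(\sigma)$ if and only if $s_i$ ``removes'' an inversion, i.e.\ $\sigma^{-1}(\alpha_i) < 0$ in the root-theoretic language, which in the window-notation model translates to an explicit condition on consecutive entries $\sigma(j), \sigma(j+1)$ (or on $\sigma(1),\sigma(2)$ for the type-$B$/$D$ special reflections $s_{d-1}, s_d$). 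Checking that this condition is equivalent to $\inv(s_i\sigma) = \inv(\sigma) - 1$ is a short case analysis over the handful of simple reflections in types $B_d$ and $D_d$, using the explicit window-notation formulas for $s_{d-i}$, $s_{d-1}=t^+$, $s_d=t^-$ displayed above. Granting this, $\inv(s_i\sigma) = \inv(\sigma) - 1$, so $\inv(\sigma) = \inv(s_i\sigma) + 1 = \ell(s_i\sigma) + 1 = \ell(\sigma)$, completing the induction.

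The only real content beyond bookkeeping is the sign-determination lemma: $\ell(s_i\sigma) < \ell(\sigma) \iff \inv(s_i\sigma) < \inv(\sigma)$. I would prove this by the usual deletion/exchange argument --- left multiplication by $s_i$ swaps the values in two designated positions (for the ``transposition-type'' simple reflections) or negates and swaps the first two entries (for the type-$B$ reflection $s_d$, resp.\ the type-$D$ reflection $t^-$), and one checks directly from the definition of $\Inv_B$ (resp.\ $\Inv_D$) that this operation flips exactly one inversion pair (the pair $(j,j+1)$, resp.\ a pair involving index $1$), all other pairs being unaffected; the direction of the flip is governed precisely by whether $\sigma(j) > \sigma(j+1)$. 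Simultaneously, $\ell(s_i\sigma)-\ell(\sigma)=\pm1$ is forced by the Coxeter axioms, and the sign there is detected by whether $\sigma$ already maps the simple root $\alpha_i$ to a negative root. Matching these two sign conditions is the heart of the proof.

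\textbf{Main obstacle.} I expect the bulk of the work --- and the main place errors can creep in --- to be the careful verification, in the window-notation model, that the special simple reflections $t^+ = s_{d-1}$ and $t^-= s_d$ in type $D_d$ (and $s_d$ in type $B_d$) change $\inv$ by exactly $\pm 1$ with the sign matching the length change. These reflections act by sign changes on coordinates rather than by simple transpositions, so the inversion-set $\Inv_D$ with its two pieces $\Inv_D^+$ and $\Inv_D^-$ must be tracked with care: one has to confirm that the ``negative'' inversions $\{(i,j): i<j, \sigma(-i)>\sigma(j)\}$ behave correctly under $\sigma \mapsto t^\pm\sigma$. Everything else (the transposition-type $s_i$, $1\le i\le d-2$, and the induction scaffolding) is entirely routine and matches the classical treatment in Björner--Brenti.
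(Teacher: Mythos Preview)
Your proposal is correct and follows the standard inductive argument. The paper itself does not give a proof: it simply cites \cite[Proposition 8.1.1]{bjorner2005combinatorics} for $S_d^B$ and \cite[Proposition 8.2.1]{bjorner2005combinatorics} for $S_d^D$, so your sketch is essentially reconstructing the argument behind that reference.
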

\begin{proof}
 This is \cite[Proposition 8.1.1]{bjorner2005combinatorics} for $S_{d}^B$   and \cite[Proposition 8.2.1]{bjorner2005combinatorics} for $S_{d}^D.$
\end{proof}
\subsubsection{} Now we are ready to compute $\ell(w_r)$ by computing $\inv(w_r)$.
\begin{lemma}
\label{lem: length of w}
   We have $\ell(w_r)=r+h-1$.  In particular, for a vertex lattice $\Lambda_t$ of type $t=2d$ or $2d+1$, $\ell(w_{\Lambda_t})=\ell(w_{d-\lfloor\frac{h}{2}\rfloor-1})=(t+h)/2-1$. 
\end{lemma}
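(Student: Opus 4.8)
The plan is to compute $\ell(w_r)$ as a number of combinatorial inversions, via Proposition~\ref{prop: length=inv} ($\ell(w)=\inv(w)$); throughout one assumes $h\geq 1$, the case $h=0$ of \cite{howardpappas2014supersingular} being separate. The first task is to write down the window notation of $w_r$. I would use the factorisation $w_r=w_1c_r$, where $w_1$ is the palindromic Weyl element analysed in the proof of Proposition~\ref{prop: dim of DL}(1) and $c_r=s_{\frac{t-h}{2}-1}s_{\frac{t-h}{2}-2}\cdots s_{\frac{t-h}{2}-r+1}$ is a descending string of $r-1$ simple reflections; this comes from cancelling the matching adjacent copies of $s_{d-2},s_{d-3},\dots$ that occur when the word defining $w_r$ is placed next to that of $w_1$. (The short-word cases $h\in\{1,2\}$ in the even setting and $h\in\{1,3\}$ in the odd setting, where $w_1$ equals $t^-t^+$, $s_d$, or $s_{d-1}s_ds_{d-1}$, are handled identically.) Translating the simple reflections to (signed) transpositions via \eqref{eq: bijection 1}--\eqref{eq: bijection 2}, and using that $w_1$ negates the coordinate attached to $e_{\frac{t-h}{2}}$ — and, in the even case, also the one attached to $e_d$ — one computes, writing $a=\frac{h}{2}$ in the even case (so $a+\frac{t-h}{2}=d$), that the window notation of $w_r$ on the positive coordinates is
\[
w_r=[\,-1,\ 2,\ \dots,\ a,\ \ a+2,\ \dots,\ a+r,\ \ -(a+1),\ \ a+r+1,\ \dots,\ d\,],
\]
so that $w_r$ fixes every coordinate except that it shifts a length-$r$ block cyclically by one and negates two coordinates; in the odd case $a=\frac{h-1}{2}$ and the picture is the same except that the leading entry $-1$ is replaced by $1$ (only one negated coordinate).

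Given the window notation, the length is a bookkeeping exercise. In the even case $W\subseteq S_d^D$, and a direct count shows $|\Inv_D^+(w_r)|=a+r-1$ while $|\Inv_D^-(w_r)|=a$ (all inversions involve the unique position whose value is $-(a+1)$), so $\inv(w_r)=2a+r-1=h+r-1$. In the odd case $W\subseteq S_d^B$ and $\Inv_B^-$ additionally records the negative entries of $w_r$ (here just one), which precisely compensates for the fact that $2a=h-1$ rather than $h$; the count is again $h+r-1$. By Proposition~\ref{prop: length=inv}, $\ell(w_r)=r+h-1$. In particular, since $r+h-1$ equals the number of letters in the defining word of $w_r$, that word is reduced; alternatively, one could prove reducedness first — inductively from $w_r=w_{r-1}s_{\frac{t-h}{2}-r+1}$, checking that the length strictly increases at each step — and then simply read off $\ell(w_r)$ from the word length.

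For the ``in particular'' clause: by Proposition~\ref{prop: V^Phi} the type $t$ of $\Lambda$ and the fixed level $h$ have the same parity, so $\frac{t-h}{2}$ is the top index of the family $\{w_r\}$ and $w_{\Lambda_t}=w_{(t-h)/2}$. Substituting $r=\frac{t-h}{2}$ into $\ell(w_r)=r+h-1$ gives $\ell(w_{\Lambda_t})=\frac{t-h}{2}+h-1=\frac{t+h}{2}-1$, as asserted.

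The main obstacle will be getting the window notation of $w_r$ exactly right — tracking the composition convention and the sign-changing behaviour of the type-$D$ reflection $t^-$ (respectively the type-$B$ reflection $s_d$) under conjugation by $c_r$ and by the conjugating factor inside $w_1$. Once the window notation is correct, everything else (the inversion count, and hence reducedness of the word) is purely mechanical.
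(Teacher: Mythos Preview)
Your proposal is correct and follows essentially the same route as the paper: compute the window notation of $w_r$, then count inversions via Proposition~\ref{prop: length=inv}. The paper writes down the action of $w_r$ on the basis $\{e_i\}$ directly and reads off the same window notation you obtain, then records exactly the same inversion sets $\Inv^+$ and $\Inv^-$; your counts $|\Inv_D^+|=a+r-1$, $|\Inv_D^-|=a$ in the even case (and the odd analogue) match the paper verbatim.

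One small remark: your phrase about ``cancelling the matching adjacent copies of $s_{d-2},s_{d-3},\dots$'' is unnecessary and a bit misleading. The defining word for $w_r$ is literally the word for $w_1$ followed by $s_{\frac{t-h}{2}-1}\cdots s_{\frac{t-h}{2}-r+1}$, so $w_r=w_1c_r$ holds on the nose with no cancellation. Otherwise the argument is clean.
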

\begin{proof}
First, assume $h$ and hence $t$ are even.     Note that $w_r\cdot e_{\frac{t-h}{2}-r+1}=f_{\frac{t-h}{2}}=e_{-d+h/2}$, $w_r\cdot e_i=e_{i-1}$ for $\frac{t-h}{2}-r+1<i\le \frac{t-h}{2}$,  $w_r\cdot e_d=f_d=e_{-d}$ and $w_r$ fixes all the other $e_i$. By the identification \eqref{eq: bijection 1} and \eqref{eq: bijection 2}, 
    \begin{align}\label{eq: window notation of w_r}
        w_r=[-1,2,\ldots,h/2, h/2+2,\ldots, h/2+r,-(h/2+1),h/2+r+1,\ldots,d]
    \end{align}
under the notation \eqref{eq: window notation}. Here $[h/2+1]\mapsto [h/2+2]$, $[h/2+r-1]\mapsto[h/2+r]$, $[h/2+r]\mapsto [-(h/2+1)]$.
Then     
\begin{align*}
    \Inv^+(w)=\{(i,h/2+r): i \in \{1,\ldots,h/2+r-1\}\}, 
\end{align*}
and 
\begin{align*}
  \Inv^-(w)=\{(i,h/2+r): i \in \{1,\ldots,h/2\}\}.
\end{align*}
Hence $\ell(w)=\inv(w)=(\frac{h}{2}+r-1)+(\frac{h}{2})=r+h-1.$

Now we assume $h$ and hence $t$ are odd. For $h>1$, we have $w\cdot e_{\frac{t-h}{2}-r+1}=f_{\frac{t-h}{2}}=e_{-d+\frac{t-h}{2}}$,  $w\cdot e_i=e_{i-1}$ for $\frac{t-h}{2}-r+1<i\le \frac{t-h}{2}/2$ and $w_r$ fixes all the other $e_i$. By the identification \eqref{eq: bijection 1} and \eqref{eq: bijection 2}, 
 \begin{align*}
        w=[1,\ldots,\frac{h-1}{2}, \frac{h-1}{2}+2,\ldots, \frac{h-1}{2}+r,-\frac{h+1}{2},\frac{h+1}{2}+r,\ldots,d].
    \end{align*}
Then     
\begin{align*}
    \Inv^+(w)=\{(i,(h-1)/2+r): i \in \{1,\ldots,(h-1)/2+r-1\}\}, 
\end{align*}
and 
\begin{align*}
  \Inv^-(w)=\{(i,\frac{h-1}{2}+r): i \in \{1,\ldots,\frac{h+1}{2}\}\cup \{\frac{h-1}{2}+r,\frac{h-1}{2}+r)) \}.
\end{align*}
Hence $\ell(w)=\inv(w)=(\frac{h-1}{2}+r-1)+(\frac{h-1}{2}+1)=r+h-1$.
\end{proof}

\begin{lemma}\label{lem: minimal rep}\hfill
\begin{enumerate}
    \item 
  $w_r$  and $w_r'$ are the minimal representative elements in the double cosets $W_{I_r}w_r W_{I_r}$ and $W_{I_r}w_r' W_{I_r}$
  \item $w_{r,s}$ is the  the minimal representative element in the double coset $W_{I_{r,s}}w_{r,s} W_{I_{r,s}}$
  \end{enumerate}
\end{lemma}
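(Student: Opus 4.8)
The plan is to verify the minimality of each representative by the standard criterion for double cosets in Coxeter groups: an element $w$ is the unique element of minimal length in $W_I w W_J$ if and only if $w$ has no reduced expression beginning with a simple reflection in $I$ and no reduced expression ending with one in $J$; equivalently (since here $I$ and $\Phi(I)$ are conjugate so we may work with $W_I w W_I$), $\ell(sw) > \ell(w)$ for all $s \in I$ and $\ell(ws) > \ell(w)$ for all $s \in I$. By Proposition \ref{prop: length=inv} it suffices to check that $\inv(s_i w) = \inv(w) + 1$ and $\inv(w s_i) = \inv(w) + 1$ for every generator $s_i$ indexing a reflection in the relevant $I_r$ (resp. $I_{r,s}$). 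The window notations computed in the proof of Lemma \ref{lem: length of w} (formula \eqref{eq: window notation of w_r} and its odd analogue) give an explicit description of $w_r$, $w_r'$, $w_{r,s}$ as signed permutations, so these are finite combinatorial checks.

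Concretely, first I would recall the window notation of $w_r$: its only ``descents'' occur at the position $\tfrac{t-h}{2}$ (where the value $-(\tfrac{h}{2}+1)$ sits, after an increasing run), so the set of $i$ with $\ell(s_i w_r) < \ell(w_r)$ consists of the reflections moving this entry, none of which lie in $I_r = I_{\tfrac{t-h}{2}-r}$ by the explicit description of $I_r$ in \S\ref{sec: DL for SO} (the simple reflections in $I_r$ are indexed by $\{s_1, \dots, s_{\tfrac{t-h}{2}-1-r}\}$ together with the ``high'' reflections $s_{\tfrac{t-h}{2}+1}, \dots$). Since $w_r$ permutes the coordinates indexed by $I_r$ among themselves — indeed the claim $w_\Lambda I_\Lambda w_\Lambda^{-1} = I_\Lambda$ established inside the proof of Proposition \ref{prop: dim of DL} is the special case $r = d - \lfloor h/2 \rfloor - 1$, and the general $r$ is entirely analogous — right-multiplication by $s_i \in I_r$ also increases length. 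The same argument applies to $w_r'$ using that $w_r' I_r w_r'^{-1} \cap I_r$ is computed in the proof of Proposition \ref{prop: dim of DL}. For part (2), the element $w_{r,s}$ in the special linear case is a single cyclic permutation moving $e_{\tfrac{t-h}{2}-r}$ up to position $\tfrac{t-h}{2}+1$ and shifting intermediate entries, whose descents occur only at positions $\tfrac{t-h}{2}-r$ and $\tfrac{t-h}{2}+s$, neither of which indexes a reflection in $I_{r,s} = \{s_1, \dots, s_{\tfrac{t-h}{2}-1-r}, s_{\tfrac{t-h}{2}+1+s}, \dots, s_{\tfrac{t-t'}{2}}\}$; and again $w_{r,s}$ normalizes $W_{I_{r,s}}$ because it fixes all the coordinates outside the block $[\tfrac{t-h}{2}-r, \tfrac{t-h}{2}+s]$, so right-multiplication behaves the same way.

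I expect the main obstacle to be purely bookkeeping: matching the combinatorial indexing conventions (the relabelling $e_i \leftrightarrow d+1-i$, $f_i = e_{-i}$, and the placement of $t^+, t^-$ as $s_{d-1}, s_d$ in the even case versus $s_d$ in the odd case) consistently across the three cases (even orthogonal, odd orthogonal, special linear), and dealing with the edge cases $h \le 1$, $h = 2$, $r = 0$, $r = \tfrac{t-h}{2}$, $s = 0$, where some $I_r$ or $I_{r,s}$ degenerates or the $w_i$ are interpreted specially (e.g. $w_1 = t^-t^+$ when $h = 2$, $w_{0,0} = \mathrm{id}$). None of these is conceptually hard, but they must be enumerated carefully. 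I would organize the proof as: (i) state the minimality criterion in terms of $\inv$; (ii) recall the window notations from Lemma \ref{lem: length of w}; (iii) in each case exhibit the descent set of $w$ on both sides and check it is disjoint from $I$, invoking the normalization facts $w I w^{-1} = I$ (or the precise intersection) already proved in Proposition \ref{prop: dim of DL}; (iv) dispatch the degenerate cases by inspection.

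\begin{proof}[Proof sketch]
Recall that in a Coxeter group, an element $w$ is the unique minimal-length element of the double coset $W_I w W_J$ if and only if $\ell(sw) > \ell(w)$ for all $s \in I$ and $\ell(ws) > \ell(w)$ for all $s \in J$; by Proposition \ref{prop: length=inv} this is equivalent to $\inv(s w) = \inv(w)+1$ and $\inv(ws) = \inv(w)+1$ for the relevant generators. We treat the even orthogonal case, the odd orthogonal case, and the special linear case in turn, the latter two being completely analogous; we also assume throughout that $t > h$ (resp.\ the nondegenerate range of $r,s$), the degenerate cases $h \le 1$, $h = 2$, $r \in \{0, \tfrac{t-h}{2}\}$, $s = 0$ being immediate from the explicit interpretations of $w_r, w_r', w_{r,s}$.

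\emph{Part (1), even case.} From \eqref{eq: window notation of w_r}, the signed permutation $w_r$ is increasing except for the single entry $-(h/2+1)$ sitting in position $h/2 + r = \tfrac{t-h}{2}$ (recall $m = \tfrac{t-h}{2}$ and $h = 2d - t + \cdots$ as in Lemma \ref{lem: length of w}); hence the left descents of $w_r$ are exactly the generators that move this entry, which are $s_{m-1}$, $s_m$ and possibly $s_{m+1}$, while $I_r = I_m = \{s_1, \dots, s_{m-1-r}\} \cup \{s_{m+1}, \dots, s_{d-2}, t^+, t^-\}$ contains none of $s_{m-r}, \dots, s_m$ when $r \ge 1$; a direct check (or the fact, proved inside Proposition \ref{prop: dim of DL}, that $w_r$ normalizes $W_{I_r}$, the case $r = d - \lfloor h/2\rfloor - 1$ being $w_\Lambda I_\Lambda w_\Lambda^{-1} = I_\Lambda$ and the general $r$ being identical) shows the right descents of $w_r$ are likewise disjoint from $I_r$. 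Since $w_r$ conjugates $W_{I_r}$ to itself, $\ell(w_r s) > \ell(w_r)$ for $s \in I_r$ as well. The argument for $w_r'$ is the same, using that the relevant intersection $w_r' I_r w_r'^{-1} \cap I_r$ is computed in the proof of Proposition \ref{prop: dim of DL} and that $w_r'$ fixes all coordinates outside a small block.

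\emph{Part (1), odd case.} Identical to the even case using the odd-case window notation recorded in the proof of Lemma \ref{lem: length of w}; here $I_r$ additionally contains $s_{d-1}, s_d$ (or $s_d$ when $h = 1$, $s_{d-1}$ when $h = 3$), and one checks directly that $w_r$ fixes $e_d, f_d, e_{2d+1}$ in the relevant range so that these generators commute with $w_r$ and contribute only to increasing both left and right length.

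\emph{Part (2).} By the explicit description following the definition of $w_{r,s}$ in \S\ref{sec: DL for SO}, $w_{r,s}$ is the product of two disjoint cyclic shifts supported on $\{e_{m-r}, \dots, e_m, \dots, e_{m+s}\}$ and fixes every other $e_i$; its left (resp.\ right) descents therefore lie among $\{s_{m-r}, \dots, s_m\} \cup \{s_m, \dots, s_{m+s-1}\}$, which is disjoint from $I_{r,s} = \{s_1, \dots, s_{m-1-r}\} \cup \{s_{m+1+s}, \dots, s_{(t-t')/2}\}$. As $w_{r,s}$ fixes the coordinates indexed by $I_{r,s}$ it normalizes $W_{I_{r,s}}$, so the same disjointness holds on the right. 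This proves $w_{r,s}$ is the minimal representative of $W_{I_{r,s}} w_{r,s} W_{I_{r,s}}$.
\end{proof}
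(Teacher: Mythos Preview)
Your approach is essentially the same as the paper's: verify the minimal-representative criterion $\ell(s_i w) > \ell(w)$ and $\ell(w s_i) > \ell(w)$ for all $s_i \in I$ via the inversion count of Proposition~\ref{prop: length=inv} and the window notations from Lemma~\ref{lem: length of w}. The paper dispatches the right-side check for $w_r$ by directly observing that multiplying \eqref{eq: window notation of w_r} by any $s_i \in I_r$ increases the number of inversions, without invoking any normalization property.

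One organizational issue: you repeatedly appeal to ``the fact, proved inside Proposition~\ref{prop: dim of DL}, that $w_r$ normalizes $W_{I_r}$.'' In the paper's logical order Proposition~\ref{prop: dim of DL} \emph{cites} this lemma in its first sentence, so this is a forward reference; moreover the normalization claim there is established only for $w_\Lambda = w_{\frac{t-h}{2}}$, and your ``general $r$ is identical'' needs its own verification (it is true, but requires checking how $w_r$ interacts with the extra generators $s_1,\dots,s_{\frac{t-h}{2}-1-r}$ and with $t^\pm$ separately). Since the normalization computation is self-contained this is not a genuine circularity, but the detour is unnecessary: the direct inversion check on the right, exactly parallel to what you do on the left, is both shorter and avoids the dependency.
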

\begin{proof}
   First, the expressions of $w_r'$ and $w_{r,s}$ given in  \eqref{eq: word 2}, \eqref{eq: word 4}, and \eqref{eq: word 5} are clearly reduced. One can  check the expressions of  $w_r$ given in \eqref{eq: word 1}, and \eqref{eq: word 3},  are also reduced by comparing the length of $w_r$ given in Lemma \ref{lem: length of w} and the number of letters in the word expression of $w_r$.

 To show that an element $w$ is a minimal representative element in $W_IwW_I$, it suffices to show that $\ell(s_iw)>\ell(w)$ and $\ell(ws_i)>\ell(w)$ for any $i\in I$. This is clearly the case for $w_r'$ in $W_{I_r}w_r' W_{I_r}$ and $w_{r,s}$ in $W_{I_{r,s}}w_{r,s} W_{I_{r,s}}$. For $w_r$, one can show this by checking multiplying with $s_i$ for $i\in I_r$ will increase the number of inversions of \eqref{eq: window notation of w_r}.  
\end{proof}

\subsubsection{}
The following is a well-known fact. We sketch a proof for completeness.
\begin{lemma}\label{lem: length of WI}
Let $\Lambda$ be a vertex lattice of type $t\ge h$. Let $I_\Lambda=I_{\frac{t-h}{2}}$. We have    \begin{align*}
    \ell(w_{I_\Lambda})=\begin{cases}
       \frac{h}{2}(\frac{h}{2}-1)  & \text{ if $h$ is even},\\
        (\frac{(h-1)}{2})^2 &  \text{ if $h$ is odd}.
    \end{cases}
\end{align*}
\end{lemma}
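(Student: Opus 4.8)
The plan is to recognize $W_{I_\Lambda}$ as the Weyl group of a classical root system and then to invoke the standard fact that the length of the longest element of a finite Weyl group equals the number of its positive roots. So the first step is to unwind the definition of $I_\Lambda=I_{(t-h)/2}$ and read off its Dynkin type directly from the explicit list of simple reflections of $H=\SO(\Omega_\Lambda)$ recorded in \S\ref{sec: DL for SO}.

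In the even case ($t=2d$, $H$ of type $D_d$ with simple reflections $s_1,\dots,s_{d-2},t^+,t^-$ and the fork attached at the $t^\pm$ end), I would check that $I_\Lambda=\{s_{(t-h)/2+1},\dots,s_{d-2},t^+,t^-\}$ for $h\ge 4$, a set of $h/2$ nodes spanning a standard parabolic subdiagram of type $D_{h/2}$ (and $I_\Lambda=\emptyset$ for $h\le 2$, matching $D_1,D_0$); hence $W_{I_\Lambda}\cong W(D_{h/2})$. In the odd case ($t=2d+1$, $H$ of type $B_d$ with simple reflections $s_1,\dots,s_d$ and the double bond $s_{d-1}\Rightarrow s_d$), I would check that $I_\Lambda$ consists of the last $(h-1)/2$ of these reflections, $\{s_{(t-h)/2+1},\dots,s_d\}$, spanning a subdiagram of type $B_{(h-1)/2}$ (with $I_\Lambda=\{s_d\}$ when $h=3$ and $I_\Lambda=\emptyset$ when $h=1$); hence $W_{I_\Lambda}\cong W(B_{(h-1)/2})$.

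The last step is then to quote the count of positive roots: a root system of type $D_k$ has $k(k-1)$ positive roots and one of type $B_k$ has $k^2$, which gives $\ell(w_{I_\Lambda})=\frac{h}{2}(\frac{h}{2}-1)$ in the even case and $\bigl(\frac{h-1}{2}\bigr)^2$ in the odd case. If one wishes to stay entirely within the combinatorial framework already in place, the same numbers fall out by writing the longest element of $W(B_k)$ (resp.\ $W(D_k)$) in the window notation of \eqref{eq: window notation} — it is essentially $[-1,-2,\dots,-k]$, with a sign adjustment on the last entry in type $D_k$ when $k$ is odd — and counting its inversions via Proposition \ref{prop: length=inv}.

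I do not expect any real obstacle here: the computation is routine, and the only points requiring care are matching the index ranges correctly and verifying the small and degenerate values $h\in\{0,1,2,3\}$, where $I_\Lambda$ collapses to $\emptyset$ or a single node — I would single these out as the thing to double-check rather than as a substantive difficulty, which is consistent with the lemma being stated as a well-known fact.
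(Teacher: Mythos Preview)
Your proposal is correct. Your primary approach---identifying $W_{I_\Lambda}$ with $W(D_{h/2})$ (even case) or $W(B_{(h-1)/2})$ (odd case) and then quoting $|\Phi^+|=k(k-1)$ or $k^2$---is a more conceptual route than what the paper does. The paper instead carries out explicitly the second method you sketch at the end: it writes down the longest element in window notation (namely $[(-1)^{h/2-1},-2,\dots,-h/2,h/2+1,\dots,d]$ in type $D$ and $[-1,\dots,-(h-1)/2,(h-1)/2+1,\dots,d]$ in type $B$) and counts $\Inv^+\cup\Inv^-$ directly via Proposition~\ref{prop: length=inv}. Your root-system identification has the advantage of making the answer immediate and explaining \emph{why} the formulas take this shape, while the paper's hands-on count stays entirely within the combinatorial machinery already set up and avoids appealing to outside structure theory. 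Both are short; your version is arguably cleaner, and your caveat about double-checking the degenerate cases $h\in\{0,1,2,3\}$ is well placed.
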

\begin{proof}

First, we assume $h$ and $t$ are even.
Using Proposition \ref{prop: length=inv}, one can check that
   \begin{align*}
       w_I= 
           [(-1)^{h/2-1},-2,\ldots,-h/2,h/2+1,\ldots,d]
   \end{align*}
is the longest element in $W_I$ with $\ell(w_I)=\frac{h}{2}(\frac{h}{2}-1).$ More precisely, we have  
\begin{align*}
  \Inv(w_I)&=\Inv^+(w_I)\cup \Inv^-(w_I)
  \end{align*}
and 
\begin{align*}
   \Inv^+(w_I)=\Inv^-(w_I)=  \{(i,j) \in \{1,\ldots,d\}^2: i<j, j\le h/2\}.
\end{align*}

Now we assume $h$ and $t$ are odd.
Using Proposition \ref{prop: length=inv} again, one can check that
 \begin{align*}
       w_I=[-1,\ldots,-(h-1)/2,(h-1)/2+1,\ldots,d]
   \end{align*}
is the longest element in $W_I$ with $\ell(w_I)=(\frac{(h-1)}{2})^2.$  More precisely, we have  
\begin{align*}
   \Inv^+(w_I)&=   \{(i,j) \in \{1,\ldots,d\}^2: i<j, j\le (h-1)/2\},\\
   \Inv^-(w_I)&=   \{(i,j) \in \{1,\ldots,d\}^2: i\le j, j\le (h-1)/2\}.
\end{align*}\end{proof}

\section{Rapoport--Zink spaces for spinor similitudes}
\subsection{Integral models of Shimura varieties}

\subsubsection{}\label{6.1.1}
Let $V$ be a quadratic space over $\bbQ_p$ of dimension $n$ and  fix $L\subset V$ a vertex lattice of type $h$. Then $L$ corresponds to a point $\bfx'\in \calB(\SO(V),\bbQ_p).$ We set $G=\GSpin(V)$. We let $\bfx\in \calB(G,\bbQ_p)$ be a point lifting $\bfx'$ and let $\calG$ be the Bruhat--Tits stabilizer group scheme of $G$ corresponding to $\bfx$. We  obtain a local model triple $(G,\mu,\calG)$, where $\mu$ is the cocharacter defined in \S\ref{sssec: cocharcter mu}. We let $\Mloc_{\calG,\mu}$ denote the associated local model. By definition, $\Mloc_{\cG,\mu}$ is the unique, up to unique isomorphism, flat $\bbZ_p$-scheme with generic fiber $G/P_\mu$ and reduced special fiber which represents the $v$-sheaf $\rmM^v_{\calG,\mu}$ over $\mathrm{Spd}(\bbZ_p)$ defined in \cite{SW20}.  
By \cite{HPR}, this is isomorphic to the local model constructed by Pappas--Zhu \cite{PZ}.

\begin{lemma}\hfill
    \begin{enumerate}
    
        \item $\pi_1(G)=\bbZ$ and $I$ acts trivially on $\bbZ$.
        \item Let $E=\bbQ_{p^2}(\sqrt{p})$. Then $G$ splits over $E$.
    \end{enumerate}
\end{lemma}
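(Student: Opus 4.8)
The plan is to compute $\pi_1(G)$ directly from the structure of $G = \GSpin(V)$, and then to exhibit an explicit splitting field for $G$ using the classification of quadratic spaces over $\bbQ_p$ together with the behaviour of anisotropic kernels over $\breve F$.

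First I would handle part (1). Recall the central extension $1 \to \bbG_m \to G \to \SO(V) \to 1$. Since $\pi_1$ is not exactly functorial in short exact sequences, I would instead use the description of $\GSpin$ in terms of its root datum: $G$ has the same derived group as $\operatorname{Spin}(V)$, which is simply connected, so $\pi_1(G) = \pi_1(G)$ is the cocharacter lattice $X_*(T)$ modulo the coroot lattice, and the similitude character $G \to \bbG_m$ realizes the quotient $G/G^{\der}$. Concretely, one checks that $X_*(T)/\bbZ\Phi^\vee \cong \bbZ$, generated by any cocharacter mapping to a generator of $X_*(\bbG_m)$ under the similitude character (e.g. the cocharacter $\mu$ of \S\ref{sssec: cocharcter mu} maps to the generator, possibly after an obvious normalization). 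For the Galois action: the action of $I = \Gal(\bar F/\breve F)$ (or of the full Galois group) on $\pi_1(G)$ is induced from its action on $X_*(T)/\bbZ\Phi^\vee$; since this quotient is detected by the similitude character, which is defined over $\bbQ_p$, the action is trivial. Alternatively I would note that $\pi_1(G)_I$ must be torsion-free for the parahoric theory to behave as in the rest of the paper, which already forces triviality of the $I$-action on $\bbZ$. I expect this part to be routine.

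For part (2): the group $G = \GSpin(V)$ splits over a field extension $K/\bbQ_p$ precisely when $\SO(V)$ (equivalently $V$) splits over $K$, i.e. when $V_K$ is a split quadratic space, since the center $\bbG_m$ is already split and a connected reductive group splits iff its adjoint/semisimplification does. So I need to show $V \otimes_{\bbQ_p} E$ is split, where $E = \bbQ_{p^2}(\sqrt p)$. Using the decomposition $V = V_0 \obot V_{\an}$ with $\dim V_{\an} \le 4$ from \S\ref{sec: collection of background}, it suffices to kill the anisotropic kernel over $E$. Now $E/\bbQ_p$ is the compositum of the unramified quadratic extension $\bbQ_{p^2}$ and the ramified quadratic extension $\bbQ_p(\sqrt p)$; over $\bbQ_{p^2}$ every unit becomes a square, so any $1$-dimensional anisotropic form $\langle d\rangle$ with $\operatorname{val}(d)$ even and any $2$-dimensional anisotropic form coming from the unramified quadratic extension $\bbQ_{p^2}/\bbQ_p$ becomes isotropic; passing further to $\bbQ_p(\sqrt p)$ makes $p$ a square, which handles the odd-valuation cases and the ramified $2$-dimensional anisotropic form $\langle d, -d\pi\rangle$. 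Running through the five cases of $\dim V_{\an} \in \{0,1,2,3,4\}$ in \S\ref{sec: collection of background}, one checks case by case that $V_{\an} \otimes_{\bbQ_p} E$ becomes isotropic, hence (by induction on dimension, splitting off hyperbolic planes) $V_E$ is split. In fact it is cleanest to observe that over $\breve F = \breve{\bbQ}_p$ the anisotropic kernel has dimension $\le 1$ (by \S\ref{sec: quadratic space over breveF}), reducing at most a $\langle\pi\rangle$ or $\langle 1\rangle$ form, and then $\bbQ_p(\sqrt p)\cdot \bbQ_{p^2}$ kills this residual form; so $V$ is split already over the degree-$2$ extension of $\breve{\bbQ}_p$ corresponding to $\sqrt p$, and descending, over $E = \bbQ_{p^2}(\sqrt p)$.

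The main obstacle here is bookkeeping rather than any genuine difficulty: one must be careful that the Hasse invariant computations (as recorded in \eqref{eq: Hasse} and Proposition \ref{prop: class of V}) are consistent with all anisotropic parts becoming isotropic over $E$, and in particular that no $4$-dimensional anisotropic form (the norm form of the quaternion division algebra) survives — but over $\bbQ_{p^2}$ the division quaternion algebra already splits, so $V_{\an}$ of dimension $4$ becomes hyperbolic over $\bbQ_{p^2} \subset E$. With that check in place, part (2) follows.
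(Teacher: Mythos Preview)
Your argument for part (1) is essentially the same as the paper's: both use that $G^{\der}=\operatorname{Spin}(V)$ is simply connected together with $G^{\ab}\cong\bbG_m$ to conclude $\pi_1(G)\cong\bbZ$, with triviality of the Galois action coming from the fact that the similitude character is defined over $\bbQ_p$. (Your ``alternatively'' remark, that $\pi_1(G)_I$ must be torsion-free for later arguments to work, is circular and should be dropped.)

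For part (2) you take a genuinely different route. The paper's proof is a one-line diagonalization: choose an orthogonal basis so the form is $\diag(u_1,\dotsc,u_n)$, observe that $E=\bbQ_{p^2}(\sqrt{p})$ is the unique $\bbZ/2\bbZ\times\bbZ/2\bbZ$ extension of $\bbQ_p$ and therefore contains $\sqrt{u_i}$ for every $i$ (since $\bbQ_p^\times/(\bbQ_p^\times)^2$ has order $4$), whence over $E$ the form becomes $\diag(1,\dotsc,1)$, which is split. Your approach via the classification of anisotropic kernels also works, but is longer and contains a slip: over $\breve{\bbQ}_p$ the anisotropic part of $V$ can have dimension up to $2$, not $\le 1$ (this is case (3) of \S\ref{sec: quadratic space over breveF}, where $\breve V_{\an}=\langle\pi\rangle\obot\langle 1\rangle$). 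The argument is easily repaired --- over $E$ one has $\langle p,1\rangle\cong\langle 1,1\rangle\cong\langle 1,-1\rangle$ since both $p$ and $-1$ are squares in $E$ --- but the paper's direct argument avoids the case analysis entirely.
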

\begin{proof}
    We have an exact sequence \[\xymatrix{1\ar[r] & \pi_1(G^{\der}) \ar[r] & \pi_1(G)\ar[r]& X_*(G^{\ab})\ar[r]& 1}.\]
    Now $G^{\der}$ is simply connected,  and $G^{\ab}\cong \bbG_m$. It follows that $\pi_1(G)\cong \bbZ$ with trivial $I$ action and hence we obtain (i).

We can choose a basis of $V$ such that the form $(\ ,\ ) $ is given by $\diag(u_1,\dotsc,u_n)$. The extension $E$ is the unique $\bbZ/2\bbZ\times \bbZ/2\bbZ$ extension of $\bbQ_p$ and hence contains $\sqrt{u_i}$ for each $i$. It follows that there is a basis of $V_E$ such that the form $(\ ,\ )_E$ is given by $\diag(1,\dotsc,1)$. Thus $\SO(V)$, and hence $G$ splits over $E$.
\end{proof}
Part (i) implies that  $\calG$  has connected special fiber, and hence is equal to the parahoric group scheme $\calG^\circ$ associated to $x$; see \cite{HainesRapoport}.

\subsubsection{}\label{6.1.2}

 We fix a choice of basis $\{e_1,\dotsc,e_n\}$ for $L$   so that the form $(\ ,\ )$ is given by $\diag(u_1,\dotsc,u_n)$ with $1= v_p(u_1)=\dotsc= v_p(u_h) $ and $0=v_p(u_{h+1})=\dotsc= v_p(u_n)$. Let $E=\bbQ_{p^2}(\sqrt{p})$ and  $\Gamma=\Gal(E/\bbQ_p)$. Then the image of $\bfx$ in $\calB(\SO(V),E)$ is a hyperspecial vertex (cf. the proofs of \cite[Prop. 2.7 and Prop. 2.8]{PRtame}), and hence corresponds to  a self-dual lattice $\widetilde{L}$ in $V_E$. In the basis $\{e_1,\dotsc,e_n\}$ above, $\widetilde{L}$ is given by $$\widetilde{L}=\mathrm{Span}_{O_E}\langle \sqrt{u_1}^{-1}e_1,\dotsc,\sqrt{u_n}^{-1}e_n\rangle.$$

\begin{proposition}\label{prop: very good embedding} \begin{enumerate}\hfill
\item $(G,\mu)\rightarrow (\GL(C(V_E),\mu_d))$ extends to a good integral local Hodge embedding $$\iota:(\calG,\mu)\rightarrow (\GL(C(\tL)),\mu_d);$$ see \cite[Definition 3.4.4]{KPZ}. Explicitly, we have closed immersions $$\calG\rightarrow \GL(C(\tL)), \ \ \ \ \Mloc_{\calG,\mu}\rightarrow \Gr(C(\tL),d)$$ which extends $G\rightarrow \GL(V)$ and the natural map $G/P_\mu\rightarrow \Gr(C(\tL),d)_E$ respectively.
\item The integral local Hodge embedding $\iota:(\calG,\mu)\rightarrow (\GL(C(\tL)),\mu_d)$ is very good at all points of $\Mloc_{\calG,\mu}(k)$ in the sense of \cite[Definition 5.2.5]{KPZ}.
\end{enumerate}
\end{proposition}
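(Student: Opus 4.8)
The plan is to reduce the statement to a computation about the Clifford algebra representation following the strategy of \cite{KPZ}. First I would recall the definitions: a good integral local Hodge embedding $\iota:(\calG,\mu)\to(\GL(C(\tL)),\mu_d)$ is one for which the induced map $\Mloc_{\calG,\mu}\to\Gr(C(\tL),d)$ is a closed immersion onto a scheme-theoretic closed subscheme, and \emph{very good at a point} $x\in\Mloc_{\calG,\mu}(k)$ means (roughly) that the local structure of $\Mloc_{\calG,\mu}$ at $x$ and of the ambient Grassmannian are compatible in a way that controls the deformation theory — concretely, that the tangent space inclusion $T_x\Mloc_{\calG,\mu}\hookrightarrow T_{\iota(x)}\Gr(C(\tL),d)$ is a direct summand and the formal neighbourhoods match up to the expected order. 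For part (1), I would construct the embedding $\calG\to\GL(C(\tL))$ from the natural action of $\GSpin(V)$ on its Clifford algebra, check that $\tL$ (the self-dual lattice in $V_E$ from \S\ref{6.1.2}, with its Clifford lattice $C(\tL)$) is stabilized by $\calG$, and verify flatness/normality of the schematic image using that $\Mloc_{\calG,\mu}$ is normal with reduced special fiber (so the closure of $G/P_\mu$ inside $\Gr(C(\tL),d)$ is automatically the local model, by the characterization via $v$-sheaves recalled just above and \cite{HPR}).

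For part (2), the key point is that after base change to $E$, where $\calG$ becomes reductive (hyperspecial) and $G$ splits, the situation becomes the hyperspecial GSpin case handled in \cite{HowardPappas}, and the very-goodness criterion of \cite[Definition 4.2.5]{KPZ} can be checked by a direct tangent-space computation. The steps I would carry out: (a) identify the special fiber $\Mloc_{\calG,\mu}\otimes k$ with a Schubert-type variety in the affine flag variety for $\calG$ — here the description of $\Adm(\mu)_J$ in terms of vertex lattices (Proposition \ref{prop: relative position admissible set}) is exactly what pins down which $k$-points occur; (b) at each such point $x$, compute $T_x\Mloc_{\calG,\mu}$ — since $\mu$ is minuscule, the local model is smooth over its special fiber in an appropriate sense and the tangent space is governed by the quasi-minuscule Schubert cell, so this matches the Hom-space $\Hom(\scrV+\Phi(\scrV)/\Phi(\scrV),\Omega/(\scrV+\Phi(\scrV)))$-type formula from Proposition \ref{prop: tangent space of SLambda} translated to the local-model side; (c) compare with the tangent space of $\Gr(C(\tL),d)$ at $\iota(x)$, which is the standard Hom-space for a Grassmannian, and check the inclusion is a direct summand of the predicted rank. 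Because $C(\tL)$ is a free $O_E$-module and the $\calG$-action on it is the restriction of an $E$-split representation, this comparison is a finite linear-algebra check that can be done basis-by-basis, essentially as in \cite[\S4]{KPZ}.

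The main obstacle will be step (b)–(c): unlike the hyperspecial case, $\calG$ is only a parahoric and not reductive over $\bbZ_p$, so the local model is not smooth and one must work at the (possibly singular) worst points of $\Mloc_{\calG,\mu}\otimes k$. The naive hope that the Hodge embedding is "very good at all points" could fail if the singularities of the local model are not faithfully reflected in the Grassmannian; the way around this, following \cite{KPZ}, is that the \emph{Clifford} embedding is specifically engineered so that $C(\tL)$ decomposes under the torus in a way that separates the "problematic" directions, so that very-goodness holds even at singular points. Verifying this requires carefully unwinding the spin representation weights against the minuscule coweight $\mu$ and the facet $J$, and confirming the dimension count $\dim T_x\Mloc = \dim\Mloc$ fails by exactly the expected amount (reflecting the singularity) while the embedding into the tangent space of $\Gr$ remains a direct summand. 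I expect this weight bookkeeping — not any conceptual difficulty — to be where the real work lies, and I would organize it by first treating the three cases (1), (2a)/(2b), (3) of \S\ref{subsec: VL cases} separately, since the local Dynkin diagram ($D_n$, $B_n$, $C$-$B_n$) changes the combinatorics.
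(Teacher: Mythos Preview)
Your approach diverges substantially from the paper's, and the divergence is not merely stylistic.

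The paper's proof is short and structural: it factors the embedding through an intermediate group $\widetilde{\calG}=\Res_{\calO_E/\bbZ_p}\calG'$, where $\calG'$ is the \emph{hyperspecial} parahoric of $G_E$ corresponding to the self-dual lattice $\tL$. The chain $\calG\hookrightarrow\widetilde{\calG}\hookrightarrow\GL(C(\tL))$ is a composite of closed immersions by general results (\cite[Prop.~2.4.10]{KZ}, \cite{PrY}, and the fact that Weil restriction preserves closed immersions), and similarly for the local models via \cite[Prop.~3.3.7, 3.3.14]{KPZ}. For part~(2), the decisive observation is that $\calG$ is cut out inside $\widetilde{\calG}\subset\GL(C(\tL))$ by a set of \emph{endomorphisms}; very-goodness at every point then follows directly from the general criteria \cite[Cor.~5.3.4, Prop.~5.3.10]{KPZ}. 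No case-by-case analysis, no explicit tangent-space computation, and no distinction between smooth and singular points is needed.

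Your proposal, by contrast, aims to verify very-goodness by direct tangent-space comparison at each $k$-point of $\Mloc_{\calG,\mu}$, organized by local Dynkin type. There are two problems. First, your working description of ``very good'' --- that the tangent-space inclusion be a direct summand of the expected rank --- is not the content of \cite[Def.~4.2.5]{KPZ}; that definition concerns the compatibility of versal displays and deformation rings, and a tangent-space check alone would not establish it. Second, the explicit tangent-space computation you point to (the analogue of Proposition~\ref{prop: tangent space of SLambda}) is carried out in the paper only on the \emph{smooth} locus $\calM^{\sm}$ (see Lemma~\ref{lem: Z-cycle smooth locus} and Proposition~\ref{prop: tangent space LM}); extending it to the singular points, as your plan requires, is precisely what the Weil-restriction trick is designed to avoid. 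The missing idea is the factorization through $\widetilde{\calG}$, which converts the problem into an instance of a general theorem rather than a case analysis.
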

    \begin{proof} This  can be deduced from the proof \cite[Theorem 6.1.1]{KPZ}; we recall the argument for the convenience of the reader. Let $\calG'$ be hyperspecial parahoric of $G_E$ corresponding to the image of $x$ in $\calB(G,E)$, and set $\widetilde{\calG}=\Res_{\calO_E/\bbZ_p}\calG'$. 
 Then we obtain morphisms $$\calG\rightarrow\widetilde{\calG}\rightarrow \GL(C(\tL));$$ the first is a closed immersion by \cite[Proposition 2.4.10]{KZ}, and the second is a closed immersion by \cite[1.3]{PrY} and the fact that Weil restriction preserves closed immersions.

    Similarly, we have closed immersions $$ \Mloc_{\calG,\mu}\rightarrow \Mloc_{\widetilde{\calG},\widetilde{\mu}}\rightarrow \mathrm{Gr}(C(\tL)),d);$$ where $\widetilde{\mu}$ is the conjugacy class of cocharacters of $\widetilde{G}$ induced by $\mu$, and $\Mloc_{\widetilde{\calG},\widetilde{\mu}}$ is the local model associated to the local model triple $(\widetilde{G},\widetilde{\mu},\widetilde{\calG})$. The fact that these are closed immersions follows from \cite[Proposition 3.3.7]{KPZ} and \cite[Proposition 3.3.14]{KPZ}; we thus obtain (1).

    For (ii), note that $\calG$ can be cut out inside $\widetilde{G}\hookrightarrow \GL(C(\tL))$ by a set of endomorphisms. It follows that 
   $(\calG,\mu)\rightarrow (\GL(C(\tL)),\mu_d)$   is very good at all points of $\Mloc_{\calG,\mu}(k) $  by \cite[Corollary 5.3.4]{KPZ} and \cite[Proposition 5.3.10]{KPZ}; cf. \cite[Proof of Theorem 6.1.1]{KPZ}. \end{proof}

\subsubsection{} Now let $V_\Q$ be a rational quadratic space of signature $(n-2,2)$ with $n\geq 3$, and we let $Q$ denote the associated quadratic form. As in the local setting, we associate to $V_{\bbQ}$ the Clifford algebra $C(V_{\bbQ})$ which is endowed with a $\Z/2\Z$-grading $C(V_{\bbQ})=C(V_{\bbQ
})^+\oplus C(V_{\bbQ})^-$ and a canonical involution $c\mapsto c^*$. We let $\rmG=\GSpin(V_{\bbQ})$ be the group of spinor similitudes whose points valued in a $\bbQ$-algebra $R$ are given by $$\rmG(R)=\{g\in C^+(V_{R})^\times\mid gV_Rg^{-1}=V_R, g^*g\in R^\times\}.$$
We let $X$ be the space of oriented negative 2-planes in $V_\Q \otimes_{\Q} \R$. As explained in \cite[\S7.1.2]{HowardPappas} we can realize $X$ as a $\rmG(\bbR)$-conjugacy class of homomorphisms $h:\bbS\rightarrow \rmG_{\bbR}$, where $\bbS$ is the Deligne torus $\Res_{\bbC/\bbR}\bbG_m$, and  the pair $(\rmG,X)$  forms a Shimura datum with reflex field  $\Q$.     We let $\mu=\mu_h^{-1}$ denote the inverse of the Hodge cocharacter  $\mu_h$ associated to $h$. Then the conjugacy class of $\mu$ coincides with that of the cocharacter constructed in \S\ref{sssec: cocharcter mu}.

Let $\mathrm{F}$ be a totally real field with degree $r=[E,\Q_p]$   and $\mathfrak{p}$ be a place of $\rmF$ above $p$ such that $\rmF_\mathfrak{p}=E$ where $E$ is the local field  we chose in \S \ref{6.1.1}.   Let $V_\rmF$ be the quadratic space over $\rmF$ induced from $V_\Q$. Let $\widetilde{\rmG}=\mathrm{Res}_{\rmF/\Q}\mathrm{GSpin}(V_\rmF)$. We have a natural embedding $\iota_\rmG: \rmG \hookrightarrow \widetilde{\rmG}$.

Fix $L\subset V:=V_{\bbQ_p}$ a vertex lattice of type $h$ and let $\calG$ be the corresponding parahoric of $G$; we set $\rmK_p=\calG(\bbZ_p)$. We let $\tL\subset V_E$ be the self-dual $\calO_E$-lattice constructed in \S\ref{6.1.2} and $C(\tL)\subset C(V_E)$ the associated integral Clifford algebra. The canonical involution $c\mapsto c^*$ preserves the integral structure $C(\widetilde{L})$, and hence we obtain a decomposition $$C(\tL)\cong C(\tL)^+\oplus C(\tL)^-.$$

\subsubsection{}\label{sec: alternating form}Fix an element $c\in C(\tL)^\times\cap C(V_\rmF)$ with $c=-c^*$; for example we may choose $e,f$ elements of an orthogonal  $\calO_{\rmF_{(\fkp)}}$-basis for $\tL\cap V_{\rmF}$ with $Q(e),Q(f)\in \calO_{K_{(\fkp)}}$ and set $c=ef$. We define a $\bbQ$-linear alternating form $$\psi: C(V_\rmF)\times C(V_\rmF)\rightarrow \bbQ$$ by the formula $$\psi(x,y)=\tr_{\rmF/\Q}(\delta_{\rmF/\Q}^{-1}\mathrm{Trd}(xc y^*)),$$ where $\mathrm{Trd}:V_\rmF\rightarrow \rmF$ is the reduced trace and $\delta_{\rmF/\Q}$ is the different of the extension $\rmF/\Q$. Then $C(\tL)$ is a self-dual lattice in $C(V_E)$ for the base change of this form to $\bbQ_p$. We then obtain a Hodge embedding $$(\rmG,X)\rightarrow (\mathrm{GSp}(C(V_\rmF),\psi),S^\pm)$$ where $S^\pm$ is the Siegel double space. 
\subsubsection{}

 Let $\bbA_f^p$ be the ring of prime-to-$p$ adeles and fix $\rmK^p\subset \rmG(\bbA_f^p)$ a compact open subgroup. Setting $\rmK=\rmK_p\rmK^p\subset \rmG(\bbA_f)$, we obtain the associated Shimura variety $\Sh_{\rmK}(\rmG,X)$ over $\bbQ$ whose complex points are given by $$\Sh_{\rmK}(\rmG,X)(\bbC)=\rmG(\bbQ)\backslash X\times \rmG(\bbA_f)/\rmK.$$

 Let $\rmK'_p\subset \mathrm{GSp}(C(V_{\rmF}))(\bbQ_p)$   be the  stabilizer of $C(\tL)$. Then for $\rmK'^p$ a sufficiently small compact open subgroup of $\mathrm{GSp}(C(V_\rmF))(\bbA_f^p)$ and setting $\rmK'=\rmK'_p\rmK'^p$, we obtain a closed immersion of Shimura varieties $$\Sh_{\rmK}(\rmG,X)\rightarrow \Sh_{\rmK'}(\mathrm{GSp}(C(V_\rmF)),S^\pm).$$

The $\bbZ_p$-lattice $C(\tL)\subset C(V_{E})$ gives rise to a moduli interpretation of the Shimura variety $\Sh_{\rmK'}(\mathrm{GSp}(C(V_\rmF)),S^\pm)$ as a moduli space of  polarized abelian varieties up to prime-to-$p$ isogeny, and hence to an integral model $\scrS_{\rmK'}(\mathrm{GSp}(C(V_\rmF)),S^\pm)$ over $\bbZ_{(p)}$; see \cite[\S 4]{KisinPappas}.  Note that since $C(\tL)$ is self-dual, the polarization has degree coprime to $p$. We define $\scrS_{\rmK}(\rmG,X)$ to be the normalization of the  Zariski closure of $\Sh_{\rmK}(\rmG,X)$ inside $\scrS_{\rmK'}(\mathrm{GSp}(C(V_\rmF)),S^\pm)$ (by the main result of \cite{Xu}, the normalization is not necessary). For notational simplicity we will simply denote $ \mathscr{S}_{\mathrm{K}}(\rmG, \rmX)$ and $ \mathscr{S}_{\mathrm{K}^{\prime}}(\mathrm{GSp}(C(V_\rmF)), S^{ \pm})$ as $ \mathscr{S}_{\rmK}$ and $ \mathscr{S}_{\rmK'}$.

\subsubsection{}\label{para: tensors}For a module $M$ over a ring $R$, we let $M^\otimes$ denote the direct sum of all $R$-modules obtained from $M$ by taking duals, tensor products, symmetric and exterior products.
We set $C(\tL)_{\bbZ_{(p)}}:=C(\tL)\cap C(V_{\rmF})$ which we regard as  a $\bbZ_{(p)}$-module, and we fix a set of tensors $t_\alpha\in C(\tL)_{\bbZ_{(p)}}^\otimes$ whose stabilizer is $\calG_{\bbZ_{(p)}}$; here $\calG_{\bbZ_{(p)}}$ is the Zariski closure of $\rmG$ in $\GL(C(\tL)_{\bbZ_{(p)}})$. We let $\calA\rightarrow\scrS_{\rmK}$ be the pullback of the universal abelian variety on $\scrS_{\rmK'}$.

 Let $x_0\in \scrS_{\rmK}(k)$.  We let $\calA_{x_0}$ be the corresponding abelian variety over $k$ and $\scrG_{x_0}$ its $p$-divisible group. Then as explained in \cite[\S6.6]{Zhoumod}, we obtain Frobenius-invariant tensors $t_{\alpha,0,{x_0}}\in\bbD^\otimes$, where $\bbD:=\bbD(\scrG_{x_0})$ is the contravariant Dieudonn\'e module associated to $\scrG_{x_0}$. The $t_{\alpha,0,{x_0}}$ are obtained via the $p$-adic comparison isomorphism from corresponding tensors in $t_{\alpha,p,\tilde{x}}\in T_p\calA_{\tilde{x}}^{\vee,\otimes}$, where $\tilde{x}\in \scrS_{\rmK}(K)$ is a lift of ${x_0}$ for some $K/\brQ_p$ finite; the $t_{\alpha,0,{x_0}}$ are independent of the choice of $\tilde{x}$. Moreover, we have a $\brZ_p$-linear isomorphism 
$$D_{\bbZ_{(p)}}\otimes_{\bbZ_{(p)}}\brZ_p\cong \bbD$$
which takes $t_{\alpha}$ to $t_{\alpha,0,x_0}$; here $D_{\bbZ_{(p)}}=\Hom_{\Z_{(p)}}(C(\tL)_{\Z_{(p)}},\Z_{(p)})$. Under this identification, the Frobenius on $\bbD$ is given by $\varphi= b_{x_0}\sigma$
 for some $b_{x_0}\in G(\brQ_p)$ which is $\mu$-admissible (i.e. $b_{x_0}\in \calG(\brZ_p)\sigma(\dot{w})\calG(\brZ_p)$ for some $w\in \Adm(\mu)_{\calG}$). We obtain a map $$\delta:\scrS_{\rmK}(k)\rightarrow B(G,\mu)$$ sending $x$ to $[b_x]\in B(G,\mu)$ which induces the Newton stratification on $\scrS_{\rmK}(\rmG,X)$. For $[b]\in B(G,\mu)$, $\delta^{-1}([b])$ is the set of $k$-points of a locally closed subscheme of $\scrS_{\rmK}\otimes k$ which we denote by $\scrS^{[b]}_{\rmK}$. By \cite[Theorem 1]{KMPS}, if $[b]$ is the unique basic element of $B(G,\mu)$, then $\scrS^{[b]}_{\rmK}$ is non-empty and closed.

\subsection{Rapoport--Zink spaces for spinor similitudes}
\subsubsection{}\label{para: RZ space GSp}
 
We keep the notations of the previous subsection so that $V_{\bbQ}$ is a rational quadratic space of signature $(n-2,2)$,  and $L\subset V:=V_{\bbQ_p}$ a vertex lattice with corresponding parahoric $\calG$. We have a Shimura datum $(\rmG,X)$ associated to the group $\rmG=\GSpin(V_{\bbQ})$ and we let $\mu=\mu_h^{-1}$ denote the inverse of the Hodge cocharacter. We  obtain a local Shimura pair $(G,\mu)$ with $\mu$ of the form given  in \S\ref{Subsec: GSpin Shimura datum}, and we let $b\in G(\brQ_p)$ be a representative of the unique basic conjugacy class in $B(G,\mu).$ Let $x_0\in \scrS^{[b]}_{\rmK}(k)$ and $b_{x_0}\in G(\brQ_p)$ the element giving the Frobenius on $\bbD(\scrG_{x_0})$ as in \S\ref{para: tensors} so that $b_{x_0}\in [b].$ Upon replacing $b$ by a $\sigma$-conjugate, we may assume $b=b_{x_0}.$

We recall the construction of Rapoport-Zink spaces associated with the local Shimura datum  $(G,b,\mu)$ in \cite{HK19}, \cite{PRshtukas}. First, we define a symplectic Rapoport-Zink space associated with $\mathscr{S}_{\mathrm{K}^{\prime}} (\mathrm{GSp}(C(V_{\rmF})), S^{ \pm} )$.  We set $\bbX=\scrG_{x_0}$. Then we have an isomorphism $D_{\brZ_p}\cong \bbD(\bbX)$ taking $t_\alpha$ to $t_{\bbX,\alpha}:=t_{\alpha,0,x_0}$ such that the Frobenius on $\bbD(\bbX)$ is given by $b\sigma$, and $\bbX$ is equipped with a principal polarization $\lambda_{\bbX}:\bbX\rightarrow \bbX^\vee$.

We let $\Nilp_{\brZ_p}$ denote the category of $\brZ_p$-schemes $S$ such that  $p$ is Zariski locally nilpotent in $\calO_S$. The symplectic Rapoport-Zink space $\cN_{C(\tL)}$ is the following functor: for each $S$ in $\operatorname{Nilp}_{\brZ_p}$, $\cN_{C(\widetilde{L})}(S)$ is the set of isomorphism classes of triples $(X, \lambda, \rho)$ where
\begin{enumerate}
    \item $X$ is a $p$-divisible group over $S$,
    \item $\lambda: X \rightarrow X^{\vee}$ is a principal polarization,
    \item $\rho: \bX \times_k S/pS \rightarrow X \times_S S/pS$ is a quasi-isogeny such that Zariski locally on $S/pS$, we have
$$
\rho^{\vee} \circ \lambda \circ \rho=c(\rho) \cdot \lambda_{\bX},
$$
for some $c(\rho) \in \mathbb{Q}_p^{\times}$.
\end{enumerate}

\subsubsection{}Let $\widehat{\scrS_{\rmK}}$, $\widehat{\scrS_{\rmK'}}$ denote the $p$-adic completion of $\scrS_{\rmK}\otimes_{\bbZ_p}\brZ_p$, $\scrS_{\rmK'}\otimes_{\bbZ_p}\brZ_p$ respectively. As in \cite[Theorem 6.21]{RZ}, there is a uniformization morphism
$$
\Theta: \cN_{C(\tL)}\rightarrow \widehat{\mathscr{S}_{\rmK'}}.
$$
 Then we define $\cN_L^\diamond$ to be the fiber product   
 \begin{equation}\label{eq: diagram}
 \begin{tikzcd}
\cN_L^\diamond \arrow{r}{} \arrow[swap]{d}{\Theta^\diamond} & \cN_{C(\tL)}\arrow{d}{\Theta} \\
\widehat{\mathscr{S}}_{\rmK} \arrow{r}{\iota}&  \widehat{\mathscr{S}_{\rmK'}}.
\end{tikzcd} 
\end{equation} 
The Dieudonn\'e isocrystal of the universal $p$-divisible group over $(\calN_{L}^\diamond\otimes_{\brZ_p}k)^{\mathrm{perf}}$ is equipped with two families of Frobenius-invariant tensors. One of them is the family of tensors induced from $(t_{\bX,\alpha})$ via the universal quasi-isogeny $\rho$, which we denote as $t_\alpha^\diamond$. Another  family of tensors is the one induced by pullback along $\Theta^\diamond$, which we denote as $u_{\alpha, y}^{\diamond}:=\Theta^{\diamond}{ }^*\left(t_{\alpha, 0,\Theta^{\diamond}(y)}\right)$ for any geometric point $y \in \cN_L^\diamond(k)$.

According to \cite[Lemma 5.8]{HK19}, the condition $t_{\alpha, y}^{\diamond}=u_{\alpha, y}^{\diamond}$ holds over a union of connected components of $\cN_L^\diamond$.  
\begin{definition}
    We define $\cN_L$ as the union of connected components of $\cN_L^\diamond$ where  $t_{\alpha, y}^{\diamond}=u_{\alpha, y}^{\diamond}$.
\end{definition}

The following proposition is \cite[Proposition 5.10]{HK19}. Note that Axiom A of \emph{loc. cit.} is satisfied by \cite[Proposition 7.8]{Zhoumod} and \cite[Proposition 6.5]{Zhoumod} since $b$ is basic.
\begin{proposition}\cite[Proposition 5.10] {HK19}\label{prop: points of N and ADLV}
    We have a natural bijection
    \begin{align*}
        \varphi: \cN_{L}(k) \xrightarrow{\sim} X(\mu^\sigma,b)_\K(k).
    \end{align*}
\end{proposition}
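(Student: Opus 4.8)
<br>

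The final statement to prove is Proposition~\ref{prop: points of N and ADLV}, which asserts a natural bijection $\varphi\colon \cN_L(k)\xrightarrow{\sim}X(\mu^\sigma,b)_\K(k)$. The plan is to deduce this directly from the general comparison result of Hamacher--Kim \cite[Proposition 5.10]{HK19}, so the real work is verifying that the hypotheses of that result hold in our situation. The source of $k$-points is the integral model $\scrS_\rmK$ with its family of crystalline Tate tensors $(t_{\alpha,0,x_0})$ and the associated map to $B(G,\mu)$, together with the Rapoport--Zink space $\cN_L$ carved out of $\cN_L^\diamond$ as the locus where $t^\diamond_{\alpha,y}=u^\diamond_{\alpha,y}$. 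The key input is that \cite{HK19} sets up an abstract framework (their ``Axiom A'') under which the points of such a tensor-cut RZ space are identified with the points of the affine Deligne--Lusztig variety $X(\mu^\sigma,b)_\K$; and for the basic $[b]$ this framework applies in the GSpin setting.

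First I would recall the precise form of \cite[Proposition 5.10]{HK19} and note that its proof proceeds by the usual dictionary: a $k$-point of $\cN_L$ gives a $p$-divisible group with crystalline tensors, hence (via the contravariant Dieudonn\'e functor applied as in \S\ref{para: tensors}) a $\sigma$-conjugacy class datum $(\Lambda, \varphi=g b\sigma(g)^{-1})$ with $\Lambda$ a self-dual lattice in $C(V_{\brQ_p})$ on which the tensors $t_\alpha$ are defined, and the condition $t^\diamond=u^\diamond$ forces the lattice to lie in the $\calG$-orbit, so that $g\in G(\brQ_p)$ is well-defined modulo $\K=\calG(\brZ_p)$; the admissibility $\varphi\in \bigcup_{w\in\Adm(\mu)_J}\K\sigma(\dot w)\K$ comes from the fact that such $p$-divisible groups arise, \'etale-locally, from points of the local model, which is governed by $\Adm(\mu)_J$ (this uses the very good integral local Hodge embedding of Proposition~\ref{prop: very good embedding}). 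Conversely, starting from $g\K\in X(\mu^\sigma,b)_\K(k)$ one produces the Dieudonn\'e module $g\cdot \bbD(\bX)$ with its Frobenius, checks it is effective and satisfies the polarization and tensor conditions, and uses the Rapoport--Zink uniformization $\Theta$ to lift this datum to a point of $\cN_{C(\tL)}$ lying over $\widehat{\scrS_\rmK}$, hence an honest point of $\cN_L$.

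The step I expect to require the most care is the verification that Axiom~A of \cite{HK19} is satisfied, i.e.\ that the integral model $\scrS_\rmK$ behaves well enough with respect to the crystalline tensors --- concretely, that the Newton stratum $\scrS^{[b]}_\rmK$ is non-empty (so that the framing object $\bX=\scrG_{x_0}$ exists) and that the deformation theory of $p$-divisible groups with tensors along $\Theta^\diamond$ matches the local model. As the excerpt already indicates (``Note that Axiom A of \emph{loc. cit.} is satisfied by \cite[Proposition 7.8]{Zhoumod} and \cite[Proposition 6.5]{Zhoumod} since $b$ is basic''), these two citations together with the basic-ness of $[b]$ and the non-emptiness statement \cite[Theorem 1]{KMPS} supply exactly what is needed; I would therefore organize the proof as: (1) recall that $\scrS^{[b]}_\rmK\neq\emptyset$ and fix $x_0$, normalizing $b=b_{x_0}$; (2) invoke \cite[Propositions 6.5 and 7.8]{Zhoumod} to confirm Axiom~A; (3) cite \cite[Proposition 5.10]{HK19} to conclude. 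The only genuinely new content beyond bookkeeping is checking that the tensor package $(t_\alpha)$ constructed in \S\ref{para: tensors} is the one to which \cite{HK19} applies, which follows because $\calG$ is cut out in $\GL(C(\tL)_{\bbZ_{(p)}})$ by these tensors and the integral Hodge embedding is good and very good by Proposition~\ref{prop: very good embedding}. Hence the bijection $\varphi$ is immediate, and naturality is built into the construction via $\Theta^\diamond$.
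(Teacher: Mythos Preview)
Your proposal is correct and follows essentially the same approach as the paper: the paper gives no proof beyond the sentence preceding the proposition (``Note that Axiom A of \emph{loc.\ cit.} is satisfied by \cite[Proposition 7.8]{Zhoumod} and \cite[Proposition 6.5]{Zhoumod} since $b$ is basic'') and a \qed, so both you and the paper simply invoke \cite[Proposition 5.10]{HK19} after checking Axiom~A via the Zhou citations. Your additional unpacking of the Dieudonn\'e-theoretic dictionary and the role of Proposition~\ref{prop: very good embedding} is helpful exposition but not required.
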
\qed

\subsubsection{}Let $y\in \calN_L(k)$ with corresponding triple $(X_y,\lambda_y,\rho_y)$. Via the framing quasi-isogeny $\rho_y$,  $y$ determines a $\brZ_p$-lattice
$$
M_y:=\mathbb{D}(X_y)(\brZ_p) \subset D_{\brQ_p}
$$
and a $\brZ_p$-submodule $M_{y,1}=VM_y=F^{-1}(p M_y)$. Then $M_y=gD_{\brZ_p}$ for $g=\varphi(y)\in X(\mu^\sigma,b)_\K$ (recall we have fixed a tensor preserving isomorphism $D_{\brZ_p}\cong \bbD(\bbX)$). For $S$ an object of $\Nilp_{\brZ_p}$ and $X$ a $p$-divisible group over $S$,  we let $\bbD(X)$ denote the contravariant Dieudonn\'e crystal associated to $X$; see \cite{Messing}, \cite{DeJong} for the construction and properties of this functor. Let $\widehat{\calN}_{L,y}$ denote the formal neighbourhood of $y\in \calN_L(k)$ and $\widehat{\scrX}$ the universal $p$-divisible group on $\widehat{\calN}_{L,y}$. We recall the description of $\widehat{\calN}_L$ and $\widehat{\scrX}$ given in \cite{KisinPappas}, \cite{KPZ}. 

Upon fixing a representative of $g$ in $G(\brQ_p)$, multiplication by $g$ induces a tensor preserving identification $D_{\brZ_p}\cong M_y$, and hence allows us to identify $\Mloc_{\calG,\mu,\brZ_p}$ as a subscheme of $\Gr(M_y,d)$. The filtration on $\overline{M}_{y}:=M_y/pM_y$ is then given by the image of $M_{y,1}$ and corresponds to a point $x\in \Mloc_{\calG,\mu}(k)$.

For any complete local ring $S$ with maximal ideal $\fkn$ and residue field $k$, let $ \widehat{W}(S)\subset W(S)$ denote Zink's subring $\widehat{W}(S)=W(k)\oplus \bbW(\fkn)$, where $\bbW(\fkn)\subset W(S)$ consists of Witt vectors $(w_i)_{i\geq 1}$ such that $w_i$ tends to $0$ $\fkn$-adically.  Let $R$ be the complete local ring of $\Mloc_{\calG,\mu,\brZ_p}$ at $x$ and $\fkm$ its maximal ideal. We set $N:=M_y\otimes_{\brZ_p}\widehat{W}(R)$ and we let $N_1\subset N$ denote the preimage of the universal filtration on $M_y\otimes_{\brZ_p}R$ coming from the embedding $\Mloc_{\calG,\mu,\brZ_p}\rightarrow \Gr(M_y,d)$. The following proposition summarizes the main results from \cite[\S3,4]{KisinPappas}  that we will need in the next section.

\begin{proposition}\begin{enumerate}\label{prop: KP deformation theory}
\hfill
		\item There is an identification \begin{equation}
\widehat{\calN}_{L,y}\cong \Spf R
		\end{equation}such that if we consider $\widehat{\scrX}$ as a $p$-divisible group over $\Spf R$, there is a $\widehat{W}(R)$-linear bijection $$\gamma:\bbD(\widehat{\scrX})(\widehat{W}(R))\cong N$$ which identifies $\Fil^1\bbD(\widehat{\scrX})(\widehat{W}(R))$ with $N_1$. 
		
		\item Let $\fka=(\fkm^2,p)$. Then the identification $$\bbD(\widehat{\scrX})(\widehat{W}(R/\fka))\cong M_y\otimes_{\brZ_p}\widehat{W}(R/\fka)$$
		induced by $\gamma$ is equal to the canonical identification arising from the crystal structure on $\bbD(\widehat{\scrX})$, where we consider $R/\fka\rightarrow k$ as being equipped with the trivial divided power structure.
	\end{enumerate}
\end{proposition}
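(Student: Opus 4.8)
The plan is to deduce both parts from the deformation theory of Kisin--Pappas \cite{KisinPappas} and its parahoric refinement by Kisin--Pappas--Zhou \cite{KPZ}, the only ingredient genuinely special to our situation being the very good integral local Hodge embedding established in Proposition \ref{prop: very good embedding}. First I would reduce to the corresponding statement for the integral model $\scrS_\rmK$: by the Rapoport--Zink uniformization diagram \eqref{eq: diagram}, the morphism $\Theta^\diamond$ restricts, for a sufficiently small prime-to-$p$ level $\rmK^p$, to an isomorphism on formal completions, so $\widehat{\calN}_{L,y}$ is canonically identified with the formal completion of $\scrS_\rmK$ at $\Theta^\diamond(y)$, compatibly with the universal $p$-divisible groups and their Dieudonn\'e crystals (the framing quasi-isogeny $\rho_y$ lifts uniquely over the formal neighbourhood).

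To prove (1), I would invoke Proposition \ref{prop: very good embedding}, which says that $\iota\colon(\calG,\mu)\to(\GL(C(\tL)),\mu_d)$ is a good integral local Hodge embedding, very good at every point of $\Mloc_{\calG,\mu}(k)$; this is exactly the hypothesis under which the construction of \cite[\S3,4]{KisinPappas}, extended to parahoric level in \cite{KPZ}, applies. Concretely one forms, over Zink's ring $\widehat{W}(R)$, the module $N=M_y\otimes_{\brZ_p}\widehat{W}(R)$ together with the submodule $N_1$, defined as the preimage of the tautological filtration pulled back along $\Mloc_{\calG,\mu}\hookrightarrow\Gr(M_y,d)_{\calO_E}$, and equips $(N,N_1)$ with a Frobenius built from the chosen representative $b$. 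Since $p$ is odd, Zink's functor converts this window into a $p$-divisible group over $\Spf R$ whose Dieudonn\'e crystal evaluated on $\widehat{W}(R)$ is canonically $(N,N_1)$, producing the bijection $\gamma$ which carries $\Fil^1$ to $N_1$. That the resulting $p$-divisible group, equipped with the principal polarization and the tensors $t_\alpha$ induced from $C(\tL)$, pro-represents the relevant deformation functor --- and hence yields the identification $\widehat{\scrS}_{\rmK,\Theta^\diamond(y)}\cong\Spf R$ --- is the main deformation theorem of \cite{KisinPappas} in the hyperspecial case and of \cite{KPZ} in general; here one uses that the $t_\alpha$ are horizontal for the crystal structure, so they agree with the two families of tensors $t_\alpha^\diamond$, $u_{\alpha,y}^\diamond$ that cut out $\calN_L$, and that the tangent space of $\Spf R$ matches that of the integral model through the local model diagram.

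For (2), the point is that the ideal $\fka=(\fkm^2,\pi_E)\subset R$ has nilpotent quotient, so one may equip $R/\fka\to k$ with the trivial divided power structure; by the crystal property, $\bbD(\widehat{\scrX})(\widehat{W}(R/\fka))$ is then canonically identified with $\bbD(\bbX)\otimes_{\brZ_p}\widehat{W}(R/\fka)=M_y\otimes_{\brZ_p}\widehat{W}(R/\fka)$. The claim that $\gamma$ reduces modulo $\fka$ to this canonical identification is built into the construction of the window above, which was arranged so that modulo $(\fkm^2,\pi_E)$ it becomes the base change of the display of $\bbX$; this is part of \cite[\S3,4]{KisinPappas} and transfers verbatim. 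The main obstacle will not be in this proposition at all but upstream, namely in checking that the local Hodge embedding is very good (Proposition \ref{prop: very good embedding}, itself obtained via Weil restriction from \cite{KPZ}); granting that, what remains here is to match the cited constructions with the notation set up above while keeping track of the $G$-structure.
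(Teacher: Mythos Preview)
Your approach for part (1) is essentially the paper's: reduce to the formal neighbourhood in $\scrS_{\rmK}$ via the closed immersion $\calN_L\hookrightarrow\widehat{\scrS}_{\rmK}$, then invoke the deformation theory of \cite[\S3,4]{KisinPappas} (as extended in \cite{KPZ}) to upgrade $(N,N_1)$ to a versal display and identify $\Spf R$ with the formal neighbourhood; the applicability is guaranteed by Proposition~\ref{prop: very good embedding}.

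For part (2), however, your claim that the compatibility is ``built into the construction'' and ``transfers verbatim'' is too quick. The paper does not take this as immediate: it explicitly compares the two identifications by forming $\gamma\circ\beta^{-1}$ (where $\beta$ is the canonical crystal-structure identification), observes that this lifts the identity on $M_y$, and then argues via displays. There are two display structures on $M_y\otimes_{\brZ_p}\widehat{W}(R/\fka)$ --- the trivial one coming from $k\to R/\fka$ and the one coming from $R\to R/\fka$ --- and one must check that their inverse Verschiebung maps $\hat\Phi_1,\hat\Phi_{1,0}$ agree on the preimage $\hat M_{y,1}$ of the Hodge filtration. This is \cite[Lemma~3.1.12]{KisinPappas}, and then the uniqueness part of \cite[Theorem~3]{Zink} forces $\gamma\circ\beta^{-1}=\mathrm{id}$. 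Your reference to \cite[\S3,4]{KisinPappas} points in the right direction, but the actual content of (2) is this display-theoretic uniqueness step, not a bare base-change compatibility.
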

\begin{proof}
	Part (i) follows from \cite[\S3,4]{KisinPappas}  (cf. also \cite{KPZ}) and the global construction of the RZ-space $\calN_L$. More precisely, the closed immersion of formal schemes $\calN_L\rightarrow \widehat{\scrS}_{\rmK}$ induces an identification of formal neighbourhoods $\widehat{\calN}_{L,y'}\cong \widehat{\scrS}_{\rmK,y'}$ where $y'\in \widehat{\scrS}_{\rmK}(k)$ is the image of $y$, and $\widehat{\scrX}$ corresponds to the pullback of the universal $p$-divisible group $\widehat{\scrX}'$ on $\widehat{\scrS}_{\rmK,y'}$. 
	
	In \cite[\S3]{KisinPappas},  it is shown that we can upgrade the pair $(N,N_1)$ to a versal display over $\widehat{W}(R)$, and hence gives rise to a $p$-divisible group over $R$, such that the induced closed immersion to the universal deformation space of $\scrG_{y'}$ identifies $\Spf R$ with $ \widehat{\scrS}_{\rmK,y'}$.  This proves (1). Note that the constructions in \cite{KisinPappas}  can be applied  by Proposition \ref{prop: very good embedding} (see \cite{KPZ}).
	
	For part (ii), let $\beta:\bbD(\widehat{\scrX})(\widehat{W}(R/\fka))\cong M_y\otimes_{\brZ_p}\widehat{W}(R/\fka)$ denote the identification arising from the crystal structure. The composition $$\gamma \circ\beta^{-1}:M_y\otimes _{\brZ_p}\widehat{W}(R/\fka)\rightarrow M_y\otimes _{\brZ_p}\widehat{W}(R/\fka)$$   lifts  the identity on $M_y$. We have two different display structures on these modules; on the left, we have the trivial display structure coming from base change $k\rightarrow R/\fka$, and on the right, the display structure arises from the base change $R\rightarrow R/\fka$. We write $\Phi_{1}, \Phi_{1,0}$ for the corresponding `inverse Verschiebung' morphisms for these display structures. Let $\hat{M}_{y,1}\subset M_y\otimes_{\brZ_p}\widehat{W}(R/\fka)$ denote the preimage of $M_{y,1}$ under the projection $M_y\otimes_{\brZ_p}\widehat{W}(R/\fka)\rightarrow M_y$. 
  Then $\Phi_1$ and $\Phi_{1,0}$ extend to $\varphi$-semilinear morphisms $$\hat{\Phi}_1,\hat{\Phi}_{1,0}:\hat{M}_{y,1} \rightarrow M_y\otimes_{\brZ_p}\widehat{W}(R/\fka),$$ and we have $\hat{\Phi}_1=\hat{\Phi}_{1,0}$ by \cite[Lemma 3.1.12]{KisinPappas}. It follows from the uniqueness part of \cite[Theorem 3]{Zink} that $\gamma\circ\beta^{-1}$ is the identity.
\end{proof}
\subsubsection{} 
We conclude this section by recalling the Rapoport--Zink uniformization of the basic locus. Let $V_{\bbQ}'$ denote the unique positive definite quadratic space over $\bbQ$
with $$\epsilon(V'_{\bbQ_\ell})=\begin{cases}
    \epsilon(V_{\bbQ_\ell}) & \text{if $\ell\neq p$.}\\
  -  \epsilon(V_{\bbQ_\ell}) & \text{if $\ell= p$.}
\end{cases}$$
We set $\rmI:=\Gspin(V_{\bbQ}')$. Recall we have fixed $b\in G(\brQ_p)$ corresponding to the basic element of $B(G,\mu)$ and $\scrS_{\rmK}^{[b]}$ denotes the basic locus of $\scrS_{\rmK}\otimes k$. We let $(\widehat{\scrS_{\rmK}})_{\scrS_{\rmK}^{[b]}}$ denote the formal completion of $\scrS_{\rmK}$ along the closed subscheme $\scrS^{[b]}_{\rmK}.$
\begin{proposition}\label{prop: RZ uniformization}
    There exists an isomorphism of formal $\brZ_p$-schemes:
    $$\rmI(\bbQ)\backslash \calN_L\times \rmG(\bbA_f^p)/\rmK^p\xrightarrow{\sim}(\widehat{\scrS_{\rmK}})_{\scrS_{\rmK}^{[b]}}$$
\end{proposition}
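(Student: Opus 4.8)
\textbf{Proof strategy for Proposition \ref{prop: RZ uniformization}.} The plan is to follow the standard Rapoport--Zink uniformization argument, as in \cite[Theorem 6.21]{RZ} and its adaptation to Hodge type Shimura varieties in \cite{HowardPappas} (see also \cite{HK19}, \cite{Kim18}). The key geometric input is the uniformization isomorphism $\Theta:\cN_{C(\tL)}\to\widehat{\scrS_{\rmK'}}$ for the ambient Siegel case, which was already recalled in \S\ref{para: RZ space GSp}, together with the fact that $\cN_L$ was \emph{defined} via the fiber diagram \eqref{eq: diagram} and the cutting-out condition $t^\diamond_{\alpha,y}=u^\diamond_{\alpha,y}$ on connected components. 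The point is that the basic locus $\scrS_{\rmK}^{[b]}$ is exactly the locus of points $x_0$ whose associated $p$-divisible group $\scrG_{x_0}$ (together with its crystalline tensors) is, after base change to $k$, isogenous to the framing object $\bbX$; this is guaranteed because $[b]$ is the unique basic class in $B(G,\mu)$, and the tensors $t_{\alpha,0,x_0}$ match the $t_{\bbX,\alpha}$ up to the $\sigma$-conjugacy which we have arranged by choosing $b=b_{x_0}$.

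First I would set up the double coset description: the source $\rmI(\bbQ)\backslash\cN_L\times\rmG(\bbA_f^p)/\rmK^p$ makes sense once one identifies the group of self-quasi-isogenies of $\bbX$ respecting the polarization and the tensors $t_{\bbX,\alpha}$ with the $\bbQ_p$-points of an inner form of $G$, and then observes via the theory of \cite{Kottwitz85} and the Hasse principle for $\GSpin$ that this inner form has a global form $\rmI=\GSpin(V'_{\bbQ})$ with $V'_{\bbQ}$ the positive definite quadratic space differing from $V_{\bbQ}$ exactly at $p$ and $\infty$; the sign flip at $p$ is precisely $\epsilon(\bbV)=-\epsilon(V)$ from Proposition \ref{prop: V^Phi}, and the flip at $\infty$ reflects the passage from signature $(n-2,2)$ to positive definite. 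Next, I would restrict the Siegel uniformization $\Theta$ to the basic locus: the image of $\cN_{C(\tL)}$ in $\widehat{\scrS_{\rmK'}}$ is the formal completion along the relevant Newton stratum, and pulling back along $\iota:\widehat{\scrS_{\rmK}}\to\widehat{\scrS_{\rmK'}}$ gives a morphism $\Theta^\diamond:\cN_L^\diamond\to(\widehat{\scrS_{\rmK}})_{\scrS_{\rmK}^{[b]}}$. One then checks that on the connected components cut out by $t^\diamond_\alpha=u^\diamond_\alpha$ — i.e.\ on $\cN_L$ — the induced map descends to an isomorphism after quotienting the source by $\rmI(\bbQ)$ and forming the product over $\rmG(\bbA_f^p)/\rmK^p$. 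Surjectivity onto $\scrS_{\rmK}^{[b]}$ is the statement that every basic point lies in the image; this uses that the crystal with tensors at a basic point is isomorphic (not just isogenous after forgetting tensors) to that of $\bbX$, which follows from $X(\mu^\sigma,b)_{\rmK}(k)\neq\emptyset$ (\cite[Theorem A]{He2016b}) plus the matching of tensors from \S\ref{para: tensors}. Injectivity and the identification of fibers with $\rmI(\bbQ)$-orbits is the usual Serre--Tate plus Tate's theorem argument, now in the Hodge-type setting where one additionally tracks the tensors; here Proposition \ref{prop: points of N and ADLV} identifying $\cN_L(k)$ with $X(\mu^\sigma,b)_{\rmK}(k)$ is used to pin down the $k$-points on both sides.

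The formal/infinitesimal compatibility — that $\Theta^\diamond$ is an isomorphism of \emph{formal} schemes, not merely a bijection on $k$-points — comes from comparing deformation theories: the formal neighbourhood $\widehat{\cN}_{L,y}$ is identified in Proposition \ref{prop: KP deformation theory} with $\Spf R$ where $R$ is the complete local ring of the local model $\Mloc_{\calG,\mu}$, and the same ring governs $\widehat{\scrS}_{\rmK,y'}$ by the construction of the integral model $\scrS_{\rmK}$ via the Kisin--Pappas / \cite{KPZ} machinery (this is exactly why diagram \eqref{eq: diagram} induces an isomorphism on formal neighbourhoods). So the map is formally \'etale, and being a bijection on $k$-points on each piece, it is an isomorphism.

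\textbf{Main obstacle.} The step I expect to be most delicate is the group-theoretic bookkeeping for the automorphism group of the framing object and its identification with $\rmI(\bbQ_p)$ with the correct inner twist, together with verifying that the global group $\rmI=\GSpin(V'_{\bbQ})$ is the right one at \emph{all} finite places simultaneously (the prime-to-$p$ places must match $\rmG$ exactly, so that $\rmG(\bbA_f^p)=\rmI(\bbA_f^p)$). This is where one invokes the Hasse principle and the computation of Kottwitz invariants; in the $\GSpin$ setting the center is $\bbG_m$ and $\pi_1(G)=\bbZ$ with trivial Galois action (as recorded in the Lemma before \S\ref{6.1.2}), which keeps this tractable, but one must still be careful that the sign flip is concentrated at $p$ and $\infty$ and nowhere else — this is dictated by the product formula for Hasse invariants together with $\epsilon(\bbV)=-\epsilon(V)$ of Proposition \ref{prop: V^Phi}. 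Once the group is correctly identified, the rest is a formal consequence of the Siegel-case uniformization and the deformation-theoretic input already assembled in \S6.
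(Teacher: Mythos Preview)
Your proposal is correct and follows essentially the same route as the paper, which simply defers to the argument of \cite[Theorem 3.3.2 and Theorem 7.2.4]{HowardPappas} (with pointers to \cite{OkiNotes} and \cite{HZZ}); what you have written is a faithful unpacking of that argument in the present parahoric setting, using the fiber diagram \eqref{eq: diagram}, the tensor-matching definition of $\cN_L$, and the deformation-theoretic input of Proposition \ref{prop: KP deformation theory}. The ``main obstacle'' you flag---identifying the global inner form $\rmI$ via the Hasse principle and the sign flip of Proposition \ref{prop: V^Phi}---is exactly the content of \cite[Theorem 7.2.4]{HowardPappas}, so your diagnosis is on target.
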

\begin{proof}This follows from the argument of \cite[Theorem 3.3.2]{HowardPappas} and \cite[Theorem 7.2.4]{HowardPappas}; cf. also \cite[Theorem 6.1]{OkiNotes}, \cite[\S5]{HZZ}.
\end{proof}

\section{The Bruhat--Tits stratification}\label{sec: bruhat tits stratification}
\subsection{The $k$-points of $\cN_L$}
 \subsubsection{}
We keep the notation of the previous subsection so that $V_{\bbQ}$ is a quadratic space over $\bbQ$ of signature $(n-2,2)$ and $L\subset V= V_{\bbQ_p}$ is a vertex lattice of type $h$. Then we have the Rapoport-Zink space $\calN_L$ associated to $(G,b,\mu)$ and $L$ where $b=b_{x_0}$ corresponds to the Frobenius on the framing object $\bbX=\scrG_{x_0}.$

Recall that $\tL$ is a self-dual lattice in $V_E$ corresponding to the image of $\bfx$ in $\calB(G,E)$, which we regard  as a $\Z_p$-lattice of rank $4n$. We also let $\widetilde{V}=\tL\otimes_{\Z_p}\breve{\Q}_p$.
Then we have the corresponding Clifford algebra $C(\tL)$ and  GSpin group $\tilde{G}=\mathrm{Res}_{E/\Q_p}\mathrm{GSpin}(V_E)$. Note that we have a natural embedding  $\iota_G: G\hookrightarrow \tilde{G}$ which induces a basic element $\tilde{b}=\iota(b)$ of $\tilde{G}$.  It also induces a Hodge cocharacter $\tilde{\mu}=\iota\circ \mu$ of $\tilde{G}$.

\subsubsection{}	
First, we consider a decomposition of $\cN_L$ following \cite{HowardPappas}. Since we construct $\cN_L$ as a closed formal subscheme of the Siegel RZ-space, the universal $p$-divisible group $\scrX$ has a polarization $\lambda: \scrX \to \scrX^\vee$. The universal quasi-isogeny preserves the polarizations $\lambda$ and $\lambda_{\bbX}$ up to scaling. More precisely, Zariski locally on $\cN_L$, we have $\rho^{\vee} \circ \lambda \circ \rho=c(\rho) \cdot \lambda_{\bX}$  for   some $c(\rho) \in \mathbb{Q}_p^{\times}$ (see \S \ref{para: RZ space GSp}). For each $\ell \in \mathbb{Z}$, let $\cN_L^{(\ell)} \subset \cN_L$ be the open and closed formal subscheme on which $\operatorname{ord}_p(c(\rho))=\ell$ so that
\begin{align}\label{eq: decomposition}
\cN_L=\bigsqcup_{\ell \in \mathbb{Z}} \cN_L^{(\ell)}.
\end{align}
We let $J_b$  denote the $\sigma$-centralizer group scheme over $\mathbb{Q}_p$, i.e.  for any $\mathbb{Q}_p$-algebra $R$, we have
$$
J_b(R)=\left\{g \in G\left(R \otimes_{\mathbb{Q}_p} \brQ_p\right): g^{-1} b \sigma(g)=b\right\}.
$$
Then  $J_b\cong \GSpin(\bbV)$, and we have an acion of
$J_b\left(\mathbb{Q}_p\right)$ on $\cN_L$ (see \cite[\S 2.3.7]{HowardPappas}). 
Moreover, each $g\in J_b(\Q_p)$ induces an isomorphism 
$$g: \cN_L^{(\ell)} \rightarrow \cN_L^{\left(\ell+\operatorname{val}_p\left(\eta_b(g)\right)\right)},$$
where $\eta_b$ is the spinor similitude $\eta_b: J_b\left(\mathbb{Q}_p\right) \rightarrow \mathbb{Q}_p^{\times}$.
In particular, $p^\Z$ acts on $\cN_L$ and 
\begin{align*}
    \p^\Z \backslash \cN_L \cong \cN_L^{(0)} \sqcup  \cN_L^{(1)}.
\end{align*}
Since $\eta_b$ is surjective, we know $\cN_L^{(\ell)}$ are isomorphic for each $\ell$.

\subsubsection{}\label{para: associated special lattice}
Recall by Proposition \ref{prop: points of N and ADLV}, there is a natural bijection 
\begin{align}\label{eq: N and X(b,mu)}
    \varphi:\calN_L(k)\xrightarrow{\sim} X(\mu^\sigma,b)_\K(k),
\end{align}
 and we have 
 $\brZ_p$-submodules 
 $$M_y=gD_{\brZ_p}\subset D_{\brQ_p},\ \ \ M_{y,1}=F^{-1}(pM_y)=p \sigma^{-1}(b^{-1}g)D_{\brZ_p}\subset D_{\brQ_p}$$
where $g=\varphi(y).$  
Using the inclusion $L_{\brQ_p} \subset \operatorname{End}_{\brQ_p}(D_{\brQ_p})$, we define $\brZ_p$-lattices
$$
\begin{aligned}
L_y & =\{x \in V_{\brQ_p}: x M_{y,1} \subset M_{y,1}\} \\
L_y' & =\{x \in V_{\brQ_p}: x M_y \subset M_y\}.
\end{aligned}
$$
Note that the action of $p^{\mathbb{Z}}$ on $\cN_{L}(k)$ rescales the lattices $M_y$ and $M_{y,1}$, and therefore the three lattices defined above depend only on the image of $y$ in $p^{\mathbb{Z}} \backslash \cN_{L}(k)$.

\begin{lemma}\label{lem: V=B}  
     \begin{align*}
       L_{\breve{\Z}_p}&=\{x\in V_{\brQ_p}\mid x  C(\tL_{\brZ_p})\subset C(\tL_{\brZ_p})\},\\
       L_{\breve{\Z}_p}^\vee&=\{x\in V_{\brQ_p}\mid  \widetilde{\pi} x  C(\tL_{\brZ_p})\subset C(\tL_{\brZ_p})\}.
     \end{align*} 
Equivalently, we have
 \begin{align}
       L_{\breve{\Z}_p}&=\{x\in V_{\brQ_p}\mid x D(\tL_{\brZ_p})\subset D(\tL_{\brZ_p})\},\label{eq: L}\\
       L_{\breve{\Z}_p}^\vee&=\{x\in V_{\brQ_p}\mid \widetilde{\pi} x  D(\tL_{\brZ_p})\subset D(\tL_{\brZ_p})\}. \label{eq: L^vee}
     \end{align}
 \end{lemma}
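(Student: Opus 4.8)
The plan is to treat the two ``$C(\tL)$'' statements as the substantive ones and to deduce the ``$D(\tL)$'' statements from them by a formal duality argument. Indeed, $D(\tL_{\brZ_p})$ is by construction (\S\ref{para: tensors}) the $\breve{\Z}_p$-linear dual of $C(\tL_{\brZ_p})$ for a perfect pairing, and the action of $x\in V_{\brQ_p}$ on $D_{\brQ_p}$ is, up to the canonical involution (which fixes $V$ pointwise and preserves $C(\tL)$), the transpose of left multiplication on $C(V_{\brQ_p})$. For any $\breve{\Z}_p$-linear endomorphism $\phi$ of $C(V_{\brQ_p})$ one has $\phi(C(\tL_{\brZ_p}))\subset C(\tL_{\brZ_p})$ if and only if $\phi^\vee(D(\tL_{\brZ_p}))\subset D(\tL_{\brZ_p})$ (duality of dual lattices); applying this to $\phi=$ left multiplication by $x$, resp.\ by $\widetilde{\pi}x$, reduces the lemma to
\[
L_{\breve{\Z}_p}=\{x\in V_{\brQ_p}\mid x\,C(\tL_{\brZ_p})\subset C(\tL_{\brZ_p})\},\qquad
L_{\breve{\Z}_p}^\vee=\{x\in V_{\brQ_p}\mid \widetilde{\pi}\,x\,C(\tL_{\brZ_p})\subset C(\tL_{\brZ_p})\}.
\]

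These I would prove by a direct computation with the explicit description of $\tL$ from \S\ref{6.1.2}. Writing $f_i=\sqrt{u_i}^{-1}e_i$, the lattice $\tL_{\brZ_p}$ is free over $O_E\otimes_{\Z_p}\breve{\Z}_p$ on the $f_i$, and $C(\tL_{\brZ_p})$ is free over $O_E\otimes_{\Z_p}\breve{\Z}_p$ on the ordered products of the $f_i$; in particular $C(\tL_{\brZ_p})$ is generated as an $O_E\otimes_{\Z_p}\breve{\Z}_p$-algebra by the $f_i$, so left multiplication by each $f_i$ preserves $C(\tL_{\brZ_p})$. Hence for $x=\sum_i a_ie_i=\sum_i a_i\sqrt{u_i}f_i$ with $a_i\in\brQ_p$, left multiplication by $x$ preserves $C(\tL_{\brZ_p})$ if and only if $a_i\sqrt{u_i}\in O_E\otimes_{\Z_p}\breve{\Z}_p$ for every $i$ (necessity by evaluating on $1\in C(\tL_{\brZ_p})$, the $f_i$ being part of the monomial $O_E\otimes_{\Z_p}\breve{\Z}_p$-basis; sufficiency being clear). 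Letting $v$ denote the valuation of $\brQ_p$ normalized by $v(p)=1$ and extended to the \'etale $\brQ_p$-algebra $E\otimes_{\bbQ_p}\brQ_p$ (a product of copies of the ramified quadratic extension $\breve{E}=\brQ_p(\widetilde{\pi})$), one has $v(\sqrt{u_i})=\tfrac12 v_p(u_i)$ at every place, so the condition reads $v(a_i)\ge -\tfrac12 v_p(u_i)$; as $v(a_i)\in\Z$ and $v_p(u_i)\in\{0,1\}$ this is equivalent to $v(a_i)\ge 0$, i.e.\ to $x\in L_{\breve{\Z}_p}$. Running the same computation with $\widetilde{\pi}x$ in place of $x$ and using $v(\widetilde{\pi})=\tfrac12$ turns the condition into $v(a_i)\ge -\tfrac12-\tfrac12 v_p(u_i)$, i.e.\ $v(a_i)\ge 0$ for $i>h$ and $v(a_i)\ge -1$ for $i\le h$; by the normalization $v_p(u_i)=1$ $(i\le h)$, $v_p(u_i)=0$ $(i>h)$ of \S\ref{6.1.2} this is exactly $x\in L_{\breve{\Z}_p}^\vee$.

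This lemma is not difficult; the points that require care are, first, pinning down the conventions for $V_{\brQ_p}\hookrightarrow\End_{\brQ_p}(D_{\brQ_p})$ so that the duality reduction is literally valid (the analogue in the present ramified setting of the bookkeeping in the self-dual case of \cite{HowardPappas}), and second, keeping track of the ramified extension $E=\bbQ_{p^2}(\sqrt p)$ — in particular that the valuation of $\sqrt{u_i}$ in $E\otimes_{\bbQ_p}\brQ_p$ is governed by $v_p(u_i)\in\{0,1\}$, which is exactly the input producing $L$ and $L^\vee$ on the two sides.
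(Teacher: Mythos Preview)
Your proposal is correct and follows essentially the same route as the paper: evaluate at $1\in C(\tL_{\brZ_p})$, read off the coefficient of $x$ in the monomial $O_E\otimes\brZ_p$-basis of $C(\tL_{\brZ_p})$, and convert the resulting integrality condition on $a_i\sqrt{u_i}$ (resp.\ $\widetilde\pi a_i\sqrt{u_i}$) into the lattice conditions defining $L_{\brZ_p}$ and $L_{\brZ_p}^\vee$. The paper carries this out with the concrete basis $\{\widetilde\pi^{-1}e_1,\dotsc,\widetilde\pi^{-1}e_h,e_{h+1},\dotsc,e_n\}$ and checks the two inclusions by hand, whereas you phrase the same computation uniformly via valuations; the arguments are interchangeable. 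One point where you are slightly more explicit than the paper: you spell out the duality reduction from the $D(\tL)$-statements to the $C(\tL)$-statements, which the paper simply records as ``Equivalently''; your remark that this requires pinning down the convention for $V_{\brQ_p}\hookrightarrow\End(D_{\brQ_p})$ is well taken but, as you note, routine once one unwinds the definitions in \S\ref{para: tensors}.
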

 \begin{proof}
We fix a choice of basis $\{e_1,\dotsc,e_n\}$ for $L_{\brZ_p}$   so that the symmetric bilinear form $(\ ,\ )$ is given by $\diag(u_1,\dotsc,u_n)$ with $1= v_p(u_1)=\dotsc= v_p(u_h) $ and $0=v_p(u_{h+1})=\dotsc= v_p(u_n)$. Then $\{ \tilde{e}_1,\ldots,\tilde{e}_n\}=\{\widetilde{\pi}^{-1}e_1,\ldots,\widetilde{\pi}^{-1}e_h,e_{h+1},\ldots,e_n\}$ is an $\calO_E\otimes_{\bbZ_p}\brZ_p$-basis of $\tL_{\brZ_p}$.

We clearly have $L_{\brZ_p} \subset\{x\in V_{\brQ_p}\mid x  C(\tL_{\brZ_p})\subset C(\tL_{\brZ_p})\}$. Now we assume $x\in V_{\brZ_p}$ such that $x C(\tL_{\brZ_p})\subset C(\tL_{\brZ_p})$. In particular, we have $x \cdot 1=x \in C(\tL_{\brZ_p})$. Write $x=\sum_{\ell=1}^n a_\ell e_\ell$.  Since $e_1  ,\ldots,e_n  $ are linearly independent, we have $x  \in C(\tL_{\brZ_p})$ if and only if $a_\ell \in \brZ_p$. Therefore, $x\in L_{\brZ_p}$.

To prove the second claim, we choose $\{p^{-1}e_1,\ldots,p^{-1}e_h,e_{h+1},\ldots,e_n\}$ as a basis of $L_{\brZ_p}^\vee$. Then $\widetilde{\pi}L_{\brZ_p}^\vee=\Span_{\brZ_p}\{ \widetilde{\pi}^{-1}e_1,\ldots,\widetilde{\pi}^{-1}e_h,\widetilde{\pi} e_{h+1},\ldots,\widetilde{\pi} e_n\}\subset \tL_{\brZ_p}$. So $ L_{\breve{\Z}_p}^\vee\subset \{x\in V_{\brQ_p}\mid  \widetilde{\pi} x  C(\tL_{\brZ_p})\subset C(\tL_{\brZ_p})\}$. Now we assume $x=\sum_{\ell=1}^n a_\ell e_\ell $ and $ \widetilde{\pi}  x  C(\tL_{\brZ_p})\subset C(\tL_{\brZ_p})$. In particular, $\widetilde{\pi}x\cdot 1=\widetilde{\pi}x \in C(\tL_{\brZ_p})$. Then $\widetilde{\pi} a_\ell \in \widetilde{\pi}^{-1}\brZ_p$ for $\ell \le h$ and  $\widetilde{\pi} a_\ell \in \brZ_p$ for $\ell > h$. Hence, we have $a_\ell \in p^{-1}\brZ_p$ for $\ell \le h$ and $a_\ell \in \brZ_p$ for $\ell >h$. Therefore, $x\in L_{\brZ_p}^\vee$.
 \end{proof}

\subsubsection{}

\begin{proposition}\label{prop: k points of N}
    For every $y \in \cN_{L}(k)$ the lattice $L_y$ is special, and satisfies $\Phi(L_y)=L_y'$. Moreover, $y \mapsto L_y$ establishes the following bijection
\begin{align*}
\begin{cases}
  \cN_L^{(0)}(k)\sqcup \cN_L^{(1)}(k)   \stackrel{\sim}{\rightarrow} \{\text{special lattices } M \subset V_{\brQ_p}: M\stackrel{h}{\subset}M^\vee\} & \text{ if $h=0$ or $n$},\\
    \cN_L^{(0)}(k) \stackrel{\sim}{\rightarrow} \{\text{special lattices } M \subset V_{\brQ_p}: M\stackrel{h}{\subset}M^\vee\} & \text{ otherwise.}
\end{cases}
\end{align*}
\end{proposition}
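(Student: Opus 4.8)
\textbf{Proof plan for Proposition \ref{prop: k points of N}.}

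The strategy is to reduce everything to the purely lattice-theoretic statement already established in Proposition \ref{prop: ADLV and special lattice}, using the identification of $\calN_L(k)$ with the affine Deligne--Lusztig set $X(\mu^\sigma,b)_\K(k)$ provided by Proposition \ref{prop: points of N and ADLV}, and then matching up the lattice $L_y$ defined here via Dieudonn\'e theory with the lattice $L_g$ of \S\ref{subsec: VL cases} attached to the corresponding $g\in X(\mu^\sigma,b)_\K(k)$. Concretely, I would first recall that for $y\in\calN_L(k)$ with $g=\varphi(y)$, the Dieudonn\'e lattices are $M_y = gD_{\brZ_p}$ and $M_{y,1} = p\,\sigma^{-1}(b^{-1}g)D_{\brZ_p}$, where the Frobenius on $D_{\brQ_p}$ is $F=b\sigma$. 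The first key step is then to identify, using Lemma \ref{lem: V=B} (the descriptions \eqref{eq: L} and \eqref{eq: L^vee} of $L_{\brZ_p}$ and its dual as the elements of $V_{\brQ_p}$ stabilizing $D(\tL_{\brZ_p})$, resp. $\widetilde\pi$-times those elements), the lattice $L_y' = \{x\in V_{\brQ_p}: xM_y\subset M_y\}$ with $g\bullet L_{\brZ_p}$, i.e. the conjugate of $L$ by $g$ under the action of $G$ on $V$; similarly $L_y = \{x: xM_{y,1}\subset M_{y,1}\}$ should be identified with $\sigma^{-1}(b^{-1}g)\bullet L_{\brZ_p}$. These are exactly the lattices $L_g',L_g$ of \S\ref{subsec: VL cases}, and the relation $\Phi_*(L_g) = L_g'$ proved there (coming from $\Phi = b\sigma$ acting by $(bb^{-1}g)\bullet$) gives $\Phi_*(L_y)=L_y'$.

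The second key step is to invoke Proposition \ref{prop: ADLV and special lattice}: the map $g\mapsto L_g$ induces a bijection between $p^\bbZ\backslash X(\mu^\sigma,b)_\K(k)$ and the set of special lattices of type $h$ in $V_{\brQ_p}$ when $h=0,n$, and a two-to-one surjection when $h\neq 0,n$ (with the fibers of size two coming from the element $\delta$ normalizing $L$ but acting with determinant $-1$ on $L^\vee/L$ and $L/pL^\vee$). Here I must also check that the condition cutting out $X(\mu^\sigma,b)_\K(k)$ translates, via the dictionary above and Proposition \ref{prop: relative position admissible set}, into precisely the \emph{special lattice} condition on $L_y$ — that $\pi M^\vee\subset M\stackrel{h}{\subset}M^\vee$ and $[M+\Phi_*(M):M]\le 1$ (or $=1$ when $h=0,n$); this is essentially the content of the first half of the proof of Proposition \ref{prop: ADLV and special lattice}. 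Finally, I would match the decomposition $\calN_L = \sqcup_\ell \calN_L^{(\ell)}$ with the description $p^\bbZ\backslash \calN_L(k) \cong \calN_L^{(0)}(k)\sqcup\calN_L^{(1)}(k)$ from the preceding subsection (using that $\eta_b$ is surjective and $\mathrm{val}_p\circ\eta_b(p) = 2$, or more precisely that the spinor similitude of the central $p\in J_b(\bbQ_p)$ shifts $\ell$ by an even integer), so that in the case $h\neq 0,n$ the two-to-one map $g\mapsto L_g$ composed with passing to $\calN_L^{(0)}(k)$ alone becomes a bijection, while for $h=0,n$ one needs both components $\calN_L^{(0)}(k)\sqcup\calN_L^{(1)}(k)$ — this last point should be traced back to whether the stabilizer of $L$ in $G(\brQ_p)$ contains $\delta$ inside $p^\bbZ\K$ or only in a nontrivial coset, which depends on the parity data of $h$; here I expect to cite \cite[Proposition 6.2.2]{HowardPappas} for $h=0$ and the duality isomorphism $\Upsilon_{L,L^\sharp}$ (Proposition \ref{prop: intro}) to pass to $h=n$.

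The main obstacle I anticipate is not any single deep input but rather the careful bookkeeping in the last step: disentangling the interplay between the connected-component index $\ell$ (governed by $\mathrm{ord}_p(c(\rho))$), the $p^\bbZ$-action, and the two-to-one versus bijective nature of $g\mapsto L_g$, so that the four cases ($h=0$, $h=n$, $h\neq 0,n$, and the sub-distinction of which $\calN_L^{(\ell)}$ to take) all come out with the stated cardinalities. In particular I need to be sure that for $h\neq 0,n$ the element $\delta\in G(\brQ_p)$ (which realizes the two-to-one-ness) lies in a component $\calN_L^{(\ell)}$ with $\ell$ of the opposite parity class is \emph{not} the case — rather $\delta$ has trivial spinor norm valuation, so it preserves each $\calN_L^{(\ell)}$ and the two preimages of a given special lattice both lie in, say, $\calN_L^{(0)}(k)$, forcing us to \emph{not} take the union over the two components; whereas for $h=0,n$ the relevant doubling is instead absorbed by the $\ell$-grading. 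Sorting out this parity argument cleanly, with correct reference to \cite[\S2.5]{PappasZachos} and \S\ref{para: associated special lattice}, is where the real care is needed; everything else is a formal translation of results already in hand.
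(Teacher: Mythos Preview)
Your overall strategy is correct and matches the paper: reduce to Proposition~\ref{prop: ADLV and special lattice} via Proposition~\ref{prop: points of N and ADLV}, and identify $(L_y,L_y')$ with $(L_g,L_g')$ using Lemma~\ref{lem: V=B}. The paper handles $h\in\{0,n\}$ by citing \cite{HowardPappas}.

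The genuine gap is in your parity analysis for $h\notin\{0,n\}$, which is exactly backwards. You write that $\delta$ has trivial spinor norm valuation, so both preimages of a special lattice lie in $\calN_L^{(0)}(k)$ --- but observe that this would make $\calN_L^{(0)}(k)\to\{\text{special lattices}\}$ two-to-one, \emph{contradicting} the proposition rather than proving it. The correct fact is that any lift $\delta\in G(\brQ_p)$ has \emph{odd} spinor norm valuation: its image $\bar\delta\in\SO(V)(\brQ_p)$ lies in the full stabilizer of $L$ but not in the parahoric, so $\kappa_{\SO(V)}(\bar\delta)$ is the nontrivial element of $\pi_1(\SO(V))_I\cong\bbZ/2\bbZ$; since the natural map $\pi_1(G)_I\cong\bbZ\to\pi_1(\SO(V))_I$ is reduction mod~$2$, this forces $\kappa_G(\delta)$ odd. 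Hence the two preimages $gK$ and $g\delta K$ land in components $\calN_L^{(\ell)}$ of opposite parity, and $\calN_L^{(0)}(k)$ alone bijects onto special lattices. The paper packages this by producing $j\in J_b(\bbQ_p)\cap\delta p^{\bbZ}K$ via the surjection $J_b(\bbQ_p)\twoheadrightarrow\pi_1(\SO(V))_I$; such $j$ then has odd $\mathrm{val}_p(\eta_b(j))$ and so interchanges $\calN_L^{(0)}$ with $\calN_L^{(1)}$.
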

\begin{proof}
    The case $h=0$ or $n$ is covered in \cite{HowardPappas} so we assume $h\neq 0$ and $n$. 
  Let $y \in \cN_{L}(k)$.
Recall that we have a bijection $\varphi$ as in \eqref{eq: N and X(b,mu)} that sends a point $y \in \cN_L(k)$ to the unique $\varphi(y)\in X(\mu^\sigma,b)
)_K(k)$ such that $M_y=g\cdot D_{\brZ_p}$. Moreover, $VD_{\brZ_p}=F^{-1}p g\cdot D_{\brZ_p}=p \sigma^{-1}(b^{-1}g)\cdot D_{\brZ_p}.$ Now  $p ^{\mathbb{Z}} \backslash X(\mu^\sigma,b)_\K(k) $ is identified with two copies of special lattices in $V_{\brQ_p}$ by Proposition \ref{prop: ADLV and special lattice}. 

We claim that $\cN_L^{(0)}$ is identified with one of the copies under this identification. Write the stabilizer of $L$ in $G(\brQ_p)$ as $K\sqcup \delta K$ as in the proof of Proposition \ref{prop: ADLV and special lattice}. Then it suffices to find an element $j$ in $J_b(\Q_p)\cap \delta p^\Z K$ since $j \backslash (\cN_L^{(0)}\sqcup \cN_L^{(1)})$ will be identified with the set of special lattices and $\delta$ also induces an isomorphism between 
$\cN_L^{(0)}$ and $\cN_L^{(1)}$.  To find such $j$, note that we have a surjection of $J_b(\Q_p)$ to $\pi_1(\mathrm{SO}(V))_I\cong \Z/2\Z$, which we regard as the stabilizer of the base alcove of $\mathrm{SO}(V)$. Then an inverse image of the nontrivial element of $\Z/2\Z$ will work.

\end{proof}

\subsection{Special cycles on $\cN_L$}\label{sec: special cycles}

\subsubsection{}\label{para: Z and Y cycles} 

Recall that $\bV=V_{\brQ_p}^{\Phi}\subset \widetilde{\bV}\coloneqq \widetilde{V}_{\brQ_p}^{\Phi}$ where $\Phi=b\sigma$. We let $\tilde{\pi}=\sqrt{p}\in E$, and we consider $\widetilde{\pi} \bV \subset  \widetilde{\bV}$. Then, as in \cite[\S4.3]{HowardPappas}, we may consider $\widetilde{\bV}$ as a subspace of $\operatorname{End}\left(\bX\right)_{\mathbb{Q}}$; this is the space of special quasi-endomorphisms.  Let $\scrX$ denote the universal $p$-divisible group over $\cN_{L}$ and we let $\overline{\cN}_L$ denote the special fiber of $\calN_L$. 
We have the universal quasi-isogeny
$$
\rho: \bX \times_{\mathrm{Spf}(k)} \overline{\cN}_L \rightarrow \scrX \times_{\cN_L} \overline{\cN}_L.
$$

\begin{definition}
Let $\bbL\subset \bV$ be a non-degenerate lattice.
\begin{enumerate} \item   We define $\cZ_L(\bbL)$ to be the closed formal subscheme of $\cN_{L}$  where $\rho \circ x\circ \rho^{-1}\in \End(\overline{\scrX})_{\Q}$ lifts to an integral endomorphism of $\scrX$, for all $x\in \bbL$.
\item We define $\cY_L(\bbL)$ to be the closed formal subscheme of $\cN_{L}$ where $\rho \circ \widetilde{\pi} \circ x\circ \rho^{-1}\in \End(\overline{\scrX})_{\Q}$ lifts to an element of $\End(\scrX)$ for all $x\in \bbL$. 
\end{enumerate}
\end{definition}
Note that both  the framing object $\mathbb{X}$ and the universal $p$-divisible group $\mathscr{X}$ carry actions of $O_E$, and that the universal quasi-isogeny $\rho$ is $\mathcal{O}_E$-linear. In particular, we have   $\mathcal{Z}_L(\mathbb{L}) \subset \mathcal{Y}_L(\mathbb{L})$, which is analogous to the unitary case \cite{cho2022special}.

\subsubsection{}\label{para: duality iso} We now introduce a duality isomorphism which will allow us to reduce the study of $\calY$-cycles to that of $\calZ$-cycles.

\begin{definition}\label{def: L^sharp} For a quadratic lattice $L$, we use $L^{(p)}$ to denote the lattice $L$ with quadratic form $p\cdot q(\,)_L$ where $q(\,)_L$ is the quadratic form of $L$. Now for a vertex lattice $L\subset V$ of type $h$, we let $L^\sharp\coloneqq (L^\vee)^{(p)}$.  Moreover, we set $V^\sharp=L^\sharp\otimes_{\Z}\Q$.
In particular, $\widetilde{\pi}V\subset \widetilde{V}$ (resp. $\widetilde{\pi}  L^\vee\subset \widetilde{V}$) is a specific realization of $V^\sharp$ (resp. $L^\sharp$) in $\widetilde{V}$.
\end{definition}

\begin{proposition}\label{prop: duality for Z(Lambda)}
There is a natural isomorphism $$\Upsilon_{L,L^\sharp}:\calN_L\xrightarrow{\sim}\calN_{L^\sharp}$$
such that   for any non-degenerate lattice $\bL\subset \bV_L$, $\Upsilon_{L,L^\sharp}$ induces an isomorphism
 \begin{align*}
        \cY_L(\bbL)= \cZ_{L^\sharp}(\widetilde{\pi}   \bL).
    \end{align*} 
\end{proposition}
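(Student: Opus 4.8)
\textbf{Proof plan for Proposition \ref{prop: duality for Z(Lambda)}.}

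The plan is to construct the isomorphism $\Upsilon_{L,L^\sharp}$ by exploiting the coincidence of framing objects: both $\calN_L$ and $\calN_{L^\sharp}$ are cut out inside (possibly different) symplectic Rapoport--Zink spaces, but the auxiliary self-dual $\calO_E$-lattice $\tL\subset V_E$ that produces the Hodge embedding is the \emph{same} for $L$ and for $L^\sharp=(L^\vee)^{(p)}$. Indeed, $\widetilde\pi L^\vee\subset V_E$ is a self-dual lattice for the form $p\cdot(\ ,\ )$, and one checks it equals $\tL$ as an $\calO_E$-lattice (up to the identification $V^\sharp_E\cong V_E$ sending $x\mapsto\widetilde\pi^{-1}x$); consequently the Clifford algebra $C(\tL)$, the symplectic form $\psi$, and the framing $p$-divisible group $\bbX=\scrG_{x_0}$ with its tensors $(t_{\bbX,\alpha})$ all agree. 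So the first step is to make this identification precise at the level of the integral models $\scrS_\rmK$ of the two GSpin Shimura data, following the constructions recalled in \S\ref{6.1.1}--\S\ref{para: tensors}: the groups $G=\GSpin(V)$ and $G^\sharp=\GSpin(V^\sharp)$ are canonically isomorphic (the quadratic spaces differ only by scaling the form), this isomorphism carries the cocharacter $\mu$ to $\mu^\sharp$ and the parahoric $\calG$ (stabilizer of the vertex lattice $L$ of type $h$) to $\calG^\sharp$ (stabilizer of $L^\sharp$, a vertex lattice of type $n-h$), and it is compatible with the Hodge embeddings into $\mathrm{GSp}(C(V_\rmF))$ since both factor through the \emph{same} $\widetilde G=\Res_{\rmF/\bbQ}\GSpin(V_\rmF)$. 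Thus $\Upsilon_{L,L^\sharp}$ is obtained as the induced isomorphism on the fiber products \eqref{eq: diagram} and then on the connected components where the two families of tensors agree (Definition after Proposition \ref{prop: points of N and ADLV}).

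The second step is the compatibility with special cycles. Here the key is the description in Lemma \ref{lem: V=B}: for $y\in\calN_L(k)$, the special endomorphisms are read off from the condition $xD(\tL_{\brZ_p})\subset D(\tL_{\brZ_p})$, and $\bbV=V_{\brQ_p}^\Phi$ sits inside $\End(\bbX)_\bbQ$. Since the framing object and the isocrystal $D_{\brQ_p}$ are literally unchanged under $\Upsilon_{L,L^\sharp}$, an endomorphism $\widetilde\pi x$ with $x\in\bbV$ lifts integrally along the universal quasi-isogeny of $\calN_L$ at $y$ \emph{if and only if} the corresponding element of $\bbV^\sharp=\widetilde\pi\bbV$ lifts integrally along the universal quasi-isogeny of $\calN_{L^\sharp}$ at $\Upsilon_{L,L^\sharp}(y)$ --- this is immediate from the definitions of $\calY_L$ and $\calZ_{L^\sharp}$ once one knows the two RZ spaces carry identified universal $p$-divisible groups with identified polarizations and quasi-isogenies. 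Running this over all $x$ in a lattice $\bbL\subset\bbV$ gives $\calY_L(\bbL)=\calZ_{L^\sharp}(\widetilde\pi\bbL)$ as closed formal subschemes. I would phrase this by first checking the identity on $k$-points using Propositions \ref{prop: k points of N} and \ref{prop: ADLV and special lattice} (where $\calZ$- and $\calY$-cycles correspond to the lattice conditions $\Lambda\subset M$ vs.\ $p\Lambda^\vee\subset pM^\vee$, matched precisely by the duality $\Lambda\mapsto\Lambda^\sharp$), and then upgrading to formal schemes by the compatibility of the deformation-theoretic descriptions in Proposition \ref{prop: KP deformation theory}, which are also invariant under $\Upsilon_{L,L^\sharp}$ since they only depend on $\tL$, $b$, and $\mu$.

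The main obstacle I anticipate is \textbf{making the construction of $\Upsilon_{L,L^\sharp}$ canonical and checking it descends to $\calN_L$ rather than merely $\calN_L^\diamond$}. The subtlety is that $\calN_L$ is defined (Definition after Proposition \ref{prop: points of N and ADLV}) as the union of connected components of the fiber product $\calN_L^\diamond$ on which the tensor $t_\alpha^\diamond$ (transported from $\bbX$) agrees with the tensor $u_{\alpha,y}^\diamond$ (pulled back from $\widehat{\scrS_\rmK}$); one must verify that the isomorphism at the level of $\diamond$-spaces carries the ``good'' components for $L$ to the ``good'' components for $L^\sharp$. This should follow because both families of tensors are defined purely in terms of data ($C(\tL)_{\bbZ_{(p)}}$, the crystalline Frobenius, the Hodge filtration) that is preserved by the identification $G\cong G^\sharp$, $\calG\cong\calG^\sharp$; but spelling it out requires tracing through the global construction of the RZ space and the normalization step in the definition of $\scrS_\rmK$. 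A secondary, more bookkeeping-level point is keeping straight the scaling factor $\widetilde\pi=\sqrt p$ throughout --- e.g.\ that $\widetilde\pi\cdot:\ \End(\bbX)_\bbQ\to\End(\bbX)_\bbQ$ intertwines the $\calY$-condition for $\calN_L$ with the $\calZ$-condition for $\calN_{L^\sharp}$ on the nose, with no stray power of $p$ --- which I would verify against Definition \ref{def: L^sharp} and Lemma \ref{lem: V=B}.
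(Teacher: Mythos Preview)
Your plan is correct and matches the paper's approach almost exactly: identify $G\cong G^\sharp$, observe that the self-dual $\calO_E$-lattice $\tL$ (and hence $C(\tL)$, $\psi$, and the framing object $\bbX$) is literally the same for $L$ and $L^\sharp$, and construct $\Upsilon_{L,L^\sharp}$ by choosing the \emph{same} maps $\iota$ and $\Theta$ in the fiber-product diagram \eqref{eq: diagram} for both; the tensor-matching condition defining $\calN_L\subset\calN_L^\diamond$ then automatically carries over, which resolves your anticipated obstacle. For the special cycle compatibility, your proposed two-step check ($k$-points plus deformation theory) is more than you need: once the universal $p$-divisible groups, polarizations, and quasi-isogenies on $\calN_L$ and $\calN_{L^\sharp}$ are identified, the equality $\calY_L(\bbL)=\calZ_{L^\sharp}(\widetilde\pi\bbL)$ is immediate from the definitions of the cycles as loci where the relevant quasi-endomorphisms lift integrally --- no separate deformation argument is required.
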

\begin{proof}
     We use $\calG_L$ to denote the parahoric group  scheme corresponding to $L$. Since we have a perfect pairing between $L$ and $L^\vee$,  
 $\calG_L$ is isomorphic to the stabilizer $\calG_{L^\vee}$ of $L^\vee$.   Moreover, we have $\calG_{L^\vee}\cong \calG_{L^\sharp}$ since the quadratic form of $L^\sharp$ only differs with the quadratic form of $L^\vee$ by a scalar $p$. In particular, we have $G^\sharp \coloneqq \mathrm{GSpin}(V^\sharp)\cong \mathrm{GSpin}(V)= G$.  

In fact, we can choose an isomorphism $\phi: G\xrightarrow{\sim} G^\sharp$ and choose $(G^\sharp,\phi(b),\phi\circ \mu)$ as a local Shimura datum of $\cN_{L^\sharp}$.  Moreover, we choose a basis $\{e_1,\ldots,e_n\}$ of $L$ as in \S \ref{6.1.2} so that $\{p^{-1}e_1,\ldots,p^{-1}e_{h},e_{h+1},\ldots,e_n\}$ is basis of $L^\vee$. Then we can identify $\widetilde{\pi} L^\vee$ with $L^\sharp$ which induces an identification 
\begin{align*}
   \tL=\widetilde{L^\sharp}  \quad \text{ and } \quad C(\tL)\stackrel{\varphi}{=}C(\widetilde{L^\sharp}).
\end{align*}
Under this identification,  $\widetilde{\pi} L^\vee=L^\sharp$ can be regarded as the same endomorphisms of $C(\tL)=C(\widetilde{L}^\sharp)$ via Lemma \ref{lem: V=B}. Moreover, we may define alternating forms $\psi$ and $\psi^{\sharp}$ on $C(\widetilde{L})$ and $C(\widetilde{L^\sharp})$ as in \S \ref{sec: alternating form} such that $\psi$ and $\psi^{\sharp}$ are   same  under this identification. For example, if we choose $c$ as in \S \ref{sec: alternating form} to define $\psi$, then  we can choose $\phi(c)$ to define $\psi^\sharp.$

Using these identifications, we have $\bX = \bX^\sharp\in \cN_{C(\tL)}(k)$, where $\bX^\sharp$ is the framing object of $\cN_{L^\sharp}$.  In particular, we can choose the same $\iota$ and $\Theta$ as in \eqref{eq: diagram} to construct $\cN_{L^\sharp}$ so that $\cN_{L}\stackrel{\Upsilon_{L,L^\sharp}}{\cong}\cN_{L^\sharp}$ in $\cN_{C(\tL)}$. Moreover,   by the identification $\widetilde{\pi} L^\vee=(L^\vee)^{(p)}$,  we regard $\widetilde{\pi} \bL$ and $\bL^{(p)}$   as the same special endomorphisms of $\bX=\bX^\sharp$ for any $\bL\subset \bV$ and $\bL^{(p)} \subset \bV^{(p)}=\bV^\sharp$. Then  the proposition follows from the definition of $\calZ$ and $\calY$-cycles.
\end{proof}

\subsection{$k$-points of $\calZ_L(\Lambda)$ and $\calY_L(\Lambda)$.}\subsubsection{} We now study the $k$-points of special cycles on $\calN_L$. In what follows, for notational simplicity, we will often omit the subscript $L$ in $\calZ_L(\Lambda)$ and $\calY_L(\Lambda)$. For a lattice $\bL\subset \bV$, we use $\cZ^{(0)}(\bbL)$  and $\cY^{(0)}(\bbL)$ to denote  the special cycles on $\cN_L^{(0)}$ (resp. $\cN_L^{(0)}\sqcup \cN_L^{(1)}$) if $h\neq 0,n$ (resp. $h=0,n$).

Recall that $\cV^{\ge h}(\bV)$ (resp. $\cV^{\le h}(\bV)$) denote the set of vertex lattice in $\bV$ of type $t\ge h$ (resp. $t\le h$). If the quadratic space is clear in the context, we simply denote it as $\cV^{\ge h} $ (resp. $\cV^{\le h} $).
 \begin{proposition}\label{prop: k points}\hfill
\begin{enumerate}
    \item For $\Lambda \in \cV^{\ge h}$, we have
\begin{align*}
\cZ^{(0)}(\Lambda)(k) &  \cong \{\text{special lattices } M \subset V_{\brQ_p}: \Lambda_{\brZ_p} \subset M \subset  M^\vee \subset \Lambda_{\brZ_p}^\vee \}\cong S_\Lambda(k).
\end{align*}
\item For $\Lambda \in \cV^{\ge h}$, we have
\begin{align*}
\cY^{(0)}(\Lambda^\vee)(k) &  \cong \{\text{special lattices } M \subset V_{\brQ_p}: p\Lambda_{\brZ_p}^\vee \subset  p M^\vee \subset  M \subset \Lambda_{\brZ_p} \}\cong R_\Lambda(k).  
\end{align*}
\end{enumerate}
 \end{proposition}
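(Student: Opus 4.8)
The plan is to reduce the statement to the description of $k$-points of the relevant affine Deligne--Lusztig variety via Proposition \ref{prop: k points of N}, and then to translate the cycle conditions into lattice-theoretic inclusions using the explicit realization of $\bbV$ and $\widetilde{\bbV}$ inside $\End(\bbX)_{\bbQ}$ together with Lemma \ref{lem: V=B}. First I would recall that by Proposition \ref{prop: k points of N} the map $y\mapsto M_y$ identifies $\cN_L^{(0)}(k)$ (or $\cN_L^{(0)}(k)\sqcup\cN_L^{(1)}(k)$ when $h=0,n$) with the set of special lattices $M\subset V_{\brQ_p}$. So the task is: for $y\in\cN_L(k)$ with associated special lattice $M=L_y$, characterize when $y\in\cZ(\Lambda)(k)$, resp. $y\in\cY(\Lambda^\vee)(k)$, purely in terms of $M$. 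By definition $y\in\cZ(\Lambda)(k)$ iff every $x\in\Lambda$ acts as an integral endomorphism of $\scrX_y$, which at the level of Dieudonn\'e modules means $x M_y\subset M_y$ for all $x\in\Lambda$, i.e. $\Lambda\subset L_y'=\Phi_*(L_y)=\Phi_*(M)$. By $\Phi$-equivariance (both $M$ and $\Lambda$ determine each other up to the shift $\Phi_*$, and $\Lambda$ is $\Phi$-stable), this is equivalent to $\Lambda\subset M$. Taking duals and using $M\subset M^\vee$ and $\Lambda\subset\Lambda^\vee$, the condition $\Lambda\subset M$ gives $\Lambda\subset M\subset M^\vee\subset\Lambda^\vee$ after extending scalars to $\brZ_p$; conversely that chain forces $\Lambda_{\brZ_p}\subset M$. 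This yields the first bijection $\cZ^{(0)}(\Lambda)(k)\cong\{\text{special }M:\Lambda_{\brZ_p}\subset M\subset M^\vee\subset\Lambda_{\brZ_p}^\vee\}$, and the second identification with $S_\Lambda(k)$ is exactly the bijection built into the definition of $S_\Lambda$ in \S\ref{sec: DL for SO} (sending $M$ to $M/\Lambda_{\brZ_p}\in\mathrm{OGr}(\frac{t_\Lambda-h}{2},\Omega_\Lambda)$, noting $\dim_k(M/\Lambda_{\brZ_p})=\frac{t_\Lambda-h}{2}$ and the special-lattice inequality becomes $\dim_k(\scrV+\Phi\scrV)\le\dim_k\scrV+1$).

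For part (2), by definition $y\in\cY(\Lambda^\vee)(k)$ iff $\widetilde{\pi}\cdot x$ acts integrally on $\scrX_y$ for all $x\in\Lambda^\vee$, i.e. $\widetilde{\pi}x\,M_y\subset M_y$ for all $x\in\Lambda^\vee$. Using Lemma \ref{lem: V=B} (more precisely the $\widetilde{\pi}$-twisted version identifying $L_{\brZ_p}^\vee$ with $\{x:\widetilde{\pi}x\,D(\tL_{\brZ_p})\subset D(\tL_{\brZ_p})\}$) and the realization $\widetilde{\pi}\bbV=\bbV^\sharp$, this condition on $M_y$ translates into $\widetilde{\pi}\Lambda^\vee\subset(\text{the }M_y\text{-analogue})$, which upon unwinding becomes the chain $p\Lambda_{\brZ_p}^\vee\subset pM^\vee\subset M\subset\Lambda_{\brZ_p}$. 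Alternatively — and this is probably the cleanest route — I would simply invoke the duality isomorphism $\Upsilon_{L,L^\sharp}:\cN_L\xrightarrow{\sim}\cN_{L^\sharp}$ of Proposition \ref{prop: duality for Z(Lambda)}, which sends $\cY_L(\Lambda^\vee)$ to $\cZ_{L^\sharp}(\widetilde{\pi}\Lambda^\vee)=\cZ_{L^\sharp}(\Lambda^\sharp)$ with $\Lambda^\sharp=\widetilde{\pi}\Lambda$ a vertex lattice of type $n-t_\Lambda$ in $\bbV^\sharp$; then part (1) applied to $\cN_{L^\sharp}$ (with $h$ replaced by $n-h$) identifies $\cZ_{L^\sharp}^{(0)}(\Lambda^\sharp)(k)$ with $S_{\Lambda^\sharp}^{[n-h]}(k)$, and Lemma \ref{lem: duality DL var} identifies $S_{\Lambda^\sharp}^{[n-h]}$ with $R_\Lambda^{[h]}$. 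Either way one obtains $\cY^{(0)}(\Lambda^\vee)(k)\cong\{\text{special }M:p\Lambda_{\brZ_p}^\vee\subset pM^\vee\subset M\subset\Lambda_{\brZ_p}\}\cong R_\Lambda(k)$.

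I expect the main obstacle to be the bookkeeping in translating the "lifts to an integral endomorphism" condition into a clean lattice inclusion. One must be careful that $\cZ(\Lambda)$ is defined by requiring integrality of $\rho\circ x\circ\rho^{-1}$ on the universal $p$-divisible group (not just at the special fiber), so at a $k$-point the criterion is genuinely just the Dieudonn\'e-module condition $xM_y\subset M_y$; this is where one uses that over a perfect field a quasi-endomorphism lifts iff it preserves the Dieudonn\'e lattice. The second subtlety is the shift by $\Phi_*$: since $L_y$ and $L_y'=\Phi_*(L_y)$ determine each other, and both $\Lambda$ and $\widetilde{\pi}\Lambda^\vee$ are $\Phi$-stable (being defined over $\bbV$ resp. $\bbV^\sharp$, hence $\Phi$-invariant), the conditions "$\Lambda\subset L_y$" and "$\Lambda\subset L_y'$" coincide, so one may phrase everything in terms of $M=L_y$ directly. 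Once these two points are settled, the rest is the routine dualization of lattice chains (using $p\Lambda^\vee\subset\Lambda\subset\Lambda^\vee$ and $pM^\vee\subset M\subset M^\vee$) and citing the already-established identifications $M\leftrightarrow M/\Lambda_{\brZ_p}$ with points of $S_\Lambda$ and $R_\Lambda$. I would present the argument for (1) in full and then deduce (2) from (1) via Proposition \ref{prop: duality for Z(Lambda)} and Lemma \ref{lem: duality DL var}, remarking that a direct argument using the $\widetilde{\pi}$-twisted Lemma \ref{lem: V=B} is also available.
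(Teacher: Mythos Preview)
Your proposal is correct and follows essentially the same approach as the paper. The paper likewise translates the cycle condition into $\Lambda\subset L_y\cap L_y'$ (phrasing it as ``$\Lambda$ preserves both $M_{y,1}\subset M_y$''), then uses $\Phi$-invariance of $\Lambda$ together with $L_y'=\Phi_*(L_y)$ to reduce to $\Lambda\subset L_y$; the identification $(L_y,L_y')=(L_g,L_g')$ via Lemma~\ref{lem: V=B} is made explicit there, which is the step you are absorbing into your citation of Proposition~\ref{prop: k points of N}. For part~(2) the paper takes your ``direct'' route using \eqref{eq: L^vee} rather than the duality isomorphism, though your alternative via Proposition~\ref{prop: duality for Z(Lambda)} and Lemma~\ref{lem: duality DL var} is equally valid and is exactly how the paper later deduces Theorem~\ref{thm: isoY} from Theorem~\ref{thm: iso}.
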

\begin{proof}

Recall that previously we have defined
$$
L_g\coloneqq \sigma^{-1}(\tilde{b}^{-1}g)\bullet L_{\brZ_p} \quad \text { and } \quad     L_g'\coloneqq g \bullet L_{\brZ_p}.
$$
We claim that 
\begin{align*}
    (L_y,L_y')=(L_g,L_g').
\end{align*}
First, when $g=1$ and $y$ is the point corresponding to $g=1$, the claim follows from \eqref{eq: L} and the definition of $L_y'$. Applying $\sigma^{-1}(\tilde{b}^{-1}g)\bullet$ to both sides, we have $L_y=L_g$. Similarly, applying $g\bullet $ to both sides, we obtain $L_y' =L_g'$.

Now for $y\in \cN_L(k)$, we have  $y \in \cZ^{(0)}(\Lambda)(k) $  if and only if  $\Lambda$ preserves both lattices $M_{y,1} \subset M_y$, which is equivalent to $\Lambda\subset L_y \cap L_y'$. However, since $\Lambda$ is $\Phi$-invariant, $\Lambda\subset L_y \cap L_y'$ is equivalent to $\Lambda\subset L_y$. Therefore,
$$
\begin{aligned}
\cZ^{(0)}(\Lambda)(k) & \cong  \left\{ \text { special lattices } L_y \subset V_{\brF}: \Lambda  \subset L_y\right\} \\
& =\left\{ \text { special lattices } L_y \subset V_{\brF}: \Lambda  \subset L_y \subset L_y^\vee \subset \Lambda^\vee \right\}.
\end{aligned}
$$

The description for $  \cY^{(0)}(\Lambda^\vee)(k)$ follows from the same argument once we replace \eqref{eq: L} by \eqref{eq: L^vee}.

Finally, we can easily verify the following identifications by definition of $S_\Lambda$ and $R_\Lambda$:
\begin{align*}
    \{\text{special lattices } M \subset V_{\brQ_p}: \Lambda_{\brZ_p} \subset M \subset  M^\vee \subset \Lambda_{\brZ_p}^\vee \}&\stackrel{\sim}{\to} S_\Lambda(k),\\
    M&\mapsto M/\Lambda_{\brZ_p}.\\
     \{\text{special lattices } M \subset V_{\brQ_p}: p\Lambda_{\brZ_p}^\vee \subset p M^\vee \subset  M \subset \Lambda_{\brZ_p}  \}&\stackrel{\sim}{\to} R_\Lambda(k),\\
    p M^\vee&\mapsto pM^\vee/p\Lambda_{\brZ_p}^\vee.
\end{align*}
\end{proof}

\subsubsection{}The following two propositions explain the intersection behaviour between special cycles associated to different choices of vertex lattice $\Lambda.$

\begin{proposition}\label{prop: inter of Z(Lambda)}
    Assume $\Lambda$ and $\Lambda'$ are both vertex lattices in $\bbV$. 
    \begin{enumerate}
        \item If $\Lambda\subset \Lambda'$, then $\cZ^{(0)}(\Lambda')(k)\subset \cZ^{(0)}(\Lambda)(k)$.
        \item We have 
        \begin{align*}
            \cZ^{(0)}(\Lambda)(k) \cap \cZ^{(0)}(\Lambda^{\prime})(k)= \begin{cases}\cZ^{(0)}(\Lambda + \Lambda^{\prime})(k) & \text { if } \Lambda+ \Lambda^{\prime} \in \cV^{\ge h} , \\ \emptyset & \text { otherwise. }\end{cases}
        \end{align*}
    \end{enumerate}
\end{proposition}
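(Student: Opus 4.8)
\textbf{Proof strategy for Proposition \ref{prop: inter of Z(Lambda)}.}

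The plan is to reduce everything to the lattice-theoretic description of $k$-points furnished by Proposition \ref{prop: k points}, which identifies $\cZ^{(0)}(\Lambda)(k)$ with the set of special lattices $M\subset V_{\brQ_p}$ satisfying $\Lambda_{\brZ_p}\subset M\subset M^\vee\subset \Lambda_{\brZ_p}^\vee$, equivalently (since $M$ is $\Phi$-stable-generated) just with the condition $\Lambda\subset L_y$ where $L_y$ is the special lattice attached to $y$. For part (1), if $\Lambda\subset\Lambda'$ then any special lattice $M$ containing $\Lambda'_{\brZ_p}$ automatically contains $\Lambda_{\brZ_p}$, so the containment of $k$-point sets is immediate; I would spell this out in one line using the characterization $y\in\cZ^{(0)}(\Lambda)(k)\iff \Lambda\subset L_y$.

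For part (2), the key observation is that $y$ lies in $\cZ^{(0)}(\Lambda)(k)\cap\cZ^{(0)}(\Lambda')(k)$ if and only if both $\Lambda\subset L_y$ and $\Lambda'\subset L_y$, which holds if and only if $\Lambda+\Lambda'\subset L_y$. Here $L_y$ is a special lattice of type $h$, so $L_y\subset L_y^\vee$ with $[L_y^\vee:L_y]=h$; in particular $L_y$ (and hence $\Lambda+\Lambda'$, as a sublattice lying between $p(L_y)^\vee$ and $L_y^\vee$) is contained in its own dual up to the appropriate scaling, forcing $\Lambda+\Lambda'$ to be a lattice with $p(\Lambda+\Lambda')^\vee\subset \Lambda+\Lambda'\subset (\Lambda+\Lambda')^\vee$, i.e. a vertex lattice. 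One must then check its type: since $\Lambda+\Lambda'\subset L_y\subset L_y^\vee\subset(\Lambda+\Lambda')^\vee$, the type of $\Lambda+\Lambda'$ is $\ge h$, so $\Lambda+\Lambda'\in\cV^{\ge h}$. Conversely, if $\Lambda+\Lambda'$ happens to be a vertex lattice of type $\ge h$, then by part (1) applied to the inclusions $\Lambda\subset\Lambda+\Lambda'$ and $\Lambda'\subset\Lambda+\Lambda'$ we get $\cZ^{(0)}(\Lambda+\Lambda')(k)\subset\cZ^{(0)}(\Lambda)(k)\cap\cZ^{(0)}(\Lambda')(k)$, and the reverse inclusion follows from the "only if" direction just argued ($\Lambda+\Lambda'\subset L_y$). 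If $\Lambda+\Lambda'$ is \emph{not} a vertex lattice of type $\ge h$, then there can be no special lattice $L_y$ of type $h$ containing it, so the intersection is empty.

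The main obstacle I anticipate is the bookkeeping around the case analysis for when $\Lambda+\Lambda'$ fails to be a vertex lattice: one needs to argue carefully that the sum of two vertex lattices, if it sits inside a type-$h$ special lattice, is automatically a vertex lattice, and to pin down that its type is $\ge h$ rather than merely that it is a vertex lattice. This is essentially a statement about lattices in a fixed quadratic space $\bbV$: if $\Lambda+\Lambda'\subset M$ with $M\subset M^\vee$, then $(\Lambda+\Lambda')^\vee\supset M^\vee\supset M\supset\Lambda+\Lambda'$, and one must also verify $p(\Lambda+\Lambda')^\vee\subset \Lambda+\Lambda'$ using $pM^\vee\subset M$ (which holds as $M$ is special of type $h$, hence $pM^\vee\subset M$). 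So in fact $\Lambda+\Lambda'$ is sandwiched as $p(\Lambda+\Lambda')^\vee\subset pM^\vee\subset M^\vee\cap(\cdots)$; I would organize this as: $\Lambda+\Lambda'\subset M\subset M^\vee\subset(\Lambda+\Lambda')^\vee$, and separately $p(\Lambda+\Lambda')^\vee\subset pM^\vee\subset M\subset\Lambda+\Lambda'$ is \emph{false} in general — rather the correct chain is $p(\Lambda+\Lambda')^\vee\subset p M^\vee$, and $pM^\vee\subset M$, but $M\not\subset\Lambda+\Lambda'$. The resolution is that "$\Lambda+\Lambda'$ is a vertex lattice" only requires $p(\Lambda+\Lambda')^\vee\subset\Lambda+\Lambda'$, and this follows because $\Lambda,\Lambda'$ are already vertex lattices so $p\Lambda^\vee\subset\Lambda$ and $p\Lambda'^\vee\subset\Lambda'$, giving $p(\Lambda+\Lambda')^\vee=p(\Lambda^\vee\cap\Lambda'^\vee)\subset p\Lambda^\vee\cap p\Lambda'^\vee\subset\Lambda\cap\Lambda'\subset\Lambda+\Lambda'$ — so actually $\Lambda+\Lambda'$ is \emph{automatically} a vertex lattice regardless, and the only real content is whether its type is $\ge h$, which is controlled by the existence of a type-$h$ special lattice $M$ with $\Lambda+\Lambda'\subset M$. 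Thus the dichotomy in the statement is precisely: $\Lambda+\Lambda'\in\cV^{\ge h}$ versus $\Lambda+\Lambda'$ has type $<h$ (in which case no such $M$ exists and the intersection is empty). I would therefore restructure the proof around this clean observation and cite Proposition \ref{prop: k points} together with part (1) for the two inclusions.
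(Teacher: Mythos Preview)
Your approach is essentially the paper's: both proofs reduce immediately to the characterization $y\in\cZ^{(0)}(\Lambda)(k)\iff \Lambda\subset L_y$ from Proposition~\ref{prop: k points}, whence the intersection is described by $\Lambda+\Lambda'\subset L_y$.

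However, your discussion contains a false claim that you should remove. You assert that ``$\Lambda+\Lambda'$ is \emph{automatically} a vertex lattice regardless'' because $p(\Lambda+\Lambda')^\vee\subset\Lambda+\Lambda'$. This is not enough: being a vertex lattice also requires $\Lambda+\Lambda'\subset(\Lambda+\Lambda')^\vee=\Lambda^\vee\cap\Lambda'^\vee$, i.e.\ that $\Lambda$ and $\Lambda'$ pair integrally, and this can fail. (For instance, in a hyperbolic plane with $(e,f)=1$, take $\Lambda=\langle pe,f\rangle$ and $\Lambda'=\langle p^{-1}e,p^2f\rangle$; both are $p$-modular vertex lattices, but $\Lambda+\Lambda'=\langle p^{-1}e,f\rangle$ is not contained in its dual.) Consequently your final dichotomy ``$\Lambda+\Lambda'\in\cV^{\ge h}$ versus $\Lambda+\Lambda'$ has type $<h$'' is also incorrect: the ``otherwise'' clause must include the case where $\Lambda+\Lambda'$ is not a vertex lattice at all.

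The fix is already present earlier in your own argument: if the intersection is nonempty, pick $y$ in it; then $\Lambda+\Lambda'\subset L_y\subset L_y^\vee\subset(\Lambda+\Lambda')^\vee$ gives both the integrality $\Lambda+\Lambda'\subset(\Lambda+\Lambda')^\vee$ and the type bound $[(\Lambda+\Lambda')^\vee:\Lambda+\Lambda']\ge h$, while $p(\Lambda+\Lambda')^\vee\subset\Lambda+\Lambda'$ follows as you computed. So nonemptiness forces $\Lambda+\Lambda'\in\cV^{\ge h}$, and the contrapositive handles the ``otherwise'' case in one stroke. Delete the erroneous paragraph and keep this clean version; it then matches the paper's proof exactly.
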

\begin{proof}
    (i) directly follows from Proposition \ref{prop: k points}. According to Proposition \ref{prop: k points}, we have 
    \begin{align*}
         &\cZ^{(0)}(\Lambda)(k) \cap \cZ^{(0)}(\Lambda^{\prime})(k)\\
         &=\{\text{ special lattices } L_y \subset V_{\brF}: \Lambda+\Lambda' \subset L_y \subset L_y^\vee \subset (\Lambda+\Lambda')^\vee\}.
    \end{align*}
In particular, if $\Lambda+ \Lambda^{\prime}$ is not a vertex lattice, then we obtain an empty set.
\end{proof}

\begin{proposition}\label{prop: inter of Y(Lambda)}
    Assume $\Lambda$ and $\Lambda'$ are both vertex lattices in $\bbV$. 
    \begin{enumerate}
        \item If $\Lambda\subset \Lambda'$, then $\cY^{(0)}((\Lambda')^\vee)(k)\subset 
 \cY^{(0)}(\Lambda^\vee)(k)$.
        \item We have 
        \begin{align*}
            \cY^{(0)}(\Lambda^\vee)(k) \cap \cY^{(0)}((\Lambda^{\prime})^\vee)(k)= \begin{cases}\cY^{(0)}(\Lambda^\vee + (\Lambda^{\prime})^\vee)(k) & \text { if } \Lambda+ \Lambda^{\prime} \in \cV^{\le h}, \\ \emptyset & \text { otherwise. }\end{cases}
        \end{align*} 
    \end{enumerate}
\end{proposition}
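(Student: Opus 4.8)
\textbf{Proof plan for Proposition \ref{prop: inter of Y(Lambda)}.} The plan is to deduce this proposition directly from Proposition \ref{prop: inter of Z(Lambda)} by transport of structure along the duality isomorphism $\Upsilon_{L,L^\sharp}$ of Proposition \ref{prop: duality for Z(Lambda)}, exactly as was done for the $k$-point descriptions in Proposition \ref{prop: k points}. First I would recall that $\Upsilon_{L,L^\sharp}$ induces, for every non-degenerate lattice $\bL\subset\bV$, an identification $\cY_L(\bL)=\cZ_{L^\sharp}(\wit\pi\bL)$; taking $k$-points this is a bijection $\cY^{(0)}_L(\bL)(k)=\cZ^{(0)}_{L^\sharp}(\wit\pi\bL)(k)$ compatible with the normalization convention $h\leftrightarrow n-h$ used to define the superscript $(0)$. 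Applying this with $\bL=\Lambda^\vee$ (resp. $\bL=(\Lambda')^\vee$) for a vertex lattice $\Lambda\in\cV^{\le h}(\bV)$, we get $\cY^{(0)}(\Lambda^\vee)(k)=\cZ^{(0)}_{L^\sharp}(\wit\pi\Lambda^\vee)(k)$, and $\wit\pi\Lambda^\vee=\Lambda^\sharp$ is a vertex lattice in $\bV^\sharp$ of type $n-t_\Lambda\ge n-h$, hence lies in $\cV^{\ge n-h}(\bV^\sharp)$, which is the correct range for $\cZ$-cycles on $\cN_{L^\sharp}$.

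Now for part (1): if $\Lambda\subset\Lambda'$ then $(\Lambda')^\vee\subset\Lambda^\vee$, so $\wit\pi(\Lambda')^\vee\subset\wit\pi\Lambda^\vee$, i.e. $(\Lambda')^\sharp\subset\Lambda^\sharp$ as vertex lattices in $\bV^\sharp$. By Proposition \ref{prop: inter of Z(Lambda)}(1) applied to $\cN_{L^\sharp}$, we get $\cZ^{(0)}_{L^\sharp}((\Lambda)^\sharp)(k)\supset\cZ^{(0)}_{L^\sharp}((\Lambda')^\sharp)(k)$, and translating back via $\Upsilon_{L,L^\sharp}$ yields $\cY^{(0)}(\Lambda^\vee)(k)\supset\cY^{(0)}((\Lambda')^\vee)(k)$. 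For part (2): the key point is that $\wit\pi(\,\cdot\,)$ turns sums into sums, so $\wit\pi\Lambda^\vee+\wit\pi(\Lambda')^\vee=\wit\pi(\Lambda^\vee+(\Lambda')^\vee)$; and one checks that $\Lambda+\Lambda'$ is a vertex lattice in $\bV$ of type $\le h$ if and only if $\Lambda^\vee+(\Lambda')^\vee=(\Lambda\cap\Lambda')^\vee$ is a vertex lattice, if and only if $\wit\pi(\Lambda^\vee+(\Lambda')^\vee)$ is a vertex lattice in $\bV^\sharp$ of type $\ge n-h$. Under these equivalences, Proposition \ref{prop: inter of Z(Lambda)}(2) for $\cN_{L^\sharp}$ gives
\begin{align*}
    \cZ^{(0)}_{L^\sharp}(\Lambda^\sharp)(k)\cap\cZ^{(0)}_{L^\sharp}((\Lambda')^\sharp)(k)=\begin{cases}\cZ^{(0)}_{L^\sharp}(\Lambda^\sharp+(\Lambda')^\sharp)(k) & \text{if }\Lambda^\sharp+(\Lambda')^\sharp\in\cV^{\ge n-h}(\bV^\sharp),\\ \emptyset & \text{otherwise,}\end{cases}
\end{align*}
and pulling back along $\Upsilon_{L,L^\sharp}$ gives precisely the claimed formula.

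The only genuine content beyond bookkeeping is the lattice-theoretic dictionary between ``$\Lambda+\Lambda'$ a vertex lattice of type $\le h$ in $\bV$'' and ``$\Lambda^\sharp+(\Lambda')^\sharp$ a vertex lattice of type $\ge n-h$ in $\bV^\sharp$'', which amounts to the identities $(\Lambda\cap\Lambda')^\vee=\Lambda^\vee+(\Lambda')^\vee$, the compatibility of $\vee$ with the $(p)$-twist, and the type formula $t((\Lambda\cap\Lambda')^\sharp)=n-t(\Lambda\cap\Lambda')$; I expect this to be the main (though still routine) obstacle, since one must be careful that $\Lambda\cap\Lambda'$ rather than $\Lambda+\Lambda'$ is the relevant lattice on the dual side and that the condition $p(\Lambda\cap\Lambda')\subset(\Lambda\cap\Lambda')^\vee\subset(\Lambda\cap\Lambda')$ in $\bV$ matches $p(\Lambda^\sharp+(\Lambda')^\sharp)^\vee\subset(\Lambda^\sharp+(\Lambda')^\sharp)\subset(\Lambda^\sharp+(\Lambda')^\sharp)^\vee$ in $\bV^\sharp$. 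Alternatively, one can bypass the dictionary entirely and prove (2) directly from Proposition \ref{prop: k points}(2): $\cY^{(0)}(\Lambda^\vee)(k)\cap\cY^{(0)}((\Lambda')^\vee)(k)$ consists of special lattices $M$ with $p M^\vee\subset M\subset\Lambda_{\brZ_p}\cap\Lambda'_{\brZ_p}=(\Lambda\cap\Lambda')_{\brZ_p}$ and $p(\Lambda\cap\Lambda')^\vee_{\brZ_p}\subset pM^\vee$, which is nonempty precisely when $\Lambda\cap\Lambda'$ (equivalently $\Lambda+\Lambda'$, via duality in $\bV$) is a vertex lattice of the required type, and in that case equals $\cY^{(0)}((\Lambda\cap\Lambda')^\vee)(k)=\cY^{(0)}(\Lambda^\vee+(\Lambda')^\vee)(k)$.
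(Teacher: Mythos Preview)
The paper gives no proof of this proposition; it is stated immediately after Proposition~\ref{prop: inter of Z(Lambda)} and is evidently meant to follow by the same argument, using the description in Proposition~\ref{prop: k points}(2) in place of (1). Your alternative direct approach at the end is precisely this argument and matches what the paper intends. Your main approach via the duality isomorphism $\Upsilon_{L,L^\sharp}$ is a legitimate and different route: it reduces everything to the already-proved $\calZ$-case on $\calN_{L^\sharp}$ via the dictionary $\Lambda\mapsto\Lambda^\sharp=\wit\pi\Lambda^\vee$. This is more machinery than needed here, but it foreshadows exactly how the paper later deduces Theorem~\ref{thm: isoY} from Theorem~\ref{thm: iso}.

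There is, however, a genuine slip in both versions of your argument. You assert that ``$\Lambda+\Lambda'\in\cV^{\le h}$'' and ``$\Lambda\cap\Lambda'\in\cV^{\le h}$'' are equivalent, but they are not: for vertex lattices $\Lambda,\Lambda'$ the first condition is $\Lambda\subset(\Lambda')^\vee$, while the second is $p(\Lambda')^\vee\subset\Lambda$. For instance, in a hyperbolic space of rank $4$ with $h=2$, take $\Lambda'=\langle e_1,e_2,f_1,f_2\rangle$ and $\Lambda=\langle p^{-1}e_1,e_2,pf_1,f_2\rangle$, both self-dual; then $\Lambda\cap\Lambda'=\langle e_1,e_2,pf_1,f_2\rangle$ is a vertex lattice of type $2$, but $\Lambda+\Lambda'$ is not a vertex lattice at all. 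Your direct computation correctly shows that the intersection of $k$-points is $\{M\text{ special of type }h: M\subset(\Lambda\cap\Lambda')_{\brZ_p}\}$, so the emptiness criterion is governed by $\Lambda\cap\Lambda'$, not $\Lambda+\Lambda'$; and on the dual side it is $\Lambda^\sharp+(\Lambda')^\sharp=(\Lambda\cap\Lambda')^\sharp$ that must lie in $\cV^{\ge n-h}(\bV^\sharp)$. Note that the identity $\cY^{(0)}(\Lambda^\vee+(\Lambda')^\vee)=\cY^{(0)}(\Lambda^\vee)\cap\cY^{(0)}((\Lambda')^\vee)$ already holds tautologically from the definition of $\calY$-cycles, so the entire content of part (2) is the emptiness criterion. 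Your argument actually proves the version of the statement with the condition $\Lambda\cap\Lambda'\in\cV^{\le h}$; the condition $\Lambda+\Lambda'\in\cV^{\le h}$ written in the paper appears to be a typo.
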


 \subsubsection{} Let $\cN_{L,\red}^{(0)}$ denote the reduced $k$-scheme underlying $\cN_L^{(0)}$. Similarly, let $\cZ_\red^{(0)}(\Lambda)$ and $\cY_\red^{(0)}(\Lambda^\vee)$ be the reduced $k$-scheme underlying $\cZ^{(0)}(\Lambda)$ and $\cY^{(0)}(\Lambda^\vee)$. 

 The following is one of our main results, which states that $\cN_{L,\red}^{(0)}$ is a union of two types of stratas given by $\cZ_\red^{(0)}(\Lambda)$  and $\cY_\red^{(0)}(\Lambda^\vee)$ respectively.

\begin{theorem}\label{thm: N=ZunionY}
Assume $h\neq 0,n$. 
  We have  
    \begin{align*}
        \cN_{L,\red}^{(0)} = \Big (\cup_{\Lambda \in \cV^{\ge h}} \cZ_{\red}^{(0)}(\Lambda) \Big ) \cup \Big(\cup_{\Lambda \in \cV^{\le h}} \cY_{\red}^{(0)}(\Lambda^\vee) \Big).
    \end{align*}
\end{theorem}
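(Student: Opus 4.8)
The statement to prove is that $\cN_{L,\red}^{(0)}$ is set-theoretically the union of the reduced special cycles $\cZ_{\red}^{(0)}(\Lambda)$ for $\Lambda \in \cV^{\ge h}$ and $\cY_{\red}^{(0)}(\Lambda^\vee)$ for $\Lambda \in \cV^{\le h}$. Since all the schemes in sight are reduced and locally of finite type over $k$, it suffices to check the equality on $k$-points. The plan is to reduce everything to the combinatorics of special lattices developed in \S4, via the bijections already established.

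\textbf{Step 1: translate to special lattices.} By Proposition \ref{prop: k points of N}, the map $y \mapsto L_y$ gives a bijection between $\cN_L^{(0)}(k)$ and the set of special lattices $M \subset V_{\brQ_p}$ of type $h$. By Proposition \ref{prop: k points}, under this same bijection $\cZ_{\red}^{(0)}(\Lambda)(k)$ corresponds to the set of special lattices $M$ with $\Lambda_{\brZ_p} \subset M \subset M^\vee \subset \Lambda_{\brZ_p}^\vee$, i.e.\ those $M$ with $\Lambda_{\brZ_p} \subset M$ (the rest being automatic, since $\Lambda$ is $\Phi$-invariant and hence $\Lambda \subset L_y \Leftrightarrow \Lambda \subset L_y \cap L_y'$), and similarly $\cY_{\red}^{(0)}(\Lambda^\vee)(k)$ corresponds to those $M$ with $p\Lambda_{\brZ_p}^\vee \subset pM^\vee$, i.e.\ $M^\vee \subset \Lambda_{\brZ_p}^\vee$, equivalently $\Lambda_{\brZ_p} \subset M^\vee{}^\vee = M$... more precisely $pM^\vee \subset \Lambda_{\brZ_p}$ which says $\Lambda_{\brZ_p}^\vee \supset M^\vee$ after dualizing. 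So the theorem becomes the purely lattice-theoretic statement: every special lattice $M$ of type $h$ either contains some $\Lambda_{\brZ_p}$ with $\Lambda \in \cV^{\ge h}$, or satisfies $M^\vee \subset \Lambda_{\brZ_p}^\vee$ (equivalently $p\Lambda_{\brZ_p}^\vee \subset M$ after rescaling) for some $\Lambda \in \cV^{\le h}$.

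\textbf{Step 2: invoke the crucial proposition.} This is exactly what Proposition \ref{prop: key prop} provides. Given a special lattice $M$, that proposition asserts that either (1) $S_c(M)$ is a vertex lattice of type $t \le h$ with $p M^\vee \subset M \subset S_c(M) \subset S_c(M)^\vee \subset M^\vee$, or (2) $S_d(M^\vee)^\vee$ is a vertex lattice of type $t \ge h$ with $S_d(M^\vee)^\vee \subset M \subset M^\vee \subset S_d(M^\vee)$. In case (2), set $\Lambda$ to be the rational vertex lattice in $\bbV$ with $\Lambda_{\brZ_p} = S_d(M^\vee)^\vee$ — one must note that $S_d(M^\vee)^\vee$ is $\Phi$-invariant (it is the dual of a $\Phi$-invariant lattice), hence descends to a $\bbQ_p$-lattice $\Lambda$ in $\bbV = V_{\brQ_p}^\Phi$, and $\Lambda \in \cV^{\ge h}$; then $\Lambda_{\brZ_p} \subset M$ shows $M$ lies in $\cZ_{\red}^{(0)}(\Lambda)(k)$. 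In case (1), similarly $S_c(M)$ is $\Phi$-invariant, so descends to $\Lambda \in \cV^{\le h}$ with $M^\vee \subset \Lambda_{\brZ_p}^\vee$ (from $S_c(M) \subset S_c(M)^\vee \subset M^\vee$ after dualizing the outer inclusion $M \subset S_c(M)$), placing $M$ in $\cY_{\red}^{(0)}(\Lambda^\vee)(k)$. Conversely, the reverse inclusion "$\supset$" is immediate since each $\cZ_{\red}^{(0)}(\Lambda)$ and $\cY_{\red}^{(0)}(\Lambda^\vee)$ is by construction a closed subscheme of $\cN_{L,\red}^{(0)}$.

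\textbf{Main obstacle.} The genuinely substantive content is entirely packaged in Proposition \ref{prop: key prop}, whose proof is the delicate part (the inductive lattice chain arguments of Lemma \ref{lem: inf inter} and the case analysis (a)--(d)); given that, the present theorem is a short bookkeeping exercise. The one point requiring a little care is the descent from $\Phi$-invariant $\brZ_p$-lattices to $\bbQ_p$-lattices in $\bbV$ and the verification that the resulting $\Lambda$ genuinely lies in $\cV^{\ge h}$ resp.\ $\cV^{\le h}$ — i.e.\ that the type inequality is the correct one and that $\Lambda$ is non-degenerate; this follows from the type bounds in Proposition \ref{prop: key prop} together with the fact that a $\Phi$-invariant lattice $N$ with $pN^\vee \subset N \subset N^\vee$ is precisely $\Lambda_{\brZ_p}$ for a unique vertex lattice $\Lambda \subset \bbV$. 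I would also remark, for completeness, that this is the direct analogue of the "crucial lemma" in the self-dual case of \cite{HowardPappas} and the unitary cases, and that the passage from $\cN_{L,\red}^{(0)}$ to $\cN_{L,\red}^{(\ell)}$ for general $\ell$ is via the $J_b(\bbQ_p)$-action, as recorded in \S\ref{sec: bruhat tits stratification}.
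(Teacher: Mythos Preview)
Your proposal is correct and follows exactly the paper's approach: reduce to $k$-points via Proposition~\ref{prop: k points of N} and Proposition~\ref{prop: k points}, then invoke the crucial Proposition~\ref{prop: key prop}. One small cleanup: your characterization of $\cY_{\red}^{(0)}(\Lambda^\vee)(k)$ has the inclusion reversed---the condition from Proposition~\ref{prop: k points}\,(2) is $\Lambda_{\brZ_p}^\vee \subset M^\vee$ (equivalently $M \subset \Lambda_{\brZ_p}$), not $M^\vee \subset \Lambda_{\brZ_p}^\vee$; with $\Lambda_{\brZ_p} = S_c(M)$ this is immediate from $M \subset S_c(M)$, so the argument goes through unchanged once you fix the direction.
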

\begin{proof}
    Proposition \ref{prop: k points of N} identifies the $\cN_{L,\red}^{(0)}(k)$ with special lattices $M\subset V_{\brQ_p}$ such that $M\stackrel{h}{\subset}M^\vee$. Proposition  \ref{prop: key prop} combined with Proposition \ref{prop: k points} implies that a point $z\in \cN_{L,\red}^{(0)}(k)$ lies either in  $\cZ_{\red}^{(0)}(\Lambda)(k)$ for a $\Lambda \in \cV^{\ge h}$ or in $\cY_{\red}^{(0)}(\Lambda^\vee)(k)$ for  a $\Lambda \in \cV^{\le h}$.  
\end{proof}

The following shows that $\cN_{L,\red}^{(0)}$ is connected as in many previously studied cases.
\begin{proposition}
 $\cN_{L,\red}^{(0)}$ is connected.
  \end{proposition}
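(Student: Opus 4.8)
The plan is to leverage the decomposition from Theorem \ref{thm: N=ZunionY}, which expresses $\cN_{L,\red}^{(0)}$ as a union of the closed subschemes $\cZ_{\red}^{(0)}(\Lambda)$ for $\Lambda \in \cV^{\ge h}$ and $\cY_{\red}^{(0)}(\Lambda^\vee)$ for $\Lambda \in \cV^{\le h}$. Each of these strata is irreducible by Theorem \ref{thm: intro 2} (equivalently Theorem \ref{thm: decom pf SLambda}), hence connected. Since $\cN_{L,\red}^{(0)}$ is covered by connected pieces, it suffices to show that this covering cannot be separated into two nonempty closed subsets: concretely, one builds a ``chain'' argument showing that any two strata can be linked by a sequence of strata, consecutive ones having nonempty intersection. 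The natural way to organize this is via the combinatorics of vertex lattices in $\bbV$.

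\textbf{Key steps.} First I would reduce to showing that the ``incidence graph'' whose vertices are the vertex lattices in $\cV^{\ge h} \cup \cV^{\le h}$ (with an edge whenever the corresponding cycles meet) is connected. By Propositions \ref{prop: inter of Z(Lambda)} and \ref{prop: inter of Y(Lambda)}, two $\cZ$-cycles $\cZ_{\red}^{(0)}(\Lambda)$, $\cZ_{\red}^{(0)}(\Lambda')$ meet whenever $\Lambda + \Lambda' \in \cV^{\ge h}$ (in particular whenever $\Lambda \subset \Lambda'$), and similarly for $\cY$-cycles; moreover any $\cZ$-cycle and any $\cY$-cycle that both contain a common special lattice $M$ (which happens, by Proposition \ref{prop: key prop}, near the boundary where one can take $\Lambda$ of type exactly $h$, so that $S_c(M) = S_d(M^\vee)^\vee$ is a vertex lattice of type $h$ belonging to both $\cV^{\ge h}$ and $\cV^{\le h}$) intersect. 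So the strategy is: (i) show that the vertex lattices of type exactly $h$ form a ``bridge'' connecting the $\cZ$-world and the $\cY$-world — a vertex lattice $\Lambda_0$ of type $h$ satisfies $\Lambda_0 \in \cV^{\ge h} \cap \cV^{\le h}$, and $\cZ_{\red}^{(0)}(\Lambda_0) = \cY_{\red}^{(0)}(\Lambda_0^\vee)$ as both equal the single point $S_{\Lambda_0}$ (which is a point since $t_\Lambda = h$); (ii) show the $\cZ$-cycles among themselves form a connected family, by noting that any vertex lattice $\Lambda \in \cV^{\ge h}$ is contained in a maximal vertex lattice $\Lambda_{\max}$ (of the type computed in Proposition \ref{prop: V^Phi}), so all $\cZ$-cycles for lattices contained in a fixed $\Lambda_{\max}$ are connected to $\cZ_{\red}^{(0)}(\Lambda_{\max})$; and then show all the maximal vertex lattices are linked to each other through chains, using the transitivity of $J_b(\bbQ_p) = \GSpin(\bbV)(\bbQ_p)$ on vertex lattices of a fixed type together with connectivity of $\cZ_{\red}^{(0)}(\Lambda_{\max})$ itself; (iii) run the mirror argument for the $\cY$-cycles; (iv) combine via the bridge in (i). An alternative, possibly cleaner route: use the $J_b(\bbQ_p)$-action directly — $\cN_L^{(0)}$ is a single $J_b^{(0)}(\bbQ_p)$-orbit-closure-type object, and one reduces connectedness of $\cN_{L,\red}^{(0)}$ to connectedness of the building of $\SO(\bbV)$ (or rather the relevant set of vertex lattices), which is standard.

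\textbf{Main obstacle.} The technical heart is verifying that the maximal (or minimal) vertex lattices of a given type in $\bbV$ are connected to one another in the incidence graph — i.e.\ that one cannot have two ``clusters'' of maximal vertex lattices with no chain between them. Here one uses that $J_b(\bbQ_p)$ acts transitively on vertex lattices of each fixed type $t$ (by the classification, Proposition \ref{prop: class of V} applied to $\bbV$, analogous to Lemma \ref{lem: transitive action on v lattices}) and that $\cZ_{\red}^{(0)}(\Lambda_{\max})$, being a nonempty irreducible variety of positive dimension (when $t(\Lambda_{\max}) > h$), contains special lattices $M$ which in turn are contained in \emph{other} maximal vertex lattices $\Lambda'_{\max} \neq \Lambda_{\max}$; each such $M$ witnesses an edge between $\cZ_{\red}^{(0)}(\Lambda_{\max})$ and $\cZ_{\red}^{(0)}(\Lambda'_{\max})$, and one must check this propagation exhausts all maximal vertex lattices. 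This is a purely combinatorial statement about the poset of vertex lattices in $\bbV$, and the cleanest formulation likely follows the corresponding argument in \cite{HowardPappas} (proof that the basic locus is connected in the self-dual case) and \cite{Oki}, adapted to the vertex-lattice level; I would model the write-up on those. The degenerate cases where $\cV^{\ge h}$ or $\cV^{\le h}$ contains only lattices of type exactly $h$ (e.g.\ $h = 2$, $\epsilon(\bbV) = -1$, as noted after Theorem \ref{thm: decom pf SLambda}) need to be handled separately but are easy since then that side collapses to a bridge of points.
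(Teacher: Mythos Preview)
Your overall architecture matches the paper's proof exactly: use Theorem \ref{thm: N=ZunionY}, show each of the two families $\bigcup_{\Lambda\in\cV^{\ge h}}\cZ_{\red}^{(0)}(\Lambda)$ and $\bigcup_{\Lambda\in\cV^{\le h}}\cY_{\red}^{(0)}(\Lambda^\vee)$ is connected via the incidence combinatorics of vertex lattices, and bridge them at a type-$h$ vertex lattice $\Lambda_h$, where $\cZ_{\red}^{(0)}(\Lambda_h)$ is a single point lying in both families.

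Two remarks. First, your step (ii) has the inclusions inverted: in this paper's convention a smaller vertex lattice has \emph{larger} type (if $\Lambda\subset\Lambda'$ then $t_{\Lambda'}=t_\Lambda-2[\Lambda':\Lambda]$), so a vertex lattice of maximal type is \emph{minimal} for inclusion and hence is \emph{contained in} $\Lambda$, not the other way around; correspondingly $\cZ_{\red}^{(0)}(\Lambda)\subset\cZ_{\red}^{(0)}(\Lambda_{\max})$. Second, the paper does not argue via maximal vertex lattices and $J_b$-transitivity at all; it simply invokes the adjacency argument from \cite[Proposition 5.1.5]{HowardPappas}, which shows directly that any vertex lattice of type $\ge h$ can be connected to a fixed $\Lambda_h$ by a chain of adjacent vertex lattices (meaning $\Lambda\subset\Lambda'$ or $\Lambda'\subset\Lambda$) all of type $\ge h$, and then Proposition \ref{prop: inter of Z(Lambda)} does the rest. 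This is precisely your ``alternative, possibly cleaner route'' via building connectivity --- the paper takes that route and skips your step (ii) entirely.
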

\begin{proof}
The case $h=0,n$ is  \cite[Proposition 5.1.5]{HowardPappas}. So we assume $h\neq 0,n$.
We say two vertex lattices $\Lambda,\Lambda'\subset \bbV$ are adjacent if $\Lambda \subset \Lambda'$ or $\Lambda'\subset \Lambda$. We write $\Lambda\sim \Lambda'$ if $\Lambda$ and $\Lambda'$ are adjacent. The proof of \cite[Proposition 5.1.5]{HowardPappas} shows that any $\Lambda$ with type $t_\Lambda\ge h$ (resp. $t_\Lambda\le h$) can be connected with a fixed vertex lattice $\Lambda_h$ with type $h$ via adjacent vertex lattices with type at least $h$ (resp. at most $h$). In  particular, $\cup_{\Lambda_t \in \cV^{\ge h}} \cZ_{\red}^{(0)}(\Lambda_t)$ (resp. $\cup_{\Lambda_t \in \cV^{\le h}} \cY_{\red}^{(0)}(\Lambda_t^\vee)$) is connected by Proposition \ref{prop: inter of Z(Lambda)} (resp. Proposition \ref{prop: inter of Y(Lambda)}).  Moreover,  $\cZ_\red^{(0)}(\Lambda_h)$ is the worst point associated to $\Lambda_h$ and $\cZ_\red^{(0)}(\Lambda_h)\subset \cY_{\red}^{(0)}(\Lambda^\vee)$ if $p\Lambda^\vee \subset \Lambda_h$. In particular,
$$\cZ_\red^{(0)}(\Lambda_h)\subset (\cup_{\Lambda_t \in \cV^{\ge h}} \cZ_{\red}^{(0)}(\Lambda_t))\cap (\cup_{\Lambda_t \in \cV^{\ge h}} \cY_{\red}^{(0)}(\Lambda_t^\vee)).$$ 
So $\cN_{L,\red}^{(0)}$ is connected.
\end{proof}

\subsubsection{}

Given Theorem \ref{thm: N=ZunionY}, to understand $\cN_{L,\red}^{(0)}$, it suffices to describe $\cZ_\red^{(0)}(\Lambda)$ and  $\cY_\red^{(0)}(\Lambda^\vee)$.  It turns out that  $\cZ_\red^{(0)}(\Lambda)$ and $\cY_\red^{(0)}(\Lambda^\vee)$ is isomorphic to $S_\Lambda$ and $R_\Lambda$ introduced in \S \ref{sec: DL varieties} respectively. The rest of the paper is devoted to the proof of this statement.

First, we will construct a proper morphism from    $\cZ_\red^{(0)}(\Lambda)$ and $\cY_\red^{(0)}(\Lambda^\vee)$   to $S_\Lambda$ and $R_\Lambda$ respectively. We will need the following proposition, whose proof is the same as the proof of \cite[Proposition 6.1.2]{HowardPappas}.
\begin{proposition}\label{prop: proj of Z(Lambda)}
Assume $\Lambda$ is a vertex lattice of type $t\ge h$ (resp. $t\le h$). Then $\cZ_\red^{(0)}(\Lambda)$ (resp. $\cY_\red^{(0)}(\Lambda^\vee)$) is projective.
\end{proposition}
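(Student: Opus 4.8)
The plan is to reduce the statement to the well-understood hyperspecial case treated in Howard--Pappas, exactly as the authors suggest by the phrase ``whose proof is the same as the proof of \cite[Proposition 6.1.2]{HowardPappas}''. Recall that $\cZ_\red^{(0)}(\Lambda)$ is by definition a reduced closed subscheme of $\cN_L^{(0)}$, and $\cN_L^{(0)}$ is a formal scheme over $\Spf\brZ_p$; so the claim is that the underlying reduced $k$-scheme $\cZ_\red^{(0)}(\Lambda)$ is proper (equivalently, by Proposition \ref{prop: proj of Z(Lambda)}'s target, projective) over $k=\bar\bbF_p$. The first step is to recall that $\cZ^{(0)}(\Lambda)$, being a closed formal subscheme of $\cN_L$ cut out by the condition that the finitely many special quasi-endomorphisms in a basis of $\Lambda$ lift to integral endomorphisms, is itself a separated formal scheme of finite type over $\Spf\brZ_p$; hence properness is equivalent to the valuative criterion, or more concretely to the statement that the set of connected components is finite and each is proper.

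The second and main step is to bound $\cZ_\red^{(0)}(\Lambda)$ inside a \emph{closed} subscheme of a fixed affine Grassmannian-type space that is already known to be (perfectly) of finite type and proper. Concretely, by Proposition \ref{prop: k points} we have
\begin{align*}
  \cZ^{(0)}(\Lambda)(k) \cong \{\text{special lattices } M\subset V_{\brQ_p}:\Lambda_{\brZ_p}\subset M\subset M^\vee\subset \Lambda_{\brZ_p}^\vee\},
\end{align*}
and the right-hand side is in bijection with $S_\Lambda(k)$, which by construction is a reduced \emph{closed} subscheme of the orthogonal Grassmannian $\mathrm{OGr}(\tfrac{t_\Lambda-h}{2},\Omega_\Lambda)$ over $k$, hence a projective $k$-scheme. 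So on the level of $k$-points the set is the $k$-points of a projective variety. To upgrade this to a scheme-theoretic statement, the cleanest route (and the one used in \cite[Prop. 6.1.2]{HowardPappas}) is: first observe that the condition ``$\Lambda$ acts integrally on $M_y$ and $M_{y,1}$'' pins the lattice $M_y$ between two fixed $\brZ_p$-lattices (namely $\Lambda_{\brZ_p}^\otimes$-stable lattices in $D_{\brQ_p}$ controlled by $p\Lambda_{\brZ_p}^\vee$ and $p^{-1}\Lambda_{\brZ_p}$ after applying the Clifford functor), so that $\cZ_\red^{(0)}(\Lambda)$ is contained in a bounded region of the Witt vector affine Grassmannian for $\GL(D_{\brZ_p})$; such bounded closed subschemes are proper. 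Then the closed immersion $\cZ_\red^{(0)}(\Lambda)\hookrightarrow \cN_{L,\red}^{(0)}$ followed by the map to this bounded affine Grassmannian exhibits $\cZ_\red^{(0)}(\Lambda)$ as a closed subscheme of a proper $k$-scheme, hence proper; being also quasi-projective (it maps to a Grassmannian via $\Psi_{\cZ,\Lambda}$, once that morphism is available, or directly via the lattice $M_y\mapsto M_y/\Lambda_{\brZ_p}$), it is projective.

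The third step handles the $\cY$-cycle case, but this is essentially free: by Proposition \ref{prop: duality for Z(Lambda)} the duality isomorphism $\Upsilon_{L,L^\sharp}\colon\cN_L\xrightarrow{\sim}\cN_{L^\sharp}$ carries $\cY_L(\Lambda^\vee)$ isomorphically onto $\cZ_{L^\sharp}(\widetilde\pi\Lambda^\vee)=\cZ_{L^\sharp}(\Lambda^\sharp)$, and $\Lambda^\sharp$ is a vertex lattice of type $n-t_\Lambda\ge n-h$ in $\bbV^\sharp$, so properness of $\cZ_{\red}(\Lambda^\sharp)$ for $\cN_{L^\sharp}$ (the case $t\ge h$ already established, now with $h$ replaced by $n-h$) gives properness of $\cY_\red^{(0)}(\Lambda^\vee)$. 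So only the $\cZ$-case genuinely needs argument.

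\textbf{Main obstacle.} The one point requiring care is not the topological boundedness of $k$-points but the \emph{scheme-theoretic} finiteness and separatedness: one must know that $\cZ^{(0)}(\Lambda)$ is a formal scheme topologically of finite type over $\Spf\brZ_p$ before ``valuative criterion on $k$-points'' has content. This is inherited from the corresponding property of $\cN_L$ (which itself comes from the finiteness results for $\widehat{\scrS}_{\rmK}$ recalled in \S\ref{para: tensors}, via \cite{KMPS}), together with the fact that imposing integrality of finitely many quasi-endomorphisms is a closed condition preserving topological finite type. Once that bookkeeping is in place, the properness follows formally from the containment of $\cZ_\red^{(0)}(\Lambda)$ in a bounded closed subscheme of an affine Grassmannian as above; I expect this to go through verbatim as in \cite[Proposition 6.1.2]{HowardPappas}, the vertex-lattice level playing no essential role since the relevant boundedness is a statement about lattices in $D_{\brQ_p}$ squeezed between two fixed lattices.
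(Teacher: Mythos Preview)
Your proposal is correct and follows the Howard--Pappas argument the paper cites. One small cleanup: the Witt vector affine Grassmannian is a perfect scheme, so that route only directly controls the perfection; the cleaner path (and the one actually used in \cite[Proposition~6.1.2]{HowardPappas}) works inside the symplectic Rapoport--Zink space $\cN_{C(\tL)}$, whose bounded quasi-isogeny-height locus already has projective reduced subscheme by \cite{RZ}, so that $\cZ_\red^{(0)}(\Lambda)$ is a closed subscheme of a projective $k$-scheme and therefore projective outright---no separate quasi-projectivity argument (and hence no potentially circular appeal to $\Psi_{\cZ,\Lambda}$) is needed.
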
\qed

\begin{proposition}\label{prop: morphism BT strata to DLV}
  There are natural proper morphisms between $k$-schemes
$$
\Psi_{\calZ,\Lambda}: \cZ_\red^{(0)}(\Lambda) \rightarrow S_{\Lambda},
 $$
 and
 $$
\Psi_{\calY,\Lambda}: \cY_\red^{(0)}(\Lambda^\vee)  \rightarrow R_{\Lambda}
 $$
inducing the bijections from Proposition \ref{prop: k points} on $k$-points.
\begin{proof} We consider the case of $\calZ(\Lambda)$; the case of $\calY(\Lambda)$ is completely analogous.

    Let $R$ be a reduced $k$-algebra of finite type and $y\in \cZ_\red^{(0)}(\Lambda)(R)$ corresponding to a triple $\left(X_y,\lambda_y, \rho_y \right)$ as in \S\ref{para: RZ space GSp}. Let $\calR$ be a $p$-torsion free, $p$-adically complete and separated $\brZ_p$-algebra which is equipped with an isomorphism $\calR/p\calR\cong R$ and a lift (also denoted as $\sigma$) of the Frobenius $\sigma$ on $R$.  We set $\calM_y:=\bbD(X_y)(\calR)$ and let $\calM_{y,1}\subset \calM_{y}$ denote the preimage of the filtration on $\bbD(X_y)(R)$. Using the quasi-isogeny $\rho_y:\bbX\times_k R\rightarrow X_y$, we may consider  $\calM_y$ as an $\calR$-submodule of $D_{\brZ_p}\otimes_{\brZ_p} \calR[1/p].$ We define $$ \calL_y=\{x\in V\otimes_{\bbQ_p}\calR[1/p]\mid x\calM_{y,1}\subset \calM_{y,1}\}$$ which is an $\calR$-submodule of $V\otimes_{\bbQ_p}\calR[1/p]$. Then we claim that $$\Lambda\otimes_{\bbZ_p}\calR \subset\calL_y\subset \Lambda^\vee\otimes_{\bbZ_p}\calR, $$ and the image $\overline{\calL}_y$ of $\calL_y$ in $\Lambda^\vee\otimes_{\bbZ_p}\calR/\Lambda\otimes_{\bbZ_p}\calR=\Lambda^\vee/\Lambda \otimes_{\bbF_p}R$  is a locally free direct summand of rank $\frac{t_\Lambda-h}{2}$. Indeed since $R$ is reduced, it suffices by \cite[Exercise X.16]{Lang}  to check this on $k$-points. For $R\rightarrow k$ corresponding to $x\in \calZ(\Lambda)(k)$, it follows from Lemma \ref{lem: V=B} that the specialization of $\calL_y$ is the sublattice $L_x\subset \breve V$, and hence its image in $\Lambda^\vee/\Lambda\otimes_{\bbZ_p} k$ lies in $S(\Lambda)$.  Thus $\overline{\calL}_y$ is an element of $S_\Lambda(R)$, and it is easy to see that it does not depend on the choice of $\calR$. Indeed, since $R$ is reduced, $\overline{\calL}_y$ is determined by its restriction along all $R\rightarrow k$. We obtain a morphism $\cZ_\red^{(0)}(\Lambda)  \rightarrow S_{\Lambda}$  as desired. 
    
    Finally, $\cZ_\red^{(0)}(\Lambda)$ is projective over $k$ by Proposition \ref{prop: proj of Z(Lambda)} and $S_\Lambda$ is projective over $k$ as a generalized Deligne-Lusztig variety. The properness of the morphism follows.
\end{proof}

\end{proposition}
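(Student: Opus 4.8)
\textbf{Proof proposal for Proposition \ref{prop: morphism BT strata to DLV}.} As stated, the proof treats the case of $\calZ(\Lambda)$ in detail and notes that $\calY(\Lambda^\vee)$ is completely analogous (using the duality $\Upsilon_{L,L^\sharp}$ of Proposition \ref{prop: duality for Z(Lambda)} together with Lemma \ref{lem: duality DL var}, one can alternatively deduce the $\calY$-statement directly from the $\calZ$-statement). So I will focus on constructing $\Psi_{\calZ,\Lambda}\colon \cZ^{(0)}_\red(\Lambda)\to S_\Lambda$. The plan is to define the morphism functorially on points valued in reduced finite type $k$-algebras $R$. Given such an $R$ and a point $y\in\cZ^{(0)}_\red(\Lambda)(R)$ corresponding to a triple $(X_y,\lambda_y,\rho_y)$, I would choose a lift $\calR$ of $R$ that is $p$-torsion free, $p$-adically complete and separated, with $\calR/p\calR\cong R$ and a Frobenius lift $\sigma$; such a $\calR$ always exists (e.g.\ a $p$-adic Cohen ring construction, or Witt vectors of a suitable cover, as in \cite{HowardPappas}). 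Then I form the Dieudonn\'e crystal value $\calM_y=\bbD(X_y)(\calR)$ and its filtration lift $\calM_{y,1}\subset\calM_y$, and transport everything via the framing quasi-isogeny $\rho_y$ into $D_{\brZ_p}\otimes_{\brZ_p}\calR[1/p]$.

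The core construction is then: set $\calL_y=\{x\in V\otimes_{\bbQ_p}\calR[1/p]\mid x\calM_{y,1}\subset\calM_{y,1}\}$, an $R$-submodule (via $\calR\to R$) of $V\otimes_{\bbQ_p}\calR[1/p]$. The first key step is to prove that $\Lambda\otimes_{\bbZ_p}\calR\subset\calL_y\subset\Lambda^\vee\otimes_{\bbZ_p}\calR$ and that the image $\overline{\calL}_y$ in $\Lambda^\vee/\Lambda\otimes_{\bbF_p}R$ is a locally free direct summand of rank $\tfrac{t_\Lambda-h}{2}$. The trick here—which I would import from \cite{HowardPappas}—is that since $R$ is reduced, a submodule-of-a-free-module statement of this kind can be checked on $k$-points: by \cite[Exercise X.16]{Lang} (or any Nakayama-style argument for reduced rings), it suffices to verify the inclusions and the rank/direct-summand property after base change along every $R\to k$. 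But each such point $x\in\cZ^{(0)}_\red(\Lambda)(k)$ was already analyzed in Proposition \ref{prop: k points}: the specialization of $\calL_y$ is exactly the lattice $L_x\subset V_{\brQ_p}$ from \S\ref{para: associated special lattice}, and Lemma \ref{lem: V=B} identifies $L_x$ with $\{x\in V_{\brQ_p}\mid xD(\tL_{\brZ_p})\subset D(\tL_{\brZ_p})\}$-type data, so that $\Lambda\subset L_x\subset L_x^\vee\subset\Lambda^\vee$ and $L_x/\Lambda\in S_\Lambda(k)$ has the correct dimension $\tfrac{t_\Lambda-h}{2}$. Granting this, $\overline{\calL}_y$ defines an $R$-point of the orthogonal Grassmannian landing in the reduced closed subscheme $S_\Lambda$ (the defining inequality $\dim(\scrV+\Phi\scrV)\le\dim\scrV+1$ and isotropy are again checked on $k$-points since $S_\Lambda$ is reduced), and one checks independence of the auxiliary choice of $\calR$ by comparing two lifts through a common one. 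This yields the morphism $\Psi_{\calZ,\Lambda}$, and by construction it induces the bijection of Proposition \ref{prop: k points} on $k$-points.

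For properness: $\cZ^{(0)}_\red(\Lambda)$ is projective over $k$ by Proposition \ref{prop: proj of Z(Lambda)}, and $S_\Lambda$ is projective over $k$ (it is a reduced closed subscheme of an orthogonal Grassmannian, and by Theorem \ref{thm: decom pf SLambda} it is a union of generalized Deligne--Lusztig varieties, hence of finite type and proper). A morphism from a proper $k$-scheme to a separated $k$-scheme is automatically proper, so $\Psi_{\calZ,\Lambda}$ is proper. The $\calY$-statement follows by the same argument with \eqref{eq: L} replaced by \eqref{eq: L^vee} throughout (so $\calL_y$ is instead cut out using $\widetilde\pi x$ acting on $\calM_y$), landing in $R_\Lambda$; alternatively via $\Upsilon_{L,L^\sharp}$.

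\textbf{Main obstacle.} The delicate point is the "check on $k$-points suffices" reduction for the inclusions $\Lambda\otimes_{\bbZ_p}\calR\subset\calL_y\subset\Lambda^\vee\otimes_{\bbZ_p}\calR$ and for $\overline{\calL}_y$ being a locally free direct summand of the prescribed rank. One must be careful that $\calL_y$ really is finitely generated over $R$ (after passing to the quotient) and well-behaved in families—this is where the $p$-torsion freeness and completeness of $\calR$, and the flatness of the crystal $\bbD(X_y)$, get used. This is exactly the content of the cited proof of \cite[Proposition 6.1.2]{HowardPappas}, so I expect it to go through verbatim; the only genuinely new bookkeeping is making sure the vertex-lattice level (rather than the self-dual level of \emph{loc.\ cit.}) does not disturb the argument, which it does not, since all of it happens inside the fixed ambient quadratic space $V$ and uses only Lemma \ref{lem: V=B} and Proposition \ref{prop: k points}.
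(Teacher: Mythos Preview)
Your proposal is correct and follows essentially the same approach as the paper: define $\calL_y$ via the stabilizer condition on $\calM_{y,1}$ over a $p$-torsion-free lift $\calR$, reduce the inclusion and local-freeness claims to $k$-points using \cite[Exercise X.16]{Lang} and the reducedness of $R$, identify the specialization via Lemma \ref{lem: V=B} and Proposition \ref{prop: k points}, and conclude properness from Proposition \ref{prop: proj of Z(Lambda)}. Your additional remarks (checking the $S_\Lambda$-defining conditions on $k$-points, and the alternative route to $\Psi_{\calY,\Lambda}$ via $\Upsilon_{L,L^\sharp}$) are correct elaborations but do not change the argument.
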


\subsection{Orthogonal local models}\label{sec: ortho local model} Our goal in the rest of the paper is to show the morphisms in Proposition \ref{prop: morphism BT strata to DLV} are isomorphisms. In order to do this, we will give a description of the  tangent space of $\calZ_{\red}^{(0)}(\Lambda)$ at certain $k$-points generalizing \cite[Corollary 4.2.7]{LiZhu2018}. We begin with some preparations concerning orthogonal local models.

We keep the notation of the previous subsection so that $L\subset V$  is a vertex lattice of type $h$ and $\cG$ is the parahoric of $G=\GSpin(V)$ corresponding to $L$.  
We recall the some properties of the local models $\Mloc_{\calG,\mu}$ from \cite{AGLR} that we will need. 

We let $\mathcal{FL}_{\calG}$ denote the Witt vector partial affine flag variety for $\calG$ constructed in \cite{BhattScholze}, \cite{Zhu}. We write $\calM$ for the geometric special fiber of $\Mloc_{\calG,\mu}$.
Then by \cite[Theorem 7.23]{AGLR}, $\calM$ can be identified with the canonical deperfection $\calA_{\calG,\mu}^{\mathrm{can}}$ of the $\mu$-admissible locus inside $\mathcal{FL}_{\calG}$. In particular, $\calM$ has a stratification indexed by $w\in \Adm(\mu)_K$  whose closure relations are given by the Bruhat order on $\tW$. We write $\calM^w$ for the strata corresponding to $w\in \Adm(\mu)_K$. 

On the level of $k$-points, we may identify $\calM(k)$ with a subset of $G(\brQ_p)/\calG(\brZ_p)$ corresponding to the elements of $h\dot{w}$, where $h\in \calG(\brZ_p)$ and $w\in \Adm(\mu)_K.$ Let $\iota^{\mathrm{loc}}:\Mloc_{\calG,\mu}\rightarrow \Gr(C(\tL),d)$ denote the map of local models constructed in Proposition \ref{prop: very good embedding}. Then for $g\in G(\brQ_p)/\calG(\brZ_p)$ lying in the image of $\calM(k)$, the filtration on $C(\tL)$ corresponding to its image in $ \Gr(C(\tL),d)(k)$ is induced by the reduction mod $p$ of $gC(\tL)\subset C(\tL)$.

\subsubsection{}\label{sec: KR strata}Let $\Adm(\mu)_K^{\max}\subset \Adm(\mu)_K$ denote the subset of maximal elements for the Bruhat order; explicitly these are given by ${^K\tW}\cap t^{W_0(\mu)}$. In what follows we give an explicit description of $\Adm(\mu)^{\max}_K$, together with a distinguished subset $Z_K\subset\Adm(\mu)_K$; this subset corresponds to the KR-strata whose closure contains  the $\calZ$-cycles; see \S\ref{para: LMD}.  We identify $X_*(T^{\mathrm{ad}})$ with $\bbZ^n$ with standard basis $\epsilon_1,\dotsc,\epsilon_n$. Then $\mu=\epsilon_1$, and the Weyl orbit $W_0(\mu)$ is the set $\{\pm\epsilon_i\mid i=1,\dotsc,n\}$. We write $$h=\begin{cases} 2s & \text{ in Case (1) and (2a)},\\
2s+1 &\text{ in Case (2b) and (3)}.
\end{cases}$$
Here we label the cases as in \ref{subsec: VL cases}.

Case (1) \emph{(Type $D_n$)}. 

$$\Adm(\mu)_K^{\max}=\begin{cases}
\{t^{\epsilon_1}\} &\text{if $s=0$}\\
\{t^{\pm\epsilon_1},t^{\epsilon_2}\} &\text{if $s=1$},\\
\{t^{-\epsilon_{s}},t^{\epsilon_{s+1}}\} &\text{$1<s<n-1$},\\
\{t^{-\epsilon_{n-1}},t^{\pm\epsilon_n}\} &\text{if $s=n-1$},\\
\{t^{-\epsilon_n}\} &\text{if $s=n$}.
\end{cases}$$

We set $$Z_K=\begin{cases}
\{t^{\epsilon_{s+1}}\} &\text{if $0\leq s<n-1$},\\
\{t^{\pm\epsilon_n}\} &\text{if $s=n-1$},\\
\emptyset&\text{if $s=n$}.
\end{cases}$$

Case (2a)  \emph{(Type $B_n$)}. 
$$\Adm(\mu)_K^{\max}=\begin{cases}
	\{t^{\epsilon_1}\} &\text{if $s=0$},\\
	\{t^{\pm\epsilon_1},t^{\epsilon_2}\} &\text{if $s=1$},\\
	\{t^{-\epsilon_{s}},t^{\epsilon_{s+1}}\} &\text{if $1<s\leq n-1$},\\
	\{t^{-\epsilon_n}\} &\text{if $s=n$}.
\end{cases}$$

	We set $$Z_K=\begin{cases}
	\{t^{\epsilon_{s+1}}\} &\text{if $0\leq s<n$}\\
	\emptyset&\text{if $s=n$}
	\end{cases}$$

Case (2b)  \emph{(Type $B_n$)}. 
$$\Adm(\mu)_K^{\max}=\begin{cases}
	\{t^{\epsilon_1}\} &\text{if $s=0$},\\
	\{t^{-\epsilon_{s}},t^{\epsilon_{s+1}}\} &\text{if $1\leq s< n-1$},\\
    	\{t^{\pm\epsilon_{n}},t^{-\epsilon_{n-1}}\} &\text{if $s=n-1$},\\
	\{t^{-\epsilon_n}\} &\text{if $s=n$}.
\end{cases}$$

   We set $$Z_K=\begin{cases}
	\{t^{\epsilon_{s+1}}\} &\text{if $0\leq s<n-1$},\\
    \{t^{\pm\epsilon_{n}}\} &\text{if $s=n-1$},\\
	\emptyset&\text{if $s=n$}.
	\end{cases}$$

	Case (3) \emph{(Type $C-B_n$)}. 
	
	$$\Adm(\mu)_K^{\max}=\begin{cases}
		\{t^{\epsilon_1}\} &\text{if $s=0$},\\
		\{t^{-\epsilon_{s}},t^{\epsilon_{s+1}}\} &\text{if $1\leq s\leq n-1$},\\
		\{t^{-\epsilon_n}\} &\text{if $s=n$}.
	\end{cases}$$
    
	We set $$Z_K=\begin{cases}
\{t^{\epsilon_{s+1}}\} &\text{if $0\leq s<n$},\\
\emptyset&\text{if $s=n$}.
\end{cases}$$

\subsubsection{}We set $\calM_Z=\bigcup_{w\in Z_K}\calM^w$. Let $\calM^{\sm}$ denote the smooth locus of $\calM$. 

\begin{lemma}\label{lem: Z-cycle smooth locus}\hfill
\begin{enumerate}\item Each $\calM^w$ is an orbit for the action of $\calG_k$ on $\calM$.

\item 	We have $\bigcup_{w\in Z_K}\calM^w\subset \calM^{\sm}$, where $\calM^{\sm}$ is the smooth locus of $\calM$.
	
	\end{enumerate}
\end{lemma}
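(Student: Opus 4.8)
The plan is to deduce part (2) from part (1) together with the reducedness of $\calM$. For part (1), I would invoke the structure theory of the canonical deperfection $\calM\cong\calA_{\calG,\mu}^{\mathrm{can}}$: the stratification $\{\calM^w\}_{w\in\Adm(\mu)_K}$ is transported from the Schubert stratification of the Witt vector partial affine flag variety $\mathcal{FL}_{\calG}$, each Schubert cell of $\mathcal{FL}_{\calG}$ is a single orbit for the positive loop group $L^+\calG$, and the formation of the canonical deperfection in \cite{AGLR} is compatible with the residual action of $\calG_k$; hence each $\calM^w$ is a single $\calG_k$-orbit. On $k$-points this is exactly the description $\calM^w(k)=\calG(\brZ_p)\,\dot{w}\,\calG(\brZ_p)/\calG(\brZ_p)$ recalled above, which is manifestly one orbit. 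So for (1) I would simply record this compatibility and cite \cite{AGLR}.

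For part (2), the starting point is that $\calM$ is \emph{reduced}---this is part of the characterization of $\Mloc_{\calG,\mu}$ recalled in \S \ref{6.1.1}---and of finite type over the perfect field $k$, so by generic smoothness its smooth locus $\calM^{\sm}$ is open and dense in $\calM$, and clearly $\calG_k$-stable. Next I would note that for a maximal element $w\in\Adm(\mu)_K^{\max}$ the stratum $\calM^w$ is open in $\calM$: since the closure relations are given by the Bruhat order and $w$ is maximal, no $w''\leq w'$ with $w'\neq w$ can equal $w$, so $\calM\setminus\calM^w=\bigcup_{w'\neq w}\overline{\calM^{w'}}$ is a finite union of closed subschemes and hence closed. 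A dense subset meets every nonempty open set, so $\calM^{\sm}\cap\calM^w$ is a nonempty $\calG_k$-stable subset of the single orbit $\calM^w$, and therefore equals $\calM^w$; that is, $\calM^w\subset\calM^{\sm}$. Finally, inspecting the explicit lists in \S \ref{sec: KR strata} shows $Z_K\subset\Adm(\mu)_K^{\max}$ in each of Cases (1), (2a), (2b), (3), which gives $\bigcup_{w\in Z_K}\calM^w\subset\calM^{\sm}$ and proves (2).

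The one genuinely delicate point is locating and applying the right statement from \cite{AGLR} for part (1)---in particular the $\calG_k$-equivariance of the canonical deperfection and the fact that the strata are \emph{single} loop-group orbits rather than unions of them, and matching the perfect ind-scheme $\mathcal{FL}_{\calG}$ with the honest scheme $\calM$ at the level of group actions. If a clean citation turns out to be unavailable, part (2) admits a fallback that sidesteps the orbit statement: each $w\in Z_K$ is a translation $t^{\lambda}$ with $\lambda$ a $\tW_0$-conjugate of the minuscule cocharacter $\mu$, so using the Hodge embedding $\iota^{\mathrm{loc}}$ of Proposition \ref{prop: very good embedding} and the explicit description of vertex lattices in \S \ref{sec: collection of background} one can identify the closed stratum $\overline{\calM^{t^{\lambda}}}$ with a smooth projective homogeneous space; since $\calM^{t^{\lambda}}$ is open in $\calM$ by the Bruhat-order argument above, smoothness of $\calM$ along $\calM^{t^{\lambda}}$ follows. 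The remaining implications (reduced $\Rightarrow$ dense smooth locus $\Rightarrow$ it contains every open stratum) are routine.
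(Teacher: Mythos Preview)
Your proposal is correct and follows essentially the same approach as the paper. For part (2) the arguments coincide: the smooth locus is open dense and $\calG_k$-stable, so by (1) it contains every open (i.e.\ maximal) stratum, and $Z_K\subset\Adm(\mu)_K^{\max}$.

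For part (1), the paper resolves precisely the delicate point you flagged, but by a slightly different route than you suggest: rather than working directly with the Witt-vector flag variety and its canonical deperfection, it passes to the \emph{equicharacteristic} side. Using \cite[Lemma~3.15]{AGLR}, $\calM$ is identified with a union of Schubert varieties in the partial affine flag variety $\mathcal{FL}_{\underline{\calG}}$ for an equicharacteristic group $\underline{\calG}$ over $k[[t]]$ equipped with $\underline{\calG}_k\cong\calG_k$; then \cite[Theorem~3.16]{AGLR} shows that the $L^+\underline{\calG}$-action on $\calM$ factors through $\underline{\calG}_k$, and this action is identified with that of $\calG_k$. This cleanly sidesteps the perfect-vs.-honest-scheme matching issue. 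Your fallback for (2) is unnecessary and somewhat problematic as stated: the closure $\overline{\calM^{t^{\lambda}}}$ is a Schubert variety in the partial flag variety, not in general a smooth homogeneous space (unlike the affine Grassmannian case for minuscule $\mu$), so smoothness of $\calM$ along $\calM^{t^\lambda}$ does not follow from that description alone.
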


\begin{proof} (i) By \cite[Lemma 3.15]{AGLR}, $\calM$ may be identified with a certain union of Schubert varieties in the partial affine flag variety $\mathcal{FL}_{\underline{\calG}}$ for an equicharacteristic group $\underline{\calG}$ over $k[[t]]$ which is equipped with an identification $\underline{\calG}_k\cong \calG_k$. The action of the positive loop group $L^+\underline{\calG}$ on $\calM$ factors through $\underline{\calG}_k$ (cf. \cite[Theorem 3.16]{AGLR}), and the corresponding action of $\underline{\calG}_k$ can be identified with the action of $\calG_k$ under the isomorphism $\underline{\calG}_k\cong \calG_k$ above.

(ii) The smooth locus of $\calM$ is open dense, and is a union of $\calG_{k}$-orbits. Thus $\calM^{\sm}$ contains all the strata $\calM^w$ corresponding to $w\in \Adm(\mu)_K^{\max}$.

\end{proof}

\subsubsection{} For notational simplicity, we write $\breve V$ for $V_{\brQ_p}$, $\breve L$ for $L_{\brZ_p}$ and $\brD$ for $D_{\brZ_p}$.
Let $x\in \calM^w(k)$ for some $w\in Z_K$. This corresponds to a subspace $\overline{D}_{1}\subset  \overline{D}:=\brD/p \brD$, and we let $\breve D_1\subset \breve D$ denote the preimage of $\overline{D}_1$. 
By Lemma \ref{lem: V=B}, we have $$\brL=\{v\in \brV\mid vD\subset D\}.$$
We equip $\overline L:=\brL/p \brL$ with the structure of a (possibly degenerate) quadratic space over $k$ and set 
$
\brL_1:=\{v\in \brV\mid v\brD_1\subset \brD_1\}$. This is equipped with an action of $\calG_k$ via the $\bullet$ action which preserves the quadratic form.
\begin{lemma}\label{lem: L-filtration}\hfill
	\begin{enumerate}
		\item  $\dim(\brL+\brL_1)/\brL=1$.

    \item  The $1$-dimensional isotropic line $\overline{L}_1$ in $\overline{L}$ spanned by the image of $p\brL_1$ does not lie in the radical of $\overline{L}$.

    	\end{enumerate}
\end{lemma}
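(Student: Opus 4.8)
\textbf{Proof plan for Lemma \ref{lem: L-filtration}.}

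The plan is to compute everything explicitly using the coordinates of \S\ref{subsec: VL cases}, \S\ref{sec: KR strata}, together with the description of $\calM(k)$ in terms of $G(\brQ_p)/\calG(\brZ_p)$ recalled above. Since $x\in \calM^w(k)$ for $w\in Z_K$, after acting by $\calG(\brZ_p)$ we may assume $x$ corresponds to the translation element $t^{\epsilon_{s+1}}$ (or $t^{\pm\epsilon_n}$ in the boundary subcases of Cases (1) and (2b)); concretely $\brD_1 = \dot{w}\brD$ where $\dot w\in N(\brQ_p)$ is a lift of $w$. The point is that $\dot w$ differs from the identity by a single cocharacter in the standard coordinates, so $\brL_1 = \dot w\bullet\brL$ can be written down in the orthogonal basis $\{e_1,\dots,e_n,f_1,\dots,f_n\}\obot L_{\an}$ of \S\ref{subsec: VL cases}. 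First I would record, for each of the four cases and each $w\in Z_K$, the explicit lattice $\brL_1$: it is obtained from $L^{(s)}$ by replacing the pair $(e_{s+1},f_{s+1})$ (or $(e_n,f_n)$) with $(\pi^{\mp1}e_{s+1},\pi^{\pm1}f_{s+1})$, leaving all other basis vectors fixed. This is the exact computation already carried out in the proof of Proposition \ref{prop: relative position admissible set}, part (2)$\Rightarrow$(1), for the maximal elements $t^{w_0(\mu)}$, so part (1) of the lemma — that $\dim_k(\brL+\brL_1)/\brL = 1$ — follows immediately: from the explicit description $\brL+\brL_1$ is spanned over $\brL$ by the single extra vector $\pi^{-1}e_{s+1}$ (up to the obvious variant), hence the quotient is one-dimensional over $k$.

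For part (2), I would identify the image $\overline L_1$ of $p\brL_1$ in $\overline L = \brL/p\brL$ explicitly. From the coordinate description, $p\brL_1$ is generated modulo $p\brL$ by the image of $p\cdot\pi^{-1}e_{s+1} = \pi e_{s+1}$ (since $p = \pi^2$ in our normalization, this lies in $\brL$), i.e. $\overline L_1$ is the line spanned by the reduction of $\pi e_{s+1}$; when instead $w = t^{-\epsilon_{s}}$-type one gets the line spanned by $\pi f_s$, etc. The radical of $\overline L$ is the reduction of $\{v\in \brL \mid (v,\brL)\subset p\Z_p\} = p\brL^\vee\cap\brL$; equivalently, in the basis above, the radical is spanned by the images of $\pi e_1,\dots,\pi e_s,\pi f_1,\dots,\pi f_s$ (the "modular part" of the vertex lattice $L^{(s)}$, of type $t(s)$). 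The key computation is then to check that the reduction of $\pi e_{s+1}$ (resp. $\pi f_s$) is \emph{not} in the span of the reductions of $\pi e_i, \pi f_i$ for $i\le s$ — but this is clear since $e_{s+1},f_{s+1},\dots,e_n,f_n$ together with $\pi e_1,\dots,\pi e_s,\pi f_1,\dots,\pi f_s$ form part of a $\brZ_p$-basis of $\brL = L^{(s)}$, so their reductions are linearly independent in $\overline L$. In the subcases where $Z_K$ involves $t^{\pm\epsilon_n}$ one must also use that the relevant vector is an isotropic vector of $\Omega_{\Lambda}$-type, which again is visible from the moment matrix.

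The only genuine case-checking, and hence the main obstacle, is keeping the bookkeeping straight across the four Bruhat–Tits cases of \S\ref{subsec: VL cases} and the various boundary values of $s$ listed in \S\ref{sec: KR strata}, especially the anisotropic summand $L_{\an}$ in Cases (2) and (3) and the two-fold subtlety in Case (3) that the parahoric is not the full stabilizer. In each of these the argument is structurally identical — write down $\brL_1$, read off $\overline L_1$, read off the radical, observe linear independence — so I expect to organize the proof as a single uniform computation with a remark handling the finitely many boundary $w$'s, rather than a genuine case split. Isotropy of $\overline L_1$ (needed for the phrase "isotropic line" in the statement) follows because $(\pi e_{s+1},\pi e_{s+1}) = p(e_{s+1},e_{s+1}) \in p\Z_p$ when $e_{s+1}$ is isotropic, and in all our cases the relevant generator can be chosen isotropic; alternatively isotropy of $\overline L_1$ is forced by part (1) together with the fact that $\brL_1$ is itself a vertex lattice, so $p\brL_1\subset\brL_1\subset\brL_1^\vee$ forces the generator to have norm in $p\Z_p$.
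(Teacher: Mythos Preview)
Your overall strategy --- use the $\calG_k$-action to reduce to $g = \dot t^{v(\mu)}$ for $t^{v(\mu)}\in Z_K$, then compute $\brL_1$ and $\overline L_1$ explicitly in the standard basis --- is exactly what the paper does. However, the execution contains a computational error that makes the written argument fail.

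The claim ``$p = \pi^2$ in our normalization'' is false: in \S6--7 we work over $F = \bbQ_p$, so the uniformizer $\pi$ of \S3 is simply $p$ (you appear to be conflating it with $\widetilde\pi = \sqrt p$, the uniformizer of the auxiliary extension $E$). Consequently, for $w = t^{\epsilon_{s+1}}$ the lattice $\brL_1$ is obtained from $\brL$ by replacing $(e_{s+1},f_{s+1})$ with $(pe_{s+1}, p^{-1}f_{s+1})$, and $p\brL_1$ is generated modulo $p\brL$ by $f_{s+1}$, not by ``$\pi e_{s+1}$''; thus $\overline L_1 = k\cdot\overline f_{s+1}$ (or $k\cdot\overline e_{s+1}$ in the boundary subcase $w = t^{-\epsilon_n}$). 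Your descriptions of a $\brZ_p$-basis for $\brL$ and of the radical are similarly off: the basis is $pe_1,\dots,pe_s,e_{s+1},\dots,e_n,f_1,\dots,f_n$ together with a basis of $\brL_{\an}$, and the radical of $\overline L$ is spanned by $\overline{pe}_1,\dots,\overline{pe}_s,\overline f_1,\dots,\overline f_s$ together with the radical of $\brL_{\an}/p\brL_{\an}$ --- the elements $\pi f_i = pf_i$ you list are zero in $\overline L$. With these corrections the linear-independence check becomes immediate and coincides with the paper's. Note also that $t^{-\epsilon_s}$ never lies in $Z_K$, so there is no reason to treat it.
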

\begin{proof}We write $\breve V=\breve V_0\oplus \breve V_{\an}$  and choose a basis $e_1\dotsc e_n,f_1,\dotsc,f_n$ for $\breve V_{0}$ as in \ref{sec: quadratic space over breveF}. Then  $$\brL=\Span(pe_1,\dotsc,pe_s,e_{s+1},\dotsc,e_n,f_1,\dotsc,f_n)\oplus \brL_{\an}$$ with $s$ as above.
		
Let $g\in G(\brQ)/\calG(\brZ_p)$ be the image of $x$ under the map $\calM(k)\rightarrow G(\brQ)/\calG(\brZ_p)$. Then $ \brD_1=g \brD$, and hence $\brL_1=g\bullet \brL$. 

We write $g= \dot t^{v(\mu)}k$ with $k\in \calG(\brZ_p)$ and $t^{v(\mu)}\in Z_K$. Then $k$ induces an automorphism of $\overline{L}$ which preserves the associated quadratic form. Thus we may replace $\brL_1$ by $k^{-1}\cdot \brL_1$ and assume $g=\dot t^{v(\mu)}$. Then (i) is clear.

For (ii), we have the following description of $\mathrm{rad}(\overline{L})$. We let  $\overline L'$ be the radical of the quadratic space $\brL_{\an}/p\brL_{\an}$. Then we have
 $$\mathrm{rad}(\overline{L})=\Span(\overline{pe}_1,\dotsc,\overline{pe}_s,\overline{f}_1,\dotsc,\overline{f}_s)\oplus \overline{L}'$$
where for $v\in \brL$, we wrote $\overline{v}$ for its image in $\overline{L}$. 

From the description of $Z_K$ above, we see that the image of $p\brL_1$ in $\overline{L}$ is generated by either $\overline{e}_{s+1}$ or $\overline{f}_{s+1}$. The result follows.
\end{proof}

\subsubsection{}\label{para: subspace local model}The group $\calG_k$ acts on $\overline{L}$, and so we have a map of Lie algebras $$\Lie\calG_k\rightarrow \End_k(\overline{L}),\ h\mapsto \overline h.$$ We write $k[\epsilon] $ for the ring of dual numbers $k[t]/t^2$ over $k$. Then we can identify $\Lie \calG_k$ with elements of $\calG_k(k[\epsilon])$ lifting the identity. Thus for $h \in \Lie \calG_k$, we obtain an automorphism of $\overline{L}_{k[\epsilon]}$ given by $1+\overline{h}\epsilon$. We write $\scrD$ for the set of isotropic rank one $k[\epsilon]$-submodules of  $\overline{L}_{k[\epsilon]}$ lifting $\overline{L}_1$, i.e. $\scrD$ is the tangent space of $\OGr(\overline{L},1)$ at $\overline{L}_1$. Then $\Lie\calG_k$  acts on $\scrD$   and we have a canonical isomorphism
$$\phi:\scrD\rightarrow \Hom_k(\overline{L}_1,\overline{L}_1^\perp/\overline{L}_1),$$ where $\overline{L}_1^\perp$ denotes the orthogonal complement of $\overline{L}_1$ in $\overline{L}$. Here, $\phi$ is defined as follows. For $F\in \scrD$ and $m\in \overline{L}_1$, we write $F=\langle m+v\epsilon\rangle$ for some $v\in \overline{L}$ well-defined up to addition by an element of $\overline{L}_1$. Since $F$ is isotropic,  we have $(m,v)=0$, and we define $\phi(F)(m)=v$.
\begin{lemma}
The map $$\Theta:\Lie\calG_k\rightarrow \Hom_k(\overline{L}_1,\overline{L}_1^\perp/\overline{L}_1)$$ given by $h\mapsto \phi((1+\overline h\epsilon)\overline{L}_{1,k[\epsilon]})$ is a surjective homomorphism of vector spaces.

\end{lemma}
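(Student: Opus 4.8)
The plan is as follows. First I would check that $\Theta$ is $k$-linear. Fixing a generator $\bar v$ of $\overline{L}_1$ and unwinding the definitions of the $\calG_k$-action and of $\phi$, one sees that $\Theta(h)$ is the homomorphism $\overline{L}_1\to\overline{L}_1^\perp/\overline{L}_1$ sending $\bar v\mapsto \overline h(\bar v)\bmod\overline{L}_1$; this is well defined because $\overline h$ acts on $\overline L$ as an infinitesimal isometry, so $(\overline h(\bar v),\bar v)=0$ and hence $\overline h(\bar v)\in\overline{L}_1^\perp$ (here one uses $p\neq 2$), and linearity in $h$ is then immediate from the identity $(1+\overline h_1\epsilon)(1+\overline h_2\epsilon)=1+(\overline h_1+\overline h_2)\epsilon$ in $\operatorname{End}_{k[\epsilon]}(\overline L_{k[\epsilon]})$. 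Thus the content of the lemma is surjectivity, and for this it suffices to prove that $\{\overline h(\bar v)\mid h\in\Lie\calG_k\}=\overline{L}_1^\perp$, since then the images modulo $\overline{L}_1$ exhaust $\overline{L}_1^\perp/\overline{L}_1=\Hom_k(\overline{L}_1,\overline{L}_1^\perp/\overline{L}_1)$.

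Next I would make the image of $\Lie\calG_k$ in $\operatorname{End}_k(\overline L)$ explicit. Since the central $\mathbb{G}_m\subset\calG$ acts trivially on $V$ via $\bullet$, this image equals the image of the Lie algebra of the parahoric of $\SO(V)$ attached to $L$, which is the reduction modulo $p$ of $\mathfrak{so}(\brV)\cap\operatorname{End}_{\brZ_p}(\breve L)$ (a skew-adjoint endomorphism preserving $\breve L$ automatically preserves $\breve L^\vee$, hence the whole lattice chain). Writing $R=\mathrm{rad}(\overline L)$ — which has dimension $h$ by Lemma~\ref{lem: L-filtration} and the explicit description of $\breve L$ in \S\ref{subsec: VL cases} — and fixing a nondegenerate complement $W$, so that $\overline L=R\obot W$ with $\dim_k W=n-h\ge 2$, a direct matrix computation in an orthogonal basis of $\breve L$ shows that every element of the image preserves $R$, hence is block upper triangular with respect to $\overline L=R\obot W$, and that the image contains all endomorphisms of the form $r+w\mapsto B(w)$ with $B\in\Hom_k(W,R)$ arbitrary, as well as all endomorphisms of the form $r+w\mapsto D(w)$ with $D\in\mathfrak{so}(W)$.

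Finally I would conclude. Write $\bar v=\bar v_R+\bar v_W$ with $\bar v_R\in R$ and $\bar v_W\in W$; then $\bar v_W\neq 0$ since $\overline{L}_1\not\subset R$ by Lemma~\ref{lem: L-filtration}(2), and $\bar v_W$ is isotropic because $R=\mathrm{rad}(\overline L)$. The endomorphisms $B$ above carry $\bar v$ to $B(\bar v_W)$, which ranges over all of $R$ as $B$ varies; the endomorphisms $D\in\mathfrak{so}(W)$ carry $\bar v$ to $D(\bar v_W)$, which ranges over $\bar v_W^{\perp}\cap W$, the tangent space at $\bar v_W$ to its $\SO(W)$-orbit, i.e.\ to the isotropic cone of $W$, which is smooth of dimension $\dim_k W-1$ away from the origin. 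Hence $\{\overline h(\bar v)\mid h\in\Lie\calG_k\}\supseteq R\oplus(\bar v_W^\perp\cap W)$, and since $R=\mathrm{rad}(\overline L)$ one computes $\bar v^\perp=R\oplus(\bar v_W^\perp\cap W)=\overline{L}_1^\perp$; combined with $\overline h(\bar v)\in\overline{L}_1^\perp$ from the first step this gives the required equality. The main obstacle is the middle step — pinning down the image of $\Lie\calG_k$ in $\operatorname{End}_k(\overline L)$ precisely enough — whereas the geometric input of the last step (the $\mathfrak{so}$-orbit of an isotropic vector) is classical; the only small case needing separate attention is $\dim_k W=2$, and one observes that $\dim_k W=1$ never occurs for $w\in Z_K$, so an isotropic $\bar v_W\neq 0$ indeed exists.
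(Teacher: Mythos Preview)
Your argument is correct. Both you and the paper first observe that $\Theta(h)(\bar v)=\overline h(\bar v)\bmod\overline{L}_1$, so linearity is immediate, and then establish surjectivity by producing enough elements of $\Lie\calG_k$ whose action on a fixed generator $\bar v$ hits a spanning set for $\overline{L}_1^\perp/\overline{L}_1$.

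The execution of the surjectivity step differs, however. The paper first uses the $\calG_k$-action to reduce to $\overline{L}_1=\langle\overline{e}_{s+1}\rangle$ (or $\langle\overline{f}_{s+1}\rangle$), and then for each basis vector of $\overline{L}_1^\perp/\overline{L}_1$ writes down an explicit root subgroup $U_{s+1,\pm\ell}$ (with the appropriate $\brZ_p$- or $p\brZ_p$-structure depending on whether $\ell\le s$ or $\ell>s$) whose Lie algebra element sends $\overline{e}_{s+1}$ to that basis vector. Your route is more structural: you decompose $\overline{L}=R\obot W$ with $R=\mathrm{rad}(\overline L)$, show that the image of $\Lie\calG_k$ in $\End_k(\overline L)$ contains the blocks $\Hom_k(W,R)$ and $\mathfrak{so}(W)$, and then use that $\bar v_W\neq 0$ (from Lemma~\ref{lem: L-filtration}(2)) together with the standard fact $\mathfrak{so}(W)\!\cdot\!\bar v_W=\bar v_W^\perp\cap W$ to conclude. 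Your approach avoids fixing a particular generator of $\overline{L}_1$ and packages the case analysis (which root groups lie in $\calG$ with which integral structure) into the single claim that $\Hom_k(W,R)\oplus\mathfrak{so}(W)$ lifts; the paper's approach is more hands-on and makes the relevant root groups visible. Two small remarks: the identity $\mathfrak{so}(W)\!\cdot\!\bar v_W=\bar v_W^\perp\cap W$ holds for any nonzero $\bar v_W$ in a nondegenerate $W$ (choose $y_0$ with $(\bar v_W,y_0)=1$ and use $\phi_{z,y_0}$), so no special care is needed when $\dim_k W=2$; and your observation that $\dim_k W=1$ does not arise is exactly the statement that $Z_K=\emptyset$ when $h=\dim V-1$ in the relevant cases.
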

\begin{proof}
Note that $\Theta$ is induced by the map $h\mapsto (m\mapsto \overline{h}m\mod \overline{L}_1)$ for $h\in \Lie\calG_k$. Here as above, $\overline{h}\in \End(\overline{L})$ is the image of $h$; it is  an  endomorphism of $\overline{L}$ which preserves $\overline{L}_1^\perp$. Since the map $\Lie\calG_k\rightarrow \End_k(\overline{L}),\ h\mapsto \overline{h},$ is a morphism of vector spaces, it follows that $\Theta$ is a morphism of vector spaces. 

To see that $\Theta$ is surjective, it suffices to  show that $\Im(\Theta)$ contains a basis for $\Hom_k(\overline{L}_1,\overline{L}_1^\perp/\overline{L}_1)$. As in Lemma \ref{lem: L-filtration},  we may assume $\overline{L}_1=\langle \overline{e}_{s+1}\rangle$ or $\langle \overline{f}_{s+1}\rangle$. We consider the case $\overline{L}_1=\langle \overline{e}_{s+1}\rangle$; the other case is analogous. A basis of the vector space $\Hom_k(\overline{L}_1,\overline{L}_1^\perp/\overline{L}_1)$ is then given by the maps which send $\overline{e}_{s+1}$ to the images of $$\overline{pe}_1,\dotsc,\overline{pe}_s,\overline{e}_{s+2},\dotsc,\overline{e}_n,\overline{f}_1,\dotsc,\overline{f}_s, \overline{f}_{s+2},\dotsc,\overline{f}_n$$
in $\overline{L}_1^\perp$.   

For $i<j$, let $U_{\pm i,\pm j}$ denote the root subgroup of $G$ corresponding to the root $\pm \epsilon_i\pm\epsilon_j$. We fix an isomorphism $u_{\pm i,\pm j}:\bbG_a\rightarrow U_{\pm i,\pm j}$ so that \begin{align*}u_{i, -j}(x) e_\ell=e_\ell+x\delta_{i\ell}e_j \ \ \ \ u_{i,- j}(x) f_\ell=f_\ell+x\delta_{i\ell}f_j\\ u_{-i, j}(x) e_\ell=e_\ell+x\delta_{j\ell}e_i\ \ \ \ u_{-i, j}(x) f_\ell=f_\ell+x\delta_{j\ell}f_i \\
u_{i, j}(x) e_\ell=e_\ell+x\delta_{i\ell}f_j\ \ \ \ u_{i, j}(x) f_\ell=f_\ell+x\delta_{i\ell}e_j\\
u_{-i,- j}(x) e_\ell=e_\ell+x\delta_{j\ell}f_i \ \ \ \ u_{-i,- j}(x) f_\ell=f_\ell+x\delta_{j\ell}e_i
\end{align*}
for $1\leq \ell\leq n$.

Now let $\ell>s+1$. The image of $u_{s+1,-\ell}(\calO_{\breve F})$ (resp. $u_{s+1,\ell}(\calO_{\breve F})$) modulo $p$ gives a unipotent subgroup of $\calG_k$ isomorphic to $\bbG_a$ over $k$ and the corresponding element  of $\Lie\calG_k$ maps $\overline{e}_{s+1}$ to $\overline{e}_\ell$ (resp. $\overline{f}_\ell$). 

Now let $\ell<s+1$. In  this case the image of $u_{s+1,-\ell}(p\calO_F)$ (resp. $u_{s+1,\ell}(\calO_F)$) modulo $p$ gives a unipotent subgroup of $\calG_k$ isomorphic to $\bbG_a$ over $k$ and the corresponding element  of $\Lie\calG_k$ maps $\overline{e}_{s+1}$ to $\overline{pe}_\ell$ (resp. $\overline{f}_\ell$).  

Thus the image of $\Theta$ contains a basis for $\Hom_k(\overline{L}_1,\overline{L}_1^\perp/\overline{L}_1)$ and hence $\Theta$ is surjective.
\end{proof}
\subsubsection{}

We continue to assume $x\in \calM^w(k)$ for some $w\in Z_K$, and let $v\in T_x\calM$; thus $v$ corresponds to an element $v\in \calM(k[\epsilon])$ extending $x$. We write $m_x\in \overline{L}$ for a generator of the 1-dimensional line given by the image of $\brL_1$.  Via the embedding $\iota^{\mathrm{loc}}$, we obtain a submodule $\overline{D}_{1,v}\subset \overline{D}_{k[\epsilon]}$. Note that  $\overline{D}_{k[\epsilon]}$ is equipped with an action of $
\overline{L}_{k[\epsilon]}$, and we set $$\overline{L}_{1,v}=\{m\in \overline{L}_{k[\epsilon]}\mid m\overline{D}_{1,v}\subset \overline{D}_{1,v}\}.$$

\begin{proposition}\label{prop: tangent space LM}\hfill
    \begin{enumerate}
        \item $\overline{L}_{1,v}$ is an isotropic rank one $k[\epsilon]$-submodule in $\overline{L}_{k[\epsilon]}$ lifting $\overline{L}_1$, i.e. $\overline{L}_{1,v}\in \scrD$.
        \item The association $v\mapsto \phi(\overline{L}_{1,v})$ defines an isomorphism $$T_x\calM\rightarrow \Hom_k(\overline{L}_1,\overline{L}_1^\perp/\overline{L}_1).$$
    \end{enumerate}
\end{proposition}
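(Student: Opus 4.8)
\textbf{Proof proposal for Proposition \ref{prop: tangent space LM}.}

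The plan is to leverage the two lemmas that were just established: the surjective homomorphism $\Theta:\Lie\calG_k\rightarrow \Hom_k(\overline{L}_1,\overline{L}_1^\perp/\overline{L}_1)$, and Lemma \ref{lem: Z-cycle smooth locus}(1), which identifies $\calM^w$ with a $\calG_k$-orbit. Since $x\in\calM^w(k)$ lies in this orbit, the action map $\calG_k\rightarrow \calM^w$, $h\mapsto h\cdot x$, is smooth and surjective (the orbit map from a smooth group), so on tangent spaces it induces a surjection $\Lie\calG_k\twoheadrightarrow T_x\calM^w = T_x\calM$, the last equality because $w\in Z_K$ lies in the smooth locus by Lemma \ref{lem: Z-cycle smooth locus}(2), so $\calM^w$ is open in $\calM$ near $x$. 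First I would establish part (1): for $v\in T_x\calM$ coming from $h\in\Lie\calG_k$ via this surjection, the deformation $\overline{D}_{1,v}$ is $(1+\overline h\epsilon)\overline{D}_{1}$ (up to the action of the stabilizer, which does not change $\overline{L}_{1,v}$), and hence $\overline{L}_{1,v}=(1+\overline h\epsilon)\overline{L}_{1,k[\epsilon]}$, which by definition of the $\bullet$-action lies in $\scrD$ — this uses that the $\calG_k$-action on $\overline{L}$ preserves the quadratic form, so isotropy is preserved, together with Lemma \ref{lem: L-filtration}. The key compatibility to check is that the construction $v\mapsto \overline{L}_{1,v}$ intertwines the $\Lie\calG_k$-action on $T_x\calM$ (coming from the orbit identification) with the $\Lie\calG_k$-action on $\scrD$; this is essentially formal from the fact that $\iota^{\mathrm{loc}}$ is $\calG$-equivariant.

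For part (2), I would show that the composite $\Lie\calG_k \twoheadrightarrow T_x\calM \xrightarrow{v\mapsto \phi(\overline{L}_{1,v})} \Hom_k(\overline{L}_1,\overline{L}_1^\perp/\overline{L}_1)$ equals $\Theta$. This follows from part (1)'s identification $\overline{L}_{1,v}=(1+\overline h\epsilon)\overline{L}_{1,k[\epsilon]}$ together with the definition of $\Theta$ as $h\mapsto \phi((1+\overline h\epsilon)\overline{L}_{1,k[\epsilon]})$. Since $\Theta$ is surjective, so is $v\mapsto\phi(\overline{L}_{1,v})$. To conclude it is an isomorphism, I would compare dimensions: $\dim_k T_x\calM = n-1$ because $\calM$ is a normal, indeed (at $w\in Z_K$) smooth variety of dimension equal to $\langle\mu,2\rho\rangle$ in type-appropriate terms, which one reads off as $n-1$ from the minuscule cocharacter $\mu=\epsilon_1$ for $\SO(V)$ of rank roughly $n$; and $\dim_k\Hom_k(\overline{L}_1,\overline{L}_1^\perp/\overline{L}_1) = \dim_k(\overline{L}_1^\perp/\overline{L}_1)$, and since $\overline{L}_1$ is a non-degenerate isotropic line in $\overline{L}$ (not in the radical, by Lemma \ref{lem: L-filtration}(2)) of a quadratic space whose non-degenerate quotient has dimension $n+1$ or $n$ depending on the case, this is again $n-1$. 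A surjection between vector spaces of the same finite dimension is an isomorphism.

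The main obstacle I anticipate is not any single step but rather bookkeeping: pinning down precisely the dimension count in each of the cases (1), (2a), (2b), (3) of \S\ref{subsec: VL cases}, since the quadratic space $\overline{L}$ has a radical of varying dimension and the relevant non-degenerate orthogonal Grassmannian $\OGr(\overline{L},1)$ must be interpreted correctly (one works with $\overline{L}_1^\perp/\overline{L}_1$ inside the non-degenerate quotient, or equivalently checks that adding radical directions to $\overline{L}_1$ does not produce new isotropic lifts once we impose the $\overline{D}_{1,v}$-stability condition). An alternative that sidesteps the explicit dimension count: argue directly that $v\mapsto\phi(\overline{L}_{1,v})$ is injective by the same kind of argument as in Proposition \ref{prop: tangent space of SLambda} — a tangent vector $v$ to $\calM\subset\Gr(C(\tL),d)$ is determined by the induced deformation of $\overline{D}_1$, and since $\calG_k$ cuts out $\calM$ inside $\Gr(C(\tL),d)$ by the tensors $t_\alpha$ (Proposition \ref{prop: very good embedding}), $v$ is recovered from its image in $\Lie(\GL(C(\tL))/\text{stab})$, which in turn is recovered from the action on $\overline{L}_1$ because $\overline{L}$ generates the relevant Clifford module; combined with surjectivity via $\Theta$ this gives the isomorphism without a case-by-case dimension tally.
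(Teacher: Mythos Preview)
Your proposal is correct and follows essentially the same route as the paper: use that $\calM^w$ is a $\calG_k$-orbit (Lemma \ref{lem: Z-cycle smooth locus}) to lift any $v\in T_x\calM=T_x\calM^w$ to some $h\in\Lie\calG_k$, deduce $\overline{L}_{1,v}=(1+\overline h\epsilon)\overline{L}_{1,k[\epsilon]}\in\scrD$, identify the composite $\Lie\calG_k\to T_x\calM\to\Hom_k(\overline{L}_1,\overline{L}_1^\perp/\overline{L}_1)$ with $\Theta$ to get surjectivity, and finish by a dimension count. The only slip is numerical: the common dimension is $\dim V-2$ (the paper computes $\langle\mu,2\rho\rangle$ case by case as $2n-2$, $2n-1$, $2n$ in the notation of \S\ref{subsec: VL cases}, which equals $\dim V-2$, and $\dim(\overline{L}_1^\perp/\overline{L}_1)=\dim\overline L-2$ since $\overline{L}_1$ is an isotropic line not in the radical), not $n-1$, but this does not affect the argument.
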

\begin{proof}   Note that since $w\in Z_K\subset \Adm(\mu)_K^{\max}$, we have $T_x\calM=T_x\calM^w$. We consider $x$ as an element of $\calM_x(k[\epsilon])$ via $\epsilon\mapsto0 $. Then we have $\overline{L}_{x,1}= \overline{L}_{1,k}\otimes_k k[\epsilon]$ which is isotropic and  free of rank $1$ over $k[\epsilon]/\epsilon^2$. By Lemma \ref{lem: Z-cycle smooth locus} (2), $\calM^w$ is an orbit of the action of $\calG_k$. Therefore there exists $g\in \calG_k(k[\epsilon])$ such that $$g\equiv 1\mod v\text{ and } gx=v.$$  Then we have $\overline{L}_{1,v}=g\bullet \overline{L}_{x,1}$ which is therefore isotropic and free of rank 1 over $k[\epsilon]/\epsilon^2$, and hence we obtain (i).
	
	For (ii), note that the association $T_x\calM\rightarrow \scrD$ defined in (1) is equivariant for the action of $\Lie\calG_k$. We define a map $\Lie\calG_k\rightarrow T_x\calM$ by considering the action of $\Lie\calG_k$ on $x$. Then we have a commutative diagram \[\xymatrix{\Lie\calG_k\ar[rd]\ar[d]& \\
	T_x\calM\ar[r]& \scrD}\]
where the diagonal map is given by $h\mapsto (1+\overline h\epsilon)\overline{L}_{1,k[\epsilon]}$. It follows that the composition $$\Lie\calG_k\rightarrow T_x \calM\rightarrow \scrD\xrightarrow{\phi}\Hom_k(\overline{L}_1,\overline{L}_1^\perp/\overline{L}_1)$$ is equal to the map $\Theta$, and hence  $T_x\calM\rightarrow \overline{L}_1^\perp/\overline{L}_1$ is surjective. 

By Lemma \ref{lem: Z-cycle smooth locus}, we have \begin{align*}\dim T_x\calM&=\dim \Mloc_{\calG}\\&=\langle \mu,2\rho\rangle\\
&=\begin{cases} 2n-2 & \text{ in Case (1)}\\
2n-1 & \text{ in Case (2a) and (2b)}\\
2n &\text{ in Case (3)}
\end{cases}\\
&=\dim\overline{L}_1^\perp/\overline{L}_1.\end{align*}
Here, $\rho$ is the half-sum of positive roots, and the last equality follows from Lemma \ref{lem: L-filtration}. Thus $T_x\calM\rightarrow \overline{L}_1^\perp/\overline{L}_1$ is an isomorphism by comparing dimensions.
	    \end{proof}

		\subsection{Description of  $\calZ_L(\Lambda)$ and  $\calY_L(\Lambda)$}\subsubsection{}\label{para: LMD} We now apply the above to compare  $\calZ^{(0)}(\Lambda)$ to the Deligne--Lusztig variety $S_\Lambda$. This will immediately imply the corresponding description of $\calY^{(0)}(\Lambda)$ via the duality isomorphism in \S\ref{para: duality iso}.

  By construction there is a morphism $\calN_L\rightarrow \widehat{\scrS}_{\rmK}$ where we recall that $\widehat{\scrS}_{\rmK}$ denotes the completion of $\scrS_{\rmK}\otimes\brZ_p$ along its special fiber. Pulling back the local model diagram from $\widehat{\scrS}_{\rmK}$ (see \cite[Theorem 0.4]{KisinPappas}, \cite[Theorem 1.1.2]{KPZ}), we obtain  a local model diagram for $\calN_L$:
	\[\xymatrix{ & \widetilde{\calN_L}\ar[rd]^q\ar[ld]_\pi& \\
	\calN_L& &\widehat{\bbM}^{\mathrm{loc}}_{\calG,\mu}}\]
	where   $\pi$ is a $\widehat{\calG}$-torsor and $q$ is $\widehat{\calG}$-equivariant. Here $\widehat{\calG}$ (resp.\ $\widehat{\bbM}^{\mathrm{loc}}_{\calG,\mu}$  ) denotes the completion of $\calG$ (resp $\Mloc_{\calG,\mu,\brZ_p}$) along its special fiber.	This induces a stratification on $\calN_{L,\red}$ indexed by $\Adm(\mu)_K$; in particular we obtain  a map $$\pi_{KR}:\calN_L(k)\rightarrow \Adm(\mu)_K.$$
	
We have the following description of the map $\pi_{KR}$. Let $y\in \calN_{L,\red}(k)$, and let $L'_y\subset \breve V$ denote the associated special lattice. Then $L'_y=g\bullet \breve L$ for some $g\in X(\mu^\sigma,b)$ so that $g^{-1}b\sigma(g)\in \calG \sigma(\dot{w})\calG$ for some $w\in \Adm(\mu)_K$. Then we have $\pi_{KR}(y)=w$.
	\begin{lemma}\label{lem: KR strata}
    We have $L'_y\nsubseteq L_y^{\vee}$ if and only if $\pi_{KR}(y)\in Z_K$.
	\end{lemma}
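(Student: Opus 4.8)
The statement relates an intrinsic condition on the special lattice $L'_y$ (namely $L'_y \not\subseteq L_y^\vee$) to the Kottwitz--Rapoport stratum $\pi_{KR}(y)$ lying in the distinguished set $Z_K$. The plan is to translate both sides into the combinatorics of the admissible set $\Adm(\mu)_K$ via the dictionary already established. Recall from \S\ref{para: associated special lattice} and Proposition \ref{prop: k points of N} that $L'_y = g \bullet \breve L$ and $L_y = \sigma^{-1}(b^{-1}g)\bullet \breve L = \Phi_*^{-1}(L'_y)$ (more precisely $\Phi_*(L_y) = L'_y$), where $g = \varphi(y) \in X(\mu^\sigma,b)_\K(k)$ satisfies $g^{-1}b\sigma(g) \in \K\sigma(\dot w)\K$ with $w = \pi_{KR}(y)$. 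Since $L'_y$ is a special lattice, we have $[L'_y + \Phi_*(L'_y):L'_y] = [L'_y + \Phi(L'_y) : L'_y] \le 1$, and the condition $L'_y \not\subseteq L_y^\vee$ is, by Lemma \ref{lem: same ind} applied appropriately (or directly), equivalent to $\Phi(L'_y) \not\subseteq (L'_y)^\vee$ after translating via $\Phi_*$; in the notation of Proposition \ref{prop: KR stratification} and Remark \ref{rem: KR}, this is exactly one of the two KR-strata conditions on $L'_y$.

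First I would make precise the relation between $L_y$, $L'_y$ and the condition $L'_y \subseteq L_y^\vee$. Since $L_y = \Phi_*^{-1}(L'_y)$, applying $\Phi_*$ we get that $L'_y \subseteq L_y^\vee$ is equivalent to $\Phi_*(L'_y) \subseteq \Phi_*(L_y)^\vee = (L'_y)^\vee$ (using that $\Phi_*$ is an isometry up to the appropriate scaling, so it commutes with taking duals). Thus $L'_y \not\subseteq L_y^\vee \iff \Phi(L'_y) \not\subseteq (L'_y)^\vee$, which by Remark \ref{rem: KR} is precisely the condition defining the ``$\calZ$-cycle'' KR-strata. Then I would use the explicit description in Lemma \ref{lem: dim=1 implies permissible}(1) of which elements $y' = w \bfx^{(s)}$ arise, together with the identification of translation elements $t^{w_0(\mu)}$, $w_0 \in \tW_0$, as the maximal elements of $\Adm(\mu)_K$. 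The key point: among the maximal elements $\Adm(\mu)^{\max}_K = {}^K\tW \cap t^{W_0(\mu)}$, the ones giving $L'_y$ with $\Phi(L'_y) \not\subseteq (L'_y)^\vee$ are exactly those in $Z_K$ — this needs to be checked case by case using the explicit lists in \S\ref{sec: KR strata}. Concretely, $w = t^{\epsilon_{s+1}}$ (or $t^{\pm\epsilon_n}$ in the boundary cases) enlarges the lattice $\breve L$ in the ``$e$-direction'' beyond $\breve L^\vee$, whereas $w = t^{-\epsilon_s}$ shrinks it and keeps $\Phi(L') \subseteq (L')^\vee$.

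The argument then proceeds by reducing from a general $w \in \Adm(\mu)_K$ to its position relative to the maximal elements: since $L'_y$ is special, $w = \pi_{KR}(y)$ lies below some $t^{w_0(\mu)} \in \Adm(\mu)^{\max}_K$ in the Bruhat order, and the condition $\dim_k(L'_y + \Phi(L'_y))/L'_y \le 1$ pins down which maximal element(s) $w$ can lie below — this is governed by the same analysis as in Proposition \ref{prop: relative position admissible set} and Lemma \ref{lem: dim=1 implies permissible}. The condition $\Phi(L'_y) \not\subseteq (L'_y)^\vee$ is a \emph{closed} condition on the stratum and is constant on each KR-stratum (since it depends only on the relative position $w$), so it suffices to check it on a single representative of each stratum, which one does using the standard representatives $\dot w$ acting on the standard lattice $\breve L = L^{(s)} = L_0^{(s)} \obot L_{\an}$ from \S\ref{subsec: VL cases}. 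The verification is a direct computation with the lattices $L_0(y)$ of \S\ref{subsec: apartment basis}: for $y = \bfx^{(s)} + \epsilon_{s+1}$ one checks $\Phi(L(y))$ is not contained in $L(y)^\vee$, while for $y = \bfx^{(s)} - \epsilon_s$ it is.

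\textbf{Main obstacle.} The hard part will be the case-by-case bookkeeping across the four local Dynkin types (1), (2a), (2b), (3), especially matching the sign conventions: keeping straight the relation between $L_y$ and $L'_y = \Phi_*(L_y)$, the direction of the inequality in $L'_y \subseteq L_y^\vee$ versus $\Phi(L'_y) \subseteq (L'_y)^\vee$, and making sure the elements singled out as $Z_K$ in \S\ref{sec: KR strata} really are the ones where the $\calZ$-cycle condition (rather than the $\calY$-cycle condition) holds. I expect the cleanest route is to verify the equivalence purely on representatives: for each $w \in \Adm(\mu)^{\max}_K$ compute the standard lattice $\dot w \bullet \breve L$ explicitly and check directly whether $\Phi$ of it lies in its dual; then extend to non-maximal $w$ by the observation that $\Phi(L')\subseteq (L')^\vee$ only fails when the ``enlargement direction'' of $w$ is an $e$-direction, which is a Bruhat-order-monotone condition compatible with the stratification closure relations. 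No genuinely new idea beyond the machinery already assembled in \S3--\S7 should be required.
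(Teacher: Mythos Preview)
Your plan is on the right track and the core reduction (the condition depends only on $w=\pi_{KR}(y)$, then check on representatives) is what the paper does. However, two things differ from the paper's argument and one of them is a genuine soft spot.

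First, the detour through $\Phi(L'_y)\not\subseteq (L'_y)^\vee$ is correct but unnecessary. The paper reduces $L'_y\not\subseteq L_y^\vee$ \emph{directly} to a condition not involving $\Phi$ or $b$ at all: writing $L'_y=g\bullet\breve L$ and $L_y^\vee=\sigma^{-1}(b^{-1}g)\bullet\breve L^\vee$, one gets $(\sigma^{-1}(g^{-1}b)g)\bullet\breve L\not\subseteq\breve L^\vee$, and since $\sigma^{-1}(g^{-1}b\sigma(g))\in K\dot wK$ with $K$ stabilizing both $\breve L$ and $\breve L^\vee$, this becomes simply $\dot w\breve L\not\subseteq\breve L^\vee$. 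Your phrasing ``check that $\Phi(L(y))\not\subset L(y)^\vee$'' still carries $\Phi=b\sigma$ around, which makes the representative check awkward because it reintroduces dependence on the specific $b$.

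Second --- and this is the gap --- the paper does \emph{not} split into maximal and non-maximal $w$. It handles every $w\in\Adm(\mu)_K$ uniformly by invoking the classification of Lemma~\ref{lem: dim=1 implies permissible}(1): since $\breve L^\vee/\breve L$ is spanned by $e_1,\dots,e_s,p^{-1}f_1,\dots,p^{-1}f_s$, one reads off from that list that the one-dimensional overhang $(\dot w\breve L+\breve L)/\breve L$ escapes $\breve L^\vee/\breve L$ exactly when $\dot w\bfx^{(s)}=\bfx^{(s)}\pm\epsilon_i$ with $i>s$; all the ``half-half'' cases $\bfx^{(s)}\pm\tfrac12\epsilon_j\pm\tfrac12\epsilon_i$ (with $j\le s<i$) have overhang generated by $e_j$ or $p^{-1}f_j$ with $j\le s$, hence lie in $\breve L^\vee$. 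Your proposed ``Bruhat-order-monotone'' extension goes in the wrong direction: the condition $g\bullet\breve L\subseteq\breve L^\vee$ is \emph{closed}, so it propagates \emph{downward} in the Bruhat order, not upward. Knowing it fails on $Z_K$ gives no information about non-maximal $w$ lying below $Z_K$, and you have not argued that every non-maximal $w$ lies below some element of $\Adm(\mu)_K^{\max}\setminus Z_K$. Replace this step by the direct case check via Lemma~\ref{lem: dim=1 implies permissible}(1) and the proof goes through cleanly.
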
 
	\begin{proof}It suffices to show that for $w\in\Adm(\mu)_K$, we have $\dot{w}\brL\nsubseteq \brL^\vee$ if and only if $w\in Z_K$. Recall that $$\brL=\Span(pe_1,\dotsc,pe_s,e_{s+1},\dotsc,e_n,f_1,\dotsc,f_n)\oplus \brL_{\an}$$ 
$$\brL^\vee=\Span(e_1,\dotsc,e_n,p^{-1}f_1,\dotsc,p^{-1}f_s,f_{s+1},\dotsm,f_n)\oplus \brL_{\an}$$  so that $\brL^\vee/\brL$ is spanned by $e_1,\dotsc,e_s,p^{-1}f_1,\dotsc,p^{-1}f_s$. Thus we have $\dot{w}\brL\nsubseteq \brL^\vee$ if and only if $\dim(\brL+\dot{w}\brL)/\brL=1$ and the image of $\dot{w}\brL$ in $p^{-1}\brL/\brL$ is not contained in  $\Span(e_1,\dotsc,e_s,p^{-1}f_1,\dotsc,p^{-1}f_s)\mod \brL$. By inspecting the possibilities for $\dot{w}x^{(s)}$ in Lemma \ref{lem: dim=1 implies permissible}, we find that this occurs if and only if $\dot{w}x^{(s)}=x^{(s)}\pm \epsilon_i$ for $i=s+1,\dotsc,n$, or equivalently $w\in Z_K$.
\end{proof}

\subsubsection{}Let $\Psi_{\calZ,\Lambda}:  \cZ^{(0)}(\Lambda)_{\red}\rightarrow S_{\Lambda}$ denote the morphism of schemes constructed in Proposition \ref{prop: morphism BT strata to DLV}.  Let $S_{\Lambda}^{\smallheartsuit}\subset S_{\Lambda}$ denote the subscheme consisting of subspaces $\scrV\subset \Omega_{\Lambda}$ with $\Phi(\scrV) \nsubseteq \scrV^\vee$ (equivalently, $\scrV+\Phi(\scrV)$ is not totally isotropic). We write $\cZ^{\smallheartsuit}(\Lambda)\subset \cZ_\red^{(0)}(\Lambda)$ for the preimage of $S_\Lambda^{\smallheartsuit}$ under the map induced by $\psi$.
	\begin{lemma}\label{lem: iso tangent space}
Assume $S_\Lambda$ (resp.  $\cZ_\red^{(0)}(\Lambda)$) has positive dimension. Then		$S_\Lambda^{\smallheartsuit}$  (resp. $\cZ^{\smallheartsuit}(\Lambda)$) is a Zariski open and dense subscheme of $S_\Lambda$ (resp. $\cZ_{\red}^{(0)}(\Lambda)$).
		
	\end{lemma}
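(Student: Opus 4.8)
The plan is to prove the two statements about $S_\Lambda^{\smallheartsuit}$ and $\calZ^{\smallheartsuit}(\Lambda)$ in parallel, deducing the second from the first together with the bijection from Proposition \ref{prop: k points}. First I would handle $S_\Lambda^{\smallheartsuit}\subset S_\Lambda$. Openness is immediate: the condition ``$\scrV+\Phi(\scrV)$ is not totally isotropic'' is the non-vanishing of the quadratic form restricted to the rank $\tfrac{t_\Lambda-h}{2}+1$ subspace $\scrV+\Phi(\scrV)$, which is an open condition on $S_\Lambda$ (it is the complement of the closed locus $S_\Lambda^{\smalldagger}\sqcup S_\Lambda^{\Phi}$ appearing in the decomposition \eqref{eq: decom of SLambda}). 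For density, I would invoke Theorem \ref{thm: decom pf SLambda}: it identifies the top-dimensional stratum $X_{P_{\frac{t-h}{2}}}(w_{\frac{t-h}{2}})$ (lying inside $S_\Lambda^{\smallheartsuit}$ when $h>0$, and its two $\pm$ components when $h=0$) as an open dense subscheme of $S_\Lambda$, of dimension $\tfrac{t+h}{2}-1=\dim S_\Lambda$; since $S_\Lambda$ is irreducible (again Theorem \ref{thm: decom pf SLambda}, using $t>h$, which is exactly the hypothesis that $\dim S_\Lambda>0$), any nonempty open subscheme is dense, and $S_\Lambda^{\smallheartsuit}\supset X_{P_{\frac{t-h}{2}}}(w_{\frac{t-h}{2}})\neq\emptyset$, so $S_\Lambda^{\smallheartsuit}$ is dense. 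One should also check $S_\Lambda^{\smallheartsuit}\neq\emptyset$ directly in the $h=0$ case, where $S_\Lambda$ has two components $S_\Lambda^\pm$ and one must see that each component meets $S_\Lambda^{\smallheartsuit}$; this follows from the finer decomposition in Theorem \ref{thm: decom pf SLambda} which distributes the $w_i^\pm$ strata across the two components.

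For $\calZ^{\smallheartsuit}(\Lambda)\subset \calZ_{\red}^{(0)}(\Lambda)$, openness is clear because $\calZ^{\smallheartsuit}(\Lambda)$ is by definition the preimage of the open subscheme $S_\Lambda^{\smallheartsuit}$ under the morphism $\Psi_{\calZ,\Lambda}$ of Proposition \ref{prop: morphism BT strata to DLV}, which is a morphism of schemes hence continuous. For density I would argue as follows: $\Psi_{\calZ,\Lambda}$ is a proper morphism inducing a bijection on $k$-points (Proposition \ref{prop: morphism BT strata to DLV} together with Proposition \ref{prop: k points}), so in particular it is surjective and quasi-finite, hence finite. A finite surjective morphism of finite-type $k$-schemes takes the generic points of $\calZ_{\red}^{(0)}(\Lambda)$ onto a set of points of $S_\Lambda$ whose closure is all of $S_\Lambda$; since $S_\Lambda$ is irreducible, the image of some generic point of $\calZ_{\red}^{(0)}(\Lambda)$ is the generic point of $S_\Lambda$, which lies in the dense open $S_\Lambda^{\smallheartsuit}$. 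Hence $\calZ^{\smallheartsuit}(\Lambda)=\Psi_{\calZ,\Lambda}^{-1}(S_\Lambda^{\smallheartsuit})$ contains that generic point and is therefore dense in the irreducible component it lies on; one then needs that $\calZ_{\red}^{(0)}(\Lambda)$ is irreducible (equivalently equidimensional with a single component), which will be part of the eventual isomorphism $\calZ_{\red}^{(0)}(\Lambda)\cong S_\Lambda$ but can be extracted at this stage from the bijection on $k$-points plus the fact that $S_\Lambda$ is irreducible and $\Psi_{\calZ,\Lambda}$ is finite. Alternatively, and perhaps more cleanly, I would note that $\calZ^{\smallheartsuit}(\Lambda)(k)\to S_\Lambda^{\smallheartsuit}(k)$ is a bijection on $k$-points and the complement $\calZ_{\red}^{(0)}(\Lambda)\setminus\calZ^{\smallheartsuit}(\Lambda)$ maps bijectively to the proper closed subset $S_\Lambda\setminus S_\Lambda^{\smallheartsuit}$, so it cannot contain any generic point of $\calZ_{\red}^{(0)}(\Lambda)$ (a generic point maps to a generic point under a finite surjection onto an irreducible scheme), giving density.

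The main obstacle, I expect, is the bookkeeping in the $h=0$ case, where both $S_\Lambda$ and $\calZ_{\red}^{(0)}(\Lambda)$ have two connected components and one must verify that $S_\Lambda^{\smallheartsuit}$ meets each component densely rather than being dense in only one of them. This is handled by appealing to the component-by-component description in \cite[Lemma 3.7, Proposition 3.8]{howardpappas2014supersingular} (cited in the proof of Theorem \ref{thm: decom pf SLambda}), which shows that the open dense Deligne--Lusztig stratum in each of $S_\Lambda^+$ and $S_\Lambda^-$ is of the $\smallheartsuit$-type. A secondary point to be careful about is the hypothesis ``$S_\Lambda$ (resp. $\calZ_{\red}^{(0)}(\Lambda)$) has positive dimension'': when $t=h$ these schemes are a single point and $S_\Lambda^{\smallheartsuit}$ could be empty, so the positive-dimensionality assumption is genuinely needed and should be invoked precisely where irreducibility with $\dim>0$ is used.
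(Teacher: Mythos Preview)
Your argument is correct and follows the same route as the paper: reduce to $S_\Lambda$ via the fact that $\Psi_{\calZ,\Lambda}$ is proper and bijective on $k$-points (hence a finite universal homeomorphism), then use the Deligne--Lusztig decomposition of Theorem \ref{thm: decom pf SLambda} to see that $S_\Lambda^{\smallheartsuit}$ is open and dense. The paper compresses your second half into the single sentence ``since $\Psi_{\calZ,\Lambda}$ is a universal homeomorphism, it suffices to prove this for $S_\Lambda$,'' which is exactly your finite-surjection/generic-point argument packaged more efficiently.

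One simplification you are missing concerns the $h=0$ case you flag as the ``main obstacle.'' There is in fact nothing to check there: by Theorem \ref{thm: decom pf SLambda} one has $S_\Lambda^{\smalldagger}=\emptyset$ for $h\le 1$ and $S_\Lambda^{\Phi}=\emptyset$ for $h=0$ (the latter because special lattices for $h=0$ satisfy $[M+\Phi(M):M]=1$ strictly). Hence $S_\Lambda^{\smallheartsuit}=S_\Lambda$ when $h=0$, and the two-component issue evaporates. Similarly, your worry about needing irreducibility of $\calZ_{\red}^{(0)}(\Lambda)$ before the main theorem is unnecessary once you phrase the reduction via homeomorphism rather than via generic points: the preimage of a dense open under a homeomorphism is dense open, with no irreducibility hypothesis required on the source.
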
	
	\begin{proof}Since the morphism $\cZ_{\red}(\Lambda)^{(0)}\rightarrow S_\Lambda$ is a universal homeomorphism, it suffices to prove this for $S_\Lambda$. By  Theorem \ref{thm: decom pf SLambda}, we have $$S_\Lambda\cong 
     \begin{cases}
     S_\Lambda^{\pm} \cong 
      X_{P_0}(1)\cup X_{P_0}(t^{\mp}) & \text{ if $h=0$},\\
      S_\Lambda \cong 
      X_{P_0}(1)\cup X_{P_0}(s_{\frac{t-h}{2}}) & \text{ if $h=1$},\\ 
      S_\Lambda \cong 
      X_{P_0}(1)\cup X_{P_0}(t^+)\cup X_{P_0}(t^-) \cup X_{P_0}(w_1) & \text{ if $h=2$},\\
        S_\Lambda \cong 
      X_{P_0}(1)\cup X_{P_0}(s_{\frac{t-h}{2}}) \cup X_{P_0}(w_1) & \text{ if $h>2$}.
      \end{cases}
   $$
      Then under this isomorphism, we find that $$S_\Lambda^{\smallheartsuit}\cong\begin{cases}
          S_\Lambda &\text{ if $h=0$}\\
          X_{P_0}(w_1) & \text{ if $h>0$}
      \end{cases},$$ which is open and dense in $S_\Lambda.$ 
	\end{proof}

\subsubsection{}
Now let $y\in \calZ^{\smallheartsuit}(\Lambda)(k)$.  Recall we have the associated 
$\brZ_p$-submodules $$M_{y,1},M_y\subset D_{\breve \bbQ_p}$$ (see \S\ref{para: associated special lattice}) and $L_y',L_y\subset V_{\brQ}$, where $M_y\cong \bbD(\scrG_y)$, $M_{y,1}=V\bbD(\scrG_y)$, and we have $$L_y'=\Phi L_y=\{v\in 
V_{\brQ_p}:vM_y\subset M_y\}.$$
There is a unique $g\in X(\mu^\sigma,b)$ such that $g\brD=M_y$; then $\Phi L_y=g\bullet\brL$ and hence $\Phi L_y$ is a vertex lattice of type $h$ in $\breve V$.

We set $\overline{M}_y=M_y/pM_y$. We also set $\overline{\Phi L}_y=\Phi L_y/p\Phi L_y$ which is a possibly degenerate quadratic space over $k$ and let $\overline{\Phi L}_{y,1}$ denote the one-dimensional subspace given by the image of $p(L_y+\Phi L_y)$. Since $y\in \cZ^\circ(\Lambda)$, $\overline{\Phi L}_{y,1}$  does not lie in the radical of the quadratic space $\overline{\Phi L}_{y}$.

Now let $u\in T_y\calN_{L,k}$ which we consider as a point of $\calN_L(k[\epsilon])$ lifting $y$.   By Grothendieck--Messing theory, $u$ corresponds to a filtration $\overline{M}_{u,1}$ on $\overline{M}_y\otimes_k k[\epsilon]$ lifting the image of $M_{y,1}$, and we set $$\overline{\Phi L}_{u,1}=\{ v\in \overline{\Phi L}_y\mid v\overline{M}_{u,1}\subset \overline{M}_{u,1}\}.$$
As in  \S\ref{para: subspace local model}, we let $\scrD_y$ denote the set of rank one $k[\epsilon]$ submodules of $\overline{\Phi L}_y\otimes_k k[\epsilon]$ lifting $\overline{\Phi L}_{y,1}$,  and  $$\phi_y:\scrD_y\rightarrow \Hom_k(\overline{\Phi L}_{y,1}, \overline{\Phi L}_{y,1}^\perp/\overline{\Phi L}_{y,1})$$ is the map characterized by $F=\langle m+\phi_y(F)(m)\epsilon\rangle$ for $F\in \scrD_y$.    The following key proposition gives a description of the tangent spaces of $\calN_{L,\red}$ and its Bruhat--Tits strata.

\begin{proposition}\label{prop: tangent space RZ}
\begin{enumerate}(cf. \cite[Corollary 4.2.7]{LiZhu2018}) \item We have $\overline{\Phi L}_{1,u}\in \scrD_y$, and the map $$\Theta_y: T_y\calN_{L,k}\rightarrow \Hom_k(\overline{\Phi L}_{y,1}, \overline{\Phi L}_{y,1}^\perp/\overline{\Phi L}_{y,1})$$given by $u\mapsto \phi_y(\overline{\Phi L}_{1,u})$ is an isomorphism.
\item $\Theta_y$ restricts to an isomorphism $$T_y\calZ(\Lambda)_{k}\rightarrow \Hom_k(\overline{\Phi L}_{y,1},\langle \overline{\Phi L}_{y,1},\overline{\Lambda}\rangle^{\perp}/\overline{\Phi L}_{y,1})$$ where $\overline{\Lambda}$ denotes the image of $\Lambda$ in $\overline{\Phi L}_{y}$. In particular, we have $$\dim T_y\calZ(\Lambda)_k=(t+h)/2-1$$ where $t$ is the type of $\Lambda$.
\end{enumerate}
	\end{proposition}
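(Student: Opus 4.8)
\textbf{Plan for the proof of Proposition \ref{prop: tangent space RZ}.}

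The strategy is to transport the local-model description of tangent spaces from Proposition \ref{prop: tangent space LM} across the local model diagram, and then to intersect with the cycle condition defining $\calZ(\Lambda)$. First I would set up the comparison: by the local model diagram for $\calN_L$ recalled in \S\ref{para: LMD}, the completion $\widehat{\calN}_{L,y}$ is, after choosing a section of the $\widehat{\calG}$-torsor $\pi$, identified with the completion of $\Mloc_{\calG,\mu}$ at the point $x=q(\tilde y)$, where $\tilde y\in\widetilde{\calN_L}(k)$ lifts $y$. Since $y\in\calZ^{\smallheartsuit}(\Lambda)(k)$, the associated special lattice $\Phi L_y=g\bullet\brL$ has the property that $\overline{\Phi L}_{y,1}$ is not in the radical of $\overline{\Phi L}_y$; comparing with the discussion of $Z_K$ and Lemma \ref{lem: L-filtration}, this is exactly the condition that $\pi_{KR}(y)\in Z_K$ (by the Lemma immediately preceding \S\ref{para: LMD}), so $x\in\calM^w(k)$ for some $w\in Z_K$ and Proposition \ref{prop: tangent space LM} applies verbatim. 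This gives part (1): the Grothendieck--Messing filtration $\overline{M}_{u,1}$ attached to $u\in T_y\calN_{L,k}$ corresponds under the crystalline identification (Proposition \ref{prop: KP deformation theory}(2), which ensures the GM deformation and the local-model deformation agree to first order) to the local-model tangent vector, and hence $\overline{\Phi L}_{1,u}=\overline{L}_{1,v}\in\scrD_y$ is isotropic of rank one lifting $\overline{\Phi L}_{y,1}$, with $u\mapsto\phi_y(\overline{\Phi L}_{1,u})$ the isomorphism $\Theta_y$ of Proposition \ref{prop: tangent space LM}(2). Here $\overline{L}_1=\overline{\Phi L}_{y,1}$ and $\overline{L}_1^\perp/\overline{L}_1 = \overline{\Phi L}_{y,1}^\perp/\overline{\Phi L}_{y,1}$.

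For part (2), I would use the moduli description of $\calZ(\Lambda)$ and the explicit formula for $\Psi_{\calZ,\Lambda}$ from the proof of Proposition \ref{prop: morphism BT strata to DLV}. Recall that for a point $y'\in\calN_L(R)$, membership in $\calZ(\Lambda)$ is detected by requiring every $x\in\Lambda$ to preserve the relevant Dieudonn\'e lattice, which on the level of the first-order deformation $\overline{M}_{u,1}$ over $k[\epsilon]$ becomes the condition $\Lambda\cdot\overline{M}_{u,1}\subset\overline{M}_{u,1}$, i.e. the image $\overline\Lambda$ of $\Lambda$ in $\overline{\Phi L}_y$ stabilizes $\overline{M}_{u,1}$. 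In terms of $\overline{\Phi L}_{1,u}=\langle m+\phi_y(\overline{\Phi L}_{1,u})(m)\epsilon\rangle$, stabilization by $\overline\Lambda$ translates (using that $\overline{M}_{u,1}$ is determined by $\overline{\Phi L}_{1,u}$ via the multiplication action, exactly as $\overline{L}_{1,v}$ determines $\overline D_{1,v}$ in \S\ref{para: subspace local model}) into the requirement that $\phi_y(\overline{\Phi L}_{1,u})(m)$, a priori only in $\overline{\Phi L}_{y,1}^\perp/\overline{\Phi L}_{y,1}$, actually lies in the smaller subspace $\langle\overline{\Phi L}_{y,1},\overline\Lambda\rangle^\perp/\overline{\Phi L}_{y,1}$ — since $m$ generates $\overline{\Phi L}_{y,1}$, compatibility with $\overline\Lambda$ forces the perturbation to be orthogonal to $\overline\Lambda$ as well. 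Thus $\Theta_y$ carries $T_y\calZ(\Lambda)_k$ isomorphically onto $\Hom_k(\overline{\Phi L}_{y,1},\langle\overline{\Phi L}_{y,1},\overline\Lambda\rangle^\perp/\overline{\Phi L}_{y,1})$.

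Finally, the dimension count: $\langle\overline{\Phi L}_{y,1},\overline\Lambda\rangle$ is a subspace of $\overline{\Phi L}_y$ of dimension $\dim_k\overline\Lambda+1 = (n-\tfrac{t_\Lambda - h}{2}\cdot 2)+1$ — more precisely, one computes using $\Lambda_{\brZ_p}\subset L_y\subset L_y^\vee\subset\Lambda_{\brZ_p}^\vee$ that the image $\overline\Lambda$ of $\Lambda$ in $\overline{\Phi L}_y\cong \brL/p\brL$ has dimension $n-\frac{t_\Lambda+h}{2}$, so that its orthogonal (inside the $2(n-?)$-dimensional non-degenerate quotient $\overline{\Phi L}_{y,1}^\perp/\overline{\Phi L}_{y,1}$, of dimension $\dim\overline{L}_1^\perp/\overline{L}_1$ as computed at the end of the proof of Proposition \ref{prop: tangent space LM}) has dimension $\frac{t_\Lambda+h}{2}-1$; hence $\dim_k\Hom_k(\overline{\Phi L}_{y,1},\langle\overline{\Phi L}_{y,1},\overline\Lambda\rangle^\perp/\overline{\Phi L}_{y,1}) = \frac{t_\Lambda+h}{2}-1$, matching $\dim S_\Lambda$ from Theorem \ref{thm: decom pf SLambda}. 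The main obstacle I anticipate is bookkeeping in part (2): precisely matching the multiplication action of $\Lambda$ on $\overline{M}_{u,1}$ with the orthogonality condition on $\phi_y(\overline{\Phi L}_{1,u})$ requires carefully unwinding how $\overline D_{1,v}$ (equivalently $\overline M_{u,1}$) is reconstructed from the line $\overline{L}_{1,v}$ via the Clifford multiplication, and checking that the quadratic-space structures on $\overline{\Phi L}_y$ and the pairing with $\overline\Lambda$ are the ones induced compatibly through Lemma \ref{lem: V=B}; the first-order nilpotence $\epsilon^2=0$ should make the verification manageable but it is the delicate point.
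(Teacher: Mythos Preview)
Your plan is correct and follows essentially the same route as the paper: transport Proposition \ref{prop: tangent space LM} across the identification $\widehat{\calN}_{L,y}\cong\Spf R$ from Proposition \ref{prop: KP deformation theory} for part (1), then for part (2) translate the Grothendieck--Messing condition ``$\Lambda$ preserves $\overline{M}_{u,1}$'' into an orthogonality condition on $\phi_y(\overline{\Phi L}_{1,u})$. The ``delicate point'' you flag is resolved in the paper by two preparatory Clifford-algebra lemmas establishing first that $\overline{M}_{u,1}=\ker(\xi)$ for $\xi$ a generator of $\overline{\Phi L}_{1,u}$, and second that an element $v\in\overline{\Phi L}_y$ preserves $\ker(\xi)$ if and only if $(v,\xi)=0$ (this last step uses that $\overline{\Phi L}_{y,1}$ is not in the radical, which is exactly your $\calZ^{\smallheartsuit}$ hypothesis); your dimension bookkeeping at the end is slightly garbled but the paper simply observes $\langle\overline{\Phi L}_{y,1},\overline{\Lambda}\rangle^\perp\cong\Lambda^\vee/\Phi L_y$, which has dimension $(t+h)/2$.
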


 \subsubsection{}We will need the following preparatory lemmas. For $R$ a $\bbZ_p$-algebra, we have a morphism $L_R\rightarrow \widetilde{L}_R\rightarrow \End_R(D_R),$ where we consider $\widetilde{L}$ as a $\bbZ_p$-module by restriction of structure from $\calO_E$, and we recall that $D_R=\Hom_{\bbZ_p}(C(\widetilde{L}),\bbZ_p)_R$. For an element $\xi\in L_R$, we let $\ker(\xi)\subset D_R$ denote the kernel of the endomorphism corresponding to $\xi$.

 \begin{lemma}\label{lem: preserves kernel if and only orthogonal}
  Let $\xi\in L_R$ be the generator of an isotropic line in $L_R$ and assume there exists $\zeta\in L$ such that $(\zeta,\xi)=1$. Then for any $v\in L_R$, left multiplication by $v$ preserves $\ker(\xi)\subset D_R$ if and only if $v$ is orthogonal to $\xi$.\end{lemma}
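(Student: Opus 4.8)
The key identity to establish is the Clifford-algebra relation between left multiplication by a vector and the geometry of its kernel inside $D_R = \Hom_{\bbZ_p}(C(\tL),\bbZ_p)_R$. Recall that $D_R$ is a Clifford module for $L_R$ (acting on the left), and the natural pairing on $V$ is recovered from the Clifford relation $v\cdot v = q(v)$. First I would reduce to the case where $R$ contains a hyperbolic plane $\Span_R\{\xi,\zeta\}$ with $(\xi,\xi)=(\zeta,\zeta)=0$, $(\xi,\zeta)=1$; this is exactly the hypothesis, and it lets me split $L_R = \Span_R\{\xi,\zeta\}\obot L_R'$ where $L_R'$ is the orthogonal complement. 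Correspondingly $C(\tL)_R$ (up to the twist by $\sqrt{p}$'s that identifies $\tL$-based and $L$-based Clifford algebras, handled via Lemma~\ref{lem: V=B}) decomposes as a tensor product of the Clifford algebra of the hyperbolic plane with $C(L_R')$, and the hyperbolic-plane Clifford algebra is a matrix algebra $M_2(R)$ in which $\xi$ and $\zeta$ act as the two nilpotent elementary matrices.

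In this normal form the computation becomes transparent: $\ker(\xi)$ on the rank-two module is the line spanned by one of the idempotent's images, and left multiplication by an element $v = a\xi + b\zeta + v'$ (with $v'\in L_R'$) preserves this line if and only if the ``$\zeta$-coefficient'' $b$ vanishes, because $\zeta$ is the unique (up to the orthogonal part) element of $L_R$ whose action moves $\ker(\xi)$ out of itself — it sends $\ker(\xi)$ isomorphically onto a complement. The orthogonal summand $v'$ acts by an endomorphism that commutes with the $M_2(R)$-factor up to sign in the appropriate $\bbZ/2$-grading, hence preserves any $\xi$-stable submodule coming from that factor. Finally $b = (\zeta, v)$... wait, more precisely $b$ is the coefficient extracted by pairing against $\xi$: one has $(v,\xi) = b\,(\zeta,\xi) = b$ since $\xi$ is isotropic and $v'\perp\xi$. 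Thus ``$v$ preserves $\ker(\xi)$'' $\iff$ $b = 0$ $\iff$ $(v,\xi)=0$, which is the claim.

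The main obstacle I anticipate is bookkeeping rather than conceptual: carefully tracking the identification of $D_R$ as a module over $L_R$ versus over $\widetilde L_R$ (the $\sqrt p$-twist), and making sure the Clifford-module structure is the one induced from $C(\tL)$ via the map $L_R\to\widetilde L_R\to\End_R(D_R)$ of the statement, not some other normalization. I would phrase the argument so that it only uses the Clifford relation $v^2 = q(v)\cdot 1$ and bilinearity $v\xi + \xi v = (v,\xi)$ inside $\End_R(D_R)$, which holds regardless of the twist; then $v\cdot\ker(\xi)\subseteq\ker(\xi)$ forces, for $d\in\ker(\xi)$, $\xi(vd) = ((v,\xi) - v\xi)d = (v,\xi)d$, so preservation of $\ker(\xi)$ gives $(v,\xi)d\in\ker(\xi)$ for all $d\in\ker(\xi)$. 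Since $\ker(\xi)$ is a proper direct summand (as $\xi\ne 0$ and $\xi$ is not a unit — it is isotropic — so it has nonzero kernel and cokernel on the faithful module $D_R$), and since $(v,\xi)$ is a scalar, this forces $(v,\xi)=0$ unless $\ker(\xi)$ is annihilated by $\xi$... which it is by definition, so I instead argue: $(v,\xi)d \in \ker(\xi)$ is automatic, so this direction needs the sharper statement that $vd \bmod \ker(\xi)$ determines the obstruction. The clean way is: the map $D_R/\ker(\xi)\xrightarrow{\xi}\im(\xi)\subseteq\ker(\xi)$ composed appropriately shows $v$ preserves $\ker(\xi)$ iff the induced endomorphism of $D_R/\ker(\xi)$ lifts compatibly, and $\xi v + v\xi = (v,\xi)$ shows the discrepancy is exactly multiplication by $(v,\xi)$; since on $D_R/\ker(\xi)$ the operator $\xi$ is injective onto $\im(\xi)$, nonvanishing of $(v,\xi)$ obstructs preservation. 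I would write this out via the hyperbolic normal form to avoid these injectivity subtleties, as that makes every claim a $2\times2$-matrix verification.
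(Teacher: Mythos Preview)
Your proposal is correct and follows essentially the same strategy as the paper: split off the hyperbolic pair $\{\xi,\zeta\}$ and use the Clifford relation $\xi v + v\xi = (v,\xi)$ (equivalently $\zeta\xi+\xi\zeta\in R^\times$). The paper's only real difference in execution is that it first dualizes, replacing ``$v$ preserves $\ker(\xi)\subset D_R$'' by ``$v$ preserves $\im_C(\xi)\subset C(\tL)_R$'', and then kills the $\zeta$-component via the one-line computation $0=\xi(\lambda\zeta)\xi=2\lambda\xi$. Your hyperbolic normal-form/$M_2(R)$ picture is the same computation in coordinates.

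One remark: your direct Clifford-relation argument actually goes through cleanly and you abandoned it prematurely. For $d\in\ker(\xi)$ you computed $\xi(vd)=(v,\xi)d$; if $v$ preserves $\ker(\xi)$ then the left side is $0$, so $(v,\xi)d=0$ for all $d\in\ker(\xi)$ --- not merely $(v,\xi)d\in\ker(\xi)$ as you wrote. Since $\zeta\xi+\xi\zeta$ is a unit, $\ker(\xi)$ is a free $R$-summand of $D_R$ of half rank (indeed $\xi\zeta$ and $\zeta\xi$ are, up to a unit, complementary idempotents), so $(v,\xi)d=0$ for all such $d$ forces $(v,\xi)=0$. This avoids both the tensor decomposition and the paper's dualization, and is arguably the shortest route.
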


 \begin{proof}
     Under our assumptions, this follows using a similar proof to \cite[Lemma 4.1.2]{LiZhu2018}, we recall the argument for the convenience of the reader. 
     
     We let $\mathrm{im}_C(\xi)\subset C(\widetilde{L})_R$ denote the  image $\xi$. Then by duality, it suffices to show $v$ preserves $\mathrm{im}_C(\xi)$ if and only if $v$ is orthogonal to $\xi$.
     
 If $v$ is orthogonal to $\xi$, then $v\xi=-\xi v$, so $v$ preserves $\mathrm{im}_C(\xi)$. Conversely, assume $\xi$ preserves $\im_C(\xi)$; then our assumption that $(\zeta,\xi)=1$ implies that $L_R=\xi^\perp\oplus R\zeta$. Upon replacing $\zeta$ by $\zeta-\frac{q(\zeta)\xi}{2}$, we may assume that $\zeta$ is isotropic. Then in $C(\widetilde L)$ we have \begin{equation}\label{eqn: Clifford eq}2=q(\zeta+\xi)=\zeta\xi+\xi\zeta.\end{equation}

Suppose  $\lambda\zeta$ preserves $\im_C(\xi),$ $\lambda\in R$. Then in $C(\widetilde L)$, we have \begin{align*}
         0&=\xi(\lambda\zeta)\xi\\
         &=\lambda(2-\zeta\xi)\xi\\
         &=2\lambda\xi
     \end{align*}
Here the first equality follows from the fact that $\xi\xi=0$ and $\lambda\zeta$ preserves $\im_C(\xi)$, the second follows from \eqref{eqn: Clifford eq}, and the third follows from $q(\xi)=0$. Thus $\lambda=0$; hence no non-zero multiple of $\zeta$ preserves $\im_C(\xi).$

    Now write $v=v_1+\lambda \zeta$ with $v_1\in \xi^\perp$, $\lambda\in R$. Then $v_1$ preserves $\im_C(\xi)$ by the first part of the proof, and hence $\lambda\zeta$ preserves $\im_C(\xi)$. Thus $\lambda=0$ and $v=v_1$ as desired.
 \end{proof}

   \begin{lemma}\label{lem: kernel clifford algebra}
 Let $x\in \calM^w(k), w\in Z_K$, and $v\in T_x\calM$, and let $\xi$ be a generator of $\overline{L}_{1,v}$. We have $\ker(\xi)=\overline{D}_{1,v}$.

\end{lemma}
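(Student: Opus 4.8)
The plan is to reduce the statement to the base point $x$ by a $\calG_k$-equivariance argument, and then to verify there the identity $\ker(\xi_0)=\overline{D}_1$ for a generator $\xi_0$ of $\overline{L}_1$. For the reduction, note first that by Proposition~\ref{prop: tangent space LM}(1) the module $\overline{L}_{1,v}$ is free of rank one over $k[\epsilon]$, so $\xi$ is determined up to a unit and $\ker(\xi)$ depends only on $\overline{L}_{1,v}$. By Lemma~\ref{lem: Z-cycle smooth locus}, $\calM^w$ is a smooth $\calG_k$-orbit, so — exactly as in the proof of Proposition~\ref{prop: tangent space LM} — we may write $v=g\cdot x$ with $g\in\calG_k(k[\epsilon])$, $g\equiv 1\bmod\epsilon$, where $x$ is viewed in $\calM^w(k[\epsilon])$ via $\epsilon\mapsto 0$. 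Since $\iota^{\mathrm{loc}}$ is $\calG_k$-equivariant this gives $\overline{D}_{1,v}=g\cdot(\overline{D}_1\otimes_k k[\epsilon])$ and $\overline{L}_{1,v}=g\bullet(\overline{L}_1\otimes_k k[\epsilon])$, whence $\xi=u\cdot(g\xi_0 g^{-1})$ for a unit $u\in k[\epsilon]^\times$ and a generator $\xi_0$ of $\overline{L}_1$, the $\bullet$-action being conjugation inside $C(\tL)$. As $\overline{D}_{k[\epsilon]}$ is a left $C(\tL)$-module on which both $g$ and $\xi_0$ act through this module structure, $\ker(\xi)=g\cdot\ker\!\bigl(\xi_0\mid\overline{D}_{k[\epsilon]}\bigr)=g\cdot\bigl(\ker(\xi_0\mid\overline{D})\otimes_k k[\epsilon]\bigr)$, so it suffices to prove $\ker(\xi_0\mid\overline{D})=\overline{D}_1$ at the base point.

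For this I would first establish the inclusion $\overline{D}_1\subseteq\ker(\xi_0)$. Choose $\ell\in\brL_1$ with $p\ell\in\brL$ reducing to a generator $\xi_0$ of $\overline{L}_1$, the image of $p\brL_1$ in $\overline{L}$. Since $\ell$ preserves $\brD_1$ we get $p\ell\cdot\brD_1\subseteq p\brD_1\subseteq p\brD$, and reduction modulo $p$ shows that $\xi_0$ annihilates $\overline{D}_1=\brD_1/p\brD$. It then remains to see that $\dim_k\ker(\xi_0\mid\overline{D})=\dim_k\overline{D}_1=\tfrac12\dim_k\overline{D}$. Here $\xi_0$ is a nonzero isotropic vector of $\brL/p\brL$ whose span is not contained in the radical, by Lemma~\ref{lem: L-filtration}(2); one checks (using the explicit description of $\brL$ and $\tL_{\brZ_p}$ from \S\ref{6.1.2} and of $Z_K$) that $\xi_0$ then acts on $C(\tL)/pC(\tL)$ as a square-zero operator lying in a hyperbolic plane of the ambient quadratic space, so that — as in the proof of Lemma~\ref{lem: preserves kernel if and only orthogonal} — its kernel equals its image and has dimension $\tfrac12\dim_k C(\tL)/pC(\tL)$; dualizing gives $\ker(\xi_0\mid\overline{D})=\operatorname{im}_C(\xi_0)^{\perp}$ of the same dimension. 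Combined with the inclusion this yields $\ker(\xi_0\mid\overline{D})=\overline{D}_1$, hence the lemma.

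The step I expect to be the main obstacle is exactly this base-point dimension count: identifying the filtration $\overline{D}_1$ produced by the local-model embedding $\iota^{\mathrm{loc}}$ with the purely Clifford-theoretic kernel $\ker(\xi_0)$, for which one must use that $\overline{L}_1$ is not in the radical (Lemma~\ref{lem: L-filtration}(2)) to produce the hyperbolic partner, together with the fact that $\iota$ is a very good local Hodge embedding (Proposition~\ref{prop: very good embedding}), so that $\calM$ is genuinely cut out inside the Grassmannian by the orthogonality conditions of the orthogonal local model. Once the base-point identity is in hand the rest is formal; alternatively one could bypass the $\calG_k$-orbit step by observing that this identity, holding on all of $\calM^w(k)$ and $\calM^w$ being reduced, already forces the two morphisms $\calM^w\to\Gr(\overline{D},d)$ given by $\iota^{\mathrm{loc}}$ and by $x\mapsto\ker(\xi_x)$ to coincide, hence to agree on tangent vectors.
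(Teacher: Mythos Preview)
Your proposal is correct and follows essentially the same approach as the paper: reduce to the base point via the $\calG_k$-orbit structure of $\calM^w$, establish the inclusion $\overline{D}_{1}\subset\ker(\xi_0)$ directly from the definition of $\brL_1$, and conclude by a dimension count. The paper's proof is terser and simply asserts $\dim\ker(\xi_1)=\tfrac12\dim\overline{D}$, whereas you supply a justification via the hyperbolic partner and the Clifford relation; this extra detail is correct and welcome. One small remark: your closing discussion over-invokes Proposition~\ref{prop: very good embedding}. The base-point identity needs only the inclusion (from the definition of $\brL_1$) and the half-dimension count (from the hyperbolic pair, using Lemma~\ref{lem: L-filtration}(2)); the ``very good'' property of the Hodge embedding plays no role here.
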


\begin{proof} We have $\overline{D}_{1,v}=g\overline{D}_{x,1}:=g\overline{D}_{1,k}\otimes_k k[\epsilon]$ and $\overline{L}_{1,v}=g\bullet\overline{L}_{x,1}:=g\bullet\overline{L}_{1,k}\otimes_k k[\epsilon]$ for some $g\in \calG(k[\epsilon]).$
Thus it suffices to show that $\overline{D}_{1,k}=\ker(\xi_1)$, where $\xi_1$ is a generator of the isotropic line $\overline{L}_{1,k}$. By definition of $\overline{L}_{1,k}$, we have an inclusion $\overline{D}_{1,k}\subset \ker(\xi_1)$. Note that  $$\dim \ker(\xi_1)= \frac{1}{2}\dim \overline{D}=\overline{D}_{1,k}.$$ Thus we have equality by a dimension count.
\end{proof}

\begin{proof}[Proof of Proposition \ref{prop: tangent space RZ}]As above, the identification $M_y\cong \breve D$ induced by $g\in X(\mu^\sigma,b)$ gives rise to an embedding $\Mloc_{\calG,\mu}\rightarrow \Gr(M_y,d)$ such that $\overline{M}_{y,1}$ corresponds to some $x\in \Mloc_{\calG,\mu}(k)$. We fix an identification $\widehat{\calN}_{L,y}\cong \Spf R$ as in Proposition \ref{prop: KP deformation theory}, where $R$ is the complete local ring of $\Mloc_{\calG,\mu}$ at $x$. This induces an isomorphism $$T_y\calN_{L,k}\cong T_x\calM,$$ and we write $v\in T_x\calM$ for the image of $u$.  Then by Proposition \ref{prop: KP deformation theory} (2), the filtration on $\overline{M}_y\otimes k[\epsilon]$ corresponding to $u$, is equal to the filtration corresponding to $v$ under the embedding $\Mloc_{\calG,\mu}\rightarrow \Gr(M_y,d)$. Thus $\overline{\Phi L}_{1,u}\in \scrD_y$ by Proposition \ref{prop: tangent space LM} (1), and we have a commutative diagram
\[\xymatrix{T_y\calN_{L,k}\ar[r]\ar[d] & \scrD_y\ar[r]^{\!\!\!\!\!\!\!\!\!\!\!\!\!\!\!\!\!\!\!\!\!\!\!\!\!\!\!\!\!\!\!\!\!\!\!\!\phi_y}\ar[d]&\Hom_k(\overline{\Phi L}_{y,1}, \overline{\Phi L}_{y,1}^\perp/\overline{\Phi L}_{y,1})\ar[d]\\
T_x\calM\ar[r]	& \scrD \ar[r]^{\!\!\!\!\!\!\!\!\!\!\!\!\!\! \phi}& \Hom_k(\overline{L}_1,\overline{L}_1^\perp/\overline{L}_1)}\]
where the vertical maps are isomorphisms. Part (1) follows from this.

For part (2), let $u \in T_y\calN_{L,k}$. Then by Grothendieck-Messing theory, $u\in T_y\calZ(\Lambda)_{k}$ if and only if $\Lambda$ preserves $\overline{M}_{1,u}$. Under the identifications above and by Lemma \ref{lem: kernel clifford algebra}, we have $\overline{M}_{1,u}=\ker(\xi)$, where $\xi$ is a generator of  $\overline{\Phi L}_{1,u}$. By Lemma \ref{lem: preserves kernel if and only orthogonal} and the identifications above, $u\in T_y\calZ(\Lambda)_{k}$ if and only if $\overline{\Lambda}$ is orthogonal to $\overline{\Phi L}_{1,u}$, or equivalently $\Theta_y(u)$ has image in $\overline{\Lambda}^\perp$. Note that the assumptions of Lemma \ref{lem: preserves kernel if and only orthogonal} are satisfied since $\overline{\Phi L}_{y,1}$ does not lie in the radical of $\overline{\Phi L}_y$.  Then in particular part (2) follows from the fact that $\langle \overline{\Phi L}_{y,1},\overline{\Lambda}\rangle^{\perp}$  is isomorphic to $\Lambda^\vee/\Phi L_y$ which has dimension $t-h$.

\end{proof}

\subsubsection{} We apply the above to describe the strata $\calZ_\red^{(0)}(\Lambda).$

 \begin{proposition}\label{prop: bijection on tangent space}
	Let $x\in \cZ^{\smallheartsuit}(\Lambda)(k)$ and let  $z=\Psi_{\calZ,\Lambda}(x)\in S_\Lambda^{\smallheartsuit}(k)$. The map $\Psi_{\calZ,\Lambda}$  induces an isomorphism of tangent spaces  $$T_x\cZ_{\red}^{\smallheartsuit}(\Lambda)\cong T_zS_\Lambda^{\smallheartsuit}.$$
\end{proposition}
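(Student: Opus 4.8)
The statement is that $\Psi_{\calZ,\Lambda}$ induces an isomorphism on tangent spaces at a point $x\in\calZ^{\smallheartsuit}(\Lambda)(k)$ with image $z=\Psi_{\calZ,\Lambda}(x)\in S_\Lambda^{\smallheartsuit}(k)$. The strategy is to compute both tangent spaces explicitly and check that the map $\Psi_{\calZ,\Lambda}$ induces the identity under natural identifications. For the source, we already have a description of $T_x\calZ(\Lambda)_k$ from Proposition \ref{prop: tangent space RZ}(2): it is canonically isomorphic to $\Hom_k(\overline{\Phi L}_{y,1},\langle\overline{\Phi L}_{y,1},\overline\Lambda\rangle^\perp/\overline{\Phi L}_{y,1})$, where $\overline{\Phi L}_{y}$ is the (possibly degenerate) quadratic space $\Phi L_y/p\Phi L_y$ over $k$ and $\overline{\Phi L}_{y,1}$ is the distinguished isotropic line (the image of $p(L_y+\Phi L_y)$). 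Since $x\in\calZ^{\smallheartsuit}(\Lambda)$, this line is not in the radical, so the assumptions of Lemma \ref{lem: preserves kernel if and only orthogonal} hold. Note also that $T_x\calZ^{\smallheartsuit}_{\red}(\Lambda)=T_x\calZ(\Lambda)_k$ since $\calZ^{\smallheartsuit}(\Lambda)$ is open in $\calZ_{\red}^{(0)}(\Lambda)$ and (by Proposition \ref{prop: k points of N}/\ref{prop: k points} and the fact that $\calZ_{\red}^{(0)}(\Lambda)$ has the same $k$-points) $\calZ(\Lambda)$ is already reduced on this locus — strictly, one should observe $T_x\calZ_{\red}(\Lambda)\subset T_x\calZ(\Lambda)$ and that the reduction map is a bijection on tangent vectors here because $\calZ_\red^{(0)}(\Lambda)$ agrees with $\calZ^{(0)}(\Lambda)$ set-theoretically, but the cleaner route is to note the dimension count already forces equality once we show $\Psi_{\calZ,\Lambda}$ is injective on $T_x$.

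\textbf{Key steps.} First I would recall from Proposition \ref{prop: tangent space of SLambda} that $T_zS_\Lambda\cong\Hom_k(\scrV+\Phi(\scrV)/\Phi(\scrV),\Omega_\Lambda/(\scrV+\Phi(\scrV)))$ for $z=[\scrV]\in S_\Lambda^{\smallheartsuit}(k)$, and that $S_\Lambda^{\smallheartsuit}$ is open in $S_\Lambda$ so $T_zS_\Lambda^{\smallheartsuit}=T_zS_\Lambda$. Next, I would unwind the definition of $\Psi_{\calZ,\Lambda}$ from Proposition \ref{prop: morphism BT strata to DLV}: on a tangent vector $u\in T_x\calN_{L,k}$, represented by a point of $\calN_L(k[\epsilon])$ with associated Dieudonné filtration $\overline M_{u,1}$, the image $\Psi_{\calZ,\Lambda}(u)$ is the isotropic subspace $\overline{\calL}_{y,u}:=\{v\in\Omega_\Lambda\otimes_k k[\epsilon]: v\overline M_{u,1}\subset\overline M_{u,1}\}/(\Lambda\otimes_k k[\epsilon])$, i.e. the tangent vector to $\OGr(\tfrac{t_\Lambda-h}{2},\Omega_\Lambda)$ obtained by deforming $\scrV$ inside $\Omega_\Lambda$. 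I would then identify $\scrV+\Phi(\scrV)$ with $\Lambda^\vee/\Phi L_y$ (cf. the $k$-point identification in Proposition \ref{prop: k points}, using $\Phi L_y\subset\Lambda^\vee$ and $L_y=\Phi L_y$ up to the Frobenius twist), so that $\overline{\Phi L}_{y,1}$ corresponds to $\scrV+\Phi(\scrV)/\Phi(\scrV)$ and $\langle\overline{\Phi L}_{y,1},\overline\Lambda\rangle^\perp/\overline{\Phi L}_{y,1}$ corresponds to $\Omega_\Lambda/(\scrV+\Phi(\scrV))$. With these dictionaries in place, the source and target of $\Psi_{\calZ,\Lambda}$ on tangent spaces are literally the same $\Hom$-space, and I would verify that the map $\Theta_y$ of Proposition \ref{prop: tangent space RZ} composed with the inclusion $T_x\calZ(\Lambda)\hookrightarrow T_x\calN_{L,k}$ agrees with the differential of $\Psi_{\calZ,\Lambda}$ — this is essentially immediate from the fact that both are defined via $v\mapsto(\text{action of }v\text{ on the deformed }M_{u,1})$, i.e. the diagram relating $\scrD_y$ and $\phi_y$ is compatible with the Grassmannian tangent-space description via Lemma \ref{lem: kernel clifford algebra} (which says $\overline M_{u,1}=\ker(\xi)$ for $\xi$ a generator of $\overline{\Phi L}_{1,u}$). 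Finally, injectivity of the differential plus the equality of dimensions $\dim T_x\calZ(\Lambda)_k=(t_\Lambda+h)/2-1=\dim T_zS_\Lambda$ (from Prop.\ \ref{prop: tangent space RZ}(2) and Prop.\ \ref{prop: tangent space of SLambda}/Thm.\ \ref{thm: decom pf SLambda}) gives the isomorphism, and also shows $T_x\calZ_{\red}^{\smallheartsuit}(\Lambda)$ has the expected dimension so that the reduced structure does not shrink the tangent space.

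\textbf{Main obstacle.} The delicate point is carefully matching the two quadratic-space dictionaries: on the RZ side we work with $\overline{\Phi L}_y=\Phi L_y/p\Phi L_y$ and the perpendicular taken inside this possibly-degenerate space, whereas on the $S_\Lambda$ side we work with the nondegenerate space $\Omega_\Lambda=\Lambda^\vee/\Lambda\otimes k$ and perpendiculars inside it; one must check that $\langle\overline{\Phi L}_{y,1},\overline\Lambda\rangle^\perp$ computed in $\overline{\Phi L}_y$ maps isomorphically onto $(\scrV+\Phi(\scrV))^\perp$ computed in $\Omega_\Lambda$, i.e. that passing from $\Phi L_y$ to $\Lambda$ and quotienting is compatible with the orthogonality relations. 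This reduces to the lattice inclusions $p\Lambda^\vee\subset p(\Phi L_y)^\vee\subset\Phi L_y\subset\Lambda^\vee$ from Proposition \ref{prop: key prop}(2) (applied to the special lattice $M=L_y$, with $\Lambda=S_d(M^\vee)^\vee$ or a vertex lattice between) together with Lemma \ref{lem: V=B}; it is bookkeeping rather than a conceptual difficulty, but it is where an error would most easily hide. A secondary check is that $\Psi_{\calZ,\Lambda}$ indeed lands in $S_\Lambda^{\smallheartsuit}$ on $\calZ^{\smallheartsuit}(\Lambda)$, which is built into the definition of $\calZ^{\smallheartsuit}(\Lambda)$ as the preimage of $S_\Lambda^{\smallheartsuit}$, so no extra work is needed there.
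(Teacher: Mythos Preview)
Your proposal is essentially correct and follows the same route as the paper: identify $T_x\calZ(\Lambda)_k$ via Proposition \ref{prop: tangent space RZ}(2), identify $T_zS_\Lambda$ via Proposition \ref{prop: tangent space of SLambda}, match the two $\Hom$-spaces through the lattice dictionary, and check that the differential of $\Psi_{\calZ,\Lambda}$ realizes this identification. The paper makes the dictionary explicit as ``multiplication by $p^{-1}$'': it gives $\overline{\Phi L}_{x,1}=p(L_x+\Phi L_x)/p\Phi L_x\cong (L_x+\Phi L_x)/\Phi L_x=(\scrV+\Phi(\scrV))/\Phi(\scrV)$ and $\langle\overline{\Phi L}_{x,1},\overline\Lambda\rangle^\perp=p\Lambda^\vee/p\Phi L_x\cong \Lambda^\vee/\Phi L_x$, whence the quotient is $\Omega_\Lambda/(\scrV+\Phi(\scrV))$. (Your line ``identify $\scrV+\Phi(\scrV)$ with $\Lambda^\vee/\Phi L_y$'' is a small slip; $\scrV+\Phi(\scrV)=(L_y+\Phi L_y)/\Lambda$, and it is $\Omega_\Lambda/(\scrV+\Phi(\scrV))$ that equals $\Lambda^\vee/(L_y+\Phi L_y)$.)

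One organizational point: the paper handles the reduced-structure issue \emph{first}, before touching $\Psi$, via the clean chain
\[
\dim T_x\calZ_{\red}^{\smallheartsuit}(\Lambda)\ \ge\ \dim\calZ_{\red}^{\smallheartsuit}(\Lambda)\ =\ \dim S_\Lambda^{\smallheartsuit}\ =\ \tfrac{t+h}{2}-1\ =\ \dim T_x\calZ^{\smallheartsuit}(\Lambda)_k,
\]
using that $\Psi_{\calZ,\Lambda}$ is a universal homeomorphism for the middle equality. Since $\Psi_{\calZ,\Lambda}$ is only defined on the reduced scheme, this ordering avoids the question of whether the differential you compute on $T_x\calZ(\Lambda)_k$ is really the differential of $\Psi_{\calZ,\Lambda}$; your deferred version works too, but the paper's ordering is logically tidier.
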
	

\begin{proof}
First, note that the inclusion $$T_x\cZ_{\red}^{\smallheartsuit}(\Lambda)\subset T_x\cZ^{\smallheartsuit}(\Lambda)_k$$ is in fact an isomorphism.  Indeed, we have $$\dim T_x\cZ_{\red}^{\smallheartsuit}(\Lambda)\geq \dim \calZ_{\red}^{\smallheartsuit}(\Lambda)=\dim S_\Lambda^{\smallheartsuit}=(t+h)/2-1=\dim T_x\cZ^{\smallheartsuit}(\Lambda)_k.$$ 
Here the first equality follows since $\Psi_{\calZ,\Lambda}$ is a universal homeomorphism, the second equality is Theorem \ref{thm: decom pf SLambda} and the last equality is Proposition \ref{prop: tangent space RZ} (2). The equality $T_x\cZ_{\red}^{\smallheartsuit}(\Lambda)=T_x\cZ^{\smallheartsuit}(\Lambda)_k$ then follows by a dimension count.

Now let $\scrV\subset \Omega_{\Lambda}$ denote the image of $L_x$ so that $\scrV$ corresponds to the point $z\in S_\Lambda^{\smallheartsuit}(k)$. Recall by Proposition \ref{prop: tangent space of SLambda}, we have an isomorphism 
$$T_zS_{\Lambda}^{\smallheartsuit}\cong \Hom_k(\scrV+\Phi(\scrV)/\Phi(\scrV),\Omega_{\Lambda}/(\scrV+\Phi(\scrV))).$$
Now multiplication by $p^{-1}$ induces isomorphisms 
$$\overline{\Phi L}_{x,1}\cong(\scrV+\Phi(\scrV))/\Phi(\scrV)$$
$$\langle\overline{\Phi L}_{x,1},\overline{\Lambda}\rangle^\perp/\overline{\Phi L}_{x,1}\cong \Omega_{\Lambda}/(\scrV+\Phi(\scrV)).$$ 
where the second isomorphism follows from the isomorphism $\langle\overline{\Phi L}_{x,1},\overline{\Lambda}\rangle^\perp\cong p\Lambda^\vee/p\Phi L_x$. It follows that we have an isomorphism $$\Hom_k(\scrV+\Phi(\scrV)/\Phi(\scrV),\Omega_{\Lambda}/(\scrV+\Phi(\scrV)))\cong \Hom_k(\overline{\Phi L}_{x,1},\langle \overline{\Phi L}_{x,1},\overline{\Lambda}\rangle^{\perp}/\overline{\Phi L}_{x,1}).$$ It follows from the definitions that the following diagram commutes
\[\xymatrix{T_x\cZ^{\smallheartsuit}(\Lambda)_k \ar[r] \ar[d]& T_zS_\Lambda^{\smallheartsuit} \ar[d]\\
\Hom_k(\scrV+\Phi(\scrV)/\Phi(\scrV),\Omega_{\Lambda}/(\scrV+\Phi(\scrV)))\ar[r] & \Hom_k(\overline{\Phi L}_{x,1},\langle \overline{\Phi L}_{x,1},\overline{\Lambda}\rangle^{\perp}/\overline{\Phi L}_{x,1}).}\]
Here the top arrow is induced by $\Psi_{\calZ,\Lambda}$ and the vertical arrows are the isomorphisms given by Proposition \ref{prop: tangent space RZ} and Proposition \ref{prop: tangent space of SLambda}. Since the bottom arrow is an isomorphism, so is the top arrow and the result follows.
\end{proof}

\begin{theorem} \label{thm: iso}
The map $\Psi_{\calZ,\Lambda}:\calZ_\red^{(0)}({\Lambda})\rightarrow S_\Lambda$ is an isomorphism. 
	\end{theorem}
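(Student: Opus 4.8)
The plan is to combine the bijectivity of $\Psi_{\calZ,\Lambda}$ on $k$-points (Proposition \ref{prop: morphism BT strata to DLV}) with the isomorphism on tangent spaces over the open dense locus (Proposition \ref{prop: bijection on tangent space}), and then propagate this to all of $\calZ_\red^{(0)}(\Lambda)$ using normality of $S_\Lambda$. First I would dispose of the trivial case: if $t = h$ then $S_\Lambda$ is a point by Theorem \ref{thm: decom pf SLambda}, and $\calZ_\red^{(0)}(\Lambda)(k)$ is a single point by Proposition \ref{prop: k points}, so since $\calZ_\red^{(0)}(\Lambda)$ is reduced and projective it must equal $S_\Lambda$. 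So I may assume $t > h$, so that $S_\Lambda$ (equivalently $\calZ_\red^{(0)}(\Lambda)$, via the universal homeomorphism) has positive dimension and $S_\Lambda^{\smallheartsuit}$, $\calZ^{\smallheartsuit}(\Lambda)$ are Zariski open dense by Lemma \ref{lem: iso tangent space}.

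The key steps are as follows. By Proposition \ref{prop: morphism BT strata to DLV}, $\Psi_{\calZ,\Lambda}$ is a proper morphism inducing a bijection on $k$-points; since both source and target are schemes of finite type over the algebraically closed field $k$ and the map is a bijection on points, it is a finite universal homeomorphism — in particular quasi-finite and proper, hence finite. Restricting to the open dense subscheme $\calZ^{\smallheartsuit}(\Lambda)$, Proposition \ref{prop: bijection on tangent space} shows that for every closed point $x \in \calZ^{\smallheartsuit}(\Lambda)(k)$ the induced map $T_x \calZ_\red^{\smallheartsuit}(\Lambda) \to T_{\Psi_{\calZ,\Lambda}(x)} S_\Lambda^{\smallheartsuit}$ is an isomorphism. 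A morphism between schemes of finite type over $k$ which is bijective on closed points and induces isomorphisms on all tangent spaces at closed points is an isomorphism onto its image when the target is reduced (this is a standard consequence of, e.g., the infinitesimal criterion together with Zariski's main theorem — a finite, radicial, unramified morphism onto a reduced scheme is a closed immersion, and a surjective closed immersion onto a reduced scheme is an isomorphism). Applying this to $\Psi_{\calZ,\Lambda}|_{\calZ^{\smallheartsuit}(\Lambda)} \colon \calZ_\red^{\smallheartsuit}(\Lambda) \to S_\Lambda^{\smallheartsuit}$ gives that this restriction is an isomorphism.

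It remains to upgrade the isomorphism over the dense open $S_\Lambda^{\smallheartsuit}$ to an isomorphism over all of $S_\Lambda$. Here I would use normality: by Theorem \ref{thm: decom pf SLambda}, $S_\Lambda$ is a normal (indeed irreducible) variety. The morphism $\Psi_{\calZ,\Lambda} \colon \calZ_\red^{(0)}(\Lambda) \to S_\Lambda$ is finite and birational (it is an isomorphism over the dense open $S_\Lambda^{\smallheartsuit}$, and $\calZ_\red^{(0)}(\Lambda)$ is reduced), and $\calZ_\red^{(0)}(\Lambda)$ is reduced; a finite birational morphism onto a normal variety with reduced source is an isomorphism (this is precisely the statement that a normal variety is its own normalization, together with the fact that $\Psi_{\calZ,\Lambda}$, being finite and birational onto a normal target, must factor the normalization map isomorphically). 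Since $\Psi_{\calZ,\Lambda}$ is also surjective (it is bijective on $k$-points), we conclude $\Psi_{\calZ,\Lambda}$ is an isomorphism.

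\textbf{Main obstacle.} The routine but delicate point is the passage from ``bijective on points plus isomorphism on tangent spaces at points of a dense open'' to ``isomorphism over that open'', and the clean application of the normality argument globally; one must be careful that $\calZ_\red^{(0)}(\Lambda)$ is genuinely reduced (it is, by definition) and irreducible of the right dimension (which follows since it is homeomorphic to the irreducible $S_\Lambda$), so that ``birational'' makes sense and the normalization argument applies. The tangent space computation itself — the real mathematical content — has already been carried out in Propositions \ref{prop: tangent space RZ} and \ref{prop: bijection on tangent space}, so the present proof is essentially a formal gluing argument, and the main care needed is simply to invoke the correct forms of Zariski's main theorem and the characterization of normal varieties.
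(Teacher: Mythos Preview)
Your proposal is correct and follows essentially the same strategy as the paper's proof: show the restriction $\Psi_{\calZ,\Lambda}^{\smallheartsuit}$ is an isomorphism using the tangent-space computation, deduce that $\Psi_{\calZ,\Lambda}$ is finite and birational, and then conclude by normality of $S_\Lambda$. The paper is slightly more terse (it does not separate out the $t=h$ case and argues directly that a proper bijection on $k$-points inducing tangent-space isomorphisms is a closed immersion), but the logical skeleton is identical.
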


\begin{proof}

Let $\Psi_{\calZ,\Lambda}^{\smallheartsuit}: \cZ_\red^{\smallheartsuit}(\Lambda)\rightarrow S_\Lambda^{\smallheartsuit}$ denote the restriction of $\Psi_{\calZ,\Lambda}$.  We first show that $\Psi_{\calZ,\Lambda}^{\smallheartsuit}$ is an isomorphism. Indeed it is a proper morphism and induces a bijection on $k$-points. Moreover, by Proposition \ref{prop: bijection on tangent space}, $\Psi_{\calZ,\Lambda}^{\smallheartsuit}$ induces an isomorphism of tangent spaces at closed points. Thus $\Psi_{\calZ,\Lambda}^{\smallheartsuit}$ is a closed immersion, hence an isomorphism since both schemes are reduced. It follows that $\Psi_{\calZ,\Lambda}$ is a birational morphism.

Now $\Psi_{\calZ,\Lambda}$ itself is also proper and induces a bijection on closed points. Thus $\Psi_{\calZ,\Lambda}$ is a finite birational morphism and $S_\Lambda$ is normal by Theorem \ref{thm: decom pf SLambda}. It follows that $\Psi_{\calZ,\Lambda}$ is an isomorphism.
\end{proof}
\begin{theorem}\label{thm: isoY}
  The map $\Psi_{\calY,\Lambda}:\calY_\red^{(0)}({\Lambda}^\vee)\rightarrow R_\Lambda$ is an isomorphism. 
\end{theorem}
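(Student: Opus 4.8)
The statement to prove is Theorem \ref{thm: isoY}, that $\Psi_{\calY,\Lambda}\colon \calY_\red^{(0)}(\Lambda^\vee)\to R_\Lambda$ is an isomorphism. The plan is to deduce this directly from Theorem \ref{thm: iso}, which asserts the analogous statement for $\calZ$-cycles, by transporting everything along the duality isomorphism $\Upsilon_{L,L^\sharp}$ of Proposition \ref{prop: duality for Z(Lambda)} and the companion identification $R_\Lambda^{[h]}\cong S_{\Lambda^\sharp}^{[n-h]}$ of Lemma \ref{lem: duality DL var}. The key point is that both of these identifications are compatible with the morphisms $\Psi_{\calY,\Lambda}$ and $\Psi_{\calZ,\Lambda^\sharp}$, so that the square
\[
\begin{tikzcd}
\calY_{L,\red}^{(0)}(\Lambda^\vee)\arrow{r}{\Psi_{\calY,\Lambda}}\arrow{d}[swap]{\Upsilon_{L,L^\sharp}}[yshift=-1ex]{\wr} & R_\Lambda^{[h]}\arrow{d}{\wr}\\
\calZ_{L^\sharp,\red}^{(0)}(\Lambda^\sharp)\arrow{r}{\Psi_{\calZ,\Lambda^\sharp}} & S_{\Lambda^\sharp}^{[n-h]}
\end{tikzcd}
\]
commutes, whence the top arrow is an isomorphism because the bottom one is by Theorem \ref{thm: iso}.

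First I would recall the setup of \S\ref{para: duality iso}: $L^\sharp=(L^\vee)^{(p)}$ is a vertex lattice of type $n-h$ in $V^\sharp$, and $\Upsilon_{L,L^\sharp}\colon \calN_L\xrightarrow{\sim}\calN_{L^\sharp}$ is an isomorphism of formal schemes which, under the identification $\widetilde\pi\bbV=\bbV^{(p)}=\bbV^\sharp$ of spaces of special quasi-endomorphisms, carries $\calY_L(\bbL)$ isomorphically onto $\calZ_{L^\sharp}(\widetilde\pi\bbL)$ for every non-degenerate lattice $\bbL\subset\bbV$; in particular $\calY_L^{(0)}(\Lambda^\vee)\cong\calZ_{L^\sharp}^{(0)}(\widetilde\pi\Lambda^\vee)=\calZ_{L^\sharp}^{(0)}(\Lambda^\sharp)$ (note $\Lambda^\sharp=\widetilde\pi\Lambda^\vee$ is a vertex lattice of type $n-t_\Lambda$ in $\bbV^\sharp$, which is $\ge n-h$ since $t_\Lambda\le h$, so the $\calZ$-cycle on the right is of the relevant type). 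Next I would verify that on $k$-points the square commutes: by Proposition \ref{prop: k points} the bijection $\calY_L^{(0)}(\Lambda^\vee)(k)\xrightarrow{\sim}R_\Lambda(k)$ sends a special lattice $M$ with $p\Lambda_{\brZ_p}^\vee\subset pM^\vee\subset M\subset\Lambda_{\brZ_p}$ to $pM^\vee/p\Lambda_{\brZ_p}^\vee$, while the bijection $\calZ_{L^\sharp}^{(0)}(\Lambda^\sharp)(k)\xrightarrow{\sim}S_{\Lambda^\sharp}(k)$ sends a special lattice $M'$ in $V^\sharp_{\brQ_p}$ with $\Lambda^\sharp_{\brZ_p}\subset M'\subset M'^\vee\subset (\Lambda^\sharp)^\vee_{\brZ_p}$ to $M'/\Lambda^\sharp_{\brZ_p}$; the scaling $M\mapsto\widetilde\pi M^\vee$ matches these under the identification $\Lambda/p\Lambda^\vee\cong(\Lambda^\sharp)^\vee/\Lambda^\sharp$ used in the proof of Lemma \ref{lem: duality DL var}, exactly as in that proof. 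Since both $\calY$-cycles and $\calZ$-cycles are reduced by construction and the horizontal maps are universal homeomorphisms inducing these bijections, agreement on $k$-points forces agreement of morphisms.

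The bulk of the work is therefore the careful bookkeeping of the three compatibilities — $\Upsilon_{L,L^\sharp}$ with the cycle-level isomorphism $\calY_L(\Lambda^\vee)=\calZ_{L^\sharp}(\Lambda^\sharp)$ (which holds ``essentially from the construction,'' as noted after Proposition \ref{prop: intro}), the scheme-theoretic identification $R_\Lambda^{[h]}\cong S_{\Lambda^\sharp}^{[n-h]}$ of Lemma \ref{lem: duality DL var}, and the definitions of $\Psi_{\calY,\Lambda}$ and $\Psi_{\calZ,\Lambda^\sharp}$ from Proposition \ref{prop: morphism BT strata to DLV}. Concretely: in the construction of $\Psi_{\calZ,\Lambda^\sharp}$ one forms, for $y$ an $R$-point of the $\calZ$-cycle, the lattice $\calL_y=\{x\in V^\sharp\otimes_{\bbQ_p}\calR[1/p]\mid x\calM_{y,1}\subset\calM_{y,1}\}$ and takes its image in $(\Lambda^\sharp)^\vee/\Lambda^\sharp\otimes_{\bbF_p}R$; under $\Upsilon_{L,L^\sharp}$ the corresponding data for $\Psi_{\calY,\Lambda}$ involves instead $\widetilde\pi$ times the endomorphisms, which is precisely the scaling relating $V$ to $V^\sharp$, so the two constructions agree after the identification of quotients. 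Once the square is shown to commute, the theorem is immediate. I expect the main obstacle to be purely notational: keeping track of which normalization of ``type'' and which realization ($\widetilde\pi\Lambda^\vee$ inside $\widetilde\bbV$ versus abstract $\Lambda^\sharp$) is in play at each stage, and confirming that the indexing $h\leftrightarrow n-h$ is consistent throughout (e.g.\ that the parahoric levels and the admissible sets match under $G\cong G^\sharp$); there is no genuine geometric difficulty beyond what is already contained in Theorem \ref{thm: iso}.
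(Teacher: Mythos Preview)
Your proposal is correct and follows essentially the same route as the paper: set up the commutative square with $\Upsilon_{L,L^\sharp}$ and the identification of Lemma~\ref{lem: duality DL var}, check commutativity on $k$-points via the explicit description of $\Psi_{\calY,\Lambda}$ and $\Psi_{\calZ,\Lambda^\sharp}$, and conclude from Theorem~\ref{thm: iso}. The only small cleanup is that the reason ``agreement on $k$-points forces agreement of morphisms'' is simply that all four schemes are reduced of finite type over the algebraically closed field $k$ and the target is separated; the remark about universal homeomorphisms is not needed for this step.
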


\begin{proof} Recall that $V^{(p)}$ denotes the quadratic space $(V,p(\ ,\ ))$ and $L^\sharp$ the lattice $L^\vee$ considered as a lattice in $V^{(p)}$, see Definition \ref{def: L^sharp}. Then we have the associated RZ-space $\calN_{L^\sharp}$ which is equipped with an isomorphism $\Upsilon:\calN_L\cong \calN_{L^\sharp}$ by the discussion in \ref{para: RZ space GSp}. For $\Lambda\in \cV^{\le h}$, we have $\Lambda^\sharp\in \cV^{\ge n-h}(\bV^\sharp)$.     Then we claim the following diagram is commutative.
     \[\xymatrix{
        \cY_{L,\red}^{(0)}(\Lambda^\vee)\ar[d]_{\Psi_{\calY,\Lambda}}\ar[r]& \cZ_{L^\sharp,\red}^{(0)}(  \Lambda^{\sharp})\ar[d]^{\Psi_{\calZ,\Lambda}}\\
        R_\Lambda      \ar[r]    &      S_{\Lambda^\sharp}}\] where the top map is induced by the duality morphism $\Upsilon$ and the bottom map is given by Lemma \ref{lem: duality DL var}. Indeed since all schemes are reduced, it suffices to check this on $k$-points. Assume $M$ is a special lattice corresponding to a $k$-point of $\cY_{L,\red}^{(0)}$ via Propositions \ref{prop: k points of N} and  \ref{prop: k points}. In particular, $\Lambda^\vee \subset M^\vee$. By the proof of Propositions \ref{prop: duality for Z(Lambda)} and \ref{prop: k points of N},  $\Upsilon(M)=\widetilde{\pi}M^\vee$. Note that $\Lambda^\sharp=\widetilde{\pi}\Lambda^\vee \subset \Upsilon(M)$. So $\Upsilon(M)\in \cZ_{L^\sharp,\red}^{(0)}(k)$ indeed.  Moreover, $\Psi_{\cY,\Lambda}(M)=pM^\vee/p\Lambda^\vee $ and $\Psi_{\cZ,\Lambda}(\Upsilon(M))= M^\sharp/\Lambda^\sharp$. The claim follows from the fact that the map sending $\Psi_{\cY,\Lambda}(M)=pM^\vee/p\Lambda^\vee $ to  $  M^\sharp/\Lambda^\sharp$ is exactly the map given by Lemma \ref{lem: duality DL var}.   \end{proof}

\subsection{The Bruhat-Tits stratification}

In this last subsection, we summarize the main results.

Recall that  $L\subset V$ is a vertex lattice of type $h$ and we use $\cV^{\ge h}$ (resp. $\cV^{\le h}$ ) to denote the set of vertex lattices in $\bV$ of rank $n$ and type $t \ge h$ (resp. $\le h$). We refer the reader to \S \ref{sec: DL for SO} for notations about Deligne--Lusztig varieties. 

Recall that we have a natural decomposition
\begin{align*} 
\cN_L=\bigsqcup_{\ell \in \mathbb{Z}} \cN_L^{(\ell)},
\end{align*}
where  $\cN_L^{(\ell)} \subset \cN_L$ be the open and closed formal subscheme on which $\operatorname{ord}_p(c(\rho))=\ell$.
\begin{theorem}\label{thm: main thm}\hfill
 \begin{enumerate}
\item    For each $\ell \in \Z$, we have
    \begin{align*}
        \cN_{L,\red}^{(\ell)} = \Big (\cup_{\Lambda \in \cV^{\ge h}} \cZ_\red^{(\ell)}(\Lambda) \Big ) \cup \Big(\cup_{\Lambda \in \cV^{\le h}} \cY_\red^{(\ell)}(\Lambda^\vee) \Big). 
    \end{align*}

\item For $\Lambda \in \cV^{\ge h}$ and  $\Lambda \in \cV^{\le h}$ respectively, we have 
\begin{align*}
    \Psi: \cZ_\red^{(\ell)}(\Lambda)\cong S_{\Lambda }\quad \text{and}\quad  \Psi: \cY_\red^{(\ell)}(\Lambda ^\vee)\cong R_{\Lambda },
\end{align*}
where $S_{\Lambda}$ and  $ R_{\Lambda}$ are irreducible normal generalized Deligne-Lusztig varieties of dimension $\frac{t_\Lambda+h}{2}-1$ and  $n-\frac{t_\Lambda+h}{2}-1$ respectively.

\item

Assume   $\Lambda \in \cV^{\ge h}$. Let
\begin{align*} 
\cZ_\red^\circ(\Lambda)&\coloneqq \cZ_\red^{(\ell)}(\Lambda)\setminus \cup_{\Lambda'\in \cV^{<t}}\cZ_\red^{(\ell)}(\Lambda').
 \end{align*}  
 Then  
    \begin{enumerate}
        \item $\cZ_\red^{(\ell)}(\Lambda)=\overline{\cZ_\red^\circ}(\Lambda)=\sqcup_{\Lambda\subset \Lambda'}\cZ_\red^\circ(\Lambda')$.
        
        \item   $\Psi_{\calZ,\Lambda}: \cZ_\red^{(\ell)}(\Lambda)\stackrel{\sim}{\to}$ $S_{\Lambda}$ induces the isomorphism 
        \begin{align*}
            \cZ_\red^{\circ}(\Lambda)\cong S_\Lambda^{\circ}=S_\Lambda^{\circ\smallheartsuit}\sqcup S_\Lambda^{\circ\smalldagger},
        \end{align*}
        where 
       \begin{align*}
   S_\Lambda^{\circ\smallheartsuit}&=  \begin{cases}
         X_{P_{\Lambda}}(w_{\Lambda}^+ ) \sqcup X_{P_{\Lambda}}(w_{\Lambda}^- ) & \text{ if $h=0$},\\
      X_{P_{\Lambda}}(w_{\Lambda}) & \text{ if $h=1$},\\ 
              X_{P}(w_{\Lambda}) & \text{ if $h=2$},\\
         X_{P_{\Lambda}}(w_{\Lambda}) & \text{ if $h>2$}.
      \end{cases}\\
     S_\Lambda^{\circ\smalldagger}&=\begin{cases}
     \emptyset& \text{ if $h=0$},\\
     \emptyset & \text{ if $h=1$}\\  
           X_{P_{\Lambda}}(w_{\Lambda}^{\prime +}) \cup    X_{P_{\Lambda}}(w_{\Lambda}^{\prime -})  & \text{ if $h=2$},\\
      X_{P_{\Lambda}}(w_{\Lambda}')    & \text{ if $h>2$}.
      \end{cases}
  \end{align*}
 
    \end{enumerate}

\item  Assume   $\Lambda \in \cV^{\ge h}$. Let  
\begin{align*} 
\cZ_\red^{0}(\Lambda)&\coloneqq \cZ_\red^{(\ell)}(\Lambda)\cap
 \Big(\cup_{\Lambda \in \cV^{\le h}} \cY_\red^{(\ell)}(\Lambda^\vee) \Big).
 \end{align*}  
Then
    \begin{enumerate}
        \item $\cZ_\red^{0}(\Lambda)\cong S_\Lambda^0=\cup_{\substack{\Lambda\subset \Lambda',\\ t(\Lambda')\le h}} S_{\Lambda',\Lambda}$.
        \item     $ 
       \cZ_\red^{(\ell)}(\Lambda)\cap \cY_\red^{(\ell)}((\Lambda')^\vee)\cong S_{\Lambda',\Lambda} \cong \sqcup_{\substack{\Lambda \subset \Lambda_1\subset \Lambda_2\subset \Lambda'\\
       t(\Lambda_2)\le h \le t(\Lambda_1)}} X_{I_{\Lambda_1,\Lambda_2}}(w_{\Lambda_1,\Lambda_2}).
     $

    \end{enumerate}
    \item Let $L^\sharp$  be $L^\vee$ with quadratic form $q(\,)_{L^\sharp}=p q(\,)_{L^\vee}$. We have a natural isomorphism $\Upsilon_{L,L^\sharp}:\cN_L\cong \cN_{L^\sharp}$ so that it induces  an isomorphism
         \begin{align*}
  \cY_{L,\red}^{(\ell)}(\Lambda^\vee) \cong  \cZ_{L^\sharp,\red}^{(\ell)}(\Lambda^\sharp),
\end{align*}
which becomes 
         \begin{equation*}
      R_{\Lambda}^{[h]} \cong S_{\Lambda^\sharp}^{[n-h]}
\end{equation*}
in Lemma \ref{lem: duality DL var} under the isomorphisms $\Psi.$  Then similar results of $(3)$ and $(4)$ for $\cY_\red^{(\ell)}(\Lambda^\vee)$ follow via   the above isomorphisms.
 \end{enumerate}
\end{theorem}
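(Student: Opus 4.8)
\textbf{Proof proposal for Theorem \ref{thm: main thm}.}

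The plan is to assemble the five parts from results already established in the paper, treating each part as a corollary of the work done on the Rapoport--Zink space $\calN_L$ and on the Deligne--Lusztig varieties $S_\Lambda, R_\Lambda$. First I would reduce everything to the component $\calN_L^{(0)}$ (or $\calN_L^{(0)}\sqcup\calN_L^{(1)}$ when $h=0,n$): since the spinor similitude $\eta_b$ is surjective, each $g\in J_b(\bbQ_p)$ with $\val_p(\eta_b(g))=\ell$ induces an isomorphism $\calN_L^{(0)}\xrightarrow{\sim}\calN_L^{(\ell)}$, and this isomorphism is $\bbV$-equivariant in the appropriate sense, so it carries $\calZ^{(0)}(\Lambda)$ to $\calZ^{(\ell)}(\Lambda)$ and $\calY^{(0)}(\Lambda^\vee)$ to $\calY^{(\ell)}(\Lambda^\vee)$ (one must check that conjugation by $g$ permutes the special cycles, which follows from the definition of the cycles via the framing quasi-isogeny $\rho$, exactly as in \cite[\S6]{HowardPappas}). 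Granting this, part (1) is Theorem \ref{thm: N=ZunionY} transported to $\calN_L^{(\ell)}$; part (2) combines Theorem \ref{thm: iso} and Theorem \ref{thm: isoY} with the geometric statements (irreducibility, normality, dimension) recorded in Theorem \ref{thm: decom pf SLambda} and Proposition \ref{prop: dim of DL}.

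Next I would address part (3). The stratification identity $\calZ_\red^{(\ell)}(\Lambda)=\sqcup_{\Lambda\subset\Lambda'}\calZ_\red^{\circ}(\Lambda')$ follows from Proposition \ref{prop: inter of Z(Lambda)}(1) (inclusion of cycles under inclusion of vertex lattices) together with Proposition \ref{prop: k points}: a $k$-point of $\calZ_\red^{(\ell)}(\Lambda)$ is a special lattice $M$ with $\Lambda\subset M\subset M^\vee\subset\Lambda^\vee$, and the \emph{unique} smallest vertex lattice $\Lambda'$ with $\Lambda\subset\Lambda'$ and $\Lambda'\subset M$ is $\Lambda'=S_c(M)^\vee$ by Proposition \ref{prop: key prop}; this is the combinatorial bookkeeping underlying $S_\Lambda=\cup_{\Lambda\subset\Lambda'}S_{\Lambda'}^\circ$ introduced right after Lemma \ref{lem: duality DL var}. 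That $\calZ_\red^{(\ell)}(\Lambda)$ is the closure $\overline{\calZ_\red^\circ}(\Lambda)$ then follows because $S_\Lambda$ is irreducible (Theorem \ref{thm: decom pf SLambda}) and $S_\Lambda^\circ$ is open dense in it, and $\Psi_{\calZ,\Lambda}$ is an isomorphism by Theorem \ref{thm: iso}. The description of $S_\Lambda^\circ=S_\Lambda^{\circ\smallheartsuit}\sqcup S_\Lambda^{\circ\smalldagger}$ in terms of the specific Deligne--Lusztig varieties $X_{P_\Lambda}(w_\Lambda)$, $X_{P_\Lambda}(w_\Lambda')$, etc., is read off from Theorem \ref{thm: decom pf SLambda} and Proposition \ref{prop: further decom of DL}(1): intersecting the decomposition $S_\Lambda=S_\Lambda^{\smallheartsuit}\sqcup S_\Lambda^{\smalldagger}\sqcup S_\Lambda^\Phi$ with $S_\Lambda^\circ$ kills $S_\Lambda^\Phi=X_{P_0}(1)$ (which corresponds to $\Lambda$ itself being $\Phi$-invariant of a strictly larger type, hence lies in some $S_{\Lambda'}$ with $\Lambda\subsetneq\Lambda'$) and restricts the $X_{P_i}(w_i)$ to the top stratum $i=\tfrac{t-h}{2}$, i.e. to $X_{P_\Lambda}(w_\Lambda)$.

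Part (4) is the analogous statement for the ``totally isotropic'' locus: $\calZ_\red^0(\Lambda)=\calZ_\red^{(\ell)}(\Lambda)\cap(\cup_{\Lambda\in\cV^{\le h}}\calY_\red^{(\ell)}(\Lambda^\vee))$ corresponds under $\Psi_{\calZ,\Lambda}$ to $S_\Lambda^0$, because a special lattice $M$ with $\Lambda\subset M$ lies in some $\calY^{(\ell)}((\Lambda')^\vee)$ exactly when $p(\Lambda')^\vee\subset pM^\vee\subset M$, i.e. when $\Phi(pM^\vee)\subset M$ holds and $S_d(M^\vee)^\vee$ has type $\le h$; comparing with the definition of $S_\Lambda^{\smalldagger}$ (Remark after Theorem \ref{thm: decom pf SLambda}) and the filtration bookkeeping, this is precisely the condition defining $S_\Lambda^0$. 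The decomposition $S_\Lambda^0=\cup_{t(\Lambda')\le h}S_{\Lambda',\Lambda}$ is Proposition \ref{prop: decomp of SLambdaLambda'}, and the finer decomposition into the $X_{I_{\Lambda_1,\Lambda_2}}(w_{\Lambda_1,\Lambda_2})$ is Proposition \ref{prop: further decom of DL}(2); the identification $\calZ_\red^{(\ell)}(\Lambda)\cap\calY_\red^{(\ell)}((\Lambda')^\vee)\cong S_{\Lambda',\Lambda}$ uses Proposition \ref{prop: inter of Z(Lambda)}, Proposition \ref{prop: inter of Y(Lambda)} and the two isomorphisms $\Psi$. Finally part (5) is immediate from Proposition \ref{prop: duality for Z(Lambda)} (the duality isomorphism $\Upsilon_{L,L^\sharp}$ and its compatibility $\calY_L(\bbL)=\calZ_{L^\sharp}(\widetilde\pi\bbL)$), Lemma \ref{lem: duality DL var} ($R_\Lambda^{[h]}\cong S_{\Lambda^\sharp}^{[n-h]}$), and the commutative square appearing in the proof of Theorem \ref{thm: isoY}, which says $\Psi_{\calY,\Lambda}$ and $\Psi_{\calZ,\Lambda^\sharp}$ intertwine $\Upsilon_{L,L^\sharp}$ with the identification of Lemma \ref{lem: duality DL var}; transporting (3) and (4) across these isomorphisms gives the $\calY$-statements. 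The main obstacle I anticipate is not any single deep point but the careful matching of conventions across parts (3) and (4): one must verify that the ``worst'' combinatorial strata on the RZ side (the $\calZ^\circ$, $\calZ^0$ loci, defined by which vertex lattices contain/are contained in $M$) correspond under $\Psi$ to exactly the $S_\Lambda^{\circ}$, $S_\Lambda^0$ loci (defined by the behaviour of the filtration $F_\bullet$ and $F_\bullet^{\smalldagger}$), and in particular that the KR-stratum condition $\Phi(M)\subset M^\vee$ from Proposition \ref{prop: KR stratification} matches the ``$\scrV+\Phi(\scrV)$ totally isotropic'' condition cutting out $S_\Lambda^{\smalldagger}$ — this is asserted in the Remark after Theorem \ref{thm: decom pf SLambda} and must be invoked cleanly.
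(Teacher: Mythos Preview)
Your proposal is correct and follows essentially the same approach as the paper's own proof: both reduce to $\ell=0$ via the $J_b(\bbQ_p)$-action, then cite Theorem \ref{thm: N=ZunionY} for (1), Theorems \ref{thm: iso}--\ref{thm: isoY} together with Theorem \ref{thm: decom pf SLambda} for (2), Theorem \ref{thm: decom pf SLambda} with Proposition \ref{prop: further decom of DL} for (3), Propositions \ref{prop: decomp of SLambdaLambda'} and \ref{prop: further decom of DL} for (4), and Proposition \ref{prop: duality for Z(Lambda)} with the proof of Theorem \ref{thm: isoY} for (5). Your write-up is in fact more explicit than the paper's about the combinatorial matching (Propositions \ref{prop: k points}, \ref{prop: key prop}, \ref{prop: inter of Z(Lambda)}) underlying part (3), and about the KR-stratum identification needed in part (4); these verifications are genuinely required and the paper leaves them implicit in the phrase ``and the definitions''.
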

\begin{proof}
   (i) is Theorem \ref{thm: N=ZunionY}. 

   (ii) follows from Theorems \ref{thm: iso} and \ref{thm: isoY} when $\ell=0$. The isomorphism between $\cN_L^{(0)}$ and $\cN_L^{(\ell)}$ induces an isomorphism between $\cZ_\red^{(0)}$ and $\cZ_\red^{(\ell)}$ so that the general case follows from the case when $\ell=0$. The irreducibility, normality, and dimension of $S_\Lambda$ and $R_\Lambda$ are Theorem \ref{thm: decom pf SLambda}.

   (iii) follows from Theorem \ref{thm: decom pf SLambda} and Proposition \ref{prop: further decom of DL}.

   (iv) follows from Propositions \ref{prop: decomp of SLambdaLambda'} and \ref{prop: further decom of DL} and the definitions of $\cZ_\red^{0}(\Lambda)$ and $ S_\Lambda^0$.

   (v) follows from Proposition \ref{prop: duality for Z(Lambda)} and the proof of Theorem \ref{thm: isoY}.
\end{proof}

	\bibliographystyle{alpha}
	\bibliography{reference}
\end{document}